\newtheorem{theorem}{Theorem}[section]
\newtheorem{lemma}[theorem]{Lemma}
\newtheorem{corollary}[theorem]{Corollary}
\newtheorem{proposition}[theorem]{Proposition}
\theoremstyle{definition}
\newtheorem{remark}[theorem]{Remark}
\newtheorem{example}[theorem]{Example}
\newcommand{\I}{{\mathbbm{1}}}
\newcommand{\M}{\mathcal{M}}
\newcommand{\A}{\mathcal{A}}
\newcommand{\R}{\mathcal{R}}
\newcommand{\B}{\mathcal{B}}
\newcommand{\D}{\mathcal{D}}
\newcommand{\N}{\mathcal{N}}
\newcommand{\Rdb}{{\mathbb R}}
\newcommand{\Edb}{{\mathbb E}}
\newcommand{\Cdb}{{\mathbb C}}
\newcommand{\Tdb}{{\mathbb T}}
\newcommand{\Pdb}{{\mathbb P}}
\newcommand{\Zdb}{{\mathbb Z}}
\begin{document}

\title[Conditional expectations and generalized representing measures]{Von Neumann algebra conditional expectations with applications to generalized representing measures for noncommutative function algebras}
\author{David P. Blecher}
\address{Department of Mathematics, University of Houston, Houston, TX
77204-3008}
\email[David P. Blecher]{dpbleche@central.uh.edu}
\author{Louis E. Labuschagne}
\address{DSI-NRF CoE in Math. and Stat. Sci, Pure and Applied Analytics,\\ Internal Box 209, School of Math. \& Stat. Sci.,
North-West University, PVT. BAG X6001, 2520 Potchefstroom, South Africa}
\email{louis.labuschagne@nwu.ac.za}

\thanks{DB is supported by a Simons Foundation Collaboration Grant (527078). LL was supported by the National Research Foundation KIC Grant 171014265824. Any opinion, findings and conclusions or recommendations expressed in this material, are those of the author, and therefore the NRF do not accept any liability in regard thereto. LL was also the recipient of a UCDP grant for the period 2019-2020 which indirectly helped to fund a visit by the first author where some of this work was done.}

\keywords{Operator algebras, conditional expectation, representing measure, subdiagonal algebra, non-commutative integration, weight
on von Neumann algebras, Haagerup $L^p$-spaces, 
noncommutative Radon-Nikodym theorem, Jensen measures, 
character of a function algebra}
 \subjclass[2000]{Primary: 46L10, 46L51, 46L52, 46L53, 47L30; Secondary: 46E30, 46A22, 46J10, 47L45, 47N30.}

\begin{abstract}  We establish several deep existence criteria for conditional expectations on von Neumann algebras, and then apply this theory to develop a noncommutative theory of representing measures of characters of a function algebra. Our main cycle of results describes what may be understood as a `noncommutative Hoffman-Rossi theorem' giving the existence of weak* continuous  `noncommutative representing measures' for so-called $\D$-characters. These results may also be viewed as `module' Hahn-Banach extension theorems for weak* continuous `characters' into possibly noninjective von Neumann algebras. In closing we introduce the notion of `noncommutative Jensen measures', and show that as in the classical case representing measures of 
logmodular algebras are Jensen measures. The proofs of the two main cycles of results rely on the delicate interplay of Tomita-Takesaki theory, noncommutative Radon-Nikodym derivatives, Connes cocycles, Haagerup noncommutative $L^p$-spaces, Haagerup's reduction theorem, etc. 
\end{abstract}

\maketitle

\section{Introduction}  Recall that if $\A \subset C(K)$ is a function algebra or uniform algebra on a compact space $K$,
then the primary associated object is the set $M_{\A}$ of (scalar valued)  {\em characters} on $\A$.  
These are the nonzero scalar valued homomorphisms on $\A$.   The basic texts
on function algebras spend considerable time on the theory and applications of these characters, which are fundamental 
in this branch of function theory (see e.g.\ \cite{Gam, Stout}).   In \cite{BLv}, inspired by Arveson's
seminal paper \cite{Arv},  we proposed a new generalization of characters
applicable to operator algebras.   By this last expression we mean 
a subalgebra $\A$ of a $C^*$-algebra or von Neumann algebra $\M$.   
In translating the classical 
theory of characters $\chi : \A \to \Cdb$ we may identify the range of the character with $\D = \Cdb 1_{\A} = \Cdb 1_{\M}$, 
in which case the homomorphism becomes an idempotent map 
on $\A$, and is a $\D$-bimodule map.  Thus  in the 
operator algebra case we suppose that we have a $C^*$- (indeed in this paper usually von Neumann) subalgebra $\D$ of $\A$ and of $\M$, and 
we define a $\D$-{\em character} to be a unital 
contractive homomorphism  $\Phi : \A \to \D$.    We also assume that
 $\Phi$  is a $\D$-bimodule map (or equivalently, is the identity map on $\D$).  
The basic theory of $\D$-characters is developed 
 in \cite{BLv}.   One motivation for our study of such maps comes from \cite{Arv}, where Arveson gives many important 
examples of $\D$-characters in his approach to noncommutative analyticity/generalized analytic functions/noncommutative
Hardy spaces.   In this
`generalized analytic function theory', and in certain other operator algebraic 
situations,  it is very important that the `noncommutative character' $\Phi$ is $\D$-valued rather than merely 
$B(H)$-valued (and the same comment holds for the `noncommutative representing measures'  
 for a $\D$-character discussed below).

In the present paper we  
simultaneously generalize important aspects of
the theory of conditional expectations of von Neumann algebras, and the theory of representing measures of 
characters of  function algebras.  To describe the first topic we begin by recalling some fundamental 
 probability theory.  Let   $(K,\A,\mu)$ be a probability 
 measure space.    
`Essentially order-closed' sub-$\sigma$-algebras $\B$ of  $\A$ correspond (up to null sets) to von Neumann subalgebras 
 $\D$ of $\M = L^\infty(K,\A,\mu)$ 
 (see e.g.\ Theorem 7.27 (iii) and (iv) in \cite{BGL}).  
  For any such subalgebra 
 $\D$ there exists a unique conditional expectation $E : \M \to \D$ such that $\int \, E (f)  \, d \mu = \int \, f   \, d \mu$ for 
 $f \in \M$.   Also $E$ is 
 weak* continuous (or equivalently, in von Neumann algebra language, {\em normal}), and {\em faithful} (that is, 
 $E(f) = 0$ implies $f = 0$ if $f \in \M_+$). 
In the von Neumann algebra context we will by a conditional expectation mean a 
(usually normal) unital positive (or equivalently, by p.\ 132--133 in \cite{Bla}, contractive)  idempotent $\D$-module map $E : \M \to \D$
 from a $C^*$-algebra onto its $C^*$-subalgebra $\D$.  
 The unique conditional expectation for $\mu$ above will be called the 
 {\em weight-preserving conditional expectation}.   A better name might be 
  `probabilistic conditional expectation', but much of our paper is in the setting of 
   `weights' as opposed to `states'.    If we 
 write it as $\Edb_\mu$ then  any other normal conditional expectation $E : \M \to \D$ is given by $E(x) = \Edb_{\mu}(hx)$ for a 
 density function $h \in L^1(K,\A,\mu)_+$ with $\Edb_{\mu}(h) = 1$. 
 Such $h$ is sometimes called a {\em weight function}.  See 
 e.g.\ \cite[Section 6]{Bposx} for references. 
 We shall show that a similar structure pertains in the noncommutative context.
 
 Turning to von Neumann algebras and noncommutative integration theory, 
recall that here von Neumann algebras are regarded as noncommutative $L^\infty$-spaces. So by fixing a  faithful 
normal semifinite (often abbreviated as {\em fns}) weight $\omega$ on $\M$, we may view the pair $(\M,\omega)$ as 
a structure encoding a noncommutative measure space. (We will define the notions of faithful, normal, 
semifinite, and weight in Section \ref{Sec1} 
below. For now recall that every von Neumann algebra has a fns weight \cite[Theorem VII.2.7]{tak2}.) For an 
inclusion $\D \subset \M$ of von Neumann algebras,  the existence of a normal conditional expectation 
$E : \M \to \D$ is a rich subject, and is often a very tricky matter.   The best result is as follows 
\cite[Theorem IX.4.2]{tak2}: If  $\omega$ above restricts to a semifinite  weight on $\D$ then there 
exists a (necessarily unique)  $\omega$-preserving  normal conditional expectation $E : \M \to \D$ if and only if 
the modular group $(\sigma^{\omega}_t)_{t \in \Rdb}$ of  $\omega$ (see  \cite[Chapter  VIII]{tak2}) leaves 
$\D$ invariant. By $\omega$-preserving 
we here mean that $\omega \circ E = \omega$ on $\M$.
 We will refer 
  to this unique conditional expectation as the {\em weight-preserving conditional expectation}, and often write it as $\Edb_\omega$ 
 or as $E_{\D}$.  
  This is of course  the noncommutative analog of the object with the same name mentioned in the last paragraph.
 As a special case, if $\omega$ restricts to a semifinite trace on $\D$ then there 
 exists an $\omega$-preserving  normal conditional expectation $E : \M \to \D$ if and only if  the modular group of 
 $\omega$ acts as the identity on $\D$, or equivalently if $\D$ is contained in the {\em centralizer} (defined below) 
 of $\omega$.   If in addition $\omega$ is a normal state on $\M$, then this centralizer condition is saying that
  $$\omega(dx) = \omega(xd) , \qquad x \in \M, d \in \D .$$   We make a few contributions to the general 
   theory of conditional expectations of von Neumann algebras in 
the early sections of our paper.   See e.g.\ Theorems \ref{wcent},  
\ref{locis}, \ref{excodsfex},
  \ref{wcentnonf}, and their corollaries.  Also in Section \ref{Gcex} we develop the noncommutative 
version of the `weight function' mentioned in the last paragraph for a 
classical   conditional expectation.   See the last two sections of \cite{Bposx} for some more  facts and background about 
  the above view of classical and von Neumann algebraic conditional expectations (and for a brief survey of some aspects of the present paper).

 We now turn to  representing measures for characters  of a function algebra $\A \subset C(K)$.
 A positive measure $\mu$  on  $K$ is called a \emph{representing measure} for a character $\Phi$  of   $\A$  if
$\Phi(f)=\int_K \, f \,d\mu$ for all $f\in A$. The functional $\widetilde{\Phi}(g) = \int_K \, g \, d\mu$ on $C(K)$ is a state on $C(K)$,
and indeed representing measures for $\Phi$ are in a bijective correspondence with the extensions of $\Phi$ 
to positive functionals on $C(K)$.  That is, representing measures for  a character are just the Hahn-Banach extensions to $C(K)$ of the 
character.   The classical {\em Hoffman-Rossi theorem} proves the existence of `normal' or `absolutely continuous'  representing measures': 
weak* continuous characters  of a function algebra $\A \subset \M = L^\infty(X,\mu)$ have 
weak* continuous  positive Hahn-Banach extensions to $\M$.   Such extensions correspond to measures on $X$ which are absolutely continuous
with respect to $\mu$.    The original Hoffman-Rossi theorem assumes that $\mu$ is a probability measure, but actually 
any positive measure will do (see  \cite{BFZ} for a swift proof of this result).  
In several sections of the present paper we prove noncommutative versions of the Hoffman-Rossi 
theorem; that is, we establish the existence of `noncommutative representing measures'  $\Psi : \M \to \D$ 
for weak* continuous $\D$-characters on a weak* closed subalgebra $\A$ of $\M$, where by a noncommutative 
representing measure we mean the extension of a $\D$-character to a positive $\D$-valued map on all of $\M$ 
satisfying certain regularity conditions.  
 These may be viewed as `module' Hahn-Banach extension theorems for weak* 
continuous $\D$-characters 
into possibly noninjective von Neumann algebras.   It is not at all clear that `Hahn-Banach extensions' into $\D$ exist, 
let alone weak* continuous
ones, in fact this is quite surprising. 
In particular it is important that $\A$ is an algebra and $\Phi$ is a homomorphism for such positive weak* continuous extensions to exist. 
 In \cite{BFZ} the first author and coauthors proved such a theorem 
which works for all von Neumann algebras $\M$,
but requires the very strong condition that the subalgebra $\D$ 
be purely atomic. 
At the end of that paper it was mentioned that we were still pursuing 
 the special case of the noncommutative Hoffman-Rossi theorem in the case 
that $\M$ has a faithful normal tracial state 
and $\D$ is any von Neumann subalgebra of $\A$.    In 
 the present paper we supply this  noncommutative generalization of the Hoffman-Rossi theorem, indeed we 
 generalize to subalgebras of much larger classes of von Neumann algebras.    

To be more specific, in Sections  \ref{HR1}--\ref{repm} we consider inclusions $\D \subset \A \subset \M$ for a
weak* closed subalgebra $\A$, and a von Neumann subalgebra  
 $\D$ of our von Neumann algebra $\M$, together with a weak* continuous $\D$-character $\Phi : A \to \D$.   
(As we said, it is usually crucial in our results here that $\A$ is an algebra, as is the case in Hoffman and Rossi's original 
theorem, but in a few  results 
 $\A$ can be allowed to be a $\D$-submodule of $\M$ containing $\D$.) 
We as before view $\M$ as a structure encoding a noncommutative  measure space by fixing a faithful normal 
semifinite weight $\nu$ on $\M$.  We then seek to  find a  (usually normal) 
extension of $\Phi$ to a positive  map $\Psi : \M \to \D$.   Such extensions $\Psi$ will be our `noncommutative 
representing measures'. Note that  $\Psi$ is necessarily a 
conditional expectation since it is the identity map on $\D$ and hence is idempotent and is a 
$\D$-module map (see p.\ 132--133 in \cite{Bla} for the main facts about conditional expectations and their
relation to bimodule maps and projections maps of norm $1$). 
Thus our setting in Sections  \ref{HR1}--\ref{repm}  simultaneously generalizes the classical setting above (the case that 
$\M$ is commutative and $\D = \Cdb 1$) and the setting for  von Neumann algebraic conditional expectations  (the case $\A = \D$).
We will describe our specific results in more detail below.   It also generalizes, as we said above,  Arveson's profound noncommutative abstraction 
from \cite{Arv,BLsurv} of Hardy space theory on the disk.   In   particular it is very important in Arveson's theory that the `noncommutative
representing measures' take values in $\D$, and {\em not merely} in the ambient $B(H)$. 
We remark that it should prove fruitful to relate our theory  to the noncommutative Choquet theory of e.g.\ \cite{DKen}.

We now describe the structure of our paper.   In Section \ref{Sec1}  we state and prove some  results in 
noncommutative integration theory, 
some of which may be folklore.
We also state some other general facts about von Neumann algebras, noncommutative $L^p$ spaces, and extensions
of conditional expectations to the latter.   In Section \ref{ncec} we develop some aspects (which we 
were not able to find in the von Neumann algebra literature) of 
the relation between normal conditional expectations and centralizers of normal states and weights.  For example,
we show how from any given state on $\M$, one may use an averaging technique to construct a related normal state for which $\D$ is in the centralizer of that state.   In Section \ref{Gcex}, which 
is also purely von Neuman algebraic, 
we develop the noncommutative theory of the `weight function' mentioned early in our Introduction,
and give  existence criteria and characterizations
of  normal conditional expectations in terms of such `weight functions' $h$.  
This uses the delicate interplay of Tomita-Takesaki theory, noncommutative Radon-Nikodym derivatives, Connes cocycles, 
Haagerup $L^p$-spaces, etc.

In Sections  \ref{HR1}--\ref{repm} we
prove the noncommutative Hoffman-Rossi theorems described above, that is the existence of `noncommutative representing measures'.
The proof of such theorems involve three stages. In the first stage we show that $\omega \circ \Phi$ extends to a normal state on $\M$.
 In the second stage we show that we may assume that 
 this  normal state has $\D$ in its centralizer, by  averaging over the unitary group of $\D$.   Finally in the third stage we use some of the technology 
 of conditional expectations  developed earlier to prove the main result.
Indeed as we hinted above, such states with $\D$ in its centralizer possess associated
 $\D$-valued normal conditional expectations.
 
 More specifically in Section  \ref{HR1} we develop our main 
ideas for proving the noncommutative Hoffman-Rossi theorem, and as an example apply these ideas to prove this 
result for all inclusions $\D \subset \A \subset \M$ where $\M$ is commutative.  This 
generalizes the classical  Hoffman-Rossi theorem (in which $\D = \Cdb 1$).  In Section  \ref{HR2} we 
prove the theorem in the case that $\M$ is finite  or $\sigma$-finite (defined below), and $\D$ is contained in the centralizer. 
The  $\sigma$-finite case becomes quite technical, involving the technology of Haagerup $L^p$-spaces.  In Section  
\ref{repm} we provide several results giving the existence of `representing measures' for general von Neumann 
algebras.    The most general result one can hope for is that for inclusions $\D \subset \A \subset \M$ for which 
we know that there {\em does exist} a normal conditional expectation $E_\omega : \M \to \D$, 
every weak* continuous $\D$-character $\Phi$ on $\A$  extends to some normal conditional expectation 
$\Psi : \M \to \D$. We obtain several special cases of this.  In our most general settings in Section \ref{repm}, 
using the famous Haagerup reduction theorem \cite{HJX}  we are able to obtain a  conditional expectation 
$\Psi : \M \to \D$ extending $\Phi$, that is, a `noncommutative representing measure'. 
However at present we are in this generality not always able to show that $\Psi$ may be chosen to be normal. 

Finally in Section \ref{Jens}, we consider a class of `noncommutative representing measures' generalizing 
the Jensen measures and Arens-Singer measures in classical function theory \cite{Gam,Stout}. We prove a 
noncommutative variant of the classical result that if $\A$ is {\em logmodular} then every representing measure 
is a Jensen measure. The theory achieved therefore provides a well-rounded foundation for the development of a 
new noncommutative theory of function spaces.   

Turning to notation: We write $\mathbb{P}(\M)$ for the projection lattice of a von Neumann algebra $\M$, and $\M_+$ for the positive cone of 
$\M$, namely  $\{ x \in \M : x \geq 0 \}$.

In this paper an operator algebra
is a unital algebra $\A$ of operators on a Hilbert space, or more abstractly a Banach algebra isometrically isomorphic to  such an algebra of Hilbert space operators.   
Many authors consider 
an operator space structure on an operator algebra, and replace the word `isometrically' by 
`completely isometrically' in the last sentence--see e.g.\
\cite{BLM} for definitions--but this will not be important in the present paper.  Indeed in the introduction to 
\cite{BLv} we explain why $\D$-characters are automatically completely contractive. 
By 4.2.9 in \cite{BLM}, which is an earlier result of the present authors, 
any unital completely contractive projection of a unital operator algebra onto a subalgebra
$\D$ is a  $\D$-bimodule map.   Thus one may equivalently define the $\D$-characters above to be
the unital completely contractive projections of $\A$ onto $\D$ which are also homomorphisms.  
In this paper $\A$ is usually weak* closed, and $\D$-characters are usually weak*  continuous. 

We will sometimes abusively call a von Neumann algebra {\em finite} if it has a faithful normal tracial 
state (this is abuse, since in nonseparable situations it differs from the common usage meaning that the 
identity element is a finite projection).   We recall that a von Neumann algebra is {\em $\sigma$-finite} if it has a faithful normal 
state (this includes all  von Neumann algebras on separable Hilbert spaces), and {\em semifinite} if it has a faithful normal tracial weight.

\section{Preliminaries on weights on von Neumann algebras} \label{Sec1} A {\em weight} on a
 von Neumann algebra $\M$ is a $[0,\infty]$-valued map on $\M_+$ which is
additive, and satisfies $\omega(tx) = t \omega(x)$ for $t \in [0,\infty]$ (interpreting $0\cdot \infty = 0$). 
We use the standard notation $${\mathfrak p}_\omega  = \{ x \in \M_+ : \omega(x) < \infty \}, \; \; \; \; \; \; \; \; 
{\mathfrak n}_\omega = \{ x \in \M : \omega(x^* x) < \infty\},$$ and 
$${\mathfrak m}_\omega = {\rm Span} \, {\mathfrak p}_\omega =  {\mathfrak n}_\omega^* {\mathfrak n}_\omega .$$ 
If here we wish to emphasize that we are only considering elements of ${\mathfrak m}_\omega$ taken from a 
specific subset ${\mathcal E}$ of $\M$, we will then write\ ${\mathfrak m}({\mathcal E})_\omega$.  
The latter may be regarded as `the definition domain' of $\omega$: this is the part of $\M$ on which $\omega$ 
is naturally defined as a  linear functional (see around VII.1.3 in \cite{tak2}, 
or see e.g.\ 
\cite{Comb} for some of the early development of the theory of weights).  We say that $\omega$ is 
{\em normal} if it preserves suprema of bounded increasing nets in $\M_+$, {\em faithful} if  $\omega(x) = 0$ 
implies $x = 0$ for $x \in \M_+$, and {\em tracial} (or is a {\em trace}) if $\omega(x^* x) = \omega(x x^*)$ 
for $x \in \M$.  Any normal weight is the supremum of an increasing family of positive 
normal functionals (see \cite[Theorem VII.1.11]{tak2}). There are several equivalent definitions of semifiniteness 
for  weights: that ${\mathfrak n}_\omega$ is weak* dense in $\M$, that  ${\mathfrak m}_\omega$ is weak* dense in $\M$, 
that there exists an increasing net in ${\mathfrak p}$ with supremum 1, and that  ${\mathfrak p} = 
\{ x \in \M_+ : \omega(x) < \infty \}$ generates $\M$ as a von Neumann algebra. A weight is {\em strictly semifinite} if it is  the sum of a 
family of positive functionals with  mutually orthogonal supports, or equivalently that its  restriction to the 
centralizer (defined below) is semifinite. 
 
For a normal trace $\tau$ on a von Neumann algebra the $\tau$-finite projections are a directed set by \cite[Proposition V.1.6]{tak1}, 
and this gives a net with weak* limit $1$ if  $\tau$ is also semifinite. 

\begin{lemma} \label{istrD} Let $\omega$ be a normal semifinite weight  on 
a von Neumann algebra $\M$,  with $\omega(d_0d_1) =  \omega(d_1d_0)$ for $d_0, d_1 \in {\mathfrak m}_\omega$.  
Then $\omega$ is a trace on $\M$. \end{lemma}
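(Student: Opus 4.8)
My plan is to prove the (seemingly stronger) identity $\omega(x^*x)=\omega(xx^*)$ for every $x\in\M$ by showing that the modular automorphism group of $\omega$ is trivial. The first, trivial, observation is that the special case $d_1=d_0^*$ of the hypothesis already gives $\omega(d^*d)=\omega(dd^*)$ for every $d$ in the $*$-algebra of definition ${\mathfrak m}_\omega$. Since $\omega$ is not assumed faithful, I must work relative to its support projection $e$, and the crucial preliminary is to prove that $e$ is \emph{central}: without this the tracial identity genuinely fails, because on the non-support corner one can produce nonzero `off-diagonal' elements $e\M(1-e)$ whose two `one-sided' $\omega$-values disagree.

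To establish centrality I would show there is no nonzero partial isometry $v$ with $v^*v=:p\le e$ and $vv^*\le 1-e$. Using semifiniteness of $\omega$ on $e\M e$, choose an increasing net $(f_i)$ in ${\mathfrak p}_\omega$ with $f_i\le e$ and $f_i\nearrow e$. Each $vf_i$ lies in ${\mathfrak n}_\omega\cap{\mathfrak n}_\omega^*\subseteq{\mathfrak m}_\omega$, since $\omega((vf_i)(vf_i)^*)\le\omega(vv^*)=0$ (as $vv^*\le 1-e$) while $\omega((vf_i)^*(vf_i))\le\omega(f_i)<\infty$. Applying the identity $\omega(d^*d)=\omega(dd^*)$ with $d=vf_i$ gives $\omega(f_i\,p\,f_i)=\omega(v f_i^2 v^*)\le\omega(vv^*)=0$; since $f_i\,p\,f_i\to p$ $\sigma$-weakly, lower semicontinuity (normality) of $\omega$ forces $\omega(p)=0$, whence $p=0$ and $v=0$. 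Thus $e\M(1-e)=0$, so $e$ is central and I may replace $\M$ by the central summand $\M e$, on which $\omega$ is now a faithful normal semifinite weight.

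On $\M e$ the heart of the matter is that $\omega$ commutes with each element of its domain. Since ${\mathfrak m}_\omega$ is a $*$-algebra, for $a\in{\mathfrak m}_\omega$ we have $a{\mathfrak m}_\omega\cup{\mathfrak m}_\omega a\subseteq{\mathfrak m}_\omega$, and the hypothesis provides $\omega(ax)=\omega(xa)$ for all $x\in{\mathfrak m}_\omega$; by the standard description of the centralizer of a faithful normal semifinite weight (the modular theory of \cite{tak2}) this places $a$ in the centralizer $\M_\omega$. Hence ${\mathfrak m}_\omega\subseteq\M_\omega$, and since $\omega$ is semifinite this domain is weak* dense in $\M e$ while $\M_\omega$ is a weak* closed subalgebra; therefore $\M_\omega=\M e$, i.e. $\sigma^\omega_t=\mathrm{id}$ for all $t\in\Rdb$. (Equivalently, one runs the KMS argument: for $x,y$ with $\sigma^\omega_t(x)y\in{\mathfrak m}_\omega$ the boundary values of the modular function $F$ satisfy $F(t)=\omega(\sigma^\omega_t(x)y)=\omega(y\sigma^\omega_t(x))=F(t+i)$, so $F$ is $i$-periodic, bounded and analytic, hence constant by Liouville, which again yields $\sigma^\omega_t=\mathrm{id}$.) By the standard characterization a normal semifinite weight with trivial modular group is a trace, so $\omega$ is a trace on $\M e$.

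Finally I would lift this to all of $\M$. Because $e$ is central, $\omega(a)=\omega(eae)$ and $e$ commutes with everything, so for any $x\in\M$ one has $\omega(x^*x)=\omega((xe)^*(xe))$ and $\omega(xx^*)=\omega((xe)(xe)^*)$ with $xe\in\M e$; the trace property just proved on $\M e$ then gives $\omega(x^*x)=\omega(xx^*)$, as required. The step I expect to be the main obstacle is passing from the tracial identity, which is only given on the definition domain ${\mathfrak m}_\omega$, to triviality of the modular group, together with the not-entirely-routine reduction to the faithful case: one must either invoke the precise centralizer characterization (taking care of its domain hypotheses) or make the KMS/Liouville argument rigorous on the correct analytic domain, and one must verify centrality of the support, since the conclusion is false when the support is non-central.
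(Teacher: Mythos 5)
Your strategy is sound and genuinely different from the paper's, but one step as written rests on a false claim. You justify $vf_i\in{\mathfrak m}_\omega$ by asserting ${\mathfrak n}_\omega\cap{\mathfrak n}_\omega^*\subseteq{\mathfrak m}_\omega$, and this inclusion fails in general: take $\omega$ the canonical trace on $B(\ell^2)$ and $x=\mathrm{diag}(1/n)$; then $x\in{\mathfrak n}_\omega\cap{\mathfrak n}_\omega^*$ since $\sum 1/n^2<\infty$, but $x\notin{\mathfrak m}_\omega$ since ${\mathfrak m}_\omega$ is the trace class and $\sum 1/n=\infty$. Fortunately the element you need does lie in ${\mathfrak m}_\omega$, by an explicit factorization through ${\mathfrak n}_\omega^*{\mathfrak n}_\omega$: write $vf_i=(f_i^{1/2}v^*)^*\,f_i^{1/2}$, where $f_i^{1/2}\in{\mathfrak n}_\omega$ because $\omega(f_i)<\infty$, and $f_i^{1/2}v^*\in{\mathfrak n}_\omega$ because $\omega(vf_iv^*)\leq\omega(vv^*)=0$. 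With that repair your centrality argument goes through; you also quietly use that $\omega=\omega(e\cdot e)$ on $\M_+$ (to get semifiniteness of $\omega$ on $e\M e$) and the $\sigma$-weak lower semicontinuity of normal weights, both standard and consistent with facts the paper itself invokes. Incidentally, lower semicontinuity can be avoided: $\omega(f_ipf_i)=0$ with $f_ipf_i=(pf_i)^*(pf_i)\in(e\M e)_+$ forces $pf_i=0$ by faithfulness on $e\M e$, and letting $f_i\to e$ strongly gives $p=pe=0$. The remainder of your proof — ${\mathfrak m}_\omega\subseteq\M_\omega$ via the centralizer characterization, weak* density giving $\M_\omega=\M e$, triviality of the modular group implying the trace property, and the lift back across the central support — is correct, and there is no circularity since none of the cited facts depend on this lemma.

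The paper's own proof is entirely different and far more elementary: it never mentions the support projection or modular theory. Given $d\in{\mathfrak n}_\omega$, it takes an increasing net $(d_t)\subset{\mathfrak p}_\omega$ with $d_t\nearrow 1$, writes $\omega(d^*d)=\lim_t\omega(d^*d_td)$ by normality, and applies the commutation hypothesis twice, to the factorizations $(d^*d_t^{1/2})(d_t^{1/2}d)$ and then $(d_t^{1/2}|d^*|)(|d^*|d_t^{1/2})$, obtaining $\omega(d^*d)=\lim_t\omega(|d^*|\,d_t\,|d^*|)=\omega(dd^*)$; exchanging $d$ and $d^*$ then shows $\omega(d^*d)<\infty$ if and only if $\omega(dd^*)<\infty$, with equality when finite, which is the trace property. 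What your route buys is structural insight: it isolates exactly why the hypothesis forces the support to be central (as you correctly note, the conclusion is false otherwise) and identifies the conclusion with triviality of the modular group. The cost is Tomita--Takesaki machinery — the centralizer characterization and the theorem that a faithful normal semifinite weight with trivial modular group is a trace — whereas the paper's computation is self-contained, needs only normality, the order structure and careful ${\mathfrak m}_\omega$-bookkeeping, and handles the non-faithful case with no reduction at all. Interestingly, the bookkeeping you flagged as the likely main obstacle (domains for the centralizer or KMS argument) was fine; the actual slip was the membership claim above.
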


\begin{proof}   
 Let $d \in \M$ be given with $\omega(d^*d)<\infty$.  That is $d\in {\mathfrak n}_\omega$.  By semifiniteness there must exist 
  $(d_t)\subset  {\mathfrak m}_\omega$ 
such that $d_t \nearrow 1$ in $\D$ with $\omega(d_t) < \infty$. Then since $d_t^{\frac{1}{2}}d  \in {\mathfrak m}_\omega$ and 
$\omega(d^*d) = \lim_t \, \omega(d^* d_t d)$, we have by 
hypothesis that $$\omega(d^*d) = \lim_t \, \omega(d_t^{\frac{1}{2}}d d^*d_t^{\frac{1}{2}})  = 
\lim_t \, \omega(d_t^{\frac{1}{2}} |d^*|^2 d_t^{\frac{1}{2}}) = \lim_t \, \omega(|d^*|\,d_t\,|d^*|)= \omega(dd^*), $$ 
which is finite. On replacing 
$d$ with $d^*$ the above also shows that if  $\omega(dd^*)<\infty$, then $\omega(d^*d)=\omega(dd^*)$. So  $\omega(d^*d)<\infty$ 
if   and only if   $\omega(dd^*)<\infty$, in which case they are equal. It follows that $\omega$ is a trace.   
\end{proof} 
    
The important data for a weight  $\omega$ is obviously ${\mathfrak p}_\omega$ and the values of the weight $\omega$ there. 
Any self-adjoint element $x\in \M$ of course generates a minimal abelian von Neumann subalgebra of $\M$, which we shall 
denote by $W^*(x)$. Given $x \in {\mathfrak p}_\omega$,  we may identify $W^*(x)$ with $L^\infty(X,\A,\mu)$ for a 
strictly localizable measure $\mu$, and $x$ with 
some $f \in L^\infty(X,\mu)$ \cite[Theorem 1.18.1]{Sakai}.   Any $f \in L^\infty(X,\mu)$ is an increasing limit of simple 
functions corresponding to projections in ${\mathfrak p}_\omega$. 
This carries over to the noncommutative case. Hence the important data for a normal weight $\omega$ is  the set of 
projections in ${\mathfrak p}_\omega$ and the values of $\omega$ on such projections. To see this note that for any 
$f\in\mathfrak{p}_\omega$ and any $\epsilon>0$, we will have that $\omega(e_\epsilon)\leq \epsilon^{-1}
\omega(f)<\infty$ where $e_\epsilon =\chi_{(\epsilon,\infty)}(f)$. The element $f$ may of course be written as an increasing limit of positive `Riemann sums' consisting 
of linear combinations of sub-projections of projections like $e_\epsilon$. By normality the value of 
$\omega$ on $f$ is determined by its value on those Riemann sums, thereby proving the claim.

The support projection $e = s(\omega)$ 
 of a normal weight  $\omega$ is the complement of the largest projection $q$ in $\M$ with $\omega(q) = 0$.
For any projection $q$ in $\M$ with $\omega(q) = 0$ we have that
 $q \in {\mathfrak m}_\omega$.  Also for any $x \in {\mathfrak p}_\omega$ we have $\omega(qx) = \omega(xq) =  \omega(qxq) = 0$ by 
the Cauchy-Schwarz inequality, hence $\omega(x) = \omega(exe) < \infty$.    
For any 
normal  weight $\omega$ on $\M$ there exist canonical projections $f \leq e = s(\omega)$ in $\M$ such that $\omega$ is faithful on 
$e \M e$, semifinite  on $f^\perp \M f^\perp$, and semifinite and faithful on
$(e-f) \M (e-f)$.    For this reason  authors often assume that normal  weights are faithful and semifinite.    We shall not usually do this though.

For a faithful normal 
semifinite weight $\omega$  on  a von Neumann algebra $\M$, the {\em centralizer} $\M_\omega$ of $\omega$ is the von Neumann subalgebra consisting of $a\in \M$ with $\sigma^\omega_t(a)=a$ for all $t\in \mathbb{R}$.   By \cite[Theorem VIII.2.6]{tak2}, if $\omega$ is a state then $$\M_\omega=\{a\in \M: \omega(ax)=\omega(xa), x\in \M\}.$$ 
For a faithful normal 
semifinite weight the centralizer is known to equal the set of $x \in \M$ with $x \mathfrak{m}_\omega \subset 
\mathfrak{m}_\omega$ and $\mathfrak{m}_\omega  \, x\subset \mathfrak{m}_\omega$,
and $\omega(xy) = \omega(yx)$ for all $y \in \mathfrak{m}_\omega$. 
If $\omega$ is semifinite but not necessarily faithful we will  say that an element $x \in \M$ is  $\omega$-{\em central} 
if  $x$ commutes with the support projection $e$ of $\omega$ and  satisfies the conditions
$$x \mathfrak{m}_\omega e \subset 
\mathfrak{m}_\omega \; , \; \; \; \;  \; \; \; \;  e \mathfrak{m}_\omega  \, x\subset \mathfrak{m}_\omega,$$
and $\omega(xy) = \omega(yx)$ for all $y \in \mathfrak{m}_\omega$.  Since this definition 
differs somewhat from competing definitions of $\omega$-centrality for non-faithful 
weights, we shall in this case write 
$\M^\omega$ for the $\omega$-central elements in $\M$.  
 In the applications later in the paper
the elements $x$ will also be in $\mathfrak{m}_\omega$, as is the case for example if $\omega$ is a state, 
so that one may drop the $e$'s from the last centered equation defining `$\omega$-central'.
See particularly Lemma \ref{tuse} below. 

If $\omega(q) = 0$ for a projection $q$ then $q$ is dominated by, 
and hence commutes with, the complement of the support projection $e$ of  $\omega$.  Since
$\mathfrak{m}_\omega$ is an algebra it follows that $\mathfrak{m}_\omega$ is left and right 
invariant under multiplication by $q$, and hence by $e$.  It follows 
that $q$ and $e$ are $\omega$-central, and are 
in $\M^\omega$.   
Indeed
 $e$  lies  in the center of $\M^\omega$ by the definition of the latter.  
It is easy to see that $\M^\omega$ is a unital $*$-subalgebra of $\M$.   In fact it is a von Neumann subalgebra 
(we remark that we were not able to see this without  including in the definition 
of $\M^\omega$  the commutation with  the support projection of $\omega$, which plays a key role in the proof below).

\begin{lemma}  \label{centvN}  Let $\omega$ be a normal semifinite weight on  a von Neumann algebra $\M$  with
 support projection $e$.   Then $\M^\omega$ is a von Neumann subalgebra of $\M$. 
 If $\N = e \M e$ then 
 $\N_\omega = \N \cap \M^\omega = e \M^\omega e$ .
  \end{lemma}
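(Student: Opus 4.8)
The plan is to exploit the central projection $e = s(\omega)$ to split the problem into a faithful piece and a null piece. Since the excerpt already records that $\M^\omega$ is a unital $*$-subalgebra of $\M$ and that $e$ lies in its center, every $x \in \M^\omega$ decomposes as $x = exe + (1-e)x(1-e)$, and it remains only to prove that $\M^\omega$ is weak* closed and to identify the two corners. Write $q = 1-e$. The two facts I would use repeatedly are that $\omega$ annihilates the corner $q\M q$ -- indeed for $a \in (q\M q)_+$ one has $0 \le a \le \|a\|q$, so $\omega(a) \le \|a\|\,\omega(q) = 0$, whence $q\M q \subset \mathfrak{m}_\omega$ -- and that $\omega(qz) = \omega(zq) = 0$ for every $z \in \mathfrak{m}_\omega$, which follows by linearity from the Cauchy--Schwarz computation already given in the excerpt for $z \in \mathfrak{p}_\omega$.

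First I would show $q\M q \subset \M^\omega$. An element $x_2 = qx_2q$ commutes with $e$ trivially. For the module conditions, given $m \in \mathfrak{m}_\omega$ we have $x_2 m e = x_2\,(qme)$ with $qme \in \mathfrak{m}_\omega$, so $x_2 m e \in \mathfrak{m}_\omega$ because $x_2 \in q\M q \subset \mathfrak{m}_\omega$ and $\mathfrak{m}_\omega$ is an algebra; the condition $e\mathfrak{m}_\omega x_2 \subset \mathfrak{m}_\omega$ is symmetric. For the trace condition, $x_2 y = q(x_2 y)$ and $y x_2 = (y x_2)q$ both lie in $\mathfrak{m}_\omega$, so $\omega(x_2 y) = \omega(y x_2) = 0$ by the vanishing facts above. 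Hence every element of $q\M q$ is $\omega$-central.

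Next I would treat $\N = e\M e$. The restriction $\omega|_\N$ is a faithful (by definition of the support $e$), normal, semifinite weight on $\N$, with $\mathfrak{m}_{\omega|_\N} = e\mathfrak{m}_\omega e$ (using $\omega(eae) = \omega(a)$ for $a \in \mathfrak{p}_\omega$ together with weak* density of $\mathfrak{m}_\omega$). Its centralizer $\N_\omega$ is a von Neumann subalgebra of $\N$ (fixed points of the modular group), and by the characterization quoted in the excerpt it consists of the $x_1 \in \N$ with $x_1\mathfrak{m}_{\omega|_\N},\,\mathfrak{m}_{\omega|_\N} x_1 \subset \mathfrak{m}_{\omega|_\N}$ and $\omega(x_1 y) = \omega(y x_1)$ for $y \in e\mathfrak{m}_\omega e$. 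I claim that for $x_1 \in \N$ one has $x_1 \in \M^\omega$ iff $x_1 \in \N_\omega$. Since $x_1 = ex_1 e$, the identity $x_1 m e = x_1(eme)$ collapses the module conditions over $\mathfrak{m}_\omega$ to the corresponding conditions over $e\mathfrak{m}_\omega e$. For the trace condition, decomposing $y = eye + eyq + qye + qyq$ and using $x_1 = ex_1 = x_1 e$ together with $\omega(qz) = \omega(zq) = 0$ shows $\omega(x_1 y) = \omega(x_1\,eye)$ and $\omega(y x_1) = \omega(eye\,x_1)$; hence the full condition over $\mathfrak{m}_\omega$ is equivalent to the one over $e\mathfrak{m}_\omega e$, proving the claim.

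Putting the pieces together: since $q\M q \subset \M^\omega$ and $\M^\omega$ is a linear space, any $x \in \M^\omega$ forces $exe = x - qxq \in \M^\omega \cap \N = \N_\omega$, so $\M^\omega = \N_\omega \oplus q\M q$ relative to the orthogonal central projections $e$ and $q$. A sum of two von Neumann algebras living in complementary corners $e\M e$ and $q\M q$ is the weak* closed set of $x$ with $exe \in \N_\omega$, $qxq \in q\M q$, and $exq = qxe = 0$, hence is itself a von Neumann subalgebra; this gives the first assertion. The identities $\N_\omega = \N \cap \M^\omega = e\M^\omega e$ of the second assertion are then immediate from this decomposition. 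The main obstacle is the bookkeeping in the middle two steps -- verifying that the defining conditions genuinely decouple across $e$ and $q$, in particular that all of $q\M q$ is $\omega$-central and that the trace condition over $\mathfrak{m}_\omega$ reduces to one over $e\mathfrak{m}_\omega e$ -- which is precisely where the commutation with $e$ built into the definition of $\M^\omega$ is essential.
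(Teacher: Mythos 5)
Your proof is correct and takes essentially the same approach as the paper's: both hinge on the two facts that the corner $e\M^\omega e$ coincides with $\N_\omega$ (proved by collapsing the defining conditions over $\mathfrak{m}_\omega$ to conditions over $e\mathfrak{m}_\omega e$, exactly as you do) and that $\omega$ annihilates everything meeting the corner of $q=1-e$, so that $q\M q\subset \M^\omega$. The only difference is organizational: you make the decomposition $\M^\omega=\N_\omega\oplus q\M q$ explicit and obtain weak* closedness for free, whereas the paper verifies closedness on a weak* convergent net $d_t\to d$ by checking $ede\in\N_\omega$, $qdq\in\M^\omega$, and vanishing off-diagonal corners --- which is the same decomposition applied to the limit point.
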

  
\begin{proof} We first show that $\N_\omega \subset \M^\omega$.  
Given $x \in \N_\omega, y \in  \mathfrak{m}_\omega$, then
$$xy e = ex e y e \in  
 {\mathfrak m}(\N)_\omega \subset  {\mathfrak m}(\M)_\omega.$$
 Similarly $e yx \in {\mathfrak m}(\M)_\omega.$ Moreover, $$\omega(xy) = \omega(ex e y  e) =  \omega(ey e x  e) =  \omega(yx) .$$
 Thus $\N_\omega \subset \M^\omega$.  
 
If  $d \in \M^\omega$ then  $d \mathfrak{m}(\M)_\omega e \subset \mathfrak{m}(\M)_\omega$
and $\omega(d x) = \omega(xd)$ for $x \in \mathfrak{m}(\M)_\omega$.
Then $\omega(ed xe) = \omega(exde)$.   If  $x \in \mathfrak{p}(\N)_\omega \subset \mathfrak{m}(\M)_\omega$,
then $edexe \in e \mathfrak{m}(\M)_\omega e \subset \mathfrak{m}(\N)_\omega$ 
and $\omega (edex) = \omega(dx) = \omega(xd) = \omega(x ede)$.  So $ede \in \N_\omega$.  

We said that $\M^\omega$ is a unital $*$-subalgebra of $\M$ with $e$ in its center.
Suppose that  $d_t \in \M^\omega$ with $d_t \to d$ weak* in $\M$.   Then $e d_t e \to ede$ weak*, and 
$e d_t e$ is in the von Neumann algebra $\N_\omega$, so that 
$e d e \in \N_\omega \subset \M^\omega$.   With $q = e^\perp$ we have  $q d_t e = 0$ so that $qde = 0$.
Similarly $e d q = 0$.  
Also  $q d q \in \mathfrak{m}(\M)_\omega$ with $\omega(q d q x) = 0 = \omega(x q d q)$ for $x \in \mathfrak{m}(\M)_\omega$, so   that 
$qdq \in \M^\omega$.   Thus $d \in \M^\omega$.   Hence  $\M^\omega$ is a von Neumann algebra.   The rest is clear.
 \end{proof}  

\begin{lemma}  \label{tuse} Let $\omega$ be a normal weight on  a von Neumann algebra $\M$ which is 
semifinite on a  von Neumann subalgebra $\D$, with $\omega(xd) = \omega(dx)$ for all
$d \in \mathfrak{m}(\D)_\omega$ and $x \in \mathfrak{m}(\M)_\omega$.   
  Then $\D \subset \M^\omega$. 
   If $e$ is the support projection of  $\omega$ 
  then $e \in \mathcal{D}'$ and $\mathcal{D} e$ is a von Neumann subalgebra
  of $e \M e$.    Also,  
  $\omega$  is a normal semifinite trace on $\M^\omega$, and also on each of $\D$ and $\D e$. 
  \end{lemma}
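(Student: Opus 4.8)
The plan is to reduce the whole statement to one non-formal fact, namely that the support projection $e = s(\omega)$ commutes with every finite-weight projection of $\D$, and then to feed this into Lemmas \ref{istrD} and \ref{centvN}. First I would record the tracial claim on $\D$: restricting the hypothesis to $d,x \in \mathfrak{m}(\D)_\omega \subset \mathfrak{m}(\M)_\omega$ gives $\omega(d_0d_1) = \omega(d_1d_0)$ for all $d_0,d_1 \in \mathfrak{m}(\D)_\omega$, and since $\omega$ is normal and semifinite on $\D$, Lemma \ref{istrD} applied to $\omega|_\D$ shows $\omega$ is a normal semifinite trace on $\D$.

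The hard part is showing $e \in \D'$. I would first treat a projection $r \in \mathfrak{p}(\D)_\omega$ and set $a = ere \in (e\M e)_+$, noting $0 \le a \le e$. Taking $d = r$ and $x = re \in \mathfrak{m}(\M)_\omega$ in the hypothesis gives $\omega(rer) = \omega(r^2e) = \omega(re)$; since $e^\perp$ is $\omega$-null the Cauchy--Schwarz inequality for $\omega$ kills the cross term in $re = ere + e^\perp re$, and the support identity $\omega(x) = \omega(exe)$ evaluates the rest, so that
\[
\omega(rer) = \omega(re) = \omega(ere) = \omega(r), \qquad \omega(rer) = \omega\big(e\,rer\,e\big) = \omega\big((ere)^2\big),
\]
the second chain again using that $rer = (er)^*(er) \in \mathfrak{p}_\omega$. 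Comparing the two evaluations gives $\omega(a) = \omega(a^2)$, i.e.\ $\omega(a - a^2) = 0$ with $a - a^2 \in (e\M e)_+$; as $\omega$ is faithful on $e\M e$ this forces $a = a^2$, whence $(e^\perp r e)^*(e^\perp r e) = e r e^\perp r e = a - a^2 = 0$ and $r$ commutes with $e$. Writing a positive element of $\mathfrak{m}(\D)_\omega$ as a norm limit of Riemann sums of its finite-weight spectral projections $\chi_{(\epsilon,\infty)}(\cdot)$ (which lie in $\mathfrak{p}(\D)_\omega$), every element of $\mathfrak{m}(\D)_\omega$ then commutes with $e$, and weak* density of $\mathfrak{m}(\D)_\omega$ in $\D$ gives $e \in \D'$.

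With this in hand the remaining inclusions are routine. For $d \in \mathfrak{m}(\D)_\omega$, the commutation with $e$ just proved is the first condition in the definition of $\M^\omega$; since $d \in \mathfrak{m}_\omega$ and $\mathfrak{m}_\omega$ is a $*$-algebra invariant under multiplication by $e$, the conditions $d\mathfrak{m}_\omega e \subset \mathfrak{m}_\omega$ and $e\mathfrak{m}_\omega d \subset \mathfrak{m}_\omega$ follow, while $\omega(dy) = \omega(yd)$ for $y \in \mathfrak{m}_\omega$ is exactly the hypothesis. Hence $\mathfrak{m}(\D)_\omega \subset \M^\omega$, and as $\M^\omega$ is weak* closed (Lemma \ref{centvN}) and $\mathfrak{m}(\D)_\omega$ is weak* dense in $\D$, we get $\D \subset \M^\omega$. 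Since $e \in \D'$, the map $d \mapsto de$ is a normal unital $*$-homomorphism of $\D$ onto $\D e$, so its weak* closed image $\D e$ is a von Neumann subalgebra of $e\M e$.

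Finally, for the trace statements I would note that $\D \subset \M^\omega$ and the semifiniteness of $\omega$ on $\D$ supply a net in $\mathfrak{p}(\D)_\omega \subset \mathfrak{p}(\M^\omega)_\omega$ increasing to $1$, so $\omega$ is semifinite on the von Neumann algebra $\M^\omega$; it is tracial on $\mathfrak{m}(\M^\omega)_\omega$ by the definition of $\M^\omega$, so Lemma \ref{istrD} makes $\omega$ a normal semifinite trace on $\M^\omega$. Restricting to the von Neumann subalgebras $\D$ and $\D e$ (on which $\omega$ stays semifinite via the nets $d_t \nearrow 1$ and $d_te \nearrow e$) yields the remaining two trace assertions. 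I expect the genuine obstacle to be the displayed computation establishing $e \in \D'$: the module and semifiniteness bookkeeping is formal, but commutation of the support projection with $\D$ is not, and appears to need precisely the tracial identity $\omega(r) = \omega(rer) = \omega((ere)^2)$ together with faithfulness of $\omega$ on $e\M e$.
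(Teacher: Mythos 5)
Your proof is correct, and it shares the paper's overall skeleton (establish $e \in \D'$, feed the commutation and the algebra property of $\mathfrak{m}_\omega$ into the definition of $\M^\omega$, use weak* density of $\mathfrak{m}(\D)_\omega$ together with Lemma \ref{centvN}, and derive all three trace statements from Lemma \ref{istrD}), but the central step $e \in \D'$ is argued by a genuinely different route. The paper handles an arbitrary $d \in \mathfrak{m}(\D)_\omega$ in one stroke: since $1-e \in \mathfrak{p}(\M)_\omega$, the element $(1-e)d^*$ lies in $\mathfrak{m}(\M)_\omega$, and
$$0 = \omega\bigl((1-e)d^*d\bigr) = \omega\bigl(d(1-e)d^*\bigr) = \omega\bigl(ed(1-e)d^*e\bigr),$$
where the first equality is the Cauchy--Schwarz nullity of $e^\perp$, the second is the centralizer hypothesis, and the third is the support identity; faithfulness of $\omega$ on $e\M e$ then gives $ed(1-e) = 0$, so $ed = ede$, which equals $de$ by running the same computation on $d^*$ and taking adjoints. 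Your argument instead specializes to projections $r \in \mathfrak{p}(\D)_\omega$, converts the same four ingredients (centralizer hypothesis, nullity of $e^\perp$, support identity, faithfulness on $e\M e$) into the identity $\omega(ere) = \omega\bigl((ere)^2\bigr)$, concludes that $ere$ is a projection and hence $e^\perp re = 0$, and then needs two further approximation layers---norm limits of Riemann sums of spectral projections to cover $\mathfrak{m}(\D)_\omega$, then weak* density to reach $\D$---where the paper needs only the last one. The two proofs thus consume exactly the same hypotheses and neither is more general; the paper's computation is simply shorter and avoids the spectral bookkeeping, while your projection-first version has the mild virtue of paralleling the ``local'' philosophy the paper exploits later (Theorems \ref{locis} and \ref{wcentw}). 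Your closing diagnosis---that commutation of the support projection with $\D$ is the one non-formal point, and that it must come from faithfulness on $e\M e$ combined with the centralizer identity---is exactly where the paper places the weight as well.
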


\begin{proof}  Since $1-e \in \mathfrak{p}(\M)_\omega$ as we said above, if $d \in \mathfrak{m}(\D)$ we see that 
$(1-e) d^* \in \mathfrak{m}(\M)_\omega$, and 
$$0 = \omega( (1-e) d^* d) =   \omega(d (1-e) d^*) =   
\omega(ed (1-e) d^*e).$$ 
 Hence  $ed (1-e) d^*e = 0$ 
since $\omega$ is faithful on $e \M e$.   We therefore have  $e d (1-e)  = 0$, so that $ed = ede$, and this must equal $d e$ by symmetry.    Thus $e$ commutes with 
the weak* closure of $\mathfrak{m}(\D)_\omega$ and its support projection.    Since
$\mathfrak{m}(\M)_\omega$ is an algebra invariant under multiplication by $e$, our hypothesis 
implies that 
$\mathfrak{m}(\D)_\omega \subset \M^\omega$.   Because $\omega$ is semifinite on $\D$, we see that $\D \subset \M^\omega$ and 
 $e \in \D'$.   The map $d \mapsto de$ is a normal 
$*$-homomorphism on $\D$.   Hence its range $\D e$ is weak*-closed, a von Neumann subalgebra of $e \M e$.

By Lemma \ref{istrD}, $\omega$  is a normal semifinite trace on $\D$.   
Since $\omega = \omega(e \, \cdot \, e)$, it is easy to see that 
it is also a normal semifinite trace on $\D e$. 
Setting $\D = \M_{\omega}$ in the argument above shows that 
$\omega$  is a normal semifinite trace on $\M_{\omega}$. 
 \end{proof}

 We say that a densely defined  closed operator $S$ in $H$ is
{\em affiliated} with a von Neumann algebra $M$ in $B(H)$ if $S$ commutes with all
unitaries $U \in M'$.  We write $S \, \eta \, \M$ to denote this.

We recall the Radon-Nikodym theorem of Pedersen and Takesaki \cite{PT}. Fix a faithful normal weight $\omega$  on  a von Neumann algebra $\M$. We use notation from \cite{PT}, except that given a positive operator $h$ affiliated to $\M_\omega$, we will as in \cite{tak2} 
write $\omega_h$ for the weight  \begin{equation}\label{PTRN}
\lim_{\epsilon\searrow 0}\omega(h_\epsilon^{1/2} x h_\epsilon^{1/2}),
\end{equation} 
where $h_\epsilon=h(\I+\epsilon h)^{-1}$.  
 If $\psi$ is a normal semifinite weight on $\M$ then we say that $\psi$  {\em commutes with} $\omega$  if $\psi \circ \sigma^\omega_t = \psi$ 
for all $t$.   The Radon-Nikodym theorem states that $\psi$  commutes with $\omega$  if and only if $\psi = \omega_h$ for a
(necessarily unique) positive selfadjoint $h$ affiliated with $\M_\omega$.    We call $h$ the {\em  Radon-Nikodym derivative} and write it as 
$\frac{d \psi}{d \omega}$.  See \cite{PT} for some theory of the Radon-Nikodym derivative, and also \cite{tak2}, e.g.\  Corollary VIII.3.6 there and the 
connection to the Connes cocycle derivative, although 
there all weights are faithful.   We also sometimes call the Radon-Nikodym derivative a {\em density}.
More loosely, by a density we mean  a positive affiliated operator which can be used to derive a new weight from the given reference weight.

In a few places we will assume that the reader is familiar with the extended positive part $\widehat{\M}_+$ of a von Neumann algebra $\M$
and its connection to weights and operator valued weights, 
as may be found in \cite[Section IX.4]{tak2} and \cite{haag-OV1}. 

We shall need some knowledge of noncommutative $L^p$-spaces for the ensuing analysis. In the case where $\M$ is equipped with a faithful normal semifinite trace $\tau$, this is achieved by passing to the so-called $\tau$-measurable operators $\widetilde{\M}$, which are defined to be the set of all those closed densely defined operators $f$ affiliated to $\M$, for which we have that $\tau(\chi_{(\epsilon,\infty)}(|f|))<\infty$ for some $\epsilon>0$. Under strong sum and product, this turns out to be a complete topological *-algebra when equipped with the so-called topology of convergence in measure. The $L^p(\M,\tau)$ spaces $(0<p<\infty)$, are then simply defined to be $L^p(\M,\tau)=\{f\in \widetilde{\M}\colon \tau(|f|^p)<\infty\}$. These spaces respect the same basic structural and duality properties of their commutative cousins,
 with $\|\cdot\|_p=\tau(|\cdot|)^{1/p}$ being the canonical norm on $L^p(\M,\tau)$ when $1\leq p<\infty$. The Pedersen-Takesaki Radon-Nikodym theorem ensures that $L^1(\M,\tau)$ is a copy of $\M_*$, 
realised as the predual of $\M=L^\infty(\M,\tau)$ via $\tau$-duality. 
(See e.g.\ \cite{GLnotes} for further details.)

To construct $L^p$ spaces for general von Neumann algebras $\M$ equipped with a faithful normal semifinite weight $\nu$, we need to pass through the crossed product with the modular group. In order to understand the analysis hereafter, we shall need some fairly detailed knowledge of key aspects of this construction. Since we will 
refer to the following facts frequently, we label them as remarks. 
These facts are scattered in various papers in the literature. However in view of their importance, we choose to summarise what we need locally
 (see e.g.\ \cite{GLnotes} for more details and background):  

\begin{remark} \label{crossprod}  Let $\M$ be a 
von Neumann algebra  acting on a Hilbert space $H$, and let $\nu$ be a fns weight on $\M$. Then $\M$ may be represented as an algebra acting as a von Neumann algebra
 on $L^2(\mathbb{R},H)$ by means of the $*$-isomorphism $a\to\pi_\nu(a)$ where $(\pi_\nu(a)\xi)(s)= \sigma_{-s}^\nu(a)\xi(s)$ for every $s\in \mathbb{R}$. 
We will usually write $\pi_\nu$ as $\pi$.
 One may also define left shift operators by $(\lambda_t(\xi))(s)= 
\xi(s-t)$. The dual action $\theta_s$ of $\mathbb{R}$ on $B(L^2(\mathbb{R},H))$ is implemented by the unitary group $w(t)$, where $(w(t)\xi)(s)=e^{-its}\xi(s)$.
We have   $\theta_s(\pi(a))=\pi(a)$ for all $a\in\M$, and $\theta_s(\lambda_t)=e^{-its}\lambda_t$. The crossed product $\mathfrak{M}=\M\rtimes_\nu \mathbb{R}$ is the 
von Neumann algebra  generated by $\pi(\M)$ and the $\lambda_t$'s. 
The operator valued weight $T_\nu$ from 
the extended positive part 
$\widehat{\mathfrak{M}}_+\equiv\widehat{\pi_\nu(\mathfrak{M})}^+$ to $\widehat{\M}_+$ is given by $T_\nu(a)=\int_{-\infty}^\infty\theta_s(a)\,ds$. 
The dual weight on $\mathfrak{M}$ of a normal semifinite weight $\rho$ on $\M$ 
(computed using $\nu$ as a reference weight)
is given by $\widetilde{\rho}=\rho\circ T_\nu$. It is well 
 known that $\sigma_t^{\widetilde{\nu}}$ is implemented by $\lambda_t$, and that $\sigma_t^{\widetilde{\nu}}(\pi(a))=\pi(\sigma_t^\nu(a))$ for any $a\in\M$. (See Theorem 4.7 and Lemma 5.2 and its proof in  \cite{haag-OV1, haag-OV2} for these facts.) By Stone's theorem there exists 
$k_\nu \, \eta \, \mathfrak{M}$ so that $k_\nu^{it}=\lambda_t$ 
 for each $t$. By \cite[Theorem VIII.3.14]{tak2}, $\mathfrak{M}$ is semifinite. It follows from the proof of \cite[Theorem VIII.3.14]{tak2}, that $\tau_{\mathfrak{M}}=\nu(k_\nu^{-1}\cdot)$ will then be a faithful normal semifinite trace on $\mathfrak{M}$ for which we have that $k_\nu=\frac{d\widetilde{\nu}}{d\tau}$ 
(writing $\tau_{\mathfrak{M}}$ as $\tau$). 
It is precisely this trace that plays such a crucial role in the construction of Haagerup $L^p$-spaces, as one may begin to appreciate 
already in the proof of \cite[Lemma 5.2]{haag-OV2}. 
For any normal semifinite weight $\rho$ on $\M$, the dual weight $\widetilde{\rho}=\rho\circ T_\nu$ above 
is characterized by the fact that the densities $k_\rho=\frac{d\widetilde{\rho}}{d\tau}$ 
satisfy $\theta_s(k_\rho)=e^{-s}k_\rho$ for all $s$.

In fact there is bijective correspondence between the classes of normal semifinite weights on $\M$, dual weights of such normal semifinite weights, and densities $k\,\eta\, \mathfrak{M}$ which satisfy $\theta_s(k)=e^{-s}k$ for all $s$. 
(See \cite[Proposition II.4]{terp} or \cite[Proposition 6.67]{GLnotes}.) In view of this correspondence, we shall denote this class of densities by ${}^\eta L^1_+(\M)$. 
The densities in ${}^\eta L^1_+(\M)$ which correspond to positive normal functionals are precisely those which belong to the algebra of $\tau$-measurable operators affiliated to $\mathfrak{M}$ (see \cite[Corollary 6.71]{GLnotes}). Using this fact one may show that the space $L^1(\M)=\{f\in \widetilde{\mathfrak M}\colon\theta_s(f)=e^{-s}f\mbox{ for all }s\in \mathbb{R}\}$ is a copy of $\M_*$. More generally one may define the $L^p(\M)$ $(0<p<\infty)$ spaces by $L^p(\M)=\{f\in \widetilde{\mathfrak M}\colon\theta_s(f)=e^{-s/p}f\mbox{ for all }s\in \mathbb{R}\}$. The space $L^1(\M)$ admits a trace-functional $tr_\M$ (or simply just $tr$ if $\M$ is understood) defined by $tr(f)=\omega_f(\I)$, where $\omega_f$ is the normal functional in $\M_*$ corresponding to $f\in L^1(\M)$. As was the case in the semifinite setting, these spaces observe all the expected duality rules, with the dual action of $L^p(\M)$ on $L^q(\M)$ (where $1=\frac{1}{p}+\frac{1}{q}$) 
being implemented via the $tr$-functional.
See \cite{GLnotes} for more on these topics.

Observe that if the same construction is done for any other 
von Neumann algebra  $\N$ with fns weight $\varphi$ acting on $H$, then the crossed product $\mathfrak{N}=\N\rtimes_\varphi\mathbb{R}$ will share the same left shift operators, 
which will here too implement the modular automorphism group for the dual weight $\tilde{\varphi}$ on $\mathfrak{N}$. Hence remarkable as it may seem, 
 the same argument as above will then yield the conclusion that the operator $k$ for which we have that $\lambda_t=k^{it}$ for all $t$, here satisfies $k=\frac{d\widetilde{\varphi}}{d\tau_{\mathfrak{N}}}$ and hence that 
$\frac{d\widetilde{\varphi}}{d\tau_{\mathfrak{N}}}=\frac{d\widetilde{\nu}} {d\tau_{\mathfrak{M}}}$ 
as operators affiliated to $B(L^2(\mathbb{R},H))$.  See \cite[Theorem 6.62]{GLnotes}. 
However we need to sound a word of warning here. Given two faithful normal semifinite weights $\nu$ and $\rho$ on $\M$, one should carefully differentiate between the dual weight of $\rho$ computed using the crossed product $\M\rtimes_\nu\mathbb{R}$, and that using the crossed product $\M\rtimes_\rho\mathbb{R}$. Let us write $\widetilde{\rho}^{(\nu)}$ and $\widetilde{\rho}^{(\rho)}$ for these two dual weights. The weight $\widetilde{\rho}^{(\nu)}$ is an altogether different 
 object to $\widetilde{\rho}^{(\rho)}$, and if $\nu$ does not agree with $\rho$, one should most certainly not expect that $\frac{d\widetilde{\nu}}{d\tau_{\mathfrak{M}}}$ agrees with $\frac{d\widetilde{\rho}^{(\nu)}}{d\tau_{\mathfrak{M}}}$.  
\end{remark}

\begin{remark}\label{crossprod2} Let $\M$ be a von Neumann algebra  acting on a Hilbert space $H$, and let $\nu$ be a fns weight on 
$\M$. Let $\D$ be a von Neumann subalgebra of $\M$ and $E_\D$ a faithful normal conditional expectation from $\M$ onto $\D$ for which we have that $\nu\circ E_\D=\nu$. Then of course $\sigma_t^\nu(\D)=\D$ and $E_\D \circ\sigma_t^\nu=\sigma_t^\nu\circ E_\D$ for all $t$. The first claim is proved in \cite[IX.4.2]{tak2}. For the second note that for any $a\in \mathfrak{m}_\nu$,  using $\nu\circ E_\D=\nu$ and $\nu\circ\sigma_t^\nu = \nu$, it is an exercise to see that $\nu(|E_\D(\sigma_t(a))-\sigma_t(E_\D(a))|^2)=0$ for all $t$, and hence that $E_\D \circ\sigma_t^\nu=\sigma_t^\nu\circ E_\D$ on $\mathfrak{m}_\nu$. 
By normality and the fact that $\mathfrak{m}_\nu$ is weak* dense in this case, $E_\D \circ\sigma_t^\nu=\sigma_t^\nu\circ E_\D$.   
See also \cite[Proposition 4.9]{haag-OV1}.

If now we compute the crossed product of $\D$ with respect to the modular automorphism group determined by $\nu_{|\D}$, it is clear from the definitions given in the preceding remark that in this case $\pi_{\nu_{|\D}}(d)=\pi_\nu(d)$ for all $d\in D$. Since $\D\rtimes_{\nu_{|\D}} \mathbb{R}$ is generated by  $\pi_{\nu_{|\D}}(\D)$ and the left shift operators, it follows that $\D\rtimes_{\nu_{|\D}} \mathbb{R}$ is in the 
naive sense a linear subspace of $\M\rtimes_{\nu} \mathbb{R}$. So in this particular case, the discussion in the preceding remark does indeed show that $\frac{d\widetilde{\nu}}{d\tau_{\mathfrak{M}}}=\frac{d\widetilde{\nu_{|\D}}}{d\tau_{\mathfrak{D}}}$.

By \cite[Theorem 4.1]{HJX}, $E_\D$ extends to a 
faithful normal conditional expectation $\overline{E_\D}$ from $\M\rtimes_\nu\mathbb{R}$ to $\D\rtimes_\nu\mathbb{R}$ which satisfies 
$\widetilde{\nu}\circ \overline{E_\D}= \widetilde{\nu}$ and $\theta_s\circ \overline{E_\D}=\overline{E_\D}\circ\theta_s$ for all $s$. In particular $\widetilde{\nu}$ is semifinite on $\D\rtimes_\nu\mathbb{R}$. The prescription given by \cite{HJX} for this extension is exactly the one given by 
\cite[Proposition  4.3]{Gol}. In fact Section 4.1 of \cite{Gol} is a precursor to \cite[Theorem 4.1]{HJX}. So \cite[Theorem 4.1]{HJX} asserts that \cite[Proposition  4.3, Lemma 4.4, Proposition 4.6, Theorem 4.7]{Gol} holds in the non-$\sigma$-finite setting. By exactly the same proof as the one used by Goldstein, \cite[Lemma 4.5 \& Lemma 4.8]{Gol} hold in the non-$\sigma$-finite setting. Now observe that the proof of \cite[Proposition 4.5]{LM} shows that we also have that $\tau \circ \overline{E_{\D}} =\tau$ where $\tau$ is the trace on $\M\rtimes_\nu\mathbb{R}$. 

By \cite[Remark 5.6]{HJX} and the reasoning in  Example 5.8 there, there is an induced expectation $E^{(1)}_\D$ on $L^1(\M)$,
given by the continuous extension to all $L^1(\M)$, of the map $k^{1/2}ak^{1/2}\to k^{1/2}E_\D(a)k^{1/2}$ where $a\in \mathfrak{m}_\nu$, and where $k=\frac{d\widetilde{\nu}}{d\tau}=\frac{d\widetilde{\nu_{|\D}}}{d\tau_{\mathfrak{D}}}$.
For any $a\in \mathfrak{m}_\nu$ it follows from \cite[Proposition 2.13(a)]{GL2} that $$tr_\D(E^{(1)}_\D(k^{1/2}ak^{1/2}))=tr_\D(k^{1/2}E_\D(a)k^{1/2})=\nu(E_\D(a))=\nu(a)=tr(k^{1/2}ak^{1/2}).$$ Now recall that 
$k^{1/2}\mathfrak{m}_\nu k^{1/2}$ is dense in $L^1(\M)$ \cite[Proposition 2.11]{GL2}. So by continuity $tr_\D\circ E^{(1)}_\D =tr$. 
\end{remark}

\section{Normal conditional expectations and centralizers of normal states and weights} \label{ncec}

\begin{lemma} \label{aves} Let $\omega$ be a normal state on a finite von Neumann algebra $\M$ which is 
tracial on a von Neumann subalgebra $\mathcal{D}$.   Let ${\mathcal U}$ be the unitary group of $\D$, and let $K_\omega$
be the 
norm closed convex hull of 
$$\{ u^* \omega u : u \in {\mathcal U} \}$$ in $\M_*$.
   Then $K_\omega$ is 
weakly compact, and contains a  normal state  $\psi$ extending $\omega_{\vert \D}$ such that 
$$\psi(dx)=\psi(xd), \qquad d \in \mathcal{D}, x\in \M.$$   \end{lemma}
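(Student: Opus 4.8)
The plan is to place everything inside $L^1(\M,\tau)$, where $\tau$ is a faithful normal tracial state on $\M$ (available since $\M$ is finite), and then to combine a uniform-integrability argument for the weak compactness of $K_\omega$ with the Ryll--Nardzewski fixed point theorem to produce $\psi$. First I would dispose of the easy requirements. Writing $\omega=\tau(h\,\cdot\,)$ for its density $h\in L^1(\M,\tau)_+$ with $\tau(h)=1$, cyclicity of $\tau$ gives $(u^*\omega u)(x)=\omega(uxu^*)=\tau((u^*hu)x)$, so the orbit $\{u^*\omega u:u\in\mathcal{U}\}$ corresponds to the unitary orbit of densities $\{u^*hu\}$. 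Each $u^*\omega u$ is a normal state, and hence so is every element of $K_\omega$ (norm limits of convex combinations of states are states). Moreover, since $\omega$ is tracial on $\D$ and $u,d\in\D$, we get $(u^*\omega u)(d)=\omega(udu^*)=\omega(d)$; thus every generator, and so every element of $K_\omega$, already restricts to $\omega$ on $\D$. Consequently the normality and extension requirements hold automatically for any $\psi\in K_\omega$, and only the centralizer identity remains to be arranged.

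For weak compactness I would prove that the orbit $\{u^*hu\}$ is relatively weakly compact in $L^1(\M,\tau)$ by checking uniform integrability. Since $u^*hu$ is unitarily equivalent to $h$, its spectral data coincide with those of $h$; explicitly $\tau\big((u^*hu)\,\chi_{(\lambda,\infty)}(u^*hu)\big)=\tau\big(h\,\chi_{(\lambda,\infty)}(h)\big)$, which tends to $0$ as $\lambda\to\infty$ independently of $u$, since $h\in L^1$ and $\tau$ is normal. By the noncommutative Dunford--Pettis type characterization of relative weak compactness in $L^1(\M,\tau)$, this uniform tail estimate yields relative weak compactness of the orbit; then by Krein's theorem the closed convex hull is weakly compact, and since the norm-closed and weakly closed convex hulls of a convex set agree (Mazur), $K_\omega$ itself is weakly compact. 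I expect this to be the main obstacle: it is the only place where finiteness of $\M$ is genuinely used, and it requires invoking the correct noncommutative weak-compactness criterion rather than any elementary estimate.

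It then remains to average. Each $T_u:\phi\mapsto u^*\phi u$ is a surjective linear isometry of $\M_*$, hence weakly continuous, and the family $\{T_u:u\in\mathcal{U}\}$ is a group (with $T_uT_v=T_{vu}$) that permutes the generators of $K_\omega$ and therefore maps the weakly compact convex set $K_\omega$ onto itself. Since $\mathcal{U}$ need not be abelian I cannot appeal to Markov--Kakutani, but a group of isometries is noncontracting, so the Ryll--Nardzewski fixed point theorem provides a common fixed point $\psi\in K_\omega$. Fixedness says $\psi(uxu^*)=\psi(x)$ for all $u\in\mathcal{U}$ and $x\in\M$, equivalently $\psi(ux)=\psi(xu)$; as the unitaries span $\D$ this gives $\psi(dx)=\psi(xd)$ for all $d\in\D$ and $x\in\M$. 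Combined with the extension and normality already observed, $\psi$ is the required state.
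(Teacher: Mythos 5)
Your proposal is correct, and its overall skeleton coincides with the paper's: every element of $K_\omega$ is automatically a normal state restricting to $\omega$ on $\D$ (by traciality of $\omega$ on $\D$), $K_\omega$ is weakly compact, and a fixed point of the isometry group $\varphi \mapsto u^* \varphi u$ supplied by the Ryll--Nardzewski theorem gives the centralizer identity via the fact that unitaries span $\D$. The difference lies in how the weak compactness is obtained. The paper simply delegates both the compactness of $K_\omega$ and the existence of the fixed point to the proof of (i) $\Rightarrow$ (ii) in \cite[Theorem V.2.4]{tak2}; in that argument compactness is extracted from finiteness in the projection-theoretic sense, necessarily without presupposing any trace, since the existence of tracial states is precisely what that theorem establishes. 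You instead exploit the paper's standing convention that ``finite'' means the existence of a faithful normal tracial state $\tau$: passing to densities in $L^1(\M,\tau)$, you observe that unitary conjugation leaves the spectral tail $\tau\bigl(h\,\chi_{(\lambda,\infty)}(h)\bigr)$ invariant, so the orbit is uniformly integrable, hence relatively weakly compact by the noncommutative Dunford--Pettis/Akemann criterion, and Krein plus Mazur finish the job. This buys a self-contained and considerably more elementary compactness proof under the paper's hypothesis; what it gives up is generality, since Takesaki's route would also cover algebras that are finite in the projection sense without an a priori faithful normal tracial state --- a distinction that is immaterial here because the lemma's hypothesis is exactly the existence of such a $\tau$. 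All the supporting steps you invoke (the identity $\chi_{(\lambda,\infty)}(u^*hu)=u^*\chi_{(\lambda,\infty)}(h)u$, weak continuity of the maps $T_u$, distality of a group of isometries, and the passage from $\psi(uxu^*)=\psi(x)$ to $\psi(dx)=\psi(xd)$) are sound.
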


\begin{proof}   We follow the idea in the proof that (i) implies (ii) in  \cite[Theorem V.2.4]{tak2}.  By that proof
 $K_\omega$ is weakly compact, and the group of isometries $\varphi \mapsto u^* \varphi u$, for 
$u \in {\mathcal U}$, has a fixed point $\psi$  in $K_\omega$.  
Since $\psi = u^* \psi u$, and since $ {\mathcal U}$ spans $\D$, we have $\psi(dx)=\psi(xd)$ for $d \in \mathcal{D}, x\in \M.$
 It is easy to see that any element of  $K_\omega$ is a normal state 
   on $\M$ extending $\omega_{|\D}$.
\end{proof}

\begin{remark}  If we drop the requirement that $\omega$ is tracial on $\D$ one obtains the same conclusion except that 
$\psi$ need not extend $\omega_{|D}$.   

 We thank Roger Smith and Mehrdad Kalantar for conversations around this last result. 
  Indeed Roger Smith pointed us to  \cite[Lemma 3.6.5]{SS},
 and the following result about masas, and its proof, which was an important initial step in the present investigation: 
  Let $\M$ be a von Neumann algebra with a faithful normal tracial state $\tau$, 
and let $\D$ be a masa in  $M$. Let $\Phi : \A \to \D$ be a unital weak* continuous
map on a weak* closed subalgebra  $\A$ of $\M$ containing $\D$, which is a $\D$-bimodule map.   Then
$\Phi$ is the restriction to $\A$ of the
unique $\tau$-preserving  normal conditional expectation $\Edb_{\tau} : M \to D$. 
To see this, since $\D$ is a masa, by
Lemma 3.6.5 and the
remark after it in \cite{SS}, for $x \in \A$ there is a net $x_t$
with terms which are convex combinations
of $u^* x u$ for various unitaries $u \in \D$
such that $x_t \to \Edb_{\tau}(x)$ weak*.  Since $\Phi(u^* x u) = u^* \Phi(x) u = \Phi(x)$,
we deduce that $\Phi(x) = \Edb_{\tau}(x)$ for $x \in \A$.

On the other hand, after we had proved the last lemma and used it to obtain  a later result, 
Mehrdad Kalantar showed us an alternative proof that 
gives  $\psi$ above as $\omega \circ E_{\D' \cap \M}$.
A generalization of his argument plays a role
 in the next result and the point in its proof where his contribution is mentioned in more detail.  
\end{remark}

\begin{example} \label{abave} One may ask if there is a variant of the last lemma for 
$\sigma$-finite von Neumann algebras.   
Consider the case that  $\D$ is the copy of $L^\infty([0,1])$  in the $\sigma$-finite semifinite von Neumann algebra $\M = B(L^2([0,1]))$.   Let  $\omega$ be the normal state on $\M$ defined by 
$\omega(x) = \frac{1}{3} \, \sum_{k \in \Zdb} \, 2^{-k} \,  \langle x e_k , e_k \rangle$, for the standard orthonormal basis $e_n = e^{2 \pi n i t}$  of $L^2([0,1])$.   If $M_f$ is multiplication by $f \in L^\infty([0,1])$ then 
 $\omega(M_f) = \frac{1}{3} \, \sum_k \, 2^{-k} \,  \int_0^1 \, f \, dx =  \int_0^1 \, f \, dx$, so $\omega$ restricts to a faithful normal tracial state
on $\D$.    However there is no  normal state  $\psi$ extending $\omega_{|\D}$ such that 
$\psi(dx)=\psi(xd),$ 
for $d \in \mathcal{D}, x\in \M.$   Indeed if there were then by e.g.\ Theorem  \ref{wcent} there would be a normal conditional
expectation onto $\D$, which is well known to be false.   Note that $\M_\omega$ in this example is also an abelian von Neumann algebra,
namely the operators that are diagonal with respect to this orthonormal basis. 
\end{example}

The situation is better if we fix a faithful 
 normal state  $\omega$ on a $\sigma$-finite von Neumann algebra $\M$  with $\D$ a
von Neumann subalgebra of the centralizer $\M_\omega$:

\begin{lemma} \label{aves2} Let $\omega$ be a faithful normal state on  a von Neumann algebra $\M$, and let $\D$ be a
von Neumann subalgebra of $\M_\omega$.   Let ${\mathcal U}$ be the unitary group of $\D$, $\psi$ a
normal state extending $\omega_{\vert \D}$, and let $K_\psi$
be the norm closed convex hull of 
$$\{ u^* \psi u : u \in {\mathcal U} \}$$ in $\M_*$.
   Then $K_\psi$  contains a  normal state  $\rho$ extending $\omega_{\vert \D}$ such that 
$$\rho(dx)=\rho(xd), \qquad d \in \mathcal{D}, x\in \M. $$     
Indeed we may take $\rho$ to be $\psi \circ E_{\D' \cap \M}$ 
where $E_{\D' \cap \M}$ is the unique $\omega$-preserving 
conditional expectation onto 
$\D' \cap \M$.  
Finally, if  $\psi$ commutes with $\omega$ then so does $\rho$.
\end{lemma}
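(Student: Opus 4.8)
The plan is to exhibit the averaged state explicitly as $\rho = \psi \circ E_{\D' \cap \M}$ and then to verify its four asserted properties (membership in $K_\psi$, extension of $\omega_{|\D}$, the trace-like identity, and preservation of commutation with $\omega$) in turn. First I would establish that $E_{\D' \cap \M}$ exists. Since $\D \subseteq \M_\omega$ we have $\sigma^\omega_t(d) = d$ for all $d \in \D$, so for $x \in \D' \cap \M$ and $d \in \D$ the multiplicativity of $\sigma^\omega_t$ gives $\sigma^\omega_t(x)\, d = \sigma^\omega_t(xd) = \sigma^\omega_t(dx) = d\, \sigma^\omega_t(x)$; hence $\sigma^\omega_t(\D' \cap \M) = \D' \cap \M$. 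As $\omega$ is a faithful normal state it restricts to a semifinite (indeed normal) state on $\D' \cap \M$, so by \cite[Theorem IX.4.2]{tak2} there is a unique $\omega$-preserving normal conditional expectation $E_{\D' \cap \M} : \M \to \D' \cap \M$. Consequently $\rho = \psi \circ E_{\D' \cap \M}$ is a normal state on $\M$.

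Next I would record the invariance that yields the trace-like identity. For a unitary $u \in \mathcal{U} \subseteq \D \subseteq \M_\omega$, the automorphism $\mathrm{Ad}(u)\colon x \mapsto u x u^*$ preserves $\omega$ (because $\omega(uxu^*) = \omega(x u^* u) = \omega(x)$) and fixes $\D' \cap \M$ pointwise (each such element commutes with $u$). Hence $x \mapsto u\, E_{\D' \cap \M}(u^* x u)\, u^*$ is again an $\omega$-preserving normal conditional expectation onto $\D' \cap \M$, so by uniqueness it equals $E_{\D' \cap \M}$; feeding in $uxu^*$ and using $u\, E_{\D' \cap \M}(x)\, u^* = E_{\D' \cap \M}(x)$ gives $E_{\D' \cap \M}(uxu^*) = E_{\D' \cap \M}(x)$. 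Applying $\psi$ shows $\rho(uxu^*) = \rho(x)$, i.e.\ $\rho$ is $\mathcal{U}$-invariant; putting $x = yu$ then yields $\rho(uy) = \rho(yu)$, and since the unitaries span $\D$ this gives $\rho(dx) = \rho(xd)$ for all $d \in \D$, $x \in \M$.

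The substantive step is $\rho \in K_\psi$, and here the main obstacle is the passage from the averaging of $E_{\D' \cap \M}$ to a genuine element of the \emph{norm}-closed convex hull $K_\psi$. I would realize $E_{\D' \cap \M}$ by averaging: representing $\M$ in standard form on $H = L^2(\M,\omega)$ with cyclic separating vector $\xi$, each $\mathrm{Ad}(u)$ is implemented by a unitary $V_u$ with $V_u \xi = \xi$, and the projection $P$ onto the common fixed vectors lies in the strong-operator closed convex hull of $\{V_u : u \in \mathcal{U}\}$ by the mean ergodic (Alaoglu--Birkhoff) theorem. Choosing finite convex combinations $T^\alpha = \sum_i \lambda^\alpha_i V_{u^\alpha_i} \to P$ strongly and setting $A^\alpha = \sum_i \lambda^\alpha_i\, u^\alpha_i\, x\, (u^\alpha_i)^* \in \M$, one has $A^\alpha \xi = T^\alpha(x\xi) \to P(x\xi)$; as the $A^\alpha$ are bounded by $\|x\|$, every weak* cluster point $B$ satisfies $B\xi = P(x\xi)$, lies in $\D' \cap \M$ (again by convergence into the invariant subspace together with the separating property of $\xi$), and obeys $\omega(Bz) = \omega(xz)$ for $z \in \D' \cap \M$ (the one genuine computation, using $z \in \D'$ and $u \in \M_\omega$), whence $B = E_{\D' \cap \M}(x)$ by faithfulness of $\omega$, so $A^\alpha \to E_{\D' \cap \M}(x)$ weak*. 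Applying the normal functional $\psi$ gives $\Phi^\alpha(x) := \sum_i \lambda^\alpha_i\, \big((u^\alpha_i)^* \psi\, u^\alpha_i\big)(x) \to \rho(x)$ for every $x$, where each $\Phi^\alpha$ is a finite convex combination of elements of the orbit $\{u^* \psi u\}$, hence lies in $K_\psi$. The crucial observation is that $\sigma(\M_*, \M)$ is the \emph{weak} topology of the Banach space $\M_*$ (since $\M = (\M_*)^*$), so $\Phi^\alpha \to \rho$ weakly; as $K_\psi$ is convex and norm-closed it is weakly closed by Mazur's theorem, and therefore $\rho \in K_\psi$. Finally, since every $u^* \psi u$, and hence every element of $K_\psi$, restricts to $\omega$ on $\D$, the membership $\rho \in K_\psi$ automatically yields that $\rho$ extends $\omega_{|\D}$.

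For the last assertion I would use that the $\omega$-preserving conditional expectation commutes with the modular group: by the argument recorded in Remark~\ref{crossprod2} (via \cite[Theorem IX.4.2]{tak2}), $E_{\D' \cap \M} \circ \sigma^\omega_t = \sigma^\omega_t \circ E_{\D' \cap \M}$ for all $t$. Hence if $\psi \circ \sigma^\omega_t = \psi$ then $\rho \circ \sigma^\omega_t = \psi \circ E_{\D' \cap \M} \circ \sigma^\omega_t = \psi \circ \sigma^\omega_t \circ E_{\D' \cap \M} = \psi \circ E_{\D' \cap \M} = \rho$, so $\rho$ also commutes with $\omega$. The one point demanding real care throughout is the topological reduction in the third paragraph: without the identification of $\sigma(\M_*,\M)$ with the weak topology of $\M_*$ one would be forced, as in the finite case of Lemma~\ref{aves}, to prove weak compactness of $K_\psi$ directly, and it is precisely that compactness which one cannot expect outside a centralizer hypothesis, as Example~\ref{abave} warns.
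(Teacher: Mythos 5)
Your proof is correct, and it reaches the two technical crux points by a genuinely different route than the paper. For the existence and normality of $E_{\D' \cap \M}$ you simply observe that $\D' \cap \M$ is globally invariant under $(\sigma^\omega_t)$ and invoke \cite[Theorem IX.4.2]{tak2}; the paper instead constructs $E_{\D' \cap \M}(x)$ following \cite[Lemma 3.6.5]{SS}, as the element of minimal $\| \cdot \|_2$-norm in $K_{\D}(x) k_\omega^{1/2} \subset L^2(\M)$ (using the density $k_\omega$ and its strong commutation with $\M_\omega$), precisely because it needs the membership $E_{\D' \cap \M}(x) \in K_{\D}(x)$, the weak* closed convex hull of $\{u x u^* : u \in {\mathcal U}\}$, for the next step. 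That membership feeds Kalantar's argument for $\rho \in K_\psi$ in the paper: if $\rho \notin K_\psi$, a geometric Hahn--Banach separation is contradicted because ${\rm Re}\, \rho(x) = {\rm Re}\, \psi(E_{\D' \cap \M}(x))$ lies in the closed convex hull of $\{ {\rm Re}\, \psi(u x u^*)\}$. You replace both steps: the Alaoglu--Birkhoff theorem in standard form produces a single net of convex combinations of unitary conjugations, independent of $x$, converging weak* to $E_{\D' \cap \M}(x)$ simultaneously for all $x$ (in effect you re-derive $E_{\D' \cap \M}(x) \in K_{\D}(x)$ ergodically rather than by minimal-norm projection, with the separating vector upgrading $uBu^*\xi = B\xi$ to $uBu^* = B$, as you note), and you then convert the resulting pointwise convergence $\Phi^\alpha \to \rho$ into convergence in $\sigma(\M_*, \M)$, identify this with the weak topology of the Banach space $\M_*$, and conclude from Mazur's theorem that the norm-closed convex set $K_\psi$ contains $\rho$. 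The two mechanisms are of course cousins---Mazur is itself a separation theorem---but yours buys an explicit approximating net and avoids the Haagerup $L^2$ machinery entirely, while the paper's buys the identification of $E_{\D' \cap \M}(x)$ with an element of $K_{\D}(x)$ directly from the construction. Your remaining verifications also differ harmlessly: you obtain ${\mathcal U}$-invariance of $E_{\D' \cap \M}$ from uniqueness of the $\omega$-preserving expectation applied to $x \mapsto u E_{\D' \cap \M}(u^* x u) u^*$ (the paper uses $K_{\D}(vxv^*) = K_{\D}(x)$), and for the final assertion you use $E_{\D' \cap \M} \circ \sigma^\omega_t = \sigma^\omega_t \circ E_{\D' \cap \M}$, whereas the paper checks directly that every element of $K_\psi$ is $\sigma^\omega_t$-invariant; both are sound.
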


\begin{proof}   First we claim that there exists an  $\omega$-preserving normal conditional expectation $E_{\D' \cap \M}$ onto  
$\D' \cap \M$.   We follow the proof strategy of  \cite[Lemma 3.6.5]{SS}.
Set $k = k_\omega$, the density $\frac{d\widetilde{\omega}}{d\tau}$ of the dual weight $\widetilde{\omega}$ on $\M\rtimes_\nu\mathbb{R}$ 
(as in \ref{crossprod}; here $\nu$ is a reference weight 
and $\tau$ is the canonical fns trace on the crossed product).
  It is well known that  $k$ strongly commutes with elements of the centralizer of $\omega$ (see e.g.\  
Proposition \ref{commhk} below).        The 
embedding $x \mapsto x k^{\frac{1}{2}}$  of $\M$ into  $L^2(\M)$  is weak* continuous, since it is easy to see
that for any $\eta \in L^2(\M)$ the functional 
$\langle \, \cdot \, k^{\frac{1}{2}} , \eta \rangle$ is weak* continuous (since the representation of $M$ on $L^2(\M)$ is normal, and
$k^{\frac{1}{2}} \in L^2(\M)$).   Thus the image of any norm closed ball in $\M$ is 
weakly compact.    Let $K_{\D}(x)$ be the weak* closed convex hull  in $\M$ of $\{ u x u^* : u \in {\mathcal U} \}$. 
Hence $K_{\D}(x) k^{\frac{1}{2}}$  is a weakly closed, hence $\| \cdot \|_2$-norm closed by Mazur's theorem,
 convex set in $L^2(\M)$ (and in $\M k^{\frac{1}{2}}$).  
 Let $\phi(x)$ be the unique element of smallest $\| \cdot \|_2$-norm (in $L^2(\M)$) in $K_{\D}(x) k^{\frac{1}{2}} \subset L^2(\M)$. 
   We have $\phi(x) \in M k^{\frac{1}{2}}$, and $\phi(x)  = E(x) k^{\frac{1}{2}}$ for  an 
element $E(x) \in K_{\D}(x) \subset \M$.   Clearly  $\| E(x ) \| \leq \| x \|$.   We also have $\omega(E(x)) = \omega(x)$, since $\D \subset
\M_\omega$.    
For a fixed unitary $v \in {\mathcal U}$ the map $x k^{\frac{1}{2}} \to v x k^{\frac{1}{2}} v^*$ is a $\| \cdot \|_2$-norm isometry, and so as in  \cite[Lemma 3.6.5]{SS}  
we have that $E(x) \in \D' \cap \M$, and for any unitaries $v,w \in \D' \cap \M$ 
we have $K_{\D}(wxv) = w K_{\D}(x) v$ and $E(wxv) = w E(x) v$.
 It follows that $$\omega(E(x + y) v) = \omega((x+y) v)= \omega( 
E(xv)+E(yv) ) = \omega((E(x)+E(y))v), \qquad x, y \in \M. $$
Hence this is true with $v$ replaced by any element in $\D' \cap \M$.   Hence 
$E(x + y) = E(x)+E(y)$.   For a scalar $t > 0$ we have $K_{\D}(tx) = t K_{\D}(x)$, and $\phi(tx) = t \phi(x)$.
It follows that $E$ is linear, and also  $E$ is a $\D' \cap \M$-module map.    
Of course $E(1) = 1$ and so $E$ is an $\omega$-preserving 
conditional expectation onto 
$\D' \cap \M$.       The uniqueness of $\omega$-preserving conditional expectations is standard (see e.g.\ the proof of 
Theorem \ref{wcent}).

We have $$\sigma^\omega_t(d') d =  \sigma^\omega_t(d' d) = \sigma^\omega_t(dd' )= 
d \sigma^\omega_t(d'), \qquad d' \in \D' \cap \M, \; d \in \D.$$  Thus $\sigma^\omega_t(\D' \cap \M) \subset 
 \D' \cap \M$, and so by \cite[Theorem IX.4.2]{tak2} there does exist a normal 
 $\omega$-preserving conditional expectation onto $\D' \cap \M$.     Thus $E$ is normal, by the uniqueness above.  
   Note  that $K_{\D}(v x v^*) = K_{\D}(x)$ for $v  \in {\mathcal U}$,
and so $E(v x v^*) = E(x)$.   Hence $\rho(u^* x u) = \rho(x)$ for all $u \in {\mathcal U}$
and $x \in \M$, so that $\D \subset \M_\rho$.     

We learned the following argument from M. Kalantar. 
We claim the normal state $\rho=\psi \circ E$ is in $K_\psi$.  By way of  contradiction, assume $\rho \notin K_\psi$.  By the geometric Hahn-Banach theorem, there exists $x \in \M$ and real $t$ such that $${\rm Re}(\rho(x)) < t < {\rm Re}(\alpha(x)) , \qquad \alpha \in K_\psi . $$ 
Since $E(x)  \in K_{\D}(x)$ 
we see that the real part of $\rho(x) = \psi(E(x))$ is in the closed convex hull of 
$$\{ {\rm Re} \, \psi(ux 
u^*) : u\in {\mathcal U} \} \subset  \{  {\rm Re} \, \alpha(x) : \alpha \in K_\psi \} \subset [t, \infty) .$$
 This contradicts the inequality above.  So $\rho \in K_\psi$.
 
Since $\rho \in K_\psi$ it extends $\omega_{\vert \D}$, since if $d \in \D$ then  $\rho(d)$  is in the norm 
closed convex hull of terms of form 
$\psi(u^* du) = \omega(d)$. 

Finally, if  $\psi$ commutes with $\omega$ then $$\psi(u^* \sigma^\omega_t(x) u) = \psi( \sigma^\omega_t( u^* x u)) 
= \psi ( u^* x u) , \qquad x \in \M , u \in \D.$$ This will persist for the convex hull of 
$\{ u^* \psi u : u \in {\mathcal U} \}$, hence for its norm closure $K_\psi$.  So $\rho$ commutes with $\omega$. 
\end{proof}

\begin{theorem}  \label{wcent} Let $\omega$ be a normal state on  a von Neumann algebra $\M$ which is faithful and tracial 
on a von Neumann subalgebra $\mathcal{D}$.  Then $$\omega(dx)=\omega(xd), \qquad d \in \mathcal{D}, x\in \M,$$ if and only if there is a normal conditional expectation $\mathbb{E}_\omega$ from $\M$ onto $\mathcal{D}$ which preserves $\omega$.  
 Moreover such an $\omega$-preserving normal conditional expectation onto $\D$ is unique, and  is  faithful if $\omega$ is faithful on $\M$.
\end{theorem}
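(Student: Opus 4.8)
The plan is to derive both implications from the special case of Takesaki's theorem \cite[Theorem IX.4.2]{tak2} recorded in the Introduction: for a \emph{faithful} normal state $\omega$ and a von Neumann subalgebra on which $\omega$ restricts to a trace, an $\omega$-preserving normal conditional expectation exists precisely when that subalgebra lies in the centralizer. The only genuine complication is that $\omega$ is not assumed faithful on $\M$, so I would first cut down to the support projection $e = s(\omega)$, where the Tomita--Takesaki machinery is available, and then transport the resulting expectation back.

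The necessity direction is routine. Given an $\omega$-preserving normal conditional expectation $\Edb_\omega : \M \to \D$, the module property gives $\Edb_\omega(dx) = d\,\Edb_\omega(x)$, so $\omega(dx) = \omega(\Edb_\omega(dx)) = \omega(d\,\Edb_\omega(x))$ for $d \in \D$, $x \in \M$; since both factors lie in $\D$ and $\omega$ is tracial there, this equals $\omega(\Edb_\omega(x)\,d) = \omega(\Edb_\omega(xd)) = \omega(xd)$. Hence $\omega(dx) = \omega(xd)$.

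For sufficiency, assume the trace condition. As $\omega$ is a state it is semifinite, so Lemma \ref{tuse} applies and yields that $e$ commutes with $\D$, that $d \mapsto de$ is a normal $*$-homomorphism of $\D$ onto a von Neumann subalgebra $\D e$ of $\N := e\M e$, and that $\omega$ is tracial on $\D$ and on $\D e$. This map is injective: if $d \in \D$ with $de = 0$ then $\omega(d^*d) = \omega(e d^* d e) = 0$, whence $d = 0$ by faithfulness of $\omega$ on $\D$; so it is a normal $*$-isomorphism $\iota : \D \to \D e$. On $\N$ the state $\omega$ is faithful, and using $ex = x = xe$ for $x \in \N$ together with the trace condition on $\M$ one checks $\omega((de)x) = \omega(dx) = \omega(xd) = \omega(x(de))$, so $\D e$ lies in the centralizer $\N_\omega$. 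By \cite[Theorem VIII.2.6]{tak2} and \cite[Theorem IX.4.2]{tak2} there is then a unique normal $\omega$-preserving conditional expectation $F : \N \to \D e$. I would define $\Edb_\omega(x) = \iota^{-1}(F(exe))$ and verify directly that it is a normal unital positive $\D$-module projection onto $\D$: each of $x \mapsto exe$, $F$, and $\iota^{-1}$ is normal, positive and contractive; the idempotency and module properties follow from $e \in \D'$ and the corresponding properties of $F$; and $\omega$-preservation follows from $\omega(exe) = \omega(x)$ (recorded in Section \ref{Sec1}), $\omega\circ F = \omega$ on $\N$, and $\omega\circ\iota^{-1} = \omega$ on $\D e$.

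I expect the main obstacle to be exactly this reduction to the faithful corner and the verification that the compressed-and-transported map $\iota^{-1}\circ F(e\,\cdot\,e)$ retains all defining properties of a conditional expectation; most of the bookkeeping ($e \in \D'$, $\D e$ a von Neumann algebra, traciality) is already supplied by Lemma \ref{tuse}. Uniqueness and faithfulness are then formal. For uniqueness, any $\omega$-preserving conditional expectation $E$ onto $\D$ satisfies $\omega(E(x)d) = \omega(E(xd)) = \omega(xd)$ for all $d \in \D$; since $\omega$ is a faithful trace on $\D$ the pairing $(a,d)\mapsto \omega(ad)$ is nondegenerate on $\D$ (taking $d = a^*$), so $E(x)$ is determined by $x$. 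Finally, if $\omega$ is faithful on $\M$ and $x \in \M_+$ with $\Edb_\omega(x) = 0$, then $\omega(x) = \omega(\Edb_\omega(x)) = 0$ forces $x = 0$, so $\Edb_\omega$ is faithful.
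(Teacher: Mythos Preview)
Your proof is correct and follows essentially the same approach as the paper's own argument: both handle the ``if'' direction by the same direct computation, and for the ``only if'' direction both reduce to the support projection $e=s(\omega)$ via Lemma~\ref{tuse}, apply Takesaki's theorem on the faithful corner $e\M e$ to obtain a conditional expectation onto $\D e$, and then transport back through the $*$-isomorphism $d\mapsto de$. The uniqueness and faithfulness arguments are likewise the same.
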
 

\begin{proof} For the `only if' part, the hypothesis about  $\omega(dx)=\omega(xd)$ ensures that 
$\mathcal{D}\subset \M_\omega$.   By \cite[Theorem VIII.2.6]{tak2}, if $\omega$ is faithful  (so $\M$ is $\sigma$-finite) then  there exists a unique $\omega$-preserving normal conditional expectation 
$E_\D$ from $\M$ onto $\D$. (Alternatively, by 
\cite[Theorem IX.4.2]{tak2} there exists a normal conditional expectation $\mathbb{E}_1$ from $\M$ onto $\M_\omega$ which 
preserves $\omega$.  From the the latter it follows that $\mathbb{E}_1$ is  faithful.  By 
\cite[Proposition V.2.36]{tak1}, there exists a faithful normal conditional expectation $\mathbb{E}_2$ from $\M_\omega$ onto $\mathcal{D}$. The conditional expectation we seek is $\mathbb{E}=\mathbb{E}_2\circ\mathbb{E}_1$, as the reader can easily check.)

Now consider the case where $\omega$ is not faithful. Let $z$ be the support projection of the state $\omega$. 
By Lemma \ref{tuse} and its proof we may make the following assertions: 
$z \in \mathcal{D}'$, $\mathcal{D}z$ is in fact a $W^*$-algebra, a von Neumann subalgebra of $z \M z$, and the map $d \mapsto dz$ is a normal 
$*$-homomorphism from  $\D$ onto $\D z$.   These do not require 
$\omega$ to be faithful on $\D$.  
Also, 
if $\omega$ is faithful on $\D$  then the  $*$-homomorphism $d \mapsto dz$ is also faithful, hence is a $*$-isomorphism from  $\D$ onto $\D z$, 
 and  $\omega$ is a trace on $\mathcal{D}z$.

We next show that as a subalgebra of $z \M z$, $z\mathcal{D}z$ satisfies the hypothesis of the proposition. For any $b\in \M$ and $d\in \mathcal{D}$, we have that $$\omega((zbz)(dz))=\omega(bzd)=\omega(dbz)=\omega(db).$$ Similarly $\omega((dz)(zbz))=\omega(bd)$. Since $\omega(bd)=\omega(db)$, the restriction of $\omega$ to $z\M z$ is a faithful state which as claimed, satisfies the hypothesis with respect to the subalgebra $\mathcal{D}z$. Therefore by the first part of the proof, there exists a faithful normal conditional expectation $\mathbb{E}_0:z\M z\to \mathcal{D}z$ which preserves the restriction of $\omega$ to $z\M z$.

Since the map $\mathcal{D}\to \mathcal{D}z:d\mapsto dz$ is an injective  normal 
$*$-homomorphism, its inverse 
$\iota_{\mathcal{D}}: \mathcal{D}z\to \mathcal{D}:zdz\mapsto d$ is an  injective  normal 
$*$-homomorphism. With $Q_\omega$ denoting the compression $\M\to z\M z$, we will show that $\mathbb{E}_\omega=\iota_{\mathcal{D}}\circ\mathbb{E}_0\circ Q_\omega$ is a normal conditional expectation from $\M$ onto $\mathcal{D}$. 

The map $\mathbb{E}_\omega$ is clearly a unital map onto $\mathcal{D}$. The map $\mathbb{E}_0\circ Q_\omega$ is clearly positive, and $\iota_{\mathcal{D}}$ is positive.
So $\mathbb{E}_\omega$ is a unital positive map, and hence is contractive. 

Given $d\in \mathcal{D}$, we have that $$\mathbb{E}_\omega(d)=\iota_{\mathcal{D}}\circ\mathbb{E}_0\circ Q_\omega(d)= \iota_{\mathcal{D}}\circ\mathbb{E}_0(dz)= \iota_{\mathcal{D}}(dz)=d.$$ Thus $\mathbb{E}_\omega \circ \mathbb{E}_\omega = \mathbb{E}_\omega$. The map $\mathbb{E}_\omega$ is therefore a conditional 
expectation from $\M$ onto $\mathcal{D}$. It is an exercise to see that $\omega \circ i_{\D} = \omega$, and hence that 
$\omega\circ\mathbb{E}_\omega=\omega$.
That  $\mathbb{E}_\omega$ is normal follows since the maps $\mathbb{E}_0, Q_\omega,$ and $i_{\D}$ are all normal.

 Given  another such $\omega$-preserving normal conditional expectation $\rho$ onto $\D$ we have
$$\omega((\rho(x) - \mathbb{E}_\omega(x)) d) = \omega(xd) - \omega(xd) = 0 , \qquad d \in \D .$$
Since $\omega$ is faithful we deduce that  $\rho = \mathbb{E}_\omega$.

For the  ``if'' part, suppose that there exists a normal conditional expectation $\mathbb{E}_\omega$ from $\M$ onto $\mathcal{D}$ which preserves $\omega$.  Then 
$$\omega(ax)=\omega(\mathbb{E}_\omega(ax))=\omega(a\mathbb{E}_\omega(x))= \omega(\mathbb{E}_\omega(x)a)=\omega(\mathbb{E}_\omega(xa))=\omega(xa),$$ for  $a\in \mathcal{D}$ and $x\in \M$. 
\end{proof} 

\begin{remark} 
Putting the last result together with Lemma \ref{aves}, if $\omega$ is a normal state on  a 
finite von Neumann algebra $\M$, which is faithful and tracial 
on a von Neumann subalgebra $\mathcal{D}$, then there is a normal conditional expectation 
from $\M$ onto $\mathcal{D}$ which preserves $\omega \circ E_{\D' \cap \M}$.   The latter state is `an averaging of $\omega$ over the unitary group' of $\D$.  
\end{remark}

Let $\omega$ be a normal weight on a von Neumann algebra $\M$. 
We say that a von Neumann subalgebra $\D$ is  {\em locally} $\omega$-{\em central} if $\omega(pxpdp) = \omega(pdpxp)$ 
for all $p\in\mathbb{P}(\D)$ with $\omega(p)<\infty$ and $x \in \M, d \in \D$.   Note that $pxp$ and $pdp$ above are in the `definition domain' 
${\mathfrak m}(\M)_\omega$ of $\omega$ (see \cite[Definition VII.1.3]{tak2}). 
We say that a normal conditional expectation $E$ from $\M$ onto
  $\mathcal{D}$ is {\em locally} $\omega$-{\em preserving} if we have that $\omega(E(p x p)) = \omega(pxp)$ for every $x \in \M_+$ and every projection $p \in \D$ such that $\omega(p) < \infty$.

We next show that `local = global' in these two definitions, under a reasonable assumption 
that always holds for example for faithful weights.  
Knowing that these two properties coincide then provides us with a very powerful technique for lifting results known for states to results pertaining to normal semifinite weights.

\begin{theorem}  \label{locis} Let $\omega$ be a normal weight on  a von Neumann algebra $\M$ which is faithful, semifinite, and tracial 
on a von Neumann subalgebra $\mathcal{D}$.  Then $\D$ is   $\omega$-central if and only if $\D$ is  locally $\omega$-central
and  the support projection of $\omega$ commutes with $\D$.  
If these hold then we have 
$$\sup_p \, \omega(pxp) =
\omega(x), \qquad x \in \M_+,$$ where the supremum is taken over all projections in $\D$ with $\omega(p)<\infty$; also 
if $E : \M \to \D$ is a locally $\omega$-preserving normal conditional expectation  then $E$ is $\omega$-preserving.
\end{theorem}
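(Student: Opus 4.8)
The plan is to establish the equivalence first, and then, under the equivalent hypotheses—so that $\D$ is genuinely $\omega$-central, i.e. $\D\subset\M^\omega$—to read off the supremum formula and the statement about $E$. For the forward direction, assume $\D\subset\M^\omega$. By the definition of $\M^\omega$ every element of $\D$ commutes with the support projection $e$, which gives the support condition. For local $\omega$-centrality, fix an $\omega$-finite projection $p\in\D$ and elements $d\in\D$, $x\in\M$; since $p\in\mathfrak{n}_\omega\cap\mathfrak{n}_\omega^*$ one checks that $pxp,pdp\in\mathfrak{m}_\omega$, while $pdp\in\M^\omega$ because $\M^\omega$ is a $*$-algebra containing $p$ and $d$. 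Applying the cyclicity enjoyed by the $\omega$-central element $pdp$ to $y=pxp\in\mathfrak{m}_\omega$ gives $\omega(pdp\cdot pxp)=\omega(pxp\cdot pdp)$, which is precisely local $\omega$-centrality.

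For the converse I would reduce, via Lemma \ref{tuse}, to the global identity $\omega(xd)=\omega(dx)$ for $d\in\mathfrak{m}(\D)_\omega$ and $x\in\mathfrak{m}(\M)_\omega$; since $\omega$ is semifinite on $\D$, Lemma \ref{tuse} then delivers $\D\subset\M^\omega$. By linearity one may take $x,d$ self-adjoint, and then the identity is equivalent to the single statement $\omega(xd)\in\Rdb$, because $\omega(z^*)=\overline{\omega(z)}$ on $\mathfrak{m}_\omega$ and $(xd)^*=dx$ give $\omega(dx)=\overline{\omega(xd)}$. Local $\omega$-centrality together with $(pxp\cdot pdp)^*=pdp\cdot pxp$ shows that every compression $\omega(pxp\cdot pdp)$, for $\omega$-finite $p\in\D$, is already real. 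The \emph{main obstacle} is the single limiting step passing from these real compressions to $\omega(xd)$ along the net $(p_i)$ of $\omega$-finite projections in $\D$ (directed and increasing weak* to $1$, since $\omega$ is a faithful normal semifinite trace on $\D$). This is delicate precisely because $\omega$ is only lower semicontinuous and is not tracial on $\M$, so the wrong-order error terms $\omega((1-p_i)b(1-p_i))$ must be controlled using the trace property of $\omega$ on $\D$ and the fact that the $p_i$ lie in $\D$. An efficient packaging is to apply the state-level Theorem \ref{wcent} on each corner $p_ie\M ep_i$, where $\omega_{p_i}=\omega(p_i\,\cdot\,p_i)$ is a bounded functional with $p_i\D p_i$ in its centralizer, and to assemble the resulting corner expectations into a global $\omega$-preserving normal conditional expectation onto $\D$, whence $\D\subset\M_\omega=\M^\omega$ as in the ``if'' part of Theorem \ref{wcent}.

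Granting the equivalence, I prove the supremum formula assuming $\D$ is $\omega$-central. First, for $\omega$-finite projections $p\le q$ in $\D$ the cross terms in $\omega(qxq)=\omega(pxp)+\omega(pxp')+\omega(p'xp)+\omega(p'xp')$ (with $p'=q-p$) vanish: $\omega(pxp')=\omega((pxp')p')=\omega(p'(pxp'))=0$ by cyclicity of the $\omega$-central element $p'$ and $p'p=0$. Hence $i\mapsto\omega(p_ixp_i)$ is increasing, so $\sup_p\omega(pxp)=\lim_i\omega(p_ixp_i)$. Lower semicontinuity of the normal weight $\omega$ applied to $p_ixp_i\to x$ (even $\sigma$-strongly) gives $\omega(x)\le\sup_p\omega(pxp)$. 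For the reverse inequality $\omega(pxp)\le\omega(x)$: if $\omega(x)=\infty$ this is trivial, and if $\omega(x)<\infty$ then $x\in\mathfrak{m}_\omega^+$ and cyclicity of $p$ yields $\omega(pxp)=\omega(xp)=\omega(x)-\omega((1-p)x(1-p))\le\omega(x)$.

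Finally, the statement about $E$ is immediate from the supremum formula. For $x\in\M_+$ we have $E(x)\in\D_+\subset\M_+$, so the formula applies to both $x$ and $E(x)$. Since $E$ is a $\D$-bimodule map and $p\in\D$, we have $pE(x)p=E(pxp)$, and local $\omega$-preservation gives $\omega(pE(x)p)=\omega(E(pxp))=\omega(pxp)$ for every $\omega$-finite projection $p\in\D$. Taking the supremum over such $p$ yields $\omega(E(x))=\sup_p\omega(pE(x)p)=\sup_p\omega(pxp)=\omega(x)$, so $\omega\circ E=\omega$ on $\M_+$ and $E$ is $\omega$-preserving.
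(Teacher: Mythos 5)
Your forward direction, your proof of the supremum formula, and your deduction of $\omega$-preservation of $E$ from that formula are all correct. In fact your route to $\sup_p\omega(pxp)=\omega(x)$ (cross terms $\omega(pxp')$ killed by centrality of the $\omega$-finite projections, lower semicontinuity of the normal weight for "$\leq$", and the corner decomposition of $\omega(x)$ for "$\geq$") is a clean, more elementary argument than the paper's, which reduces to $e\M e$ and invokes \cite[Proposition 4.2]{PT}. The trouble is the converse implication, which is the heart of the theorem, and there your argument is circular.

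Concretely: assembling the corner expectations $E_p$ obtained from Theorem \ref{wcent} (this is exactly the construction of Theorem \ref{wcentw}, and it needs only local $\omega$-centrality) produces a normal conditional expectation $E:\M\to\D$ that is only known to be \emph{locally} $\omega$-preserving, i.e.\ $\omega(E(pxp))=\omega(pxp)$ for $\omega$-finite $p\in\Pdb(\D)$. To run the ``if'' computation of Theorem \ref{wcent} and conclude $\D\subset\M^\omega$ you need $E$ to be \emph{globally} $\omega$-preserving. But since $\omega$ restricts to a normal semifinite trace on $\D$, one has $\omega(E(x))=\sup_p\omega(pE(x)p)=\sup_p\omega(E(pxp))=\sup_p\omega(pxp)$ for $x\in\M_+$ (use $\omega(pdp)=\omega(d^{1/2}pd^{1/2})\nearrow\omega(d)$ inside $\D$); so ``$E$ is $\omega$-preserving'' is \emph{exactly} the statement $\sup_p\omega(pxp)=\omega(x)$, i.e.\ your supremum formula, whose proof you (correctly) base on $\omega$-centrality of $\D$. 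Lower semicontinuity only gives $\omega\circ E\geq\omega$; the reverse inequality is precisely what is at stake. A telltale sign of the gap is that your converse never uses the hypothesis that the support projection $e$ of $\omega$ commutes with $\D$; by the Remark following the theorem, given local $\omega$-centrality the condition $e\in\D'$ is \emph{equivalent} to $\omega$-centrality, so no argument ignoring that hypothesis can be correct. The paper closes this gap with genuine modular theory: in the faithful case it shows that each $\omega$-finite projection $q\in\Pdb(\D)$ satisfies $\Delta q=q$ and hence lies in the Tomita algebra, whence the functionals $\omega(q\,\cdot\,)$ and $\omega(\,\cdot\,q)$ on $\mathfrak{m}_\omega$ extend to weak* continuous functionals on $\M$; only with this normality in hand does the limiting step $\omega(ap)=\lim_t\omega(q_tap)=\lim_t\omega((q_taq_t)(q_tpq_t))=\lim_t\omega(paq_t)=\omega(pa)$ go through. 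The non-faithful case is then reduced to $e\M e$ using the support hypothesis.
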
 

\begin{proof} Clearly if $\D$ is  $\omega$-central then it is  locally $\omega$-central: indeed 
if $\omega(xd) = \omega(dx)$ for all 
$d \in \mathfrak{m}(\D)_\omega$ and $x \in \mathfrak{m}(\M)_\omega$, then $\omega(pxpdp) = \omega(pdpxp)$ 
for all $p\in\mathbb{P}(\D)$ with $\omega(p)<\infty$ and $x \in \M, d \in \D$.  
The support projection of $\omega$ commutes with $\D$ by Lemma  \ref{tuse}.

 For the converse, suppose 
  that $\D$ is locally $\omega$-central. 
 First let $\omega$ be a fns weight which is a semifinite trace on $\D$.   
Let ${\mathfrak A}_\omega$  be the full left Hilbert algebra associated with $\omega$.   
 This contains the image of $p \M p$ for $\omega$-finite $p \in \Pdb(\D)$. 
Since $\omega$ is tracial on $\D$, the set $\Pdb(\D)_\omega$  of $\omega$-finite projections in $\D$ is upwardly directed, 
 and may be viewed as a net with weak* limit $1$; that is $p \nearrow 1$ along this directed set $\Pdb(\D)_\omega$.  Since the left and right representations
 of $\M$ on $L^2(M)$ 
  are  weak* continuous, we see that  $p_t \to 1$ strongly on $L^2(M)$. 
Hence  
\begin{equation}\label{eq:loc=glob} \| x - pxp \|_2 \leq \| x - px \|_2 + \| p (x - xp)  \|_2 \leq \| x - px \|_2 + \| x - xp  \|_2 \to 0\end{equation}  
as $p \nearrow 1$ as above. 
  We now use the language and notation of \cite[Lemma VI.1.5]{tak2} 
 and its proof.  Thus $p \M p \subset {\mathfrak D}^{\sharp}$, and $S_0(pxp) = px^* p$ for $x \in \M$.   
 Indeed ${\mathfrak D}^{\sharp}$ contains the set $K$ of those $x \in \M$ which equal $p x p$ for some  $p \in \Pdb(\D)_\omega$. 
 Note that $K$ is a linear subspace, since if $r \in \Pdb(\D)_\omega$ dominates $p$ and $q$ in $\Pdb(\D)_\omega$ 
 then $y = pxp + qzq$ equals $r yr$.  
 Also, by equation (\ref{eq:loc=glob}) above $K$ is $2$-norm dense in $D^{\sharp}$, and hence also in $H = L^2(M)$.  Let $R$ be the restriction of $S_0$ to  $K$, 
and let $R_1$ be the linear operator $R_1 = C R$ into $\bar{H}$, where as in the proof of \cite[Lemma VI.1.5]{tak2}, $C$ is the canonical anti-linear isometry from $H$ to $\bar{H}$.  
 We claim that the closure of $R_1$ is the closed operator $C S$. 
To see this note that, as was shown in the proof of \cite[Lemma VI.1.5]{tak2}, $C S$ is the closure of $S_1 = C S_0$.   Moreover if $(x, S_1 x)$ is in the graph of 
 $S_1$ then $x \in {\mathfrak D}^{\sharp}$, and $p_t x p_t \to x$ in $2$-norm as above.  
 Since also $p_t x^* p_t \to x^*$ in $2$-norm, it is now clear that $C S_0(p_t x p_t) \to C S_0(x)$.   It follows that 
(the graph of) $C S$ is the closure of (the graph of) $R_1$, or that $K$ is a core for $C S$.

Note that if $q \leq p$ in $\Pdb(\D)_\omega$, then 
 $$\omega(pxp q) = \omega(pxp q p) = \omega(pqp xp) = \omega(q pxp) = \langle pxp , q \rangle , \qquad x \in \M.$$ 
It follows from the above that for any $\xi \in K$ and any $q\in \Pdb(\D)_\omega$, we have 
$\langle S_0 \xi , q \rangle = \langle q , \xi \rangle$, or equivalently 
$\langle q, S_0 \xi \rangle = \langle \xi  , q \rangle$. To see this recall that $\xi$ is of the form $\xi=pxp$ for some $p\in \Pdb(\D)_\omega$. Since $\Pdb(\D)_\omega$ is  upwardly directed, we may assume that $p\geq q$. It is then an exercise to conclude from the previously centred equation that $\langle q, S_0 \xi \rangle = \omega((px^*p)^*q) =\omega(qpxp) =\langle \xi, q\rangle$ as claimed. This ensures that the functional $\xi \mapsto \langle q , S_0 \xi \rangle$ where $\xi \in K$, will for any $q\in \Pdb(\D)_\omega$, be bounded and linear on $K \subset {\mathfrak A}_\omega$.
 Equivalently, the functional  $\xi \mapsto \langle R_1 (\xi), Cq \rangle$ is bounded and linear on  $K$, and 
 for $\xi \in K$  we have $\langle R_1 (\xi), Cq \rangle =  \langle \xi  , q \rangle$.   
 Thus $Cq \in {\rm Dom} (R_1^*)$ and $R_1^*(Cq) = q$.   
 Since  $K$ is a core for $C S$, $(R_1)^* = (CS)^*$ by an elementary lemma in the  theory of unbounded operators. 
Thus $(CS)^*(Cq) =(CS)^*(CSq)  = q$, whence $\Delta(q) = q$.   Since $q \in {\mathfrak A}_\omega$ 
it follows by the criterion in \cite[Theorem VI.2.2 (i)]{tak2}  that $q$ is in the Tomita algebra ${\mathfrak A}_0$.  
Hence by \cite[10.21 Corollary]{Strat-Zsido}, $q$ is in the Tomita algebra as described on p.\ 28 in \cite{Stratila}.
 
By \cite[Proposition 2.13]{Stratila}, the functionals $a\to \omega(ap)$ and $a\to \omega(pa)$ on 
$\mathfrak{m}(\M)_\omega$ will for any $p\in\Pdb(\D)_\omega$ extend to
weak* continuous functionals on $\M$. Since  $\Pdb(\D)_\omega$ is  upwardly directed
we may construct a net $(q_t)$  in $\Pdb(\D)_\omega$ increasing to 1, with each $q_t$ majorising $p$. Given $a\in\mathfrak{m}(\M)_\omega$ we  then have $$\omega (q_tap)= \omega((q_taq_t)(q_tpq_t))= \omega((q_tpq_t)(q_taq_t))= \omega(paq_t).$$ Since $q_t \to 1$ in the $\sigma$-strong* topology, the continuity noted earlier ensures that 
$\omega(ap)=\lim_t \omega(q_tap)=\lim_t \omega(paq_t)=\omega(pa)$. Thus we  have $p\in \M_\omega$. Since $\omega$ is tracial on $\D$, the span of $\Pdb(\D)_\omega$ is weak* dense in $\D$. In addition $\M_\omega$ is a von Neumann algebra. 
Combining these two facts with what we have just proven then shows that $\D \subset \M_\omega$.  
That is, $\D$ is $\omega$-central.

Now for the non-faithful case.  
Here $\omega$ is a normal weight on  a von Neumann algebra $\M$ which is faithful, semifinite, and tracial 
on  $\mathcal{D}$, and  $\D$ is  locally $\omega$-central.  
  Let $e$ be  the support projection of  $\omega$.  As in the proof of Lemma \ref{tuse} and Theorem \ref{wcent}  
 $\mathcal{D} e$ is a  von Neumann subalgebra of $e \M e$, and the map $d \mapsto de$ is a faithful normal 
$*$-homomorphism from  $\D$ onto $\D e$.      It follows that any projection in $\D e$ is of the form $pe$ for $p \in \Pdb(\D)$, 
and $\omega(p) = \omega(pe)$.  If the latter is finite then 
$$\omega(pe d pe x e pe) = \omega(pdp exep) = \omega(pexep dp ) = \omega(pe x e pe d pe), \qquad d \in \D , exe \in e \M e .$$
Thus  $e \mathcal{D}$  is locally $\omega$-central in $e \M e$.   By the first part of the proof, $ep$ is in the centralizer of $\omega$ 
 in $e \M e$,
and $e \mathcal{D}$  is  $\omega$-central in $e \M e$.   Finally, if $d \in {\mathfrak m}(\D)$ then $d \in \M^\omega$ since $ed = de$ and 
$$\omega(d x) = \omega(ed xe) =\omega(ed exe) =\omega(exe de) =\omega(xd), \qquad x \in {\mathfrak m}(\M)_\omega .$$
Since $\M^\omega$ is a von Neumann algebra we have $\D \subset \M^\omega$.

For the final assertions we will apply \cite[Proposition 4.2]{PT} 
to the restriction of $\omega$ to 
 $e \M e$. 
For any $x\in\M_+$ we have that 
$$\sup_p \, \omega(pxp)= \sup_p \, \omega(epxpe) = \sup_p \, \omega(epexepe) = \omega(exe) = 
 \omega(x),$$ where the supremum is taken over $p \in \Pdb(\D)_\omega$.  
We have used 
that $e \in \D'$, that $pe \nearrow e$, and the fact from earlier in the proof
that $ep$ is in the centralizer of $\omega$ on $e \M e$.
The  $\omega$-preserving assertion clearly follows from the second last  centered  equality above, 
 since $$\omega(E(x)) =  \sup_p \, \omega(pE(x)p) =\sup_p \, \omega(E(pxp)) = \sup_p \,
 \omega(pxp) = \omega(x),$$  for $x \in \M_+$.
\end{proof}

\begin{remark} 
Let $\omega$ be a normal weight on  a von Neumann algebra $\M$ with support 
projection $e$, which is faithful, semifinite, and tracial 
on a von Neumann subalgebra $\mathcal{D}$, and suppose that $\mathcal{D}$ is locally  
$\omega$-central.
One can show that the following are equivalent:  
(1)\  
 $e \in \mathcal{D}'$, 
 (2)\  $\mathcal{D}$ is   $\omega$-central,
(3)\   $\sup_{p  \in \Pdb(\D)_\omega} \, \omega(pxp)= \omega(x)$  for $x \in \M_+$,
(4)\ there exists a $\omega$-preserving normal conditional expectation, and 
(5)\   there exists a locally  $\omega$-preserving normal conditional expectation $E$ with $E(e) = 1$
(or equivalently, with supp$(E) \leq e$, where supp$(E)$ is as defined above Lemma \ref{suE}). 
\end{remark}

\begin{theorem}  \label{wcentw} Let $\omega$ be a normal weight on a von Neumann algebra $\M$ which is a 
faithful semifinite trace when restricted to a von Neumann subalgebra $\mathcal{D}$.  Then there is a 
locally $\omega$-preserving normal conditional expectation $\mathbb{E}_\omega$ from $\M$ onto $\mathcal{D}$ if and only
$\D$ is locally $\omega$-central.
 Moreover such a locally $\omega$-preserving normal conditional expectation onto $\D$ is unique, and is  faithful if $\omega$ is faithful on $\M$. 
\end{theorem}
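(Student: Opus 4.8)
The plan is to localize everything to the corners $p\M p$ for $\omega$-finite projections $p \in \D$, where the restriction of $\omega$ is a scalar multiple of a normal state that is faithful and tracial on $p\D p$, so that Theorem \ref{wcent} applies directly. Since $\omega$ is a faithful semifinite trace on $\D$, the set $\Pdb(\D)_\omega$ of $\omega$-finite projections of $\D$ is upward directed with weak* limit $1$, so $p \nearrow 1$ strongly along this net. For the forward (``only if'') direction, suppose $E : \M \to \D$ is a locally $\omega$-preserving normal conditional expectation. Fix $p \in \Pdb(\D)_\omega$; as $E$ is a $\D$-bimodule map, $E$ restricts to a normal conditional expectation $E_p := E|_{p\M p} : p\M p \to p\D p$, and local $\omega$-preservation says exactly that $E_p$ preserves the normal state $\omega_p := \omega(p)^{-1}\omega|_{p\M p}$. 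Applying the ``if'' half of Theorem \ref{wcent} in $p\M p$ (with subalgebra $p\D p$, on which $\omega_p$ is faithful and tracial) yields $\omega_p(d'y) = \omega_p(yd')$ for $d' \in p\D p$, $y \in p\M p$, which upon unwinding is precisely the local $\omega$-centrality relation $\omega(pxpdp) = \omega(pdpxp)$. Ranging over all $p$, $\D$ is locally $\omega$-central.

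For the converse, assume $\D$ is locally $\omega$-central. For each $p \in \Pdb(\D)_\omega$, the relation just displayed shows $p\D p$ lies in the centralizer of $\omega_p$, so the ``only if'' half of Theorem \ref{wcent} furnishes a unique $\omega_p$-preserving normal conditional expectation $E_p : p\M p \to p\D p$. The key compatibility is that for $q \le p$ in $\Pdb(\D)_\omega$ one has $E_q(qxq) = q\, E_p(pxp)\, q$ for all $x \in \M$: indeed $q \in p\D p$, so the $p\D p$-bimodule property of $E_p$ gives $E_p(qxq) = q E_p(pxp) q$, while the compression $y \mapsto qE_p(y)q$ is an $\omega_q$-preserving normal conditional expectation onto $q\D q$, hence equals $E_q$ by uniqueness. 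I would then define $E(x)$ as the weak* limit of the net $(E_p(pxp))_p$. Convergence follows from this compatibility together with $p \nearrow 1$: the net is norm bounded by $\|x\|$, and any weak* cluster point $d$ satisfies $qdq = E_q(qxq)$ for each fixed $q$; since $qdq \to d$ strongly as $q \nearrow 1$, all cluster points agree and the full net converges. This produces a unital contractive idempotent linear map $E : \M \to \D$ fixing $\D$ (as $E_p(pap) = pap$ for $a \in \D$), hence a conditional expectation by Tomiyama's theorem (p.\ 132--133 of \cite{Bla}).

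It then remains to verify normality, local $\omega$-preservation, uniqueness and faithfulness, all of which descend from the corner data via the identity $qE(z)q = E_q(qzq)$. For normality, if $x_\alpha \nearrow x$ boundedly in $\M_+$ and $y := \sup_\alpha E(x_\alpha) \le E(x)$, then compressing by any $q \in \Pdb(\D)_\omega$ and using normality of $E_q$ gives $qyq = \sup_\alpha E_q(qx_\alpha q) = E_q(qxq) = qE(x)q$; letting $q \nearrow 1$ strongly yields $y = E(x)$. For local $\omega$-preservation, the compatibility relation with $q = p$ gives $E(pxp) = E_p(pxp)$, which is $\omega_p$-preserving, so $\omega(E(pxp)) = \omega(pxp)$. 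Uniqueness is proved by the same compression: a second such $E'$ satisfies $qE'(x)q = E_q(qxq) = qE(x)q$ for all $q$, whence $E' = E$ on letting $q \nearrow 1$. Finally, if $\omega$ is faithful on $\M$ and $E(x) = 0$ for $x \ge 0$, then $\omega(pxp) = \omega(pE(x)p) = 0$ forces $pxp = 0$ for all $p$, so $x = 0$.

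I expect the main obstacle to be the converse construction: proving that $(E_p(pxp))_p$ converges and that the limit $E$ is genuinely normal, rather than merely a pointwise-weak* limit of normal maps (which need not be normal in general). Both hinge on the coherence relation $E_q(qxq) = qE_p(pxp)q$ and on controlling the passage $q \nearrow 1$; once these are secured, the remaining properties follow by systematically compressing to corners. A secondary point requiring care is that, unlike in Theorem \ref{locis}, the support projection of $\omega$ here need not commute with $\D$, so $E$ is only locally $\omega$-preserving; the corner-by-corner construction is precisely what avoids having to invoke $e \in \D'$.
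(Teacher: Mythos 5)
Your proposal is correct and follows essentially the same route as the paper's proof: compress to the corners $p\M p$ over the net of $\omega$-finite projections $p\in\D$, apply Theorem \ref{wcent} on each corner, use its uniqueness clause to get the coherence relation $E_q(qxq)=qE_p(pxp)q$, and pass to the limit $p\nearrow 1$ (Tomiyama for the expectation property, compression arguments for normality, local preservation, uniqueness, and faithfulness). Your minor variations---proving normality directly by compressing and letting $q\nearrow 1$ rather than by the paper's contradiction via a support projection, and deriving faithfulness from faithfulness of $\omega$ rather than of the $E_p$---are cosmetic rearrangements of the same argument.
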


\begin{proof}
Suppose that there exists a locally $\omega$-preserving normal conditional expectation $E$ onto $\D$. Let $p\in \mathcal{D}$ be a projection with 
$\omega(p)<\infty$. The conditional expectation $E$ then restricts to a normal conditional expectation from $p \M p$ to $p \D p$.  
Also the restriction of $\omega$ to $p \M_+ p$ is  finite.  Thus $p \M p \subset {\mathfrak m}_\omega$,
and $\omega$ extends to a scalar multiple of a normal state $\omega_p$ on $p \M p$, with $\omega_p$  on  $p \D p$. 
 We have $\omega_p \circ E = \omega_p$ on $p \M p$. So by Theorem \ref{wcent} we 
have $\omega_p(dx) = \omega_p(xd)$ for $x \in p \M p, d \in p \mathcal{D} p$. Thus 
$\omega(p dp xp)=\omega(p xp dp)$ for $d \in \mathcal{D}, x\in \M$. 

Conversely, suppose that that for every projection $p\in \mathcal{D}$ with $\omega(p)<\infty$, we have that 
$$\omega(p dp xp)=\omega(p xp dp), \qquad d \in \mathcal{D}, x\in \M.$$ The collection of all projections in $\D$ with finite 
$\omega$-trace, is of course a net increasing to $\I$ when ordered with the natural ordering. By Theorem \ref{wcent}, there will for 
such projection $p$ exist a  normal conditional expectation $E_p: p \M p\to p \D p$, preserving the restriction of $\omega$ to 
$p \M p$. We claim that for $p_1 \geq p_0$, $(E_1)_{|p_0 \M p_0}=E_0$. 
This is a fairly straightforward consequence of the 
uniqueness assertion in Theorem \ref{wcent}, and the fact 
that $p_1\geq p_0$ ensures that $p_0 xp_0\in p_1 \M p_1$ and $p_0 dp_0\in p_1 \D p_1$, with $E_1(p_0 xp_0) = 
E_1(p_0 p_1 xp_1 p_0) = p_0 E_1(p_1 xp_1) p_0$.

We know that $\{p\in\mathbb{P}(\D):\omega(p)<\infty\}$ is $\sigma$-strong* convergent to $\I$. So for any $x\in \M$, $\{pxp: p\in\mathbb{P}(\D), \omega(p)<\infty\}$ is $\sigma$-strong* convergent to $x$. Now let $x\in \M$ be fixed. We show that similarly $\{E_p(p xp): p\in\mathbb{P}(\D), \omega(p)<\infty\}$ is $\sigma$-strong* convergent to some $d_x\in \D$. Observe that the set $\{E_p(p xp): p\in\mathbb{P}(\D), \omega(p)<\infty\}$ is norm-bounded, and hence relatively weak*-compact. Let $d_0$ be any weak* cluster point of the net $\{E_p(p xp): p\in\mathbb{P}(\D), \omega(p)<\infty\}$. By the relative compactness, there must exist some subnet $\{E_{p_\alpha}(p_{\alpha} xp_{\alpha})\}$ converging weak* to $p_0$. For any fixed $q\in\mathbb{P}(\D)$ with $\omega(q)<\infty$, the fact that $q E_{p}(pxp)q=E_q(q xq)$ for each $p\geq q$, ensures that $\{qE_p(p xp)q: p\in\mathbb{P}(\D), \omega(p)<\infty\}$ is uniformly convergent to  $E_q(q xq)$. But this net also has weak* limit point $q d_0 q$. Thus clearly $E_q(q xq)= q d_0 q$. Since $\{q d_0 q: q\in\mathbb{P}(\D), \omega(q)<\infty\}$ is $\sigma$-strong* convergent to $d_0$, the claim follows. 

We now define the map $E:\M\to \D$ by setting $E(x)=\lim_p \, E_p(p xp)$.  Clearly  
$$E(d) =\lim_p \, E_p(p dp)
= \lim_p \, pdp = d , \qquad d \in \mathcal{D}.$$  It easily follows that $E$ is a contractive unital positive projection.  Hence by Tomiyama's result $E$ is a conditional expectation.

We next show that $E$ is normal. Let $\{x_\alpha\}\subset \M_+$ be given with $x=\sup_\alpha x_\alpha\in \M_+$. By positivity $E(x)\geq \sup_\alpha E(x_\alpha)$. Suppose that $E(x)- \sup_\alpha E(x_\alpha)\neq 0$. We now equip $\D$ with the weight induced by the restriction of $\omega$ to $\D$. The semifiniteness of $(\D,\omega)$ then ensures that there must exist a non-zero subprojection $p$ of the support projection of $E(x)- \sup_\alpha E(x_\alpha)\geq 0$, for which $\omega(p)<\infty$, and $$E(pxp)- \sup_\alpha E(px_\alpha p)= p(E(x)- \sup_\alpha E(x_\alpha))p\neq 0.$$ But by construction $E_{|p \M p}=E_p$. Since $E_p$ is known to be normal, and 
$\sup_\alpha px_\alpha p=pxp$, it follows that $$E(pxp)- \sup_\alpha E(px_\alpha p)=E_p(pxp)- \sup_\alpha E_p(px_\alpha p)=0.$$ This is a contradiction. Hence $E$ must be normal. 

The fact that $E_{|p \M p}=E_p$, ensures that $E$ is locally $\omega$-preserving. The claims regarding uniqueness and faithfulness similarly follow from this fact. For example if it was indeed possible to find two distinct conditional expectations $E$ and $E_0$ fulfilling the criteria of the hypothesis, we would be able to find some $x\in \M_+$ for which $E(x)-E_0(x)\neq 0$. Thus there must then exist some non-zero subprojection $p$ of the support projection of $E(x)-E_0(x)$ for which we still have that $p(E(x)-E_0(x))p\neq 0$. But we then also have that $$p(E(x)-E_0(x))p = E(pxp)-E_0(pxp)=E_p(pxp)-E_p(pxp)=0,$$ a contradiction. Thus $E$ must be unique. 

If $\omega$ is faithful then  by  Theorem \ref{wcent} 
each $E_p$ is also faithful. Let $0\neq x\in\M_+$ be given. We know that $\{pxp: p\in\mathbb{P}(\D), \omega(p)<\infty\}$ is $\sigma$-strong* convergent to $x$. So for some $p\in\mathbb{P}(\D)$ with $\omega(p)<\infty$, we must have that $pxp\neq 0$. The faithfulness of $E_p$ then ensures that $0\neq E_p(pxp)=pE(x)p$, which in turn ensures that $E(x)\neq 0$. Hence $E$ is faithful.
\end{proof}

\begin{theorem} \label{excodsfex} 
Suppose that  $\D$ is a von Neumann subalgebra of  a von Neumann algebra $\M$.  The following are equivalent:  \begin{itemize}
\item [(1)]  $\D$ is semifinite (or equivalently, has a faithful normal semifinite trace $\tau$) and there exists a normal conditional expectation (resp.\ 
faithful normal conditional expectation) $\Psi : \M \to \mathcal{D}$.
\item [(2)]  $\M$ has a  normal (resp.\ 
faithful normal) weight $\omega$ 
with ${\mathfrak m}(\D)_\omega$ $\omega$-central  such that $\omega$ is semifinite  and faithful on $\D$.  
\item [(3)]  $\M$ has a  normal (resp.\  faithful normal) 
 weight $\omega$ 
with $\D$ locally $\omega$-central, such that $\omega$ is a semifinite  faithful trace 
 on $\D$.  
\end{itemize}
Moreover if  {\rm (2)} holds then 
we can choose $E$ in {\rm  (1)}
 to be $\omega$-preserving.  Such an $\omega$-preserving normal conditional expectation onto $\D$ is unique.  Also   $\Psi \mapsto \tau \circ \Psi$ is a bijective correspondence between the items in {\rm  (1)} and  {\rm  (2)}.  
 This correspondence  is also a bijection with the $\omega$ in {\rm  (3)} 
whose support projections commute with $\D$.
\end{theorem}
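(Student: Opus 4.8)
The plan is to prove the cycle $(1) \Rightarrow (2) \Rightarrow (3) \Rightarrow (1)$, packaging the genuinely hard analytic work already carried out in Theorems \ref{wcent}, \ref{locis} and \ref{wcentw}, and then to read off the ``moreover'' and bijection assertions from the constructions used in the cycle. The two bridges between conditional expectations and weights are the assignments $\Psi \mapsto \tau \circ \Psi$ (a weight from an expectation) and $\omega \mapsto \Edb_\omega$ (an expectation from a weight, via Theorem \ref{wcentw}); the whole argument amounts to checking that these respect normality, faithfulness, semifiniteness and the various centrality conditions, and that they are mutually inverse. Throughout, the parenthetical ``resp.'' (faithful) versions are tracked step by step.

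For $(1) \Rightarrow (2)$ I would set $\omega = \tau \circ \Psi$. This is a normal weight on $\M$ (a composition of the normal map $\Psi$ with the normal weight $\tau$), and since $\Psi$ is the identity on $\D$ we get $\omega_{|\D} = \tau$, which is faithful, normal, semifinite and tracial on $\D$; and if both $\Psi$ and $\tau$ are faithful then $\omega$ is faithful on $\M$. Next, for an $\omega$-finite projection $p \in \Pdb(\D)$, $x \in \M$ and $d \in \D$, the $\D$-bimodule property of $\Psi$ gives $\Psi(pxpdp) = p\Psi(x)pdp$ and $\Psi(pdpxp) = pdp\Psi(x)p$; applying $\tau$ to the elements $p\Psi(x)p, pdp$ of the finite corner $p\D p$ and using traciality of $\tau$ yields $\omega(pxpdp) = \omega(pdpxp)$, so $\D$ is locally $\omega$-central. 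I would then identify the support projection $e$ of $\omega$ with the support of $\Psi$ and show $e \in \D'$: if $q = 1-e$ is the largest projection with $\Psi(q)=0$, then for a unitary $u \in \D$ the identity $\Psi(uqu^*) = u\Psi(q)u^* = 0$ forces $uqu^* \le q$, whence $q$, and so $e$, commutes with $\D$. By Theorem \ref{locis}, local $\omega$-centrality together with $e \in \D'$ upgrades to $\D \subset \M^\omega$; in particular ${\mathfrak m}(\D)_\omega$ is $\omega$-central, which is $(2)$.

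The remaining implications are short. For $(2) \Rightarrow (3)$, condition $(2)$ is exactly the hypothesis of Lemma \ref{tuse}, which gives $\D \subset \M^\omega$, $e \in \D'$, and that $\omega$ is a faithful semifinite trace on $\D$; since $\D \subset \M^\omega$ says $\D$ is $\omega$-central, the easy direction of Theorem \ref{locis} supplies local $\omega$-centrality, which is $(3)$. For $(3) \Rightarrow (1)$, condition $(3)$ is precisely the hypothesis of Theorem \ref{wcentw}, yielding a unique locally $\omega$-preserving normal conditional expectation $\Psi : \M \to \D$ (faithful when $\omega$ is), while $\D$ is semifinite because $\omega_{|\D}$ is a faithful normal semifinite trace. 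This closes the cycle. For the final assertions, assume $(2)$: from $(2)\Rightarrow(3)$ we have $\D$ $\omega$-central with $e \in \D'$, so Theorem \ref{locis} shows the locally $\omega$-preserving expectation of Theorem \ref{wcentw} is in fact $\omega$-preserving, giving $\Edb_\omega$; and it is unique since any $\omega$-preserving normal expectation is locally $\omega$-preserving, where uniqueness is Theorem \ref{wcentw}. To see $\Psi \mapsto \tau \circ \Psi$ is a bijection (with $\tau$ fixed, between normal expectations onto $\D$ and the weights of $(2)$ restricting to $\tau$ on $\D$), I check mutual inversion: given $\Psi$, the weight $\omega = \tau \circ \Psi$ satisfies $\omega \circ \Psi = \tau \circ \Psi \circ \Psi = \tau\circ\Psi = \omega$, so $\Psi$ is $\omega$-preserving and $\Edb_\omega = \Psi$ by uniqueness; conversely, for $\omega$ with $\omega_{|\D}=\tau$, since $\Edb_\omega$ maps into $\D$ and is $\omega$-preserving, $\tau \circ \Edb_\omega = \omega \circ \Edb_\omega = \omega$. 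Faithfulness is preserved both ways. Finally, the weights of $(2)$ are exactly the $\omega$-central ones, whereas by Theorem \ref{locis} a weight of $(3)$ is $\omega$-central precisely when its support projection commutes with $\D$; hence this same correspondence restricts to a bijection with the weights of $(3)$ whose support projections commute with $\D$.

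The bulk of the work is bookkeeping, so the main obstacle is not one hard step but keeping the non-faithful case honest: the entire distinction between $(2)$ and $(3)$ lives in the behaviour of the support projection $e$, and the one genuinely new computation I must get right is that the support of $\tau \circ \Psi$ both equals the support of $\Psi$ and commutes with $\D$. It is exactly this that lets Theorem \ref{locis} promote local $\omega$-centrality to $\omega$-centrality, and thereby separates the $(3)$-weights that land in $(2)$ from those that do not.
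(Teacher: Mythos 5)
Your proposal is correct, and it assembles the same supporting results (Lemma \ref{tuse}, Theorems \ref{locis} and \ref{wcentw}) into a genuinely different implication graph than the paper's. The paper proves $(1)\Rightarrow(2)$ by a two-line computation, $\omega(xd)=\tau(\Psi(x)d)=\tau(d\Psi(x))=\omega(dx)$ for $d\in{\mathfrak m}(\D)_\omega$ and $x\in{\mathfrak p}(\M)_\omega$, using the bimodule property of $\Psi$ and traciality of $\tau$ (full $\omega$-centrality then comes from Lemma \ref{tuse}); and it proves $(2)\Rightarrow(1)$ \emph{directly}: via Takesaki's theorem \cite[Theorem IX.4.2]{tak2} in the faithful case, and in general by using Lemma \ref{tuse} to cut down to the corner $e\M e$ and transporting the expectation back as in Theorem \ref{wcent} --- which is also where the paper obtains the $\omega$-preserving ``moreover'' claim. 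You instead prove only the cycle $(1)\Rightarrow(2)\Rightarrow(3)\Rightarrow(1)$, getting $(2)\Rightarrow(1)$ for free by composition, and you recover the $\omega$-preserving property by combining Theorem \ref{wcentw} (existence and uniqueness of the locally $\omega$-preserving expectation) with the final assertion of Theorem \ref{locis} (locally $\omega$-preserving implies $\omega$-preserving once $\D$ is $\omega$-central); this is a legitimate economy that avoids repeating the corner-reduction construction. The trade-off sits in $(1)\Rightarrow(2)$: you re-derive commutation of ${\rm supp}(\tau\circ\Psi)$ with $\D$ by unitary conjugation of the kernel projection (essentially re-proving the last part of Lemma \ref{suE}) and then invoke Theorem \ref{locis} --- the deepest result in this circle, whose proof runs through Tomita--Takesaki theory --- merely to upgrade local to global centrality, where the paper's elementary traciality computation plus Lemma \ref{tuse} suffices. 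Your handling of uniqueness, of the bijection $\Psi\mapsto\tau\circ\Psi$ (checked by mutual inversion with $\omega\mapsto\Edb_\omega$), and of the identification via Theorem \ref{locis} of the weights in $(3)$ with $\D$-commuting supports with the weights in $(2)$, matches the paper's in substance while being somewhat more explicit.
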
 

\begin{proof}  (1) $\Rightarrow$ (2)\ Indeed if there exists a (faithful) normal conditional expectation $\Psi : \M \to \mathcal{D}$,
and if $\D$ has a fns trace $\tau$, then the composition 
$\omega=\tau\circ\Psi$ is a (faithful)  normal  semifinite weight  with ${\mathfrak m}(\D)_\omega$ $\omega$-central:
 $$\omega(xd) = \tau(\Psi(x) d)) =  \tau(d \Psi(x))) = \omega(dx), \qquad d \in {\mathfrak m}(\D)_\omega, x \in {\mathfrak p}(\M)_\omega .$$

(2)  $\Rightarrow$ (1)\ If $\M$ has a  
faithful normal weight $\omega$ 
with $\D \subset \M_\omega$, such that $\omega$ is semifinite on $\D$, then there exists a $\omega$-preserving 
faithful normal conditional expectation by \cite[Theorem IX.4.2]{tak2}.     
By Lemma \ref{istrD},  $\omega$ is a trace on $\D$.    

 Now suppose that $\M$ has a  
 normal weight $\omega$ such that $\omega$ is semifinite and faithful on $\D$, 
 with $\omega(xd) =  \omega(dx)$ for $d \in {\mathfrak m}(\D)_\omega, x \in {\mathfrak m}(\M)_\omega$.   
  Let $e$ be  the support projection of  $\omega$, then 
Lemma \ref{tuse} shows that  $\D \subset \M^\omega$,
 that $e \in \D'$ and $\D e$ is a von Neumann subalgebra of  $e \M e$, and
 the restriction of $\omega$  to  $\D$ (resp.\ $\D e$) is 
 a  normal   (resp.\ faithful normal) semifinite  trace.  
 
  By the earlier case there exists a (faithful) normal conditional expectation $\Psi : e \M e \to \mathcal{D} e$, 
 so that $\Psi(e \cdot e)$ is a normal conditional expectation from $\M$ onto $\D e$ as in the proof of Theorem  \ref{wcent}.     The canonical map $\D \mapsto \D e$ is faithful, since 
 $\omega(e d^* d e) = \omega(d^* d)$, and $\omega$ is faithful on $\D$. 
 As in the proof of Theorem  \ref{wcent},  the associated map $E : \M \to \D$ is a normal conditional expectation.
  Moreover $$\omega(E(x)) = \omega(L_e^{-1} \Psi(exe)) =  \omega(\Psi(exe)) = \omega(exe) = \omega(x), \qquad x \in \M_+.$$
   
 (2)  $\Rightarrow$  (3)\ We saw above that (2) implies that $\omega$ is a trace on $\D$.    Since $p \M p \subset {\mathfrak m}_\omega$
 (and similarly for $\D$), (3) is now clear.
 
 (3) $\Rightarrow$ (1)\ This follows from Theorem \ref{wcentw}.  
(Note that the `faithful case' follows from the `faithful case' of Theorem \ref{wcentw}.)

 The uniqueness of an $\omega$-preserving conditional expectation is just as in e.g.\ the proof of 
Theorem \ref{wcent}.    This easily gives the bijectivity of the map  $\Psi \mapsto \tau \circ \Psi$  between the items in {\rm  (1)} and  {\rm  (2)}.  
Finally if $e \in \D'$ then $\D$ is  locally $\omega$-central if and only if $\D$ is   $\omega$-central by Theorem  \ref{locis}. 
 \end{proof}

 As noted in an early paragraph in the introduction we may call $E$ in (1) in the last result {\em the (generalized) 
weight-preserving conditional expectation} from $\M$ to $\D$ associated with the 
normal weight
$\omega$ in (2) 
(by the bijective correspondence in the theorem).     We wrote it as $\mathbb{E}_\omega$ before (in the normal state case).

Putting the last two results and their proofs together  shows the  
weight-preserving   conditional expectation $E$
 associated with a weight $\omega$ in this way,  may be approximated by a net $(E_p)$ of
expectations on the  `finite' von Neumann algebras $p \M p$, for $\omega$-finite  $p \in \Pdb(\D)$. 
In Theorem \ref{locis} we also saw that $\omega$ can be approximated by a matching net of normal 
functionals: $\omega(x) = \sup_p \, \omega(pxp)  = \lim_p \, \omega(pxp)$ for $x \in \M_+.$
 
Another upshot of the last theorem is that for semifinite von Neumann subalgebras, the existence of conditional expectations necessitates 
 the $\omega$-centrality condition. 
   Thus in our later  generalization of the Hoffman-Rossi theorem to $\D$-characters for  a semifinite $\D$,
 it is natural  to assume that $\D \subset 
 \M_\omega$. 
   Of course for a normal state  the latter condition is just saying that $\omega(xd) = \omega(dx)$ for $x \in \M , d \in \D$. 
 In one of our `best Hoffman-Rossi theorems' (Theorem \ref{HRsfcent}), $\D$ is a subalgebra of a $\sigma$-finite von Neumann algebra $\M$ satisfying
 the latter condition.

For a normal positive map $E : \M \to \N$ we define the support projection ${\rm supp} \, (E)$ of $E$ to be 
the complement of the largest projection $p \in \M$ with $E(p) = 0$. All of the following result is no doubt 
known, even in this generality.  We furnish a proof 
amounting to a very  slight generalization of the ideas in \cite[Lemma 1.2]{ES}.

\begin{lemma} \label{suE} For a normal positive map $E : \M \to \N$ between von Neumann algebras
the support projection $z$ above of $E$ exists, and $E(x) =  E(zxz)$ for all $x \in \M$.
We have  $z = 1$ if and only if 
$E$ is faithful.   If $x \in \M_+$ then $E(x) = 0$ if and only if  $zxz = 0$.  

If in addition,  $\N \subset \M$, $E \circ E = E$, and $E$ is contractive, 
then $z E(x) = E(x) z$ for $x \in \M$.    \end{lemma}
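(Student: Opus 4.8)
The plan is to show that $z$ commutes with the range of $E$; since $E(x)$ lies in that range for every $x$, this is exactly the asserted identity $zE(x)=E(x)z$. First I would record that under the extra hypotheses $E$ is a genuine conditional expectation. Indeed a nonzero contractive idempotent satisfies $\|E\|=\|E^2\|\le\|E\|^2$, forcing $\|E\|=1$, so $E$ is a norm-one projection onto the weak*-closed $*$-subalgebra $\N_0=E(\M)$ (weak*-closed because $E$ is normal and $\N_0$ is its fixed-point set $\{x:E(x)=x\}$). By the facts on p.\ 132--133 of \cite{Bla}, such an $E$ is an $\N_0$-bimodule map, i.e.\ $E(bxc)=bE(x)c$ for $b,c\in\N_0$ and $x\in\M$. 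As a von Neumann algebra, $\N_0$ is the norm-closed linear span of its unitaries, so it suffices to show that the projection $p=1-z=z^\perp$ commutes with every unitary $a\in\N_0$.

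The key step exploits the maximality built into the definition of the support projection. Fix a unitary $a\in\N_0$. Since $a,a^*\in\N_0$, the bimodule property gives $E(apa^*)=aE(p)a^*=0$, because $E(p)=0$ by the very definition of $z$ (its complement $p=z^\perp$ is the largest projection annihilated by $E$). The element $apa^*$ is again a projection, as $a$ is unitary and $p$ is a projection. Since $p$ is the largest projection annihilated by $E$, we conclude $apa^*\le p$. Applying the same reasoning to the unitary $a^*$ yields $a^*pa\le p$, and conjugating this inequality by $a$ (multiplying on the left by $a$ and on the right by $a^*$, using $aa^*=1$) gives $p\le apa^*$. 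Hence $apa^*=p$, which is equivalent to $ap=pa$.

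Since this holds for every unitary $a\in\N_0$, the projection $p$, and therefore $z=1-p$, commutes with all of $\N_0$. As $E(x)\in\N_0$ for each $x\in\M$, we conclude $zE(x)=E(x)z$, as required.

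The main obstacle here is conceptual rather than computational: one must resist trying to prove that $z$ lies in $\N_0$ or is central in $\M$, both of which are false in general. For instance, when $E(x)=\varphi(x)1$ for a non-faithful state $\varphi$, the support $z$ is the support projection of $\varphi$ and need not be central, yet the conclusion holds because $\N_0=\Cdb 1$ is central. The correct claim to isolate is that $z$ commutes with the \emph{range} only. Once that is identified, the engine is the interplay between the bimodule identity $E(apa^*)=aE(p)a^*$ and the maximality of the support projection; the only point genuinely requiring care is verifying that the extra hypotheses upgrade $E$ to a bimodule map, so that this identity is available.
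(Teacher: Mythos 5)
There is a genuine gap, and it sits exactly at the point you flag as ``the only point genuinely requiring care.'' Your argument rests on the claim that $\N_0 = E(\M)$ is a weak*-closed $*$-subalgebra of $\M$ because it is the fixed-point set of the normal map $E$. Fixed-point sets of normal positive contractive idempotents are indeed weak*-closed self-adjoint \emph{subspaces}, but they need not be subalgebras: the symmetrization map $E(x) = \frac{1}{2}(x + x^t)$ on $\M = \N = M_2(\Cdb)$ is a normal unital positive contractive idempotent (the transpose is a positive map), yet its range is the set of symmetric matrices, which is not closed under multiplication. Consequently Tomiyama's theorem (the facts from p.\ 132--133 of \cite{Bla}) cannot be invoked, and the bimodule identity $E(apa^*) = aE(p)a^*$ --- the engine of your proof --- is unjustified in the generality of the lemma. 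A second, related problem is that the lemma does not assume $E(1)=1$. If $E(1) \neq 1_{\M}$ (as happens in the paper's own applications, e.g.\ Theorem \ref{wcentnonf}, where the range is the ideal $z\D$), then even granting that $\N_0$ is a von Neumann algebra, its unitaries $a$ satisfy $aa^* = a^*a = E(1) \neq 1_\M$; then $apa^*$ need not be a projection, and the step ``conjugate $a^*pa \leq p$ by $a$ to get $p \leq apa^*$'' uses $aa^* = 1_\M$ and fails.

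Your argument is a correct and pleasantly elementary proof under the extra hypotheses that $E$ is unital and surjective onto the von Neumann subalgebra $\N$ --- then Tomiyama does apply, unitaries of $\N$ are unitaries of $\M$, and the conjugation/maximality argument goes through; this special case covers several of the paper's uses of the lemma. But the lemma is stated and proved without those hypotheses, and the paper's route is designed around their absence: it passes to the unitization $\M^1 = \M \oplus \Cdb$, checks that $E'(x+\lambda 1, \lambda) = (E(x)+\lambda 1, \lambda)$ is a \emph{unital} normal positive idempotent whose support projection is again $z$, and then quotes \cite[Lemma 1.2 (2)]{ES}, whose proof relies on Kadison--Schwarz/Jordan-type arguments and therefore needs no multiplicative structure on the range. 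To repair your proof you must either add the unitality and surjectivity hypotheses explicitly, or replace the Tomiyama step by such a Schwarz-inequality argument. (A minor further point: you take for granted that a \emph{largest} projection annihilated by $E$ exists --- this is the first assertion of the lemma, which the paper derives from the fact that the left kernel $\{x : E(x^*x)=0\}$ is a weak*-closed left ideal --- and your maximality step depends on it.)
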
 

\begin{proof} Let $L_E = \{ x : E(x^* x) = 0 \}$, the left kernel 
of $E$.  For $x \in L_E$ we have  $E(y^* x) = 0$ for all $y \in \M$, 
since for all normal states $\varphi$ on $\N$ we have 
$|\varphi(E(y^* x))| = 0$ by the Cauchy-Schwarz inequality for $\varphi\circ E$.    Then $L_E$ is a weak* closed left ideal in $\M$. The support projection 
$f$ of $L_E$ is the ``largest projection'' mentioned in the statement preceding the Lemma.  
Indeed clearly $E(f) = 0$, and for a projection $p \in \M$ with $E(p) = 0$ 
we have $p \in L_E$ so that $p \leq f$.
 It follows that if $z = f^\perp$  and $x \in \M$ then 
$E(x) = E(xz)$.  Similarly $E(x) = E(zx) = E(zxz)$.   Clearly $z = 1$ if and only if $f = 0$ if and only if
$E$ is faithful.   If $x \in \M_+$ then $E(x) = 0$ if and only if $x = xf$, and 
if and only if $x = fxf$.  Indeed $E(x) = 0$ if and only if 
$x^{\frac{1}{2}} \in L_E$, and if and only if $x^{\frac{1}{2}} = x^{\frac{1}{2}} f$, which implies
$x = xf$ and $x = fxf$.    Conversely, if $x = fxf$ then $E(x) = 0$ by a fact a few lines back.
Thus $E(x) = 0$ if and only if $zxz = 0$ (since the latter implies that $x^{\frac{1}{2}} z = 0$ and
$x^{\frac{1}{2}} f = x^{\frac{1}{2}}$.  

Suppose that $\N \subset \M$, $E \circ E = E$, and $E$ is contractive. Let $\M^1 = \M \oplus \Cdb$. 
Then it is an exercise (or alternatively a consequence of \cite[Theorem 2.3 (2)]{BNjp}) that 
$$E'(x+  \lambda 1,\lambda) = (E(x) + \lambda 1 , \lambda), \qquad x \in \M , \lambda \in \Cdb,$$ 
defines a unital normal idempotent  positive projection on $\M^1$ extending $E$.  Since the projections in $\M^1$ are
the projections in $\M$ together with their orthocomplements in $\M^1$, it is easy to see that 
the support projection of $E'$ equals $z$ above.  Thus $zE(x) = z E'(x) = E'(x) z = E(x) z$ for $x \in \M$
by \cite[Lemma 1.2 (2)]{ES}.
\end{proof} 

\begin{theorem}  \label{wcentnonf} Let  $\mathcal{D} \subset \M$ be an inclusion of  von Neumann algebras. 
Let $\omega$ be a normal state on $\M$, and let $z$ be the  support projection of $\omega_{|\D}$ in $\D$. 
If we have that $\omega(ax)=\omega(xa)$ for every $a\in \mathcal{D}, x\in \M$, then there is 
 an $\omega$-preserving normal idempotent contractive 
completely positive $\D$-module map $\mathbb{E}_\omega$ from 
$\M$ onto  an ideal in  $\mathcal{D}$, with ${\rm supp} (\mathbb{E}_\omega) \leq z$. We have  $\mathbb{E}_\omega(\M) = z \D$. 
Indeed there is a unique $\omega$-preserving normal idempotent contractive $\D$-module map 
from $\M$ onto a $*$-subalgebra of $\mathcal{D}$ with ${\rm supp}  (E) \leq z$.  
\end{theorem}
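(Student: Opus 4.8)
The plan is to reduce everything to the faithful case already settled in Theorem~\ref{wcent}, by cutting $\M$ down to the correct corner. First I would record the structure forced by the hypotheses. Taking $x \in \D$ in the relation $\omega(ax) = \omega(xa)$ shows that $\omega$ restricts to a normal \emph{trace} on $\D$; hence its support projection $z$ (the complement of the largest projection of $\D$ annihilated by $\omega$) is central in $\D$, the set $z\D$ is a von Neumann subalgebra and a two-sided ideal of $\D$, and $\omega$ restricts to a \emph{faithful} normal trace on $z\D$. Since $1-z \in \D$ with $\omega(1-z) = 0$, the Cauchy--Schwarz inequality for $\omega$ gives $\omega((1-z)x) = \omega(x(1-z)) = 0$ for every $x \in \M$, and therefore
\[
\omega(zxz) = \omega(x), \qquad x \in \M.
\]
Although $z$ need not be the support projection of $\omega$ on all of $\M$, this shows that compression by $z$ nonetheless preserves $\omega$; this is the one point that needs care.

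For existence I would apply Theorem~\ref{wcent} to the inclusion $z\D \subseteq z\M z$. Indeed $\omega$ restricts to a normal state on $z\M z$ (as $\omega(z) = 1$) that is faithful and tracial on $z\D$, and the centralizer relation $\omega(ax) = \omega(xa)$ for $a \in z\D,\ x \in z\M z$ is a special case of the hypothesis; note that Theorem~\ref{wcent} does not require faithfulness of $\omega$ on the large algebra. This yields a unique $\omega$-preserving normal conditional expectation $\mathbb{E}' \colon z\M z \to z\D$, and I would set $\mathbb{E}_\omega(x) = \mathbb{E}'(zxz)$ for $x \in \M$. Then $\mathbb{E}_\omega$ is normal, completely positive, and contractive as a composition of such maps; it is the identity on $z\D$ (since $zdz = d$ there) and hence an idempotent onto $z\D$; and $\mathbb{E}_\omega(1-z) = \mathbb{E}'(0) = 0$, whence $\mathrm{supp}(\mathbb{E}_\omega) \leq z$. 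The centrality of $z$ lets one factor $zaxz = (za)(zxz)$ with $za \in z\D$, so the $z\D$-bimodule property of $\mathbb{E}'$ upgrades to $\mathbb{E}_\omega(ax) = (za)\mathbb{E}_\omega(x) = a\,\mathbb{E}_\omega(x)$ (using $\mathbb{E}_\omega(x) \in z\D$), and symmetrically on the right, so $\mathbb{E}_\omega$ is a $\D$-bimodule map onto the ideal $z\D$. Finally $\omega(\mathbb{E}_\omega(x)) = \omega(\mathbb{E}'(zxz)) = \omega(zxz) = \omega(x)$ by the displayed identity, so $\mathbb{E}_\omega$ is $\omega$-preserving.

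For uniqueness, suppose $E \colon \M \to \mathcal{C}$ is any $\omega$-preserving normal idempotent contractive $\D$-module map onto a $*$-subalgebra $\mathcal{C} \subseteq \D$ with $\mathrm{supp}(E) \leq z$ (note such $E$ is positive, so $\mathrm{supp}(E)$ is well defined via Lemma~\ref{suE}, and $\mathrm{supp}(E)\le z$ means $E(1-z)=0$). Writing $c = E(1) \in \mathcal{C} \subseteq \D$, the bimodule property gives $E(d) = cd = dc$ for $d \in \D$, so $c \in Z(\D)$; idempotence forces $c^2 = c$, and $\omega$-preservation gives $\omega((1-c)d) = 0$ for all $d \in \D$, hence $\omega(1-c)=0$ and $c \geq z$. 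Linearity together with $E(1-z) = 0$ gives $c = E(1) = E(z) = cz = z$, so $E(1) = z$, $\mathcal{C} = z\D$, and $E$ agrees with $\mathbb{E}_\omega$ on $\D$. For general $x$, the right-module property and $\omega$-preservation give $\omega(E(x)d) = \omega(E(xd)) = \omega(xd) = \omega(\mathbb{E}_\omega(x)d)$ for all $d \in \D$; putting $a = E(x) - \mathbb{E}_\omega(x) \in z\D$ and $d = a^*$ yields $\omega(aa^*) = 0$, whence $a = 0$ by the traciality and faithfulness of $\omega$ on $z\D$. Therefore $E = \mathbb{E}_\omega$.

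I expect the main obstacle to be the bookkeeping forced by the non-faithfulness of $\omega$ on $\D$: one must identify the correct central support $z$, observe that it reduces the problem to the corner $z\M z$ where Theorem~\ref{wcent} applies even though $\omega$ need not be faithful there, and verify the compression identity $\omega(zxz) = \omega(x)$. Once these are in hand the remaining module, idempotence, normality, and faithfulness verifications are routine.
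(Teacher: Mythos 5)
Your proposal is correct and takes essentially the same route as the paper's proof: since $\omega$ is tracial on $\D$ the support $z$ of $\omega_{|\D}$ is central in $\D$, one applies Theorem \ref{wcent} to the inclusion $z\D \subset z\M z$ (where faithfulness on the subalgebra holds) and composes the resulting expectation with compression by $z$, exactly as in the paper. Your only deviations are cosmetic: you justify $\omega(zxz)=\omega(x)$ by Cauchy--Schwarz rather than the multiplicative-domain argument, and for uniqueness you argue directly via $\omega\bigl((E(x)-\mathbb{E}_\omega(x))d\bigr)=0$ and faithfulness of $\omega$ on $z\D$ instead of citing the uniqueness clause of Theorem \ref{wcent} on the corner, which amounts to the same mechanism.
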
 

\begin{proof}  Let $z$ be the support projection in $\D$ for $\omega_{|\D}$. The left kernel of $\omega_{|\D}$ in $\D$ is an ideal 
in this case, so $z$ is central in $\D$.  Then $z \mathcal{D} \subset z \M z$ is an inclusion of  von Neumann algebras 
as in Theorem  \ref{wcent}, and by that result there is a unique   normal conditional expectation $F$ from $z \M z$ onto 
$z \mathcal{D}$ which preserves  $\omega_{|z \M z}$.    

Composing with the normal idempotent contractive $\D$-module  map $x \mapsto zxz$ we get  
a  normal idempotent contractive  map $\mathbb{E}_\omega$ from $\M$ onto $z \mathcal{D}$. \ Clearly $\mathbb{E}_\omega$  is a 
$\D$-module map since e.g.\ 
$$\mathbb{E}_\omega(d x) = F(zdxz) = zd F(zxz) = d F(zxz) = d \mathbb{E}_\omega(x), \qquad d \in \D, x \in \M .$$ 
We have 
$$\omega(\mathbb{E}_\omega(x)) = \omega_{|z \M z} (\mathbb{E}_\omega(zxz)) =\omega_{|z \M z} (zxz) = \omega(x) , \qquad x \in \M.$$ 
The last equality holds since $z$ is in the multiplicative domain of $\omega$ (see e.g.\ \cite[Proposition 1.3.11]{BLM}).  
Of course $\mathbb{E}_\omega(1-z) = 0$, so that ${\rm supp}  (\mathbb{E}_\omega) \leq z$.     

Suppose that $E$ is an $\omega$-preserving normal idempotent contractive $\D$-module map 
from $\M$ onto an ideal $R$ of $\mathcal{D}$ with ${\rm supp} \, (E) \leq z$.  Now $E(z) = z E(1) z \leq z$,
and $\omega(z- E(z)) = 0$, so that $z = E(z)$.
Also $E(1-z) = 0$  and so $E(1) =  E(z) = z$.
The restriction of $E$ to $z \M z$  is a conditional expectation onto $z \D$ which preserves $\omega_{|z \M z}$. 

By the uniqueness assertion in the first paragraph the restriction of $E$ to $z \M z$ 
is uniquely determined, and indeed  $E(x) = E(zxz) = F(zxz)  = \mathbb{E}_\omega(x)$ for $x \in \M_+$. 
\end{proof}  

\begin{remark} 
The condition in the last result that  ${\rm supp} \, (E) \leq z$ is 
necessary, as is clear from the following example. Let  $\M = M_3$ with $\omega(a) = a_{33}$, and let 
$\D = D_3$, the diagonal matrices.  Then $\mathbb{E}_\omega(a) = \omega(a) \, z$. 
However if $F$ is any idempotent contractive $D_2$-module  map from $M_2$ onto a $*$-subalgebra of $D_2$
then  $E(a) = F(a) \oplus \omega(a)$ is an $\omega$-preserving normal idempotent contractive $\D$-module map 
onto a $*$-subalgebra of $\D$.  Since there are many such maps $F$ in general,
there may be many such $E$.  Hence the `uniqueness' 
statement at the end of the theorem  is violated.
\end{remark}

We may call $\mathbb{E}_\omega$ in the last result {\em the 
weight-preserving conditional expectation} from $\M$ to $\D$ associated with the 
normal state 
$\omega$ on $\M$.

 \begin{corollary}  \label{unfco}  Let  $\mathcal{D} \subset \M$ be an inclusion of  von Neumann algebras.  Let $\omega$ be a normal state on $\M$
 such that $\omega(ax)=\omega(xa)$ for every $a\in \mathcal{D}, x\in \M$,
and let $z$ be the support in $\D$ of $\omega_{|\D}$. 
 There is a bijective correspondence between surjective normal  idempotent contractive $\D$-module maps $E : \M \to \D z$ with ${\rm supp}  (E) \leq z$, and  extensions of $\omega_{|\D}$ to normal states $\rho$ on all of $\M$ which have $\D$ in the centralizer.    
Under this correspondence, the support projections of $E$ and $\rho$ in $\M$ are the same, and 
$\rho = \rho \circ E = \omega \circ E$.    Also, such maps $E$ are completely positive.
 \end{corollary}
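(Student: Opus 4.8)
The plan is to set up two explicit assignments between the two families and prove they are mutually inverse. Given a surjective normal idempotent contractive $\D$-module map $E : \M \to \D z$ with ${\rm supp}(E) \le z$, I will associate the functional $\rho := \omega \circ E$; conversely, given a normal state $\rho$ extending $\omega_{|\D}$ with $\D$ in its centralizer, I will associate the map $\mathbb{E}_\rho$ supplied by Theorem \ref{wcentnonf}. The bulk of the work is verifying that each assignment lands in the correct family, that the two are inverse to one another, and that the asserted support and positivity properties hold.

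First I would check that $\rho = \omega \circ E$ is a normal state extending $\omega_{|\D}$ with $\D$ in its centralizer. Normality and positivity are inherited from $\omega$ and $E$. Since $E$ is an idempotent $\D$-module map onto $\D z$ with ${\rm supp}(E) \le z$, one has $E(1-z)=0$ and $z \in \D z = {\rm ran}(E)$, so $E(1)=E(z)=z$ and hence $\rho(1)=\omega(z)=1$ (as $z$ is the support of the state $\omega_{|\D}$). For $d \in \D$ the module property gives $E(d)=dz$, so $\rho(d)=\omega(dz)=\omega(d)$, i.e. $\rho$ extends $\omega_{|\D}$. For the centralizer condition, the bimodule property yields $\rho(dx)=\omega(dE(x))$ and $\rho(xd)=\omega(E(x)d)$; since $E(x)\in\D$, the standing hypothesis $\omega(ay)=\omega(ya)$ (for $a\in\D$) applied with $a=d$, $y=E(x)$ makes these equal. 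The reverse assignment is handled by applying Theorem \ref{wcentnonf} to $\rho$: as $\rho$ has $\D$ in its centralizer and $\rho_{|\D}=\omega_{|\D}$ has support $z$, that theorem produces a $\rho$-preserving normal idempotent contractive (indeed completely positive) $\D$-module map $\mathbb{E}_\rho$ of $\M$ onto $\D z$ with ${\rm supp}(\mathbb{E}_\rho)\le z$, exactly as required.

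Next I would verify the two compositions. Starting from $\rho$, the composite $\omega\circ\mathbb{E}_\rho$ recovers $\rho$: for any $x$, $\mathbb{E}_\rho(x)\in\D z\subset\D$, so $\omega(\mathbb{E}_\rho(x))=\rho(\mathbb{E}_\rho(x))=\rho(x)$, using that $\omega$ and $\rho$ agree on $\D$ and that $\mathbb{E}_\rho$ is $\rho$-preserving. Starting from $E$, put $\rho=\omega\circ E$; the key observation is that $E$ is automatically $\rho$-preserving, since $\rho\circ E=\omega\circ E\circ E=\omega\circ E=\rho$ by idempotency. Thus $E$ is a $\rho$-preserving normal idempotent contractive $\D$-module map onto the $*$-subalgebra $\D z$ with ${\rm supp}(E)\le z$, and the uniqueness clause of Theorem \ref{wcentnonf} forces $E=\mathbb{E}_\rho$. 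This mutual-inverse step is where I expect the real content to lie; the equalities $\rho=\rho\circ E=\omega\circ E$ fall out of this computation, and complete positivity of every such $E$ follows because each is identified with some $\mathbb{E}_\rho$, which Theorem \ref{wcentnonf} guarantees is completely positive.

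Finally I would match up the support projections in $\M$. The crucial point is that $z$ being the support of $\omega_{|\D}$ means $\omega$ is \emph{faithful} on $\D z$; since $E(x)\in(\D z)_+$ for $x\in\M_+$, this gives $\rho(x)=\omega(E(x))=0$ if and only if $E(x)=0$. Hence a projection $p$ satisfies $\rho(p)=0$ exactly when $E(p)=0$, so the largest such projection — and therefore its complement — coincides for $\rho$ and for $E$, using the description of ${\rm supp}(E)$ in Lemma \ref{suE}. The main obstacle is less any single hard estimate than keeping the three projections straight and correctly invoking the uniqueness in Theorem \ref{wcentnonf} with $\rho$ (rather than $\omega$) as the distinguished state; once faithfulness of $\omega$ on $\D z$ is isolated, the support identity is immediate.
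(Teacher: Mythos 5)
Your overall strategy is exactly the paper's: the assignment $E \mapsto \omega\circ E$ in one direction, Theorem \ref{wcentnonf} applied to $\rho$ in the other, mutual inverseness extracted from the uniqueness clause of that theorem, and the support identification via faithfulness of $\omega$ on $\D z$. The extra detail you give on the two compositions is correct and fills in what the paper leaves implicit. There is, however, one genuine gap, and it sits at the very first step of the forward direction: you assert that positivity of $\rho = \omega \circ E$ is ``inherited from $\omega$ and $E$''. Positivity of $E$ is not a hypothesis --- $E$ is only assumed to be a surjective normal idempotent \emph{contractive} $\D$-module map, and (complete) positivity of such maps is part of the \emph{conclusion} of the corollary. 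Since the state-ness of $\rho$ is what licenses the application of Theorem \ref{wcentnonf} to $\rho$, and hence the identification $E = \mathbb{E}_\rho$ from which you later deduce complete positivity of $E$, the argument as written is circular at precisely this load-bearing point.

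The repair is easy and uses only material you already wrote down, and it is exactly the move the paper makes: you established $E(1)=E(z)=z$ and $\omega(z)=1$, so $\rho=\omega\circ E$ is a normal functional with $\|\rho\|\le\|\omega\|\,\|E\|\le 1$ and $\rho(1)=1$; a contractive functional on a unital $C^*$-algebra attaining the value $1$ at the identity is automatically positive, so $\rho$ is a state with no appeal to positivity of $E$. (Alternatively one can get positivity of $E$ directly: the module property and centrality of $z$ in $\D$ give $E(x)=E(zxz)$ and $E(z)=z$, so $E_{|z\M z}$ is a unital contractive idempotent onto the $C^*$-algebra $\D z$, hence positive by Tomiyama; this is also what legitimizes your later uses of $E(x)\in(\D z)_+$ and of Lemma \ref{suE} for $E$ before the identification $E=\mathbb{E}_\rho$ is available.) With that single correction your proof is complete and coincides with the paper's.
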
 

\begin{proof} 
If $E : \M \to \D z$ is a surjective normal idempotent contractive $\D$-module map  with ${\rm supp}  (E) \leq z$ then
$\omega \circ E$ is a normal contractive functional with $\D$ in the centralizer, and $$\omega(E(d)) = \omega(E(dz)) = \omega(dz) = \omega(d),
\qquad d \in \D.$$ In particular $\omega(E(1)) = 1$ so that $\omega \circ E$ is a state.

Conversely, given an extension of $\omega_{|\D}$ to a normal state $\rho$ on  $\M$ which has $\D$ in the centralizer,
then by Theorem \ref{wcentnonf} there is a unique surjective $\rho$-preserving normal idempotent contractive 
completely positive $\D$-module map 
from $\M$ onto  $\mathcal{D} z$ with ${\rm supp}  (E) \leq z$.  

Note that $E(x) = 0$ if and only if $\omega(E(x)) = 0$,
for $x \in \M_+$.   Thus the support projections of $E$ and $\omega \circ E$ are the same.    
\end{proof}

\begin{remark}
1)\ The `conditional expectations' $E$ in Corollary \ref{unfco} need not be nicely related 
to the conditional expectation $\mathbb{E}_\omega$ in 
Theorem \ref{wcentnonf}, unlike the case mentioned early in the introduction where we discussed `weight functions' $h$,
 and e.g.\ in Remark  \ref{afterg} (1).
  A good example to see some of the issues that can arise in this `nonfaithful' case
is as follows.   Let $\M = M_3$ and $\D = \Cdb I_3 + \Cdb E_{33}$, and let $\omega(x) = x_{11}$.   The `conditional expectations' $E$ in Corollary \ref{unfco}
correspond to the density matrices in $M_2$ (i.e.\ positive trace 1 matrices in $M_2$).   Only some of these 
density matrices are related to 
$\mathbb{E}_\omega$ or to `Radon-Nikodym derivatives' of $\omega$, in contrast to the faithful case (see e.g.\ Corollary \ref{Hcongen}
or Corollary \ref{Hcon},  and the remark between these results).

If however the support projections of $\omega$ and $\omega_{| \D}$ agree, 
then there is much more that one can say.   Indeed we may cut down to $z \M z$ in this case, on which algebra 
$\omega$ and  $\omega_{| \D}$ are faithful, 
and on $z \M z$ simply apply the theory of von Neumann algebraic conditional expectations, and the later result 
Corollary \ref{Hcongen}.       

\smallskip

2)\  The proof above shows that every surjective normal  idempotent contractive $\D$-module map $E : \M \to \D z$ with ${\rm supp}  (E) \leq z$
is the 
`weight-preserving conditional expectation' on $\M$ associated with a state on $\M$ (namely the state $\omega \circ E$). 
\end{remark}

The following simple result  is similar to  \cite[Corollary 3.1]{BLv}. 

\begin{corollary} \label{HRf}  Suppose that  $\D$ is a von Neumann subalgebra of  a von Neumann algebra $\M$,
and that  $\M$ has a  normal weight $\omega$   such that $\omega$ is 
faithful on $\D$, and such that there exists an $\omega$-preserving conditional expectation 
$\mathbb{E}_\omega : \M \to \D$.
Let $\Phi : \A \to \mathcal{D}$ be an $\omega$-preserving unital  linear 
map on a unital 
subalgebra $\A$ of $\M$  containing $\D$, which is a $\mathcal{D}$-bimodule map.
 Then there is a unique  $\omega$-preserving normal conditional expectation $\M \to \mathcal{D}$ extending 
$\Phi$. 
\end{corollary}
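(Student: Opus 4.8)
The plan is to show that the hypothesised map $\mathbb{E}_\omega$ is \emph{itself} the desired extension, so that the entire content reduces to the two assertions (a) $\mathbb{E}_\omega|_{\A} = \Phi$, and (b) uniqueness. First I would record that the existence of an $\omega$-preserving conditional expectation onto $\D$ already forces $\omega|_{\D}$ to be semifinite and (by \cite[Theorem IX.4.2]{tak2}) forces $\sigma^\omega_t(\D) = \D$, whence the $\omega$-preserving conditional expectation onto $\D$ is the unique, and in particular \emph{normal}, such map, exactly as in the uniqueness discussion in the proof of Theorem \ref{wcent}. Thus $\mathbb{E}_\omega$ is a normal $\omega$-preserving conditional expectation onto $\D$, and it only remains to see that it restricts to $\Phi$ on $\A$.

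The heart of the matter is the identity $\Phi = \mathbb{E}_\omega|_{\A}$. Both $\Phi$ and $\mathbb{E}_\omega$ are unital $\omega$-preserving $\D$-bimodule maps, and $\A$ is an algebra containing $\D$; the key observation is that for \emph{any} such map $T$, any $a \in \A$ and $d \in \D$ one has $\omega(T(a)d) = \omega(T(ad)) = \omega(ad)$ (using the right-module property and that $ad \in \A$), a quantity which does not depend on $T$. Consequently $\omega((\Phi(a) - \mathbb{E}_\omega(a))d) = 0$ for all relevant $d$, and faithfulness of $\omega$ on $\D$ should force $c := \Phi(a) - \mathbb{E}_\omega(a) = 0$.

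The main obstacle is precisely that $\omega$ is a weight rather than a state, so the expressions $\omega(T(ad))$ and $\omega(ad)$ only make sense inside the definition domain $\mathfrak{m}_\omega$, and faithfulness may only be applied to a genuinely $\omega$-finite positive element. I would dispose of both issues by localising along the directed net $(e_i)$ in $\mathfrak{p}(\D)_\omega$ with $e_i \nearrow 1$ furnished by semifiniteness of $\omega|_{\D}$. Since $e_i^{1/2} \in \D \subset \A$, the element $a_i := e_i^{1/2} a\, e_i^{1/2}$ lies in $\A \cap \mathfrak{m}_\omega$, the bimodule property gives $\Phi(a_i) - \mathbb{E}_\omega(a_i) = e_i^{1/2} c\, e_i^{1/2} =: c_i \in \mathfrak{m}(\D)_\omega$, and now the computation of the previous paragraph, taken with $d = c_i^*$ (legitimately inside $\mathfrak{m}_\omega$), yields $\omega(c_i c_i^*) = \omega(a_i c_i^*) - \omega(a_i c_i^*) = 0$. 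Faithfulness of $\omega$ on $\D$ then gives $c_i = e_i^{1/2} c\, e_i^{1/2} = 0$ for every $i$; letting $e_i \nearrow 1$ (so $e_i^{1/2} \to 1$ strongly and hence $e_i^{1/2} c\, e_i^{1/2} \to c$ strongly) forces $c = 0$, i.e.\ $\Phi(a) = \mathbb{E}_\omega(a)$ for every $a \in \A$.

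Finally, for uniqueness I would observe that any $\omega$-preserving normal conditional expectation $\M \to \D$ extending $\Phi$ is in particular an $\omega$-preserving normal conditional expectation onto $\D$, and such a map is unique: if $\rho$ is another one, then $\omega((\rho(x) - \mathbb{E}_\omega(x))d) = \omega(xd) - \omega(xd) = 0$ for $d$ ranging over the same $\omega$-finite family, whence $\rho = \mathbb{E}_\omega$ by faithfulness on $\D$, exactly the two-line argument of Theorem \ref{wcent}. Therefore $\mathbb{E}_\omega$ is the unique $\omega$-preserving normal conditional expectation extending $\Phi$, which completes the proof.
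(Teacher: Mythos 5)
Your core argument coincides with the paper's own proof: the paper likewise takes $\mathbb{E}_\omega$ itself as the desired extension, derives $\Phi=\mathbb{E}_\omega|_{\A}$ from the chain $\omega(d\Phi(a))=\omega(\Phi(da))=\omega(da)=\omega(\mathbb{E}_\omega(da))=\omega(d\mathbb{E}_\omega(a))$ (the bimodule property of each map plus $\omega$-preservation), followed by faithfulness of $\omega$ on $\D$, and it settles uniqueness by exactly the two-line computation from the proof of Theorem~\ref{wcent} that you quote. Your third paragraph, which cuts everything down by a net $(e_i)$ in $\mathfrak{p}(\D)_\omega$ so that all quantities being compared lie in $\mathfrak{m}_\omega$ before faithfulness is invoked, is a legitimate (and welcome) rigorization of a step the paper performs without comment; the verifications there (that $e_i^{1/2}a e_i^{1/2}$ and $a_ic_i^*$ lie in $\mathfrak{m}_\omega$, and that $e_i^{1/2}ce_i^{1/2}\to c$ strongly) all go through.

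The step that does not survive scrutiny is your opening paragraph. The modular group $\sigma^\omega_t$, and with it \cite[Theorem IX.4.2]{tak2}, are only available when $\omega$ is a faithful normal \emph{semifinite} weight on all of $\M$; here $\omega$ is merely a normal weight on $\M$ which is faithful on the subalgebra $\D$. So you cannot deduce from the hypotheses that $\sigma^\omega_t(\D)=\D$, nor that the hypothesised $\mathbb{E}_\omega$ is automatically normal, nor that $\omega|_{\D}$ is semifinite. This is not merely cosmetic for your write-up: your localization needs the net $e_i\nearrow 1$ in $\mathfrak{p}(\D)_\omega$ (i.e.\ semifiniteness of $\omega|_{\D}$), and the stated conclusion needs normality of $\mathbb{E}_\omega$. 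The paper obtains both by reading rather than by proof: under its standing convention (see the Introduction) a `conditional expectation' is normal, so normality of $\mathbb{E}_\omega$ is part of the hypothesis, and the various evaluations of $\omega$ are tacitly taken to be meaningful. If you likewise treat normality of $\mathbb{E}_\omega$ (and, for your localization, semifiniteness of $\omega$ on $\D$) as given rather than derived, then the remainder of your argument is correct and is essentially identical to the paper's.
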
 

\begin{proof} The desired expectation is 
the map $\mathbb{E}_\omega$.
To see this let $d\in\mathcal{D}$ and $a\in \A$ be given. It follows from the hypothesis 
that 
$$\omega(d\Phi(a))= \omega(\Phi(da))=\omega(da)=\omega(\mathbb{E}_\omega(da))=\omega(d\mathbb{E}_\omega(a)).$$
Since $\omega$ is faithful on $\mathcal{D}$ and $d\in\mathcal{D}$ arbitrary, this equality suffices to show that $\Phi(a)=\mathbb{E}_\omega(a)$ as claimed. 
The uniqueness is similar, and 
follows as in the proof of Theorem \ref{wcent}.  
\end{proof}

\section{Weight functions and characterizations of normal conditional expectations} \label{Gcex}

Let $\D$ be a von Neumann subalgebra of a von Neumann algebra  $\M$, and let $\nu$ be a faithful normal 
semifinite weight on $\M$. 
We again recall that as in \cite{tak2}, given a positive operator $h$ affiliated to $\M_\nu$, we write $\nu_h$ for the weight $\lim_{\epsilon\searrow 0} \, \nu(h_\epsilon^{1/2} x h_\epsilon^{1/2})$.

We say that a normal conditional expectation $E : \M \to \D$ {\em commutes with} $\nu$ if 
$\nu \circ E$ commutes with $\nu$ in the sense of {\rm \cite{PT}} (so e.g.\ $\nu \circ E \circ \sigma^\nu_t = \nu \circ E$ for $t\in\mathbb{R}$).

\begin{lemma}  \label{Ecomm}  
Let $\D$ be a von Neumann subalgebra of von Neumann algebra $\M$, and let $\nu$ be a faithful normal 
semifinite weight on $\M$.  Then a normal conditional expectation $E : \M \to \D$ commutes with $\nu$ if and only if 
$E \circ \sigma^\nu_t = \sigma^\nu_t  \circ  E$ for all $t\in\mathbb{R}$. \end{lemma}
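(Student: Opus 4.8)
Write $\psi := \nu \circ E$, a normal weight on $\M$; in each direction this is the object being compared with $\nu$. The plan is to treat the two implications separately, the forward one being a one-line computation and the reverse one carrying essentially all the content. For the direction $E \circ \sigma^\nu_t = \sigma^\nu_t \circ E \Rightarrow$ ``$E$ commutes with $\nu$'', I would simply use that the modular group preserves its own weight, $\nu \circ \sigma^\nu_t = \nu$, to get $\psi \circ \sigma^\nu_t = \nu \circ E \circ \sigma^\nu_t = \nu \circ \sigma^\nu_t \circ E = \nu \circ E = \psi$ for every $t$, which is precisely the statement that $\psi = \nu\circ E$ commutes with $\nu$ in the sense of \cite{PT}.

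For the converse the starting point is the Pedersen--Takesaki theorem: since $\psi$ commutes with $\nu$, we may write $\psi = \nu_h$ for a unique positive $h$ affiliated with the centralizer $\M_\nu$, and the associated Connes cocycle is $[D\psi:D\nu]_t = h^{it}$, so that $\sigma^\psi_t = \mathrm{Ad}(h^{it}) \circ \sigma^\nu_t$; here $h^{it}$ lies in $\M_\nu$ and, since $\sigma^\psi_t(h^{is}) = h^{is}$, also in $\M_\psi$. The next observation is that $E$ is automatically $\psi$-\emph{preserving}: because $E$ is idempotent onto $\D$, we have $\psi \circ E = \nu \circ E \circ E = \nu \circ E = \psi$. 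Applying Takesaki's theorem \cite[Theorem IX.4.2]{tak2} to the fns weight $\psi$ and the subalgebra $\D$ (on which $\psi$ restricts to $\psi|_\D = \nu|_\D$) then yields $\sigma^\psi_t(\D) = \D$, the identity $\sigma^\psi_t|_\D = \sigma^{\nu|_\D}_t$, and crucially the commutation $E \circ \sigma^\psi_t = \sigma^\psi_t \circ E$.

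The heart of the proof, and the step I expect to be the main obstacle, is to promote this to the statement that the cocycle $h^{it}$ lies in the relative commutant $\D' \cap \M$ (equivalently, that $\sigma^\nu_t(\D) = \D$). The plan is to exploit the relation $\sigma^\psi_t(d) = h^{it}\,\sigma^\nu_t(d)\,h^{-it}$ for $d \in \D$ in tandem with $\sigma^\psi_t|_\D = \sigma^{\nu|_\D}_t$: once one knows that $\sigma^\nu_t$ leaves $\D$ invariant and agrees there with $\sigma^{\nu|_\D}_t$, this identity forces $h^{it}$ to commute with every element of $\D$. This is exactly the point where the conditional-expectation structure and the commuting hypothesis must be combined, since the invariance of $\psi$ controls $\sigma^\psi$ on $\D$ but not $\sigma^\nu$; I would expect to use here the Tomita--Takesaki/KMS analysis of $h$ as a $\sigma^\nu$-fixed (hence $\sigma^\nu$-analytic) element, and the strong-commutation fact of Proposition \ref{commhk}.

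Finally, granting $h^{it} \in (\D' \cap \M) \cap \M_\psi$, I would close the argument using that $E$ is characterized among maps into $\D$ by $\psi(E(y)\,d) = \psi(yd)$ for all $d \in \D$. Writing $\sigma^\nu_t = \mathrm{Ad}(h^{-it}) \circ \sigma^\psi_t$ and using that $h^{it}$ is $\psi$-central (so it may be cycled under $\psi$) and commutes with $\D$, one obtains, for $x \in \M$ and $d \in \D$, the chain $\psi(E(\sigma^\nu_t(x))\,d) = \psi(\sigma^\nu_t(x)\,d) = \psi(\sigma^\psi_t(x)\,d) = \psi(\sigma^\psi_t(E(x))\,d) = \psi(\sigma^\nu_t(E(x))\,d)$. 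Since $\sigma^\nu_t(\D) = \D$ both $E(\sigma^\nu_t(x))$ and $\sigma^\nu_t(E(x))$ lie in $\D$, and $\psi|_\D$ is faithful, so they coincide; this gives $E \circ \sigma^\nu_t = \sigma^\nu_t \circ E$, completing the converse.
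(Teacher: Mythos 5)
Your forward implication is the same one\--liner as the paper's, and your reduction of the converse to the single claim that $h^{it}\in\D'\cap\M$ (equivalently, that $\sigma^\nu_t(\D)=\D$) is an accurate diagnosis of where the content lies --- but that claim is exactly where your proof stops being a proof. You flag it as ``the main obstacle'' and offer only a plan (``I would expect to use\dots''), and in fact no argument can close it, because the claim does not follow from the hypotheses of the lemma as stated: $\sigma^\nu$-invariance of $\nu\circ E$ does not force $\sigma^\nu_t$ to leave $\D$ invariant. Concretely, let $\M=M_2$, let $\D$ be the diagonal subalgebra, let $E$ be the unique conditional expectation onto $\D$ (it is normal and faithful), and let $\nu=\mathrm{Tr}(\rho\,\cdot)$ with $\rho=\frac{1}{2}\I+\frac{1}{4}(e_{12}+e_{21})$. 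Then $\psi:=\nu\circ E=\frac{1}{2}\mathrm{Tr}$ is a faithful normal trace, hence invariant under $\sigma^\nu_t=\mathrm{Ad}(\rho^{it})$ (and $\sigma^\psi_t=\mathrm{id}$, so $\psi$ and $\nu$ commute in any reasonable sense, and $\psi$ is fns so even the secondary issues about applying \cite[Theorem IX.4.2]{tak2} to $\psi$ do not arise here); but $\sigma^\nu_t(e_{11})$ has off-diagonal part $i\sin(\beta t)\cos(\beta t)(e_{21}-e_{12})\neq 0$, where $\beta=\frac{1}{2}\log 3$, so $E(\sigma^\nu_t(e_{11}))\neq \sigma^\nu_t(e_{11})=\sigma^\nu_t(E(e_{11}))$ and the conclusion fails.

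So the gap in your write-up is genuine, but it is a defect of the statement rather than of your strategy: the ``if'' direction needs the additional hypotheses that $\nu$ is semifinite on $\D$ and $\sigma^\nu_t(\D)\subset\D$ --- precisely the standing assumptions of Theorem \ref{gencor}, where the lemma is applied. Under those hypotheses your outline does complete: $\sigma^\psi_t|_{\D}=\sigma^{\nu|_{\D}}_t=\sigma^\nu_t|_{\D}$ then forces $h^{it}\in\D'\cap\M$, and your closing chain of equalities is valid (an even quicker finish: $\sigma^\nu_{-t}\circ E\circ\sigma^\nu_t$ is a normal conditional expectation onto $\sigma^\nu_{-t}(\D)=\D$ preserving $\psi$, hence equals $E$ by the uniqueness part of Takesaki's theorem). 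That route --- Pedersen--Takesaki, Connes cocycles, and Takesaki's theorem for $\psi$ --- is genuinely different from the paper's, which argues by expanding $\nu(|E(\sigma^\nu_t(a))-\sigma^\nu_t(E(a))|^2)$ for $a\in\mathfrak{n}_{\nu\circ E}$ and asserting equation (\ref{eq-commexp}). Be aware, though, that the same $2\times 2$ example breaks that computation as well: for $a=e_{11}$ the left side of (\ref{eq-commexp}) equals $\sin^2(\beta t)\cos^2(\beta t)>0$ while the right side equals $-\sin^2(\beta t)\cos^2(\beta t)<0$. So your attempt and the paper's proof founder at the same underlying point, and the repair in both cases is to add the invariance (and semifiniteness) hypothesis, not to find a cleverer argument from the stated ones.
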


\begin{proof} 
Since  $\nu \circ \sigma^\nu_t = \nu$ the one direction is easy.  
Conversely, suppose that $\nu \circ E \circ \sigma^\nu_t = \nu \circ E$ for $t\in\mathbb{R}$. 
If $a\in \mathfrak{n}_{\nu\circ E}$ then 
$\nu\circ E(|\sigma_t^\nu(a)|^2)=\nu\circ E(|a|^2)$. So for any $t$, $\sigma_t^\nu(a)$ will then again belong to $\mathfrak{n}_{\nu\circ E}$. Similarly if $a\in \mathfrak{n}_{\nu\circ E}$, we may use the Kadison-Schwarz inequality to see that $\nu\circ E(|E(a)|^2)\leq \nu\circ E(E(|a|^2)) =\nu\circ E(|a|^2)$. Thus $E(a)$ will then again belong to $\mathfrak{n}_{\nu\circ E}$. For any $a\in \mathfrak{n}_{\nu\circ E}$ and any $t\in\mathbb{R}$ 
each of the terms in the expansion of $\nu(|E(\sigma_t^\nu(a)) -\sigma_t^\nu(E(a))|^2)$
 as $$\nu(|E(\sigma_t^\nu(a))|^2) - \nu(E(\sigma_t^\nu(a))^*\sigma_t^\nu(E(a))) - \nu(\sigma_t^\nu(E(a))^*E(\sigma_t^\nu(a)))+\nu(\sigma_t^\nu(|E(a)|^2)),$$ is  finite. Repeated use of the facts that $\nu \circ \sigma^\nu_t = \nu$ and $\nu \circ E \circ \sigma^\nu_t = \nu\circ E$
then shows that
\begin{equation}\label{eq-commexp}
\nu(|E(\sigma_t^\nu(a)) -\sigma_t^\nu(E(a))|^2) = \nu(|E(\sigma_t^\nu(a))|^2) - \nu(|E(a)|^2) .
\end{equation}
We therefore have  $\nu(|E(\sigma_t^\nu(a))|^2) - \nu(|E(a)|^2)$ nonnegative for each $t\in\mathbb{R}$. But since 
$\sigma_t^\nu(a)\in \mathfrak{n}_{\nu\circ E}$ for any $t$, we must by symmetry also have that $$0\leq\nu(|E(\sigma_{-t}^\nu(\sigma_t^\nu(a)))|^2) - \nu(|E(\sigma_t^\nu(a))|^2)=\nu(|E(a)|^2) - \nu(|E(\sigma_t^\nu(a))|^2)$$ for each $t$. Thus $\nu(|E(\sigma_t^\nu(a))|^2) - \nu(|E(a)|^2) = 0$ for each $t\in\mathbb{R}$, in which case we will  have by equation (\ref{eq-commexp})
that $\nu(|E(\sigma_t^\nu(a)) -\sigma_t^\nu(E(a))|^2)=0$ for each $t$. The faithfulness of $\nu$ then ensures that $E(\sigma_t^\nu(a)) = \sigma_t^\nu(E(a))$ for all $a\in \mathfrak{n}_{\nu\circ E}$. By normality this equality holds on all of $\M$. Thus $E \circ \sigma^\nu_t = \sigma^\nu_t  \circ  E$.
\end{proof}

Thus  $E$  commutes with $\nu$ if  and only if $E$  commutes with $(\sigma^\nu_t)$.   In particular, 
by 
Remark \ref{crossprod2} (or \cite[Proposition 4.9]{haag-OV1}), $E_{\D}$ commutes with 
$\nu$, where $E_{\D}$ is  the unique weight-preserving conditional expectation of $\M$ onto $\D$ associated with 
the weight $\nu$ (from \cite[Theorem IX.4.2]{tak2}). 

We recall that two selfadjoint unbounded operators $S, T$ on $H$ {\em commute strongly} if  all of their Borel spectral projections commute.
This is equivalent to saying that there is a commutative von Neumann algebra $\N$ on $H$ which both $S$ and $T$ are affiliated with (see e.g.\ 
\cite[Theorem 5.6.15]{KR1}).  For the one direction of this equivalence take the von Neumann algebra $\N$ generated by these two commuting families
of spectral projections (we have $S \, \eta \, W^*(S) \subset \N$ so that $S \, \eta \, \N$, and similarly for $T$).  For the other direction recall that 
$W^*(S)$ is the smallest von Neumann algebra on $H$ with which  $S$ is  affiliated, and similarly for $T$.  So all the spectral projections of $S$ and $T$ are in $\N$, 
and hence they commute.

Before proving the main theorem of this section, we need some insight into how one may describe the weights of the form described in equation (\ref{PTRN}) in Haagerup $L^p$-space terms. This is the topic of the next result.

\begin{proposition} \label{commhk} Let $\nu$ be a faithful normal semifinite weight on $\M$. A possibly unbounded positive operator  $h$ is affiliated with $\M_\nu^+$ if and only if $h \,  \eta \, \M$ and 
 $h$ strongly commutes with $k_\nu$, where $k_\nu$ denotes the density $\frac{d\widetilde{\nu}}{d\tau}$ of the dual weight $\widetilde{\nu}$ on $\M\rtimes_\nu\mathbb{R}$.   In particular, 
if  $\D$ is a von Neumann subalgebra of $\M$ then $\D$ is contained in the centralizer of $\nu$ if and only if 
$k_\nu$ commutes strongly with $\D$.   
Moreover for any positive operator $h$ affiliated with $\M_\nu^+$, the dual weight of $\nu_h$ is given by the formula $\lim_{\epsilon\searrow 0} \widetilde\nu(h_\epsilon^{1/2}\cdot h_\epsilon^{1/2})$, with the strong product $k=k_\nu\cdot h$ corresponding to the density $\frac{d\widetilde{\nu_h}}{d\tau}$ of the dual weight $\widetilde{\nu_h}$ on $\M\rtimes_\nu\mathbb{R}$.
\end{proposition}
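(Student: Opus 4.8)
The plan is to transfer everything to the crossed product $\mathfrak{M}=\M\rtimes_\nu\mathbb{R}$, where by Remark \ref{crossprod} the modular group of the dual weight $\widetilde\nu$ is implemented by $\lambda_t=k_\nu^{it}$, one has $\sigma_t^{\widetilde\nu}\circ\pi=\pi\circ\sigma_t^\nu$, and $k_\nu=\frac{d\widetilde\nu}{d\tau}$ for the canonical trace $\tau$. For a positive $h\,\eta\,\M$ with spectral projections $(e_\lambda)\subset\M$, one has $h\,\eta\,\M_\nu$ if and only if each $e_\lambda\in\M_\nu$, i.e.\ $\sigma_t^\nu(e_\lambda)=e_\lambda$ for all $t$. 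Applying the injective map $\pi$ and the intertwining relation above, this is equivalent to $\sigma_t^{\widetilde\nu}(\pi(e_\lambda))=\lambda_t\pi(e_\lambda)\lambda_t^*=\pi(e_\lambda)$, that is, to $\pi(e_\lambda)$ commuting with every $\lambda_t=k_\nu^{it}$. Since the $k_\nu^{it}$ generate $W^*(k_\nu)$, this says precisely that each spectral projection of $\pi(h)$ commutes with each spectral projection of $k_\nu$, i.e.\ that $h$ strongly commutes with $k_\nu$; this proves the first equivalence and, en route, shows $\pi(h)\,\eta\,\mathfrak{M}_{\widetilde\nu}$. The statement about $\D$ is then immediate from the bounded case applied to the self-adjoint generators of $\D$: $\D\subset\M_\nu$ if and only if every element of $\D$ commutes with all $\lambda_t=k_\nu^{it}$, equivalently the spectral projections of $k_\nu$ lie in $\D'$, i.e.\ $k_\nu$ commutes strongly with $\D$.

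For the final assertion, fix $h\,\eta\,\M_\nu^+$. The first part gives $\pi(h)\,\eta\,\mathfrak{M}_{\widetilde\nu}^+$, so the Pedersen--Takesaki perturbation $(\widetilde\nu)_{\pi(h)}=\lim_{\epsilon\searrow0}\widetilde\nu(h_\epsilon^{1/2}\,\cdot\,h_\epsilon^{1/2})$ is a legitimate normal semifinite weight on $\mathfrak{M}$ commuting with $\widetilde\nu$; here $\pi(h)_\epsilon=\pi(h_\epsilon)$ since $\pi$ respects the functional calculus. To identify this with $\widetilde{\nu_h}$ I would argue directly with the operator-valued weight $T_\nu$. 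Recalling from Remark \ref{crossprod} that $\widetilde\nu=\nu\circ T_\nu$ and that $T_\nu$ is an $\M$-bimodule map (via $\pi$), for positive $X$ in the definition domain
$$\widetilde\nu\big(\pi(h_\epsilon^{1/2})\,X\,\pi(h_\epsilon^{1/2})\big)=\nu\big(T_\nu(\pi(h_\epsilon^{1/2})\,X\,\pi(h_\epsilon^{1/2}))\big)=\nu\big(h_\epsilon^{1/2}\,T_\nu(X)\,h_\epsilon^{1/2}\big).$$
Letting $\epsilon\searrow0$, which is a monotone limit legitimate by normality, the right-hand side tends to $\nu_h(T_\nu(X))=\widetilde{\nu_h}(X)$. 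Hence $(\widetilde\nu)_{\pi(h)}=\widetilde{\nu_h}$, which is exactly the claimed formula for the dual weight of $\nu_h$.

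To compute $\frac{d\widetilde{\nu_h}}{d\tau}$, I would pass to Connes cocycles. As $\tau$ is a trace, $(D\widetilde\nu:D\tau)_t=k_\nu^{it}$ and $(D\widetilde{\nu_h}:D\tau)_t=\left(\frac{d\widetilde{\nu_h}}{d\tau}\right)^{it}$, while the Pedersen--Takesaki perturbation gives $(D\widetilde{\nu_h}:D\widetilde\nu)_t=\pi(h)^{it}$ (see \cite{PT,tak2}). The Connes chain rule then yields
$$\left(\frac{d\widetilde{\nu_h}}{d\tau}\right)^{it}=(D\widetilde{\nu_h}:D\widetilde\nu)_t\,(D\widetilde\nu:D\tau)_t=\pi(h)^{it}\,k_\nu^{it}.$$
Since $\pi(h)$ and $k_\nu$ strongly commute, their strong product $k=k_\nu\cdot h$ is a well-defined positive self-adjoint operator with $k^{it}=k_\nu^{it}\,\pi(h)^{it}$, so comparing the resulting one-parameter unitary groups gives $\frac{d\widetilde{\nu_h}}{d\tau}=k_\nu\cdot h$, as asserted.

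The main obstacle I anticipate is the careful handling of the unbounded, strongly commuting operators: verifying rigorously that $\pi(h)_\epsilon=\pi(h_\epsilon)$, that the $\epsilon$-limit genuinely interchanges with $T_\nu$ and with the weights (which should follow from $h_\epsilon\nearrow h$ together with normality and lower semicontinuity), and checking the compatibility $\theta_s(k_\nu\cdot h)=e^{-s}\,k_\nu\cdot h$ that identifies $k_\nu\cdot h$ as the density of a genuine dual weight in the correspondence of Remark \ref{crossprod}. The cocycle input $(D\widetilde{\nu_h}:D\widetilde\nu)_t=\pi(h)^{it}$ and the trace-cocycle relation are standard \cite{PT,tak2}, so the novelty is entirely in the bookkeeping across the crossed product.
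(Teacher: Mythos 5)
Your treatment of the first two assertions (the affiliation criterion via spectral projections of $h$ and $k_\nu$, and the statement about $\D$) is essentially identical to the paper's, and is fine. Where you genuinely diverge is in the proof of the last assertion. The paper's route is: show $\widetilde{\nu_h}$ commutes with $\widetilde{\nu}$ (via $T_\nu\circ\sigma_t^{\widetilde{\nu}}=\sigma_t^{\nu}\circ T_\nu$), so that $\frac{d\widetilde{\nu_h}}{d\widetilde{\nu}}$ exists; then prove this derivative equals $h$ by matching Connes cocycles across the duality, $h^{it}=(D\nu_h:D\nu)_t=(D\widetilde{\nu_h}:D\widetilde{\nu})_t$, using \cite[Theorem 4.7]{haag-OV1}, with a separate reduction for non-faithful $\nu_h$ via the auxiliary faithful weight $\nu_h+(\I-e_0).\nu.(\I-e_0)$; and finally apply the Pedersen--Takesaki chain rule \cite[Proposition 4.3]{PT} in $(\mathfrak{M},\tau)$. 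You instead identify $(\widetilde{\nu})_{\pi(h)}=\widetilde{\nu_h}$ in one stroke from the $\pi(\M)$-module property of the operator valued weight $T_\nu$ together with a double monotone limit. This is a genuinely different argument for the key identification, and it has two virtues: since $\widetilde{\rho}=\rho\circ T_\nu$ holds on all of $\mathfrak{M}_+$, no domain restriction is needed at all; and it treats injective and non-injective $h$ uniformly, so no case distinction is required at this stage. Modulo the extended-positive-part bookkeeping you flag (that the canonical extension of $\nu_h$ to $\widehat{\M}_+$ is still computed by the increasing limit $\lim_{\epsilon\searrow 0}\hat{\nu}(h_\epsilon^{1/2}\,\cdot\,h_\epsilon^{1/2})$, which follows by writing elements of $\widehat{\M}_+$ as increasing limits from $\M_+$ and interchanging the two increasing limits), this step is sound.

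The gap is in your final cocycle step, and it is precisely the issue the paper spends half of its proof addressing: the case where $h$ has nontrivial kernel, so that $\nu_h$ and $\widetilde{\nu_h}$ are \emph{not faithful}. Your inputs $(D\widetilde{\nu_h}:D\widetilde{\nu})_t=\pi(h)^{it}$ and $(D\widetilde{\nu_h}:D\tau)_t=\bigl(\frac{d\widetilde{\nu_h}}{d\tau}\bigr)^{it}$, and the chain rule combining them, are quotable from \cite{PT} and \cite[Chapter VIII]{tak2} only for faithful weights, i.e.\ non-singular densities; in the non-faithful case the operators $\pi(h)^{it}$ and $k^{it}$ are merely partial isometries (with the convention $0^{it}=0$), the balanced-weight construction of the cocycle and its chain rule require the non-faithful Connes cocycle machinery, and ``comparing the one-parameter unitary groups'' via Stone's theorem no longer applies verbatim. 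This is exactly why the paper passes to $(\nu_h)_0=\nu_h+(\I-e_0).\nu.(\I-e_0)$, computes there, and cuts back down by the support $e_0$. Your argument can be repaired without that detour: having established $(\widetilde{\nu})_{\pi(h)}=\widetilde{\nu_h}$, you can drop cocycles entirely and conclude as the paper does, by applying \cite[Proposition 4.3]{PT} in the semifinite algebra $(\mathfrak{M},\tau)$ (whose centralizer is everything) to the strongly commuting pair $k_\nu$, $\pi(h)$, giving $\frac{d\widetilde{\nu_h}}{d\tau}=k_\nu\cdot h$ directly. As written, however, the non-injective case is a genuine hole.
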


\begin{proof}  Indeed if $h \,  \eta \, \M$  then affiliation of $h$ with $\M_\nu$ is equivalent to the condition that $h\chi_{[0,n]}(h)\in \M_\nu$ for each $n\in \mathbb{N}$, which in turn is equivalent to requiring that $\sigma^\nu_t(h\chi_{[0,n]}(h))=h\chi_{[0,n]}(h)$ for all $t\in \mathbb{R}$ and all $n$. On regarding $\M$ as a subalgebra of the crossed product $\M\rtimes_\nu\mathbb{R}$, the modular automorphism group will then be implemented by $k_\nu^{it}$. The above equality 
may be reformulated as the claim that $h\chi_{[0,n]}(h)=k_\nu^{it}h\chi_{[0,n]}(h)k_\nu^{-it}$ for all $t\in \mathbb{R}$ and all $n\in \mathbb{N}$. 
 This clearly shows that $h\chi_{[0,n]}(h)$ and $k_\nu^{it}$ will commute for any $n\in\mathbb{N}$ and any 
$t\in \mathbb{R}$, which in turn is equivalent to the fact that the spectral projections of $h$ and $k_\nu$ all commute. 

In particular, $k_\nu$ commutes strongly with $\D$ if and only if for each $d\in \D$ and each $t\in \mathbb{R}$ we have that $k_\nu^{it}dk_\nu^{-it}=d$.  This holds if and only if $\sigma_t^\nu(d)=d$, that is,  if and only if $\D \subset \M_\nu$.

We next show that $\frac{d\widetilde{\nu_h}}{d\widetilde{\nu}}$ exists and equals $h$. Let $T$ denote the operator valued weight from the 
extended positive part of $\M\rtimes_\nu\mathbb{R}$ to that of $\M$. If we combine \cite[Proposition 4.9]{haag-OV1} with the fact that 
$\nu_h$ commutes with $\nu$, we have that $\widetilde{\nu_h}\circ\sigma_t^{\widetilde{\nu}}=\nu_h\circ T\circ \sigma_t^{\widetilde{\nu}} = \nu_h\circ \sigma_t^{\nu} \circ T = \nu_h\circ T= \widetilde{\nu_h}$ for all $t\in\mathbb{R}$. Thus $\widetilde{\nu_h}$ must then 
commute with $\widetilde{\nu}$, which ensures that $\widetilde{h}=\frac{d\widetilde{\nu_h}}{d\widetilde{\nu}}$ exists as an operator 
affiliated to $(\M\rtimes_\nu\mathbb{R})_{\widetilde{\nu}}$. We go on to prove that $\widetilde{h}=h$. In the case where $\nu_h$ is 
faithful this easily follows from the fact that $$h^{it}=(D\nu_h:D\nu)_t=(D\widetilde{\nu_h}:D\widetilde{\nu})_t=\widetilde{h}^{it}\mbox{ for all } t\in \mathbb{R}.$$
(The first and last equality follows from \cite[Theorem VIII.2.11]{tak2} and the definition of the cocycle derivative; 
see the proof of \cite[Corollary VIII.3.6]{tak2}. The second equality follows from \cite[Theorem 4.7]{haag-OV1}.) In the case where 
$\nu_h$ is not faithful (i.e.\ $e_0= \mathrm{supp}(\nu_h)$ is strictly smaller than $\I$), we may pass to the weight 
$(\nu_h)_0 =\nu_h+(\I-e_0).\nu.(\I-e_0)$. It is an exercise to see that this is a faithful normal semifinite weight on $\M$ 
with $\frac{d(\nu_h)_0}{d\nu}=h+(\I-e_0)$. The dual weight will then be 
$\widetilde{(\nu_h)_0} = \widetilde{\nu_h}+(\I-e_0).\widetilde{\nu}.(\I-e_0)$  
using one of \cite[Lemma II.1]{terp} or \cite[Theorem 6.55]{GLnotes}. 
We also have  $h+(\I-e_0)=\frac{d\widetilde{(\nu_h)_0}}{d\widetilde{\nu}}$, by the `faithful case' above. But clearly 
$\frac{d(\I-e_0).\widetilde{\nu}.(\I-e_0)}{d\widetilde{\nu}}=(\I-e_0)$. We also know from 
either \cite[Lemmas II.1 and II.2, and Proposition II.4]{terp} or \cite[Theorem 3.24, Theorem 6.55 and Proposition 6.67]{GLnotes}, that $$\mathrm{supp}(\widetilde{\nu_h})=\mathrm{supp}(\nu_h)=\mathrm{supp}(h) = e_0.$$
Therefore $e_0.\widetilde{(\nu_h)_0}.e_0= \widetilde{\nu_h}$. Using all 
these facts it is now possible to verify that $\frac{d\widetilde{\nu_h}}{d\widetilde{\nu}}=e_0(h+(\I-e_0))e_0=h$. 
This fact then ensures that $\widetilde{\nu_h}=\widetilde{\nu}_h$, that is that the dual weight of $\nu_h$ is given by the formula $\lim_{\epsilon\searrow 0} \widetilde\nu(h_\epsilon^{1/2}\cdot h_\epsilon^{1/2})$.

Since the centralizer $(\M\rtimes_\nu\mathbb{R})_\tau$ is all of $\M\rtimes_\nu\mathbb{R}$, we may now apply the chain rule described in \cite[Proposition 4.3]{PT} to the pair $k_\nu$ and $h$, to see that $hk_\nu=\frac{d\widetilde{\nu_h}}{d\widetilde{\nu}}\frac{d\widetilde{\nu}}{d\tau}=\frac{d\widetilde{\nu_h}}{d\tau}=k$ as required.
\end{proof}

\begin{remark}
Let $\nu$ be a faithful normal semifinite weight on $\M$ and let $h$ be a positive operator affiliated to $\M_\nu$. 
For $x \in \M_+$ it is tempting to assign a meaning to the product  $h^{\frac{1}{2}} x h^{\frac{1}{2}}$, perhaps as an 
element $b$ of the extended positive part 
 $\widehat{\M}_+$ of  $\M$, and to then interpret $\nu_h(x)$ as the canonical 
extension of $\nu$ to $\widehat{\M}_+$ applied to $b$. 

In this regard the conclusions regarding $h$, $k_\nu$ and $k=k_\nu\cdot h$ noted in the preceding Proposition, may be 
interpreted as a realisation of exactly this objective. To see why this is so, we shall for the sake of simplicity pass to 
the case where $\nu$ is a normal state, and $\nu(h)<\infty$. (Note that the weight $\nu$ has a well-defined action on $h$ 
since it canonically extends to $\widehat{\M}_+$.) Since $\nu_h(\I)=\lim_{\epsilon\searrow}\nu(h_\epsilon)=\nu(h)$, this 
assumption ensures that $\nu_h$ is a normal functional. The theory of Haagerup $L^p$-spaces then ensures that for any 
$x\in \M_+$, we will have that $\nu(x)=tr(k_\nu^{\frac{1}{2}} xk_\nu^{\frac{1}{2}})$ and that $\nu_h(x)= 
tr(k^{\frac{1}{2}} xk^{\frac{1}{2}})$. When these equalities are considered alongside the fact that $k^{\frac{1}{2}}= k_\nu^{\frac{1}{2}}\cdot h^{\frac{1}{2}}$, it is then clear that the equality $\nu_h(x)=tr(k^{\frac{1}{2}} xk^{\frac{1}{2}})$ 
may be interpreted as a `rigorization' of the formal equality $\nu_h(x) = \nu(h^{\frac{1}{2}}xh^{\frac{1}{2}})$.

In the case where $\nu$ is a trace, we can achieve this objective more directly, by using basic facts about noncommutative 
integration with respect to a trace from e.g.\ Section IX.2 in \cite{tak2} 
(see also the fact in \cite[Remark 4.15]{GLnotes}, which informs us the element $b$ above 
 will be in $L^1(\M,\nu)$). Note that if we view $h$ as an element of $\widehat{\M}_+$ then the product $c = x^{\frac{1}{2}} \cdot h \cdot x^{\frac{1}{2}}$, 
defined as in \cite[Definition IX.4.6]{tak2}, is in $\widehat{\M}_+$. The trace $\nu$ allows for an extension to 
$\widehat{\M}_+$, and for this extension normality of the trace ensures that
\begin{equation}\label{PTremark}\nu_h(x) = \lim_{\epsilon \searrow 0}
\, \nu(h_\epsilon^{\frac{1}{2}} x h_\epsilon^{\frac{1}{2}}) = 
\lim_{\epsilon \searrow 0}
\, \nu(x^{\frac{1}{2}} h_\epsilon x^{\frac{1}{2}})=\nu(c). 
\end{equation}
If this quantity is finite, that is if $x\in \mathfrak{p}_{\nu_h}$, then we know from either \cite[Remark 1.2]{LabCo} or \cite[Remark 4.15]{GLnotes} that 
the 
 product $c=x^{\frac{1}{2}} \cdot h \cdot x^{\frac{1}{2}}$ above is in $L^1(\M,\nu)$. But if $x^{\frac{1}{2}}\cdot h \cdot x^{\frac{1}{2}}$ is a densely defined operator, then using the ideas in the proof of either \cite[Lemma 7.39]{GLnotes} 
or \cite[Lemma 2.1]{GL2}, shows that $h^{\frac{1}{2}} x^{\frac{1}{2}}$ is densely defined with $c=|h^{\frac{1}{2}} x^{\frac{1}{2}}|^2$. Thus if $\nu_h(x)$ is finite, then $h^{\frac{1}{2}} x^{\frac{1}{2}}\in L^2(\M,\nu)$. We may then 
use the tracial property of $\nu$ to conclude that 
$$\nu_h(x)=\nu(c)= \nu((h^{\frac{1}{2}} x^{\frac{1}{2}})^*(h^{\frac{1}{2}} x^{\frac{1}{2}})) =\nu((h^{\frac{1}{2}} x^{\frac{1}{2}})(h^{\frac{1}{2}} x^{\frac{1}{2}})^*),$$ 
giving what was claimed at the start of this remark with $b = (h^{\frac{1}{2}} x^{\frac{1}{2}})(h^{\frac{1}{2}} x^{\frac{1}{2}})^*$. 
If conversely $h^{\frac{1}{2}} x^{\frac{1}{2}}$ is densely defined and in $L^2(\M,\nu)$, then one may then use the 
tracial property alongside equation (\ref{PTremark}) to see that 
$\nu_h(x) < \infty$.

Finally we point out that the argument in this remark can be considerably simplified, 
 and slightly strengthened, 
if $h$ is also $\nu$-measurable: in this case for any $x\in \M_+$, the operators $x^{\frac{1}{2}}h_\epsilon x^{\frac{1}{2}} = 
 |h_\epsilon^{\frac{1}{2}}x^{\frac{1}{2}}|^2$ are known to increase to the operator $x^{\frac{1}{2}}h x^{\frac{1}{2}} = |h^{\frac{1}{2}}x^{\frac{1}{2}}|^2$ inside the algebra 
of $\nu$-measurable operators as $\epsilon \searrow 0$ \cite[Proposition 2.63]{GLnotes}. Normality and the tracial property of the extension of $\nu$ to this algebra \cite[Proposition 4.17]{GLnotes} then ensure that  $$\nu_h(x) = \lim_{\epsilon \searrow 0}
\, \nu(h_\epsilon^{\frac{1}{2}} x h_\epsilon^{\frac{1}{2}})=\nu(x^{\frac{1}{2}}h x^{\frac{1}{2}})=\nu(h^{\frac{1}{2}}x h^{\frac{1}{2}}) ,$$
for {\em all} $x \in \M_+$.
\end{remark}

\begin{theorem} \label{gencor} Let $\D$ be a von Neumann subalgebra of $\M$, and $\nu$ a faithful normal weight on $\M$, which is semifinite on $\D$, and satisfies $\sigma^\nu_t(\D) \subset \D$ for all $t \in \Rdb$. There is a bijective correspondence between the following objects:
\begin{enumerate}
\item[(a)] Normal conditional expectations $E$ onto $\D$ 
which commute with $(\sigma^\nu_t)$ in the sense above,
\item[(b)] densities $h \,  \eta \, \M_\nu^+$ 
which commute with all elements of $\D$, and 
which satisfy $\nu_h = \nu$ on $\D_+$, 
\item[(b$^{\prime}$)] densities $h  \, \eta \,  \M_\nu^+$ which 
commute with all elements of $\D$, and which 
satisfy $E_{\D}(h) = 1$
 (where the action on $h$ is by the normal extension of $E_{\D}$ to the extended positive part of $\M$ in e.g.\
 {\rm \cite[Proposition 3.1]{Gol}}), 
\item[(b$^{\prime \prime}$)] densities $k \in {}^\eta L^1_+(\M)$ which commute strongly with 
the element $k_\nu$ in the last result, for which $h=k.k_\nu^{-1}$ commutes with all elements of $\D$, and which satisfy $E_{\D}(k) = E_{\D}(k_\nu)$
(where the action on $k_\nu$ and $k$ is by the normal extension of $E_{\D}$ to the extended positive part of $\M\rtimes_\nu\mathbb{R}$), 
\item[(c)] extensions of $\nu_{|\D}$ to normal semifinite weights $\rho$ on all of $\M$ which commute with 
$\nu$ in the sense of  the Introduction, whose support projection $z$ commutes with 
$\D$ and satisfies $\sigma^{\rho}_t(\D z) \subset \D z$ for all $t \in \Rdb$.  
\end{enumerate} 
With respect to the correspondences above we also have the formulae  $$\rho = \nu \circ E = \nu_h, \; \; \; \; \; \; \;  \; \; \; \; h = \frac{d \rho}{d\nu}=k.k_\nu^{-1},$$ and 
$$E(x) =\lim_{\epsilon\searrow 0} \; E_{\D}(h_\epsilon^{\frac{1}{2}}  
\, x \, h_\epsilon^{\frac{1}{2}} ), \qquad x \in \M_+,$$ where $h_\epsilon=h(\I+\epsilon h)^{-1}$ 
and  with the last limit being a ($\sigma$-strong*) limit of an increasing net in $\M$. 
\end{theorem}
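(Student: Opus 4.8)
The plan is to arrange the five classes into a hub-and-spoke picture, with the weight $\rho$ of (c) and its Pedersen--Takesaki density $h=d\rho/d\nu$ as the hub, to prove the substantive equivalence (a)$\Longleftrightarrow$(c) first, and to treat (b), (b$'$), (b$''$) as reformulations of the single density. Throughout I would use the weight-preserving conditional expectation $E_\D:\M\to\D$, which exists by \cite[Theorem IX.4.2]{tak2} because $\sigma^\nu_t(\D)\subset\D$ and $\nu$ is semifinite on $\D$, together with the fact (Lemma \ref{Ecomm} and Remark \ref{crossprod2}) that $E_\D$ commutes with $(\sigma^\nu_t)$ and satisfies $\nu\circ E_\D=\nu$.

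For (a)$\to$(c) I set $\rho=\nu\circ E$. As $E|_\D=\mathrm{id}$ this extends $\nu_{|\D}$, it is a normal semifinite weight, and $\rho\circ E=\rho$, so $E$ is the $\rho$-preserving expectation onto $\D$; Lemma \ref{Ecomm} applied to $E$ shows $\rho$ commutes with $\nu$. The conceptual heart is to produce the density and force it into the relative commutant: writing $h=d\rho/d\nu\,\eta\,\M_\nu^+$ (Pedersen--Takesaki), the Connes cocycle gives $\sigma^\rho_t=\mathrm{Ad}(h^{it})\circ\sigma^\nu_t$. Since $E_\D$ and $E$ are expectations compatible with $\nu$ and $\rho$ respectively, \cite[Theorem IX.4.2]{tak2} yields $\sigma^\nu_t|_\D=\sigma^{\nu_{|\D}}_t=\sigma^{\rho_{|\D}}_t=\sigma^\rho_t|_\D$, using $\rho_{|\D}=\nu_{|\D}$. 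Comparing this with the cocycle formula gives $h^{it}\sigma^\nu_t(d)h^{-it}=\sigma^\nu_t(d)$ for $d\in\D$, and since $\sigma^\nu_t(\D)=\D$ this means $h^{it}\in\D'\cap\M$ for all $t$, i.e.\ $h$ commutes with $\D$. Conversely, given $h$ as in (b) I define $\rho=\nu_h$; because $h^{it}$ commutes with $\D$ and $\sigma^\nu_t(\D)=\D$, the cocycle formula gives $\sigma^\rho_t(\D)=\D$, so \cite[Theorem IX.4.2]{tak2} produces the $\rho$-preserving normal expectation $E$, and $\rho\circ E=\rho$ with $\rho_{|\D}=\nu_{|\D}$ yields $\nu\circ E=\rho$, so $E$ is an object of (a).

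For the density reformulations I compute, for $d\in\D_+$, $\nu_h(d)=\lim_\epsilon\nu(h_\epsilon^{1/2}dh_\epsilon^{1/2})=\lim_\epsilon\nu(d\,E_\D(h_\epsilon))=\nu(d\,E_\D(h))$, using $h\in\D'$, cyclicity of $\nu$ over $\M_\nu$, and normality of the extension of $E_\D$ to $\widehat\M_+$; faithfulness of $\nu_{|\D}$ then gives $\nu_h=\nu$ on $\D_+\Longleftrightarrow E_\D(h)=1$, which is (b)$\Longleftrightarrow$(b$'$). The passage (b)$\Longleftrightarrow$(b$''$) is Proposition \ref{commhk}: $k=h\cdot k_\nu=k_\nu\cdot h$ is the density $\tfrac{d\widetilde{\nu_h}}{d\tau}$, strong commutation of $k$ with $k_\nu$ is automatic, $h=k\cdot k_\nu^{-1}$, and $E_\D(k)=E_\D(k_\nu)$ translates into $E_\D(h)=1$ via $\overline{E_\D}(k_\nu)=k_\nu$ and the trace/module compatibility of $\overline{E_\D}$ recorded in Remark \ref{crossprod2}.

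Finally I would establish the sandwich formula. Set $\Phi_\epsilon(x)=E_\D(h_\epsilon^{1/2}xh_\epsilon^{1/2})\in\D_+$ for $x\in\M_+$. Since $h_\epsilon\in\M_\nu\cap\D'$, for $a\in\mathfrak n(\D)_\nu$ the $\D$-module property of $E_\D$, the commutation $ah_\epsilon^{1/2}=h_\epsilon^{1/2}a$, and cyclicity of $\nu$ over $\M_\nu$ give $\nu(a^*\Phi_\epsilon(x)a)=\nu(a^*xa\,h_\epsilon)$, which is \emph{linear} in $h_\epsilon$. Hence for $\epsilon'<\epsilon$, with $\delta=h_{\epsilon'}-h_\epsilon\ge0$ (again in $\M_\nu\cap\D'$), the same identity yields $\nu(a^*(\Phi_{\epsilon'}(x)-\Phi_\epsilon(x))a)=\nu(a^*E_\D(\delta^{1/2}x\delta^{1/2})a)\ge0$ for all such $a$; density of the GNS vectors $\{\Lambda(a)\}$ forces $\Phi_{\epsilon'}(x)\ge\Phi_\epsilon(x)$. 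Thus $(\Phi_\epsilon(x))$ is increasing and bounded above by $\|x\|\,E_\D(h)=\|x\|$, so it converges $\sigma$-strong* to some $\Phi(x)\in\D_+$, and $\nu(a^*\Phi(x)a)=\lim_\epsilon\nu(a^*xa\,h_\epsilon)=\nu_h(a^*xa)=\rho(a^*xa)=\nu(a^*E(x)a)$ identifies $\Phi(x)=E(x)$. The formulae $\rho=\nu\circ E=\nu_h$ and $h=d\rho/d\nu=k\cdot k_\nu^{-1}$ record the hub correspondences, and uniqueness of $\rho$-preserving expectations (as in Theorem \ref{wcent}) makes all the maps mutually inverse.

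The step I expect to be the main obstacle is the non-faithful case, which is also where the extra hypotheses of (c) enter. When $h$, equivalently $\rho$, has support $z<1$, one observes that $z\in\D'\cap\M$ (its spectral projections lie there), compresses to $z\M z\supset\D z$, runs the faithful argument above inside $z\M z$ to obtain an expectation onto $\D z$, and lifts it through the $*$-isomorphism $\D\to\D z$ (injective because $\nu$ is faithful on $\D$), exactly as in the proofs of Lemma \ref{tuse} and Theorem \ref{wcent}; the conditions that $z$ commute with $\D$ and $\sigma^\rho_t(\D z)\subset\D z$ are precisely what make this reduction go through. I expect this support-projection bookkeeping, together with a fully rigorous justification of the crossed-product identity $E_\D(k)=E_\D(k_\nu)\Longleftrightarrow E_\D(h)=1$ needed for (b$''$), to require the most care to write out.
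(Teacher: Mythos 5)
Your proposal follows essentially the same architecture as the paper's proof: the hub is $\rho=\nu\circ E$ with $h=\frac{d\rho}{d\nu}$ from Pedersen--Takesaki, commutation of $h$ with $\D$ is forced by comparing modular groups through Theorem IX.4.2 and Corollary IX.4.22 of \cite{tak2}, (b)$\Leftrightarrow$(b$'$) is the extended-positive-part/normality computation, (b$'$)$\Leftrightarrow$(b$''$) is the crossed-product translation via Proposition \ref{commhk} and Remark \ref{crossprod2}, and the sandwich formula comes from the bounded increasing net $E_{\D}(h_\epsilon^{1/2}xh_\epsilon^{1/2})$ identified through a GNS-density argument. In the faithful case your cocycle chain $\sigma^\nu_t|_{\D}=\sigma^{\nu_{|\D}}_t=\sigma^{\rho_{|\D}}_t=\sigma^\rho_t|_{\D}$ is literally the paper's argument.

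The one genuine divergence is the non-faithful case, and there your plan as stated has a concrete defect. You propose to compress to $z\M z$ and ``run the faithful argument'' for $\rho_z$ and $\nu_z:=\nu_{|z\M z}$. But the faithful argument hinges on the two weights agreeing on the subalgebra, and that equality does not survive compression: for $d\in\D_+$ one has $\rho_z(dz)=\rho(d)=\nu(d)$, whereas $\nu_z(dz)=\nu(dz)=\nu(d)-\nu(d(\I-z))$, which is strictly smaller whenever $d(\I-z)\neq 0$. Hence $(\rho_z)_{|\D z}\neq(\nu_z)_{|\D z}$ in general, the middle link of your chain breaks inside $z\M z$, and the compressed argument does not deliver $h^{it}\in(\D z)'$ verbatim. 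The repair is to compare $\sigma^{\rho_z}_t|_{\D z}$ not with the modular group of $(\nu_z)_{|\D z}$ but with that of the push-forward of $\nu_{|\D}$ under the $*$-isomorphism $d\mapsto dz$, namely $dz\mapsto\sigma^\nu_t(d)z$ (legitimate because $*$-isomorphisms intertwine modular groups); this does give $h^{it}\in(\D z)'$, and then $h$ commutes with $\D$ because ${\rm supp}(h)\leq z\in\D'$. The paper avoids this transport issue entirely by gluing rather than compressing: it passes to the faithful weight $\rho'=\rho+\nu_f$ on all of $\M$ ($f=z^\perp$, density $h_0=h+f$), builds $E_1=E_z(z\,\cdot\,z)+E_f(f\,\cdot\,f)$, and makes the modular comparison on $\D$ itself, using that $\rho'$ agrees on $\D$ with $\nu_g$, $g=2\I-z$. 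Both routes work, but the transported-weight observation is not optional in yours. Finally, for (a)$\Rightarrow$(c) to be well defined you must also verify what you silently assume: that $\rho=\nu\circ E$ is semifinite (the paper uses an approximate identity of $\mathfrak{n}(\D)_\nu$), that ${\rm supp}(\rho)={\rm supp}(E)$ commutes with $\D$ (Lemma \ref{suE}), and that $\sigma^\rho_t(\D z)\subset\D z$ (via the compressed expectation $E'(zxz)=E(x)z$ and the converse direction of \cite[Theorem IX.4.2]{tak2}); these conditions are part of the data of (c) and cannot be skipped if the correspondence is to be a bijection.
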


\begin{proof}   By \cite[Theorem IX.4.2]{tak2} there is a unique $\nu$-preserving normal conditional expectation 
$E_{\D} : \M \to \D$.  
We noted immediately after Lemma \ref{Ecomm} that $E_{\D}$ commutes with 
$(\sigma_t^{\nu})$. 

(a)  $\Rightarrow$ (c)  \ 
Given a normal conditional expectation $E$ onto $\D$ 
which commutes with $(\sigma^\nu_t)$, let $\rho = \nu \circ E$.  Then $\rho \circ E = \rho$.   
This is a normal weight on $\M$ extending $\nu_{| \D}$, for which we have that $\rho\circ\sigma^\nu_t=\rho$ since $E$ commutes with $\sigma^\nu_t$. 
Thus $\rho$ commutes with $\nu$  in the sense of {\rm \cite{PT}} (see also  \cite[Corollary VIII.3.6]{tak2}, although 
here all weights are faithful). 
We recall the support projection $z = {\rm supp} (E)$ from Lemma \ref{suE}.
Let $z$ be the support projection of $E$.   It is also the support projection 
of $\rho$, since $\nu$ is faithful (so that  for 
 $x \in \M_+$ we have $E(x) = 0$ if and only if $\nu(E(x)) = 0$). 
 
The fact that $\rho$ is semifinite follows from the facts that
$\nu_{|\D}=\rho_{|\D}$ is semifinite and $\rho\circ E=\rho$. We sketch the proof of this claim: 
  We firstly note that the left ideal $\mathfrak{n}(\D)_\nu$ admits a right approximate identity $( f_\lambda )$ of positive contractive elements (see for example 
\cite[Proposition 2.2.18]{BR}) which by \cite[Lemma 2.4.19]{BR} must converge $\sigma$-strongly to some contractive element $p$ of $\M$. Since $\nu_{|\D}$ is faithful on $\D$, this element in fact turns out to be the support projection of $\nu_{|\D}$
(in the sense of \cite{tak2}, see p.\ 57 there), which in this case is $\I$ since $\nu_{|\D}$ is semifinite 
on $\D$. Although we shall not need this fact, we pause to note that this approximate identity may in fact be chosen to consist entirely of entire analytic elements of $\mathfrak{n}(\D)_\nu^+$ by \cite[Lemma 9]{Terp2}. One may now use this approximate identity to show that 
$\mathfrak{m}_\rho^+$ is weak* dense in $\M^+$, which in turn ensures that $\rho$ is semifinite. In particular given any $a\in\M^+$, 
the $\sigma$-strong convergence of $\{f_\lambda\}$ to $\I$, ensures that $( f_\lambda af_\lambda )$ converges $\sigma$-weakly (weak*) to $a$. For any $\lambda$, we moreover have that 
$$\rho(f_\lambda af_\lambda)=\nu(E(f_\lambda af_\lambda))=\nu(f_\lambda E(a)f_\lambda)\leq \|a\| \nu(f_\lambda^2)<\infty .$$

By Lemma \ref{suE}, $z$ commutes with $\D$.  
The map $\D \to \D z$ of multiplication by $z$ is a surjective normal $*$-homomorphism with right inverse $E$.
Hence $\D \cong \D z$. 
Then $\rho$ restricts to a faithful normal weight on $z \M z$, which is semifinite on $z \M z$ and $\D z$
since  if $(e_t)$ is an increasing
$\nu$-finite net in $\D$ with limit $1$ then $(z e_t)$  is an increasing
$\rho$-finite and $\nu$-finite net in $\D z$ with limit $z$.  
 Define $E'(zxz) = E(x) z = E(zxz) z$.  
Then $E' : z \M z \to \D z$ is a $\rho$-preserving normal conditional expectation.  
 By \cite[Theorem VIII.2.6]{tak2} 
we have $\sigma^\rho_t(\D z) \subset \D z$ for all $t \in \Rdb$.   

(c) $\Rightarrow$ (b)\ Now suppose that $\rho$ is a normal semifinite weight on $\M$ which extends $\nu_{|\D}$, 
and commutes with 
$\nu$. The commutation with $\nu$, ensures that $h=\frac{d\rho}{d\nu}$ exists, and that $h \, \eta \, \M_\nu$ 
\cite{PT}.   We have $\nu_h(d) = \rho(d) = \nu(d)$ for $d \in \D_+$, so that $\nu_h = \nu$ on $\D_+$.

 Let  $z_\rho=\mathrm{supp}(\rho)=\mathrm{supp}(h)$, which we sometimes write as $z$.  Note that as in a proof two paragraphs above,  
$z \M z$ and $\D z$ are von Neumann subalgebras of $z \M z$ on which $\rho$ and $\nu$ are semifinite  faithful normal weights.
Since  $h=\frac{d\rho}{d\nu}$ is affiliated to $\M_\nu$,   by the Borel functional calculus  we have  $z_\rho=\chi_{(0,\infty)}(h)\in \M_\nu$. It  remains to show that  $h$ commutes with $\D$. 
Let $h_n=h\chi_{[0,n]}(h)$, $f = z_\rho^\perp$, $\rho' = \rho +\nu_{f}$ and $h_0 = h + f$. It is clear that $(h_n)$ increases to $h$. So 
by \cite[Propositions 4.1 \& 4.2]{PT}, we have 
$$\rho' = \nu_h+ \nu_{f}=\sup_{n\geq 1} \, \nu_{h_n}+\nu_{f} =\sup_{n\geq 1} \, [\nu_{h_n}+\nu_{f}]= \sup_{n\geq 1} \, \nu_{h_n+ f} = \nu_{h_0}.$$ Since $\rho$ is faithful on 
$z_\rho\M z_\rho$ and $\nu_{f}$ is faithful on $f \M f$, the sum  $\rho'$
is faithful on all of $\M$. By \cite[Theorem  4.6]{PT} 
the modular automorphism group of $\rho'$ is $h_0^{it} \, \sigma_t^\nu(\cdot) \, h_0^{-it}$. Now $h_0^{it}=h^{it}+f^{it}$ since  
$hf =0$. So the modular group  of $\rho'$  is 
$(h^{it}+f^{it}) \, \sigma_t^\nu(\cdot) \,  (h^{-it}+f^{-it})$ for all $t$.

Theorem 4.6 in \cite{PT} ensures that the modular automorphism group of $\nu_f$ is $f^{it} \, \sigma_t^\nu (\cdot) \,  f^{-it}$. Since $\sigma_t^\nu$ preserves $\D$ and $z = z_\rho$ is a fixed point of $\sigma_t^\nu$, this automorphism group clearly preserves $\D f$. So there exists a faithful normal conditional expectation $E_{f}$ from $f \M f$ onto $\D f$
such that $\nu_f \circ E_{f} =\nu_f$. Again by \cite[Theorem 4.6]{PT}, the modular group of $\rho$ is $h^{it}
\, \sigma_t^\nu(\cdot) \,  h^{-it}$, which by assumption preserves $\D z$. Since $\rho$ is faithful on $z \M z$, this then ensures that there exists a unique faithful normal conditional expectation $E_z$ from 
$z \M z$ onto $\D z$ for which we have that $\rho_{|z M_z} = \rho_{|z \M z} \, \circ \, E_z$. It is now an exercise to see that the map $E_1:\M\to \D$
defined by $E_1(a) = E_z(zaz)+E_{f}(faf)$ is a faithful normal conditional expectation onto $\D$ for which we have that $\rho' \, \circ \,
E_1= \rho'$. By \cite[Corollary IX.4.22]{tak2}, the modular automorphism group of the restriction of $\rho'$ to $\D$ is just the restriction of $(h^{it}+f^{it})\, \sigma_t^\nu (\cdot) \, (h^{-it}+f^{-it})$ to $\D$. Since $z_\rho$ commutes with $\D$ and $\sigma_t^\nu$ preserves $\D$, this group simplifies to $h^{it}\sigma_t^\nu(\cdot)h^{-it}+f^{it}\sigma_t^\nu(\cdot) f^{-it}$ on $\D$. But on $\D$, $\rho' = \rho+\nu_f$ agrees with $\nu_g$ where
$g = 2\I-z_\rho$. So this group must agree with the modular group of $(\nu_g)_{| \D}$. We proceed to describe that group.

We again use \cite[Theorem 4.6]{PT} to see that the modular automorphism group of $\nu_{g}$ is $g^{it}
\, \sigma_t^\nu(\cdot) \, g^{-it}$. Recall that $\sigma_t^\nu$ preserves $\D$ and that $z_\rho$ commutes with $\D$ and that $z_\rho$ is a fixed point of $\sigma_t^\nu$. So for any $d\in \D$ we have that  $g^{it}
\, \sigma_t^\nu(d) \, g^{-it}=\sigma_t^\nu(d)$. So this modular group preserves $\D$. Thus by \cite[Theorem IX.4.2 \& Corollary
 IX.4.22]{tak2}, the modular group of $\nu_{g}|_{\D}$, is just the restriction of $g^{it}
\, \sigma_t^\nu(\cdot) \, g^{-it}$ to $\D$, which as we have just seen is just $\sigma_t^\nu$. When considering this fact alongside what we showed in the previous paragraph, it is clear that $h^{it}\sigma_t^\nu(\cdot)h^{-it}+ f^{it}
\, \sigma_t^\nu(\cdot) \, f^{-it}$ agrees with $\sigma_t^\nu$ on $\D$. So on multiplying with $z_\rho$, we have that $h^{it}\sigma_t^\nu(\cdot)h^{-it}= \sigma_t^\nu(\cdot) z_\rho$ on $\D$. Taking into account that $\sigma_t^\nu$ preserves $\D$, we have that 
$$h^{it}dh^{-it}=h^{it}\sigma_t^\nu(\sigma_{-t}^\nu(d))h^{-it}=\sigma_t^\nu(\sigma_{-t}^\nu(d))z_\rho=dz_\rho, \qquad d\in \D, t \in \Rdb,$$
 or equivalently that $h^{it}d=dh^{it}$ for all $t$. Thus $h$ commutes strongly with each $d\in \D$.
 
(b) $\Leftrightarrow$ (b$^{\prime}$)\ 
Since $E_{\D}$ commutes with the modular group $\sigma_t^\nu$ and each $x \in \M_\nu$ is a fixed point of this modular group, it now clearly follows that $\sigma_t^\nu (E_{\D}(x)) = E_{\D}(x)$ for each $x \in \M_\nu$.  Thus $E_{\D}(\M_\nu) \subset \D \cap
\M_\nu \subset \D_\nu$.   Let $E'$ be $E_{\D}$ viewed as a conditional expectation from $\M_\nu$ onto $\D_\nu$.
The restriction of $\nu$ to $\M_\nu$ is a  a faithful normal  weight.  It is also a trace (see III.4.6.2 in \cite{Bla}).   

Most of the following part of the proof consists of a review of some aspects of Haagerup's theory 
of the extended positive part of 
a von Neumann algebra \cite{haag-OV1}, and 
establishing a general consequence of \cite[Proposition IX.4.11]{tak2} (which may be known to some 
experts).
Write $\hat{E}_{\D}$ for the canonical extension of $E_{\D}$ 
to the extended positive part $\hat{\M}_+$ of $\M$.  
We may regard the extended positive part $\widehat{\M_\nu}_+$ as a subspace of 
$\hat{\M}_+$, and may regard the extended positive part $\widehat{\D_\nu}_+$ as a subspace of 
$\hat{\D}_+$.   It is easy to see that the restriction of $\hat{E}_{\D}$ to the extended positive part of $\M_\nu$
equals the canonical extension $\widehat{E'}$ of $E'$ to $\widehat{\M_\nu}_+$.    
Let ${\mathcal S}_\nu$ denote  the set of selfadjoint positive operators $a$
affiliated with $\M_\nu$.  These correspond to certain weights $\nu_a$ on $\M$ by the Pedersen-Takesaki Radon-Nikodym 
correspondence \cite{PT}.  

By \cite[Proposition IX.4.11]{tak2} we may view an element $m$ in $\widehat{\M_\nu}_+$ (resp.\ in $\widehat{\D_\nu}_+$) as a  
normal weight $\nu_m$ on $\M$  (resp.\  normal weight $(\nu_{| \D})_m$ on $\D$).   Then  $\widehat{E'}$  induces 
a `normal' order preserving map $j$ from ${\mathcal S}_\nu$ into the set of normal weights  on $\D$.    
Indeed $j(a) = (\nu_{| \D})_m$  with $m = \widehat{E'}(a)$.  
Let $i$ be the map from ${\mathcal S}_\nu$ into the weights  on $\D$ defined by $i(a) = (\nu_a)_{| \D_+}$.
 We claim that $j = i$.  
 If  $x$ and $x^{\frac{1}{2}}$ are in $(\M_\nu)_+$ then we have 
 $$i(x)(d) = (\nu_x)(d) = \nu(x^{\frac{1}{2}} dx^{\frac{1}{2}}) = \nu(d^{\frac{1}{2}} x d^{\frac{1}{2}}) = \nu(d^{\frac{1}{2}}  E_{\D}(x) d^{\frac{1}{2}} ), \qquad d \in (\mathfrak{m}(\D)_{\nu_{|\D }})_+ . $$
This equals $\nu_{E_{\D}(x)}(d)$ since $E_{\D}(x) \in \widehat{\D_\nu}_+$.  Since normal weights on a semifinite algebra
 are determined by their action on $\mathfrak{m}_+$, we 
  have $j(x)= (\nu_{| \D})_{E_{\D}(x)} = i(x)$.    So $i = j$ on $(\M_\nu)_+$.   
If $a_1 \leq a_2$ (in the sense of unbounded operators as on p.\ 62 in \cite{PT}) 
 in ${\mathcal S}_\nu$ then 
$\nu_{a_1} \leq \nu_{a_2}$ by  \cite{PT}.  Hence $i(a_t) \leq  i(a)$.  So $i$ is order preserving.  
If $a_t \nearrow a$ (in the sense of unbounded operators  as on p.\ 62  \cite{PT}) in ${\mathcal S}_\nu$ 
then $\nu_{a_t} \nearrow \nu_{a}$ by  \cite{PT}.  Hence $i(a_t) \nearrow  i(a)$.  So $i$ is normal.
Since every element of ${\mathcal S}_\nu$ is an increasing limit of a sequence in $(\M_\nu)_+$, we see that  $i = j$. 

Next suppose we are given $h \, \eta  \, \M_\nu^+$
with $\nu_h = \nu$ on $\D$.  Then by hypothesis $i(h) (d) = \nu_h(d) = \nu(d)$ for $d \in \D_+$.  So in the notation 
of the last paragraph,  $i(h) = (\nu_{| \D})_m$ where $m = 1$.
On the other hand $j(h) = (\nu_{| \D})_n$ where  $n = \widehat{E'}(h)$.  Taking the Radon-Nikodym derivative with respect to 
$\nu_{| \D}$, we obtain $\widehat{E'}(h) = 1$.  Thus $\hat{E}_{\D}(h) = 1$.

At this point it is easy to see that (b) is equivalent to (b$^{\prime}$).   Indeed if $E_{\D}(h) = 1$ and $h$ commutes with elements of $\D_+$, then $$\nu_h(d) = \lim_{\epsilon\searrow 0}\nu(h_\epsilon^{\frac{1}{2}}  d 
h_\epsilon^{\frac{1}{2}} ) = \lim_{\epsilon\searrow 0}\nu(d^{1/2}h_\epsilon d^{\frac{1}{2}} ) = \nu(d^{1/2}hd^{1/2})$$ 
$$= \nu(E_{\D}(d^{1/2}hd^{1/2}) ) = \nu(d^{1/2}E_{\D}(h)d^{1/2})=\nu(d), \qquad d \in \D_+.$$

(b$^{\prime}$) $\Leftrightarrow$ (b$^{\prime \prime}$)\ Given $h  \, \eta \,  \M_\nu^+$ it is clear from Proposition \ref{commhk} that $h$ commutes strongly with $k_\nu$. For any $n\in \mathbb{N}$ we may then use the Borel functional calculus to see that 
$$\theta_s(h\chi_{[0,n]}(h).k_\nu\chi_{[0,n]}(k_\nu))=\theta_s(h\chi_{[0,n]}(h)).\theta_s(k_\nu\chi_{[0,n]}(k_\nu)),$$
which equals $$h\chi_{[0,n]}(h).e^{-s}k_\nu\chi_{[0,n]}(e^{-s}k_\nu)=e^{-s}h\chi_{[0,n]}(h).k_\nu\chi_{[0,e^sn]}(k_\nu).$$ Letting $n\to \infty$ now yields the conclusion that $\theta_s(hk_\nu)=e^{-s}$ for all $s$. That is $hk_\nu\in {}^\eta L^1_+(\M)$. 

Conversely given some $k\in {}^\eta L^1_+(\M)$ which commutes with $k_\nu$, a similar argument shows that for $h=k.k_\nu^{-1}$ we then have that $\theta_s(h)=h$ for every $s$. For each $n\in \mathbb{N}$ we will then have that $\theta_s(\chi_{[0,n]}(h))=\chi_{[0,n]}(\theta_s(h))=\chi_{[0,n]}(h)$ for all $s$. Thus all the spectral projections of $h$ are in $\M$, which ensures that $h\,\eta\,\M$. Since $h=k.k_\nu^{-1}$ clearly commutes with $k_\nu$, we in fact have that $h\,\eta\,\M_\nu$.

There is therefore a bijection between densities $k\in {}^\eta L^1_+(\M)$ which commute strongly with $k_\nu$ and for which $k.k_\nu^{-1}$ commutes with $\D$, and densities $h  \, \eta \,  \M_\nu^+$ which commute with $\D$. This bijection is given by $h\to hk_\nu$. 

If we are able to show that for densities $h  \, \eta \,  \M_\nu^+$ which strongly commute with $\D$ and $k_\nu$ we have that $E_{\D}(h)=\I$ 
  and only if $\overline{E_{\D}}(hk_\nu)=\overline{E_{\D}}(k_\nu)$, then the class of densities described in (b$^{\prime \prime}$) will  clearly be in 
  bijective correspondence with the class described in (b$^{\prime}$).  

Suppose first that $E_{\D}(k_\nu h) = k_\nu$. Note that the assumptions on $h$ ensure that in the crossed product it commutes strongly with $\D$ and with each $\lambda_t=k_\nu^{it}$. Thus $h$ commutes strongly with the von Neumann subalgebra generated by these operators, namely $\D\rtimes_\nu\mathbb{R}$.

Recall that we noted in Remark \ref{crossprod2} that $\tau\circ \overline{E_{\D}} =\tau$. Since by Proposition \ref{commhk} the strong product $k_\nu h$ is the density $k_{nu_h}$ of the weight $\widetilde{\nu}_h$, we may now use this fact to conclude that 
$$\widetilde{\nu}_h(d)= \tau((hk_\nu)^{\frac{1}{2}}  d (hk_\nu)^{\frac{1}{2}}) = \tau(d^{\frac{1}{2}}  \, hk_\nu d^{\frac{1}{2}}) = 
\tau(\overline{E_{\D}}(d^{\frac{1}{2}}  \, hk_\nu d^{\frac{1}{2}}))$$ $$= \tau(d^{\frac{1}{2}} \overline{E_{\D}}(hk_\nu) d^{\frac{1}{2}})=\tau(d^{\frac{1}{2}}  \, k_\nu d^{\frac{1}{2}}))=\widetilde{\nu}(d)$$ for all $d \in (\D\rtimes_\nu\mathbb{R})_+$. 
Again by Remark \ref{crossprod2}, we have that  $\widetilde{\nu} \circ \overline{E_{\D}} =\widetilde{\nu}$. 

This fact together with the commutation of $h$ with $\D \rtimes_\nu \mathbb{R}$ now ensures that 
$\widetilde{\nu}(d)=\widetilde{\nu}_h(d)$ equals 
$$\lim_{\epsilon\searrow 0} \widetilde{\nu}(d^{1/2}h_\epsilon d^{1/2})= \widetilde{\nu}(d^{1/2}h d^{1/2}) = \widetilde{\nu}(\overline{E_{\D}}(d^{1/2}hd^{1/2}))= \widetilde{\nu}(d^{1/2}\overline{E_{\D}}(h)d^{1/2})$$ for all $d \in (\D\rtimes_\nu\mathbb{R})_+$. 
Since $\nu$ is faithful and semifinite on $\D$ with
$\widetilde{\nu}_{|(\D\rtimes\mathbb{R})}=\nu_{|\D}\circ (T_{\nu})_{|(\D\rtimes\mathbb{R})}$, $\widetilde{\nu}_{|(\D\rtimes\mathbb{R})}$ the 
dual weight of $\nu_{|\D}$, and therefore faithful and semifinite on $\D\rtimes_\nu\mathbb{R}$. This then ensures that 
$\overline{E_{\D}}(h)=\I$. 

Now suppose that $\overline{E_{\D}}(h)=\I$. 
It then follows from the semifinite variant of \cite[Lemma 4.8]{Gol} mentioned in Remark \ref{crossprod2}, that $\overline{E_{\D}}(k_\nu h)=\overline{E_{\D}}(k_{\nu_h})= k_{\nu_h\circ E_{\D}}$. (Here $k_{\nu_h\circ E_{\D}}$ is considered in the context of $\D\rtimes_\nu\mathbb{R}$.) Now notice that when combined with the fact that $\nu\circ E_{\D}=\nu$, the commutation of $h$ with $\D$ ensures that 
$$\nu_h\circ E_{\D}(a)=\sup_\epsilon \nu(h^{1/2}_\epsilon E_{\D}(a)h^{1/2}_\epsilon)= \sup_\epsilon \nu(E_{\D}(a)^{1/2} h_\epsilon E_{\D}(a)^{1/2}).$$
But this equals $$\nu(E_{\D}(a)^{1/2} h E_{\D}(a)^{1/2})=\nu(\widehat{E_{\D}}(E_{\D}(a)^{1/2} h E_{\D}(a)^{1/2}))=\nu(E_{\D}(a)).$$ Thus we have that $\overline{E_{\D}}(k_\nu h)= k_{\nu_h\circ E_{\D}}=k_\nu$ as required.

(b$^{\prime}$) $\Rightarrow$ (a)\ Suppose that $E_{\D}(h) = 1$.  It follows that for any $x \in \M_+$ and any $\epsilon>0$ we have $$E_{\D} (h_\epsilon^{\frac{1}{2}}  x h_\epsilon^{\frac{1}{2}} ) \leq \| x \| \, E_{\D}(h_\epsilon) \leq \| x \| \, E_{\D}(h) =  \| x \| \cdot 1$$ 
where as before $h_\epsilon=h(\I+\epsilon h)^{-1}$.  
Thus for any $x \in \M_+$, the net $(E_{\D} (h_\epsilon^{\frac{1}{2}}  x h_\epsilon^{\frac{1}{2}}))$ is a norm bounded net in $\D$. 

For any $d \in \D_+$ we know from the equivalence of (b) with (b${}^\prime$) that $\nu(d)=\nu_h(d)$. So for any $d \in \mathfrak{n}(\D)_{\nu}$ and any $x\in\M$, we will have that $d^*xd\in\mathfrak{m}_{\nu_h}$. For any 
$d \in \mathfrak{n}(\D)_{\nu}$, it is now clear that 
\begin{eqnarray*}
\nu(d^*E_{\D} (h_\epsilon^{\frac{1}{2}}  x h_\epsilon^{\frac{1}{2}})d) 
&=& \nu(d^*(h_\epsilon^{\frac{1}{2}}  x h_\epsilon^{\frac{1}{2}})d)\\
&=& \nu(h_\epsilon^{\frac{1}{2}} d^* x d h_\epsilon^{\frac{1}{2}})\\
&\nearrow& \nu_h(d^* x d)<\infty
\end{eqnarray*} 
as $\epsilon\searrow 0$. To see this recall that by \cite[Lemma VIII.2.7]{tak2} the net $(\nu(h_\epsilon^{\frac{1}{2}} d^* x d h_\epsilon^{\frac{1}{2}}))$ increases to $\nu_h(d^* x d)$. The above fact in particular also ensures that if $\epsilon_2 > \epsilon_1 >0$, we will then for any $d \in \mathfrak{n}(\D)_{\nu}$ have that 
$$\nu(d^*[E_{\D} (h_{\epsilon_2}^{\frac{1}{2}}  x h_{\epsilon_2}^{\frac{1}{2}})-E_{\D} (h_{\epsilon_1}^{\frac{1}{2}}  x h_{\epsilon_1}^{\frac{1}{2}})]d) \geq 0.$$Now recall that in the 
GNS representation for $(\nu_{|\D},\D)$ the space $\eta(\mathfrak{n}_\nu(\D))$ is a dense subspace of $H_{\nu_{|\D}}$. On 
passing to this GNS representation if need be, the above inequality corresponds to the statement that $$\langle [E_{\D} (h_{\epsilon_2}^{\frac{1}{2}}  x h_{\epsilon_2}^{\frac{1}{2}})-E_{\D} (h_{\epsilon_1}^{\frac{1}{2}}  x h_{\epsilon_1}^{\frac{1}{2}})]\xi, \xi\rangle \geq 0$$for all vectors $\xi$ in a dense subspace of $H_{\nu_{|\D}}$. By continuity we will then in fact have that $\langle [E_{\D} (h_{\epsilon_2}^{\frac{1}{2}}  x h_{\epsilon_2}^{\frac{1}{2}})-E_{\D} (h_{\epsilon_1}^{\frac{1}{2}}  x h_{\epsilon_1}^{\frac{1}{2}})]\xi, \xi\rangle \geq 0$ for all $\xi \in H_{\nu_{|\D}}$. But this ensures that $E_{\D} (h_{\epsilon_2}^{\frac{1}{2}}  x h_{\epsilon_2}^{\frac{1}{2}})\geq E_{\D} (h_{\epsilon_1}^{\frac{1}{2}}  x h_{\epsilon_1}^{\frac{1}{2}})$. Thus the net $(E_{\D} (h_\epsilon^{\frac{1}{2}}  x h_\epsilon^{\frac{1}{2}}))$ is a norm bounded increasing net. Hence for each 
$x\in \M_+$, the limit $\lim_{\epsilon\searrow 0} E_{\D} (h_\epsilon^{\frac{1}{2}}  x h_\epsilon^{\frac{1}{2}})$ exists with convergence taking place in the $\sigma$-strong* topology.

For any $x\in \M_+$, we now define the element $E(x)\in \D_+$ by $E(x)= \lim_{\epsilon\searrow 0} 
E_{\D}(h_\epsilon^{\frac{1}{2}} x h_\epsilon^{\frac{1}{2}})$. We pause to note that for this element, it is clear from the 
above computations that for any $d \in \mathfrak{n}(\D)_\nu$ and any $x\in \M_+$, we have that 
\begin{equation}\label{nuhexp} \nu(d^*E(x)d)=\lim_{\epsilon\searrow 0}\nu(d^* E_{\D} (h_\epsilon^{\frac{1}{2}}  x h_
\epsilon^{\frac{1}{2}})d) = \lim_{\epsilon\searrow 0}\nu(h_\epsilon^{\frac{1}{2}} d^*xd h_\epsilon^{\frac{1}{2}})=
\nu_h(d^*xd). \end{equation}

Notice that for $x \in \M_+$ and any $d\in \D$, the commutation of $h$ with $\D$ 
 clearly ensures that $$E(d^* x d) = 
\lim_{\epsilon\searrow 0} E_{\D} (h_\epsilon^{\frac{1}{2}}  d^* x d h_\epsilon^{\frac{1}{2}}) = 
d^* \lim_{\epsilon\searrow 0}E_{\D} (h_\epsilon^{\frac{1}{2}} x h_\epsilon^{\frac{1}{2}}) d = d^* E(x) d , \qquad d \in \D.$$
The prescription $E(x) = \lim_{\epsilon\searrow 0}E_{\D}(h_\epsilon^{\frac{1}{2}} x h_\epsilon^{\frac{1}{2}})$ 
($x \in \M_+$), therefore yields a well defined $\D$-valued operator valued weight on $\M_+$. 
Since also $E(\I) = \lim_{\epsilon\searrow 0}E_{\D}(h_\epsilon) = E_{\D} (h)  = \I$, $E$ gives rise to
a conditional expectation of $\M$ onto $\D$ that we continue to write as $E$ (see \cite[Lemma IX.4.13 (iii)]{tak2}). 

Now suppose that $x_t \nearrow x$ in $\M_+$. Since $E$ is order preserving, we clearly have that $\sup_t E(x_t)=\lim_t E(x_t) 
\leq E(x)$. But for any $d \in \mathfrak{n}(\D)_\nu$ it follows from equation (\ref{nuhexp}) and the fact that $\nu_h(d^*\cdot d)$ is 
a positive normal functional, that $$\nu(d^*(E(x)-\lim_t E(x_t))d)=\lim_t\nu_h(d^*(x-x_t)d)=0.$$ On again passing to the GNS representation for $(\nu_{|\D},\D)$ if need be, this ensures that $\langle\sup_t E(x_t)\xi,\xi\rangle = \langle E(x)\xi,\xi\rangle$ for all $\xi$ in a dense subspace of $H_{\nu_{|\D}}$. By continuity we then in fact have that $\langle\sup_t E(x_t)\xi,\xi\rangle = \langle E(x)\xi,\xi\rangle$ for all $\xi\in H_{\nu_{|\D}}$. That can of course only be if $\sup_t E(x_t)= E(x)$, which proves the normality of $E$. 

It remains to show that $\nu(E(x)) = \nu_h(x)$ for all $x\in \M_+$. To see this recall that for $x\in\M_+$ the net $(E_{\D} (h_\epsilon^{\frac{1}{2}}  x h_\epsilon^{\frac{1}{2}}))$ increases to $E(x)$. The normality of $\nu$ therefore ensures that $$\nu(E(x)) = \sup_{\epsilon > 0}\nu(E_{\D} (h_\epsilon^{\frac{1}{2}}  x h_\epsilon^{\frac{1}{2}}))=\sup_{\epsilon > 0} \nu(h_\epsilon^{\frac{1}{2}}  x h_\epsilon^{\frac{1}{2}})=\nu_h(x).$$Thus by the statement of the main correspondence in \cite{PT}, $\nu \circ E$ commutes with $\nu$.

We now prove the bijectivity of the correspondences.
The map defined above `from  (b) to (c)'  is one-to-one by the bijectivity in the Pedersen-Takesaki Radon-Nikodym theorem \cite{PT}.

Let $\rho =   \nu \circ E$, and $z = {\rm supp} \, (E)  = {\rm supp} \, (\rho)$. 
Define $E_z$ and $E_1$ as in the proof of (b) above.   Then as we saw in the proof of (a) above, 
$E(zxz) z$ defines a normal conditional expectation 
from $z \M z$ onto $\D z$ with $\rho(E(zxz) z) = \rho(zxz)$, for $x \in \M$.  
By the uniqueness assertion where we defined $E_z$ above we have
$$E_z(zxz) = E_z(zxz) z = E_1(zxz) z = E(zxz) z = E(x) z, \qquad x \in \M .$$   
Since   $\D \cong \D z$ via the $*$-isomorphism of right multiplication by $z$ we see from the last formula
that $E$ may be retrieved from $E_z$ and hence from $\rho$.   (See also \cite[Lemma 4.8]{haag-OV1}.) 

Thus to complete the proof that the correspondences defined between (a), (b), (c) are bijective it suffices to show that 
given $h$ as in (b), if $E_h$ is the expectation obtained in the proof of 
(b$^{\prime}$)$\Rightarrow$(a), we then have $\frac{d(\nu \circ E_h)}{d \nu} = h$.  
But we saw in  the proof of (b$^{\prime}$) $\Rightarrow$(a) that 
$\nu_h = \nu \circ E_h$.  Hence $\frac{d(\nu \circ E_h)}{d \nu} = h$ as required.
    \end{proof} 
 
\begin{remark} 
 \label{afterg} 
1)\ Let $\D$ be a von Neumann subalgebra of $\M$, and let $\nu$ 
 be a faithful normal weight on $\M$ which is semifinite on $\D$ and  
 satisfies $\sigma_t^\nu(\D) = \D$ for real $t$.   Let $E_{\D}$ be the $\nu$-preserving 
conditional expectation of $\M$ onto $\D$ (see the definition at the start of Section \ref{ncec}). 
  Let $E$ be a faithful normal conditional expectation onto $\D$ which commutes with $(\sigma^\nu_t)$.   Then $E$ is the {\em 
weight-preserving  
conditional expectation} associated with the faithful normal semifinite weight $\rho = \nu \circ E$.  
 This weight satisfies $\sigma_t^\rho(\D) = \D$ for real $t$ since by \cite[Theorem 4.7 (1)]{haag-OV1} we have 
$\sigma_t^{\nu \circ E} =  \sigma_t^{\nu}$.  
So we are in the setting of the theorem but with $\omega = \nu \circ E$ playing the role of $\nu$, and then 
the associated $\omega$-preserving 
conditional expectation $E_{\D}$ (guaranteed by \cite[Theorem IX.4.2]{tak2}  as at the start of Section \ref{ncec}), is exactly $E$.  

\smallskip

2)\  An explicit example showing the necessity of $\nu$ being semifinite on $\D$ is the case that 
$D$ is the copy of $L^\infty([0,1])$  in $B(L^2([0,1]))$ in Example \ref{abave}, with 
 $\nu$ the trace on $B(L^2([0,1]))_+$.     Here $\nu$ is not semifinite on $\D \cong L^\infty([0,1])$, although 
 it is semifinite on $B(L^2([0,1]))_+$ and $\sigma^\nu_t(\D) = \D$.  

\smallskip

3)\ Let $\nu$ be a faithful normal semifinite 
{\em trace} on $\M$ and let $h$ be a positive operator affiliated to $\M_\nu$. 
Following on from the remark after Proposition \ref{commhk} we show that we may in the specialized 
setting of Theorem \ref{gencor}, similarly rewrite the limit 
$E(x) = \lim_{\epsilon \searrow 0} \, E_{\D}(h_\epsilon^{\frac{1}{2}} x h_\epsilon^{\frac{1}{2}})$ as the claim that 
$E_{\D}(h^{\frac{1}{2}} x h^{\frac{1}{2}})=E(x)$, where by $h^{\frac{1}{2}} x h^{\frac{1}{2}}$ we mean the operator 
$b=(h^{\frac{1}{2}}x^{\frac{1}{2}})(h^{\frac{1}{2}}x^{\frac{1}{2}})^*\in L^1(\M)$.  

Recall that the theorem ensures that $\nu(d)=\nu_h(d)$ for any $d\in \D_+$. Thus 
${\mathfrak n}(\D)_\nu = {\mathfrak n}(\D)_{\nu_h}$. So for any $d \in {\mathfrak n}(\D)_\nu$ and with $x$ as before, 
we will have that $d^*xd \in \mathfrak{p}_{\nu_h}$. The results of the remark following Proposition \ref{commhk}, therefore 
ensures that $h^{\frac{1}{2}} d^* x^{\frac{1}{2}}\in L^2(\M,\nu)$. When combined with the fact that $E_{\D}$ is 
$\nu$-preserving, the strong commutation of $h$ with $\D$ further ensures that
$$\nu(d^* E_{\D}(b) d) = \nu(d^* bd) = \nu( (h^{\frac{1}{2}} d^*x^{\frac{1}{2}})(h^{\frac{1}{2}}d^*x^{\frac{1}{2}})^*)= 
\nu((h^{\frac{1}{2}} d^*x^{\frac{1}{2}})^* (h^{\frac{1}{2}} d^*x^{\frac{1}{2}})),$$
while by similar considerations 
$$\nu(d^* E(x) d) = \lim_{\epsilon \searrow 0} \, \nu(E_{\D}(d^* h_\epsilon^{\frac{1}{2}} x h_\epsilon^{\frac{1}{2}} d) )
= \lim_{\epsilon \searrow 0} \, \nu(h_\epsilon^{\frac{1}{2}} d^* x d h_\epsilon^{\frac{1}{2}} ).$$
In equation (\ref{PTremark}) we may by a tiny variant of that argument replace $x^{\frac{1}{2}}$ with $d^* x^{\frac{1}{2}}$, 
upon which it is then clear that the 
quantity in the last equation equals 
$$\nu((x^{\frac{1}{2}} d) h (x^{\frac{1}{2}} d)^*) = \nu(|h^{\frac{1}{2}} d^*x^{\frac{1}{2}} |^2) .$$
Hence $\nu(d^* E(x) d) = \nu(d^* E_{\D}(b) d)$. We know from Remark \ref{crossprod2} that $E_{\D}(b)\in L^1(\M,\nu)_+$. 
Since $\nu(E(x))=\nu_h(x)<\infty$, we also have that $E(x)\in L^1(\M,\nu)$. It follows that $E_{\D}(b)=E(x)=  
\lim_{\epsilon \searrow 0} \, E_{\D}(h_\epsilon^{\frac{1}{2}} x h_\epsilon^{\frac{1}{2}})$ as desired.
\end{remark}

We now turn to the special case that semifinite $\D$ is contained in the centralizer $\M_\nu$.   This is equivalent by \cite[Theorem VIII.2.6]{tak2}  to the modular automorphism group
$(\sigma^\nu_t)$ restricting to the identity
map on $\D$ for each real $t$. 
In this case there again exists the 
$\nu$-preserving  conditional expectation $E_{\D} : \M \to \D$ by \cite[Theorem IX.4.2]{tak2}.

\begin{corollary} \label{Hcongen} Let $\D$ be a von Neumann subalgebra of a  von Neumann algebra $\M$, and let $\nu$ be a faithful normal weight on $\M$, which is semifinite on $\D$ and is also $\D$-central 
(that is, $\D \subset \M_\omega$; or equivalently, by e.g.\ Lemma  {\rm \ref{tuse}}, $\nu$ is tracial on $\D$ with in addition 
$\nu(xd)=\nu(dx)$ for any $x\in \mathfrak{m}(\M)_\nu$ and any $d\in \mathfrak{m}(\D)_\nu$). 
 There is a bijective correspondence between the following objects:
\begin{enumerate}
\item[(a)] Normal conditional expectations $E$ onto $\D$ 
which commute with $(\sigma^\nu_t)$ ($t\in\mathbb{R}$), 
\item[(b)] densities $h \eta \M_\nu^+$ which commute with all elements of $\D$, and which satisfy $\nu_h = \nu$ on $\D_+$,
\item[(b$^{\prime}$)] densities $h \eta \M_\nu^+$ which commute with all elements of $\D$, and which satisfy $E_{\D}(h) = 1$ 
(where the action on $h$ is by the normal extension of $E_{\D}$ to the extended positive part of $\M$ in e.g.\
 {\rm \cite[Proposition 3.1]{Gol}}), 
\item[(b$^{\prime \prime}$)] densities $k \in {}^\eta L^1_+(\M)$ which commute strongly with both $k_\nu$ and $\D$, and which satisfy $E_{\D}(k) = E_{\D}(k_\nu)$
(where the action on $k_\nu$ and $k$ is by the normal extension of $E_{\D}$ to the extended positive part of $\M\rtimes_\nu\mathbb{R}$),
\item[(c)] extensions of $\nu_{|\D}$ to normal semifinite weights $\rho$ on all of $\M$ which commute with 
$\nu$, and are $\D$-central in the sense that $\rho(xd)=\rho(dx)$ for any $x\in\mathfrak{m}(\M)_\rho$ and any $d\in
 \mathfrak{m}(\D)_\nu$.
\end{enumerate} 
With respect to the correspondences above we also have the formulae  $$\rho = \nu \circ E = \nu_h, \; \; \; \; \; \; \;  \; \; \; \; h = \frac{d \rho}{d\nu} = k.k_\nu^{-1},$$ and 
$$E(x) =\lim_{\epsilon\searrow 0} \; E_{\D}(h_\epsilon^{\frac{1}{2}}  x h_\epsilon^{\frac{1}{2}} ), \qquad x \in \M_+,$$ where $h_\epsilon=h(\I+\epsilon h)^{-1}$ 
and  with the last limit being a ($\sigma$-strong*) limit of an increasing net in $\M$. 
 Indeed the relation between $E$ and $h$ is uniquely determined by the equation $\nu(E(exe)) = \nu_h(exe)$ 
 for  $x \in M_+$ and  projections $e\in \D$ with $\nu(e) < \infty$.  
\end{corollary}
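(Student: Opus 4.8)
The bulk of Corollary \ref{Hcongen} is inherited directly from Theorem \ref{gencor}, since the hypothesis $\D \subset \M_\nu$ is precisely the requirement $\sigma^\nu_t(\D) \subset \D$ strengthened so that $(\sigma^\nu_t)$ acts as the identity on $\D$; by Lemma \ref{tuse} this makes $\nu$ restrict to a faithful normal semifinite trace on $\D$, and the correspondence between (a), (b), (b$^{\prime}$), (b$^{\prime\prime}$), (c) together with the displayed formulae then specializes verbatim. Thus the only point needing a fresh argument is the final sentence: that the correspondence $E \leftrightarrow h$ is characterized by the identity $\nu(E(exe)) = \nu_h(exe)$, as $x$ ranges over $\M_+$ and $e$ over the $\nu$-finite projections of $\D$. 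The plan is to show this identity holds exactly for corresponding pairs, after which uniqueness of the relation is automatic from the bijectivity already established.

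The forward direction is immediate. If $E$ and $h$ correspond, then the argument establishing (b$^{\prime}$)$\Rightarrow$(a) in Theorem \ref{gencor} already yields the global equality $\nu \circ E = \nu_h$ on all of $\M_+$, and restricting to elements $exe$ gives the displayed identity at once.

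For the converse I would recover this global equality from the local identity by a supremum argument over the upward-directed net of $\nu$-finite projections $e \in \Pdb(\D)$, which increases to $1$ because $\nu|_\D$ is a normal semifinite trace. On the left, $E$ being a $\D$-bimodule map gives $E(exe) = e E(x) e$ with $E(x) \in \D_+$, so normality and semifiniteness of the trace $\nu|_\D$ yield $\sup_e \nu(e E(x) e) = \nu(E(x))$. On the right I would invoke the supremum formula $\sup_e \nu_h(exe) = \nu_h(x)$ of Theorem \ref{locis}, applied now to the weight $\nu_h$: this weight restricts to $\nu$, hence to a faithful normal semifinite trace, on $\D$; its support projection ${\rm supp}(\nu_h) = {\rm supp}(h)$ commutes with $\D$ since $h$ does; and $\D$ is locally $\nu_h$-central, which I would deduce from the strong commutation of $h$ with $\D$ together with the $\D$-centrality of $\nu$. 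Equating the two suprema with the hypothesized pointwise equality forces $\nu(E(x)) = \nu_h(x)$ for every $x \in \M_+$, i.e.\ $\nu \circ E = \nu_h$; the Pedersen-Takesaki uniqueness \cite{PT} then gives $h = \frac{d(\nu \circ E)}{d\nu}$, so $(E,h)$ is a corresponding pair, and the equation selects for each admissible $E$ the single density $h$, and conversely.

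I expect the one genuine obstacle to be the verification that Theorem \ref{locis} applies to $\nu_h$ and not merely to $\nu$: establishing the local $\nu_h$-centrality of $\D$ and coping with the possible non-faithfulness of $\nu_h$. The cleanest way around the latter is to pass to $z \M z$ for $z = {\rm supp}(h)$, where $\nu_h$ is faithful and semifinite, $z$ commutes with $\D$, and the centralizer machinery of Lemma \ref{tuse} applies directly; the local centrality on $\nu$-finite compressions $e\M e$ then reduces to the commutation of $h$ with $e$ and with each $d \in \D$ combined with the centrality of $\nu$ there.
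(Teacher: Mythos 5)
Your treatment of the final claim is essentially correct, and it is in fact the paper's own strategy: the forward direction comes from the global identity $\nu\circ E=\nu_h$ established in the proof of (b$'$)$\Rightarrow$(a) of Theorem \ref{gencor} (equation (\ref{nuhexp})), and the converse is a supremum argument over the $\nu$-finite projections $e\in\Pdb(\D)$, which lie in the centralizers of both weights and increase to $\I$, so that local agreement forces $\nu_h=\nu\circ E$, after which Pedersen--Takesaki uniqueness \cite{PT} identifies $h$. The paper gets the supremum formula by citing \cite[Proposition 4.2]{PT} (or \cite[Lemma VIII.2.7]{tak2}) together with the observation $\sigma^{\nu_h}_t(e)=h^{it}eh^{-it}=e$ from \cite[Proposition 4.6]{PT}; your route through Theorem \ref{locis} is a legitimate substitute, and your worry about non-faithfulness of $\nu_h$ is unnecessary, since Theorem \ref{locis} only requires faithfulness on $\D$ (where $\nu_h=\nu$), and the $\D$-centrality of $\nu_h$ is available for free from the already-established bijection (b)$\leftrightarrow$(c).

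The genuine gap is in your first sentence: item (c) does \emph{not} specialize ``verbatim'' from Theorem \ref{gencor}, because the two statements of (c) are different conditions. In the theorem, (c) asks that the support projection $z$ of $\rho$ commute with $\D$ and that $\sigma^\rho_t(\D z)\subset\D z$; in the corollary, (c) is the centralizer condition $\rho(xd)=\rho(dx)$ for $x\in\mathfrak{m}(\M)_\rho$, $d\in\mathfrak{m}(\D)_\nu$. Identifying these two classes of weights (equivalently, proving the bijection (a)$\leftrightarrow$(c) in the new formulation) is exactly what the paper singles out as ``the only equivalence that bears some investigation,'' and it requires arguments you have omitted. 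For (a)$\Rightarrow$(c) one must check that $\rho=\nu\circ E$ is $\D$-central: this uses $\mathfrak{m}(\D)_\rho=\mathfrak{m}(\D)_\nu$, the Kadison--Schwarz inequality to see that $E(a)\in\mathfrak{m}_\nu^+$ for $a\in\mathfrak{m}_\rho^+$, and then traciality of $\nu$ on $\D$ to get $\rho(ad)=\nu(E(a)d)=\nu(dE(a))=\rho(da)$. For (c)$\Rightarrow$(a) the paper does not pass through the theorem's (c) at all: it invokes Theorem \ref{excodsfex} (1) to produce the $\rho$-preserving normal conditional expectation $E$, and then deduces $E\circ\sigma_t^\nu=E$ (hence commutation with the modular group, since $\sigma_t^\nu\circ E=E$ automatically when $\D\subset\M_\nu$) from the uniqueness of $\rho$-preserving expectations; alternatively one could pass from the corollary's (c) to the theorem's (c) via Lemmas \ref{tuse} and \ref{centvN}, but some such argument must be supplied. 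Without it the correspondence with (c) — and hence the formula $\rho=\nu\circ E=\nu_h$ on which your own converse argument for the final claim relies — is not established.
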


\begin{proof}  We saw earlier that $\D \subset \M_\omega$ 
is equivalent to the requirement that $k_\nu$ commutes strongly with $\D$. 
Under this a priori assumption the equivalence of (a), (b), (b$^{\prime}$) and (b$^{\prime \prime}$) now follow directly from Theorem \ref{gencor}. The only equivalence that bears some investigation is (c). 

(a) $\Rightarrow$ (c)\  As in the proof in Theorem \ref{gencor} 
if $E: \M \to \D$ is a normal conditional expectation onto $\D$ which commutes with $(\sigma^\nu_t)$
then  $\rho  = \nu \circ E$ is a normal semifinite  weight on $\M$ extending $\nu_{| \D}$.
Again  $\rho\circ\sigma^\nu_t=\rho$ and   $\rho$ commutes with $\nu$  in the sense of {\rm \cite{PT}} (see also  \cite[Corollary VIII.3.6]{tak2}, although 
here all weights are faithful).

Since $\rho$ and $\nu$ agree on $\D$, it is clear that $\mathfrak{m}(\D)_\nu^+=\mathfrak{m}(\D)_\rho^+$, and hence by linearity 
that $\mathfrak{m}(\D)_\nu=\mathfrak{m}(\D)_\rho$.
For $d\in\mathfrak{m}(\D)_\rho$, we then clearly have that 
$d\mathfrak{m}_\rho \subset \mathfrak{m}_\rho$ and $\mathfrak{m}_\rho d \subset \mathfrak{m}_\rho$. Now let $a\in\mathfrak{m}_\rho^+$ 
be given. Then $E(a)\in \mathfrak{m}_\nu^+$ since by the Kadison-Schwarz inequality 
 $$\nu(E(a)^* E(a)) \leq \nu(E(a^* a) = \rho(a^* a) < \infty .$$ It is therefore clear that 
$$\rho(ad)=\nu(E(ad))=\nu(E(a)d)=\nu(dE(a))=\nu(E(da))=\rho(da)$$ 
and hence that 
$\mathfrak{m}(\D)_\rho$ is in the centralizer of $\rho$.

(c) $\Rightarrow$ (a)\ Take $\rho$ commuting with $\nu$ as in (c).      Let $E$ be the $\rho$-preserving conditional expectation  in Theorem  \ref{excodsfex} 
(1).   We have  $\rho(E(\sigma_t^\nu(x)))  = \rho(\sigma_t^\nu(x))  = \rho(x)$ if $x \in \M_+$. So $E \circ \sigma_t^\nu$ is a  $\rho$-preserving conditional expectation, hence  $E \circ \sigma_t^\nu = E$ by 
 the uniqueness in Theorem \ref{excodsfex}.  The fact that $E \circ \sigma_t^\nu$ is the identity on $\D$ follows because it is normal and is the identity on ${\mathfrak m}(\D)$.

For the final claim, it obviously follows from equation (\ref{nuhexp}) that $\nu(E(exe)) =\nu_h(exe)$, for $x \in \M_+$ and  projections $e\in \D$ with $\nu(e) < \infty$.

Conversely, if this equation holds then $\nu_h$ and $\nu \circ E$ are two normal semifinite weights on $\M_+$ that agree on 
$e \M e$ for each projection $e\in \D$ with $\nu(e) < \infty$.   Note that such $e$ are in the 
centralizers of these weights (for $\rho = \nu_h$ it follows from e.g.\   \cite[Proposition 4.6]{PT}
  
 that $\sigma^\rho_t(e) = h^{it} e h^{-it} = e$).  Since such $e$ form an increasing net with limit $1$, we may deduce from 
\cite[Proposition 4.2]{PT} (or \cite[Lemma VIII.2.7]{tak2}) that $\nu_h = \nu \circ E$.  
However we have seen above that $\nu_h = \nu \circ E$ completely determines $h$ in terms of $E$ and vice versa. 
\end{proof}

\begin{remark} 
1)\ The 
following parallels  Remark 1)\ after Theorem \ref{gencor}.  Let $\D$ be a von Neumann subalgebra of $\M$, and $\nu$ a 
faithful normal semifinite weight on $\M$, which is semifinite on $\D$ and  $\D$-central in the sense of the corollary.  
Let $E$ be a faithful normal conditional expectation onto $\D$ which commutes with $(\sigma^\nu_t)$.   
As in   Remark 1)\ after Theorem \ref{gencor}, $E$ is the {\em 
weight-preserving 
conditional expectation} associated with the faithful normal semifinite weight $\rho = \nu \circ E$.  
We check that this weight satisfies the appropriate centralizer condition.  By \cite[Theorem 4.7 (1)]{haag-OV1} we have 
$\sigma_t^{\nu \circ E} =  \sigma_t^{\nu}$ on $\D$ for all $t$, so that $\sigma_t^{\nu \circ E}(d) = d$ for all $d \in \mathfrak{m}(\D)_\nu$, 
and hence  $$\nu(E(xd))= \nu(E(x)d)= \nu(d E(x)) = \nu(E(dx)), \qquad x \in \mathfrak{m}(\M)_\nu, \;    
d\in \mathfrak{m}(\D)_\nu.$$
So we are in the setting of the corollary but with $\rho = \nu \circ E$ playing the role of $\nu$. Then 
the associated $\rho$-preserving 
 conditional expectation $E_{\D}$ (guaranteed by 
\cite[Theorem IX.4.2]{tak2}  as at the start of Section \ref{ncec}), is exactly $E$. 

\smallskip

2)\  For a nonzero projection $p \in \D$ with $\nu(p) < \infty$, a multiple of $\nu$ is a faithful normal state $\nu_p$ on $p \M p$.   
As in Theorem  \ref{wcentw}  and its proof we may  view the conditional expectation $E$ as being approximated by an increasing 
net of conditional expectations $E_p(x) = E(pxp) = p E(x) p$ on the  von Neumann algebras $p \M p$. 
Pursuing this line suggests that 
there is no doubt a variant of (b) (or (b$^{\prime}$)) in 
the last result phrased in terms of the $\sigma$-finite von Neumann algebras $p \M p$, for each such $p$, and `local densities' $ehe$.

\smallskip

3)\ Let $\D$ be a von Neumann subalgebra of $\M$, and $\nu$ a faithful normal  tracial weight on $\M$, 
which is semifinite on $\D$.  It follows that we then have a bijective correspondence between 
(a)\ normal conditional expectations $E$ onto $\D$, 
(c)\  extensions of $\nu_{|\D}$ to normal semifinite weights $\rho$ on all of $\M$ which commute with 
$\nu$ in the sense in the Introduction, and  items (b) and (b$^{\prime}$) in the corollary as stated.  
We may also use the remark after Proposition \ref{commhk} and Remark 
3 after Theorem \ref{gencor} to write 
$\nu_h$ and $E$ in terms of $h$ without using limits, as described there.
\end{remark}

The following is a special case of Corollary  \ref{Hcongen} and 3)  of the last remark.   The explicit formulation as a result characterizing $\D$-valued normal conditional expectations will however prove to be extremely useful.  

\begin{corollary} \label{Hcon} Let $\D$ be a von Neumann subalgebra of $\M$, and $\tau$ a faithful normal tracial state on $\M$. There is a bijective correspondence between the following objects:
\begin{enumerate}
\item[(a)] Normal conditional expectations onto $\D$,
\item[(b)] densities $h \in L^1(\M)_+$ which commute with $\D$ and satisfy 
$E_{\D}(h)=\I$,
\item[(c)] extensions of $\tau_{|\D}$ to normal states $\rho$ on all of $\M$ which have $\D$ in their centralizer.
\end{enumerate}
The  normal conditional expectation $E$ in {\rm   (a)} and  $h$ in  {\rm   (b)}  are related by the formula
$$E(x) = E_{\D}(h^{\frac{1}{2}}  x h^{\frac{1}{2}}) , \qquad x \in \M_+.$$ It also may be described by the relation 
$\tau(E(x) ) = \tau(hx)$ for all $x \in \M e$.  Also $h$ in {\rm  (b)} is the 
Radon-Nikodym derivative $\frac{d \rho}{d \tau}$ for $\rho$ as in {\rm (c)}. 
\end{corollary}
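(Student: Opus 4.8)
The plan is to obtain this result as the specialization of Corollary~\ref{Hcongen} to the case $\nu=\tau$, a faithful normal tracial state, and then to read off the closed-form formulae from the tracial refinements already recorded. First I would note that because $\tau$ is a trace its modular automorphism group $\sigma^\tau_t$ is trivial; consequently the hypotheses of Corollary~\ref{Hcongen} hold automatically with $\nu=\tau$ (the weight $\tau$ is finite, hence semifinite on $\D$, and it is $\D$-central since $\tau(xd)=\tau(dx)$ for all $x\in\M$, $d\in\D$), while the auxiliary requirement in items (a) and (c) there that objects ``commute with $(\sigma^\tau_t)$'' becomes vacuous.

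With this in hand the three families match up directly. Item (a) of Corollary~\ref{Hcongen} reduces to all normal conditional expectations onto $\D$, which is (a) here. For (c): any normal semifinite weight $\rho$ extending $\tau_{|\D}$ satisfies $\rho(1)=\tau(1)=1$, so $\rho$ is automatically a normal state, the commutation with $\tau$ is vacuous, and $\D$-centrality reduces via \cite[Theorem VIII.2.6]{tak2} to $\D\subset\M_\rho$; this is precisely (c) here. For (b) I would pass through item (b$'$) of Corollary~\ref{Hcongen}: since $\M_\tau=\M$, a density $h\,\eta\,\M_\tau^+$ is simply a positive operator affiliated with $\M$ commuting with $\D$, and since $\rho=\nu_h$ is a state we have $\tau(h)=\nu_h(1)=\rho(1)=1<\infty$, so $h\in L^1(\M,\tau)_+$. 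Because $\tau$ is a trace, the Haagerup space $L^1(\M)$ coincides with this tracial $L^1(\M,\tau)$, whence $h\in L^1(\M)_+$, and the requirement $E_\D(h)=\I$ is verbatim the condition in (b$'$). Thus (b) here is (b$'$) there.

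It then remains to record the formulae. The relations $h=\frac{d\rho}{d\tau}$ and $\rho=\tau\circ E=\nu_h$ are immediate from Corollary~\ref{Hcongen} together with the Pedersen--Takesaki correspondence \cite{PT}. To remove the $\epsilon$-limit from $E(x)=\lim_{\epsilon\searrow 0}E_\D(h_\epsilon^{1/2}xh_\epsilon^{1/2})$ I would invoke Remark~\ref{afterg}(3), which in the tracial setting rewrites this as $E(x)=E_\D(h^{1/2}xh^{1/2})$, the product $h^{1/2}xh^{1/2}$ being understood as the element $(h^{1/2}x^{1/2})(h^{1/2}x^{1/2})^*\in L^1(\M)$. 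Finally, for $x\in\M_+$ the tracial computation in the remark following Proposition~\ref{commhk} gives $\tau(E(x))=\nu_h(x)=\tau(h^{1/2}xh^{1/2})=\tau(hx)$, the last equality being cyclicity of $\tau$; by linearity this persists for all $x\in\M$, and since $\tau$ is faithful and $E$ is a $\D$-bimodule map this relation determines $E$, recovering (a) from (b).

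The genuinely analytic content is confined to the identification step and the passage to the limit-free formula, so I expect the main obstacle to be the bookkeeping needed to justify that the density $h$ produced by Corollary~\ref{Hcongen} is $\tau$-integrable and that the interpretation of $h^{1/2}xh^{1/2}$ as an $L^1$-element is consistent; in particular one should check $E_\D(h^{1/2}xh^{1/2})=E_\D(hx)$, which follows by pairing with $d\in\D$ and using that $h^{1/2}$ commutes with $\D$. Everything else is a translation of definitions, so once these tracial-$L^1$ facts are in place the bijections and formulae follow without further difficulty.
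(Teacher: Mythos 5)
Your proposal is correct and takes essentially the same route as the paper: the paper's proof simply notes that for a tracial state the commutation conditions in Corollary \ref{Hcongen} (a) and (c) are automatic (with $\mathfrak{m}(\M)_\nu = \M$ and $\mathfrak{n}(\D)_\nu = \D$) and leaves the rest as an exercise, and that exercise is precisely the specialization you carry out, including the appeal to Remark \ref{afterg}(3) and the remark after Proposition \ref{commhk} for the limit-free formulae. Your write-up merely supplies the bookkeeping the paper leaves implicit (that $h$ is $\tau$-integrable, that a weight extending the state $\tau_{|\D}$ is a state, and the identification of (b) with (b$^{\prime}$)).
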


\begin{proof} Since $\tau$ is a tracial state the commuting condition in Corollary \ref{Hcongen} (a) and (c) is automatic, and $\mathfrak{m}(\M)_\nu = \M$ and $\mathfrak{n}(\D)_\nu
= \D$.    With these in mind, the rest is an exercise.  \end{proof} 

\section{The noncommutative Hoffman-Rossi theorem--preliminary considerations} \label{HR1}

\begin{proposition} \label{HRf3g}  
Consider the inclusions $\D \subset \A \subset \M,$ where $\M$ is a von Neumann algebra with 
normal  state $\nu$ which is faithful on von Neumann subalgebra $\D$, with  $\D$ in the centralizer of $\nu$, and $\A$ is a
 $\D$-submodule of $\M$. 
Let $\Phi : \A \to \mathcal{D}$ be a  
map with $\Phi(1) = 1$, which is a $\mathcal{D}$-bimodule map (if $A$ is an algebra
and $\Phi$ is a homomorphism
 then this
is  equivalent to $\Phi$ being the identity map on $\mathcal{D}$). 
\begin{itemize}
\item [(1)]  Assume that  $\nu \circ \Phi$ extends to a normal state 
$\rho$ on $\M$ with $\D$ in its centralizer (that is, $\rho(xd)=\rho(dx)$ for every $d \in \mathcal{D}, x\in \M$).
Then there is a unique  $\rho$-preserving normal conditional expectation $\M \to \mathcal{D}$, and this expectation  extends $\Phi$. 
\item [(2)] Assume that  $\nu$ is faithful and 
$\nu \circ \Phi$ extends to a normal state  on $\M$.    
 Then $\nu \circ \Phi$ extends to a normal state 
$\rho$ on $\M$ with $\D$ in its centralizer,
and there is a  normal $\rho$-preserving conditional expectation $\M \to \mathcal{D}$ extending $\Phi$.  Moreover such a
normal $\rho$-preserving conditional expectation is unique.  
\end{itemize}
\end{proposition}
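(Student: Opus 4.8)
The plan is to prove (2) from (1) via an averaging argument, and to prove (1) as a direct application of Theorem \ref{wcent} together with the computation used in Corollary \ref{HRf}. So I would first dispose of (1), then build the required state $\rho$ for (2).

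For (1), note that since $\Phi(1)=1$ and $\Phi$ is a $\D$-bimodule map, $\Phi$ is the identity on $\D$, whence $\nu\circ\Phi=\nu$ on $\D$; as $\rho$ extends $\nu\circ\Phi$ we get $\rho|_\D=\nu|_\D$. In particular $\rho$ is faithful on $\D$ (since $\nu$ is) and tracial on $\D$ (since $\D\subset\M_\nu$ forces $\nu|_\D$ to be a trace). Combined with the hypothesis $\rho(dx)=\rho(xd)$ for $d\in\D$, $x\in\M$, Theorem \ref{wcent} supplies a \emph{unique} $\rho$-preserving normal conditional expectation $\mathbb{E}_\rho:\M\to\D$. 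To see that $\mathbb{E}_\rho$ extends $\Phi$, I would argue exactly as in Corollary \ref{HRf}: for $a\in\A$ and $d\in\D$ (so that $da\in\A$, using that $\A$ is a $\D$-submodule),
$$\nu(d\Phi(a))=(\nu\circ\Phi)(da)=\rho(da)=\rho(\mathbb{E}_\rho(da))=\rho(d\mathbb{E}_\rho(a))=\nu(d\mathbb{E}_\rho(a)),$$
the final equality holding because $d\mathbb{E}_\rho(a)\in\D$ and $\rho|_\D=\nu|_\D$. Thus $\nu(d(\Phi(a)-\mathbb{E}_\rho(a)))=0$ for all $d\in\D$, and taking $d=(\Phi(a)-\mathbb{E}_\rho(a))^*$, faithfulness of $\nu$ on $\D$ forces $\Phi(a)=\mathbb{E}_\rho(a)$.

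For (2), the only real task is to manufacture a normal state $\rho$ extending $\nu\circ\Phi$ with $\D$ in its centralizer, since then (1) applies verbatim and yields both existence and uniqueness. I would start from a normal state $\psi$ extending $\nu\circ\Phi$ (given by hypothesis); then $\psi|_\D=\nu|_\D$. Because $\nu$ is a faithful normal state with $\D\subset\M_\nu$, Lemma \ref{aves2} (applied with $\omega=\nu$) produces a normal state $\rho$ in the norm-closed convex hull $K_\psi$ of $\{u^*\psi u:u\in\mathcal U\}$, where $\mathcal U$ is the unitary group of $\D$, such that $\rho$ extends $\nu|_\D$ and has $\D$ in its centralizer. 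The step still needing verification is that $\rho$ extends $\nu\circ\Phi$ on all of $\A$, not merely on $\D$. Here I would use the bimodule structure: for $u\in\mathcal U$ and $a\in\A$ we have $uau^*\in\A$, so using that $\Phi$ is a $\D$-bimodule map and $\nu$ is tracial on $\D$,
$$(u^*\psi u)(a)=\psi(uau^*)=\nu(\Phi(uau^*))=\nu(u\Phi(a)u^*)=\nu(\Phi(a))=(\nu\circ\Phi)(a).$$
Consequently every convex combination of the $u^*\psi u$ agrees with $\nu\circ\Phi$ on $\A$, and by norm continuity so does every element of $K_\psi$; in particular $\rho$ extends $\nu\circ\Phi$. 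Feeding this $\rho$ into part (1) gives the desired unique normal $\rho$-preserving conditional expectation extending $\Phi$.

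The main obstacle is precisely this last verification. Lemma \ref{aves2} only guarantees that the averaged state agrees with $\nu$ on $\D$, so the genuinely new content is the displayed computation showing that averaging over $\mathcal U$ preserves the values of $\nu\circ\Phi$ across the \emph{whole} module $\A$; this hinges on $\Phi$ intertwining the conjugation action of $\mathcal U$ (its $\D$-bimodule property) and on $\nu|_\D$ being tracial. Once this is in hand, the remainder is bookkeeping: a straightforward invocation of Theorem \ref{wcent} for (1) and of Lemma \ref{aves2} for (2).
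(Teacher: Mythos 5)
Your proof is correct and takes essentially the same approach as the paper: part (2) rests on Lemma \ref{aves2} together with exactly the paper's key computation that averaging over the unitary group $\mathcal{U}$ of $\D$ preserves the values of $\nu \circ \Phi$ on all of $\A$ (via the bimodule property of $\Phi$ and traciality of $\nu$ on $\D$), followed by an appeal to part (1). For part (1) you invoke Theorem \ref{wcent} where the paper invokes Corollary \ref{unfco}, but these are interchangeable here since $\rho$ is faithful on $\D$, and your argument that the expectation extends $\Phi$ is the same faithfulness computation the paper uses (as in Corollary \ref{HRf}).
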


\begin{proof}    (1)\    By Corollary \ref{unfco}  there is a  $\rho$-preserving normal conditional 
expectation $E : \M \to \mathcal{D}$.  
As usual  the fact that it is $\rho$-preserving makes it unique, and 
by the same argument $E$ extends $\Phi$.   

(2)\ Suppose that  
$\nu \circ \Phi$ extends to a normal state  $\omega$ on $\M$.     
Let  $K_\omega$ and $\rho \in K_\omega$ be as  in Lemma \ref{aves2}, so that
$\rho(xd)=\rho(dx)$ for every $d \in \mathcal{D}, x\in \M$.
  We have a sequence $\psi_n = \sum_k \, t_k u_k^* \omega u_k$
of convex combinations approximating $\rho$.  For  $a \in \A$ we have  $$\psi_n(a) = \sum_k \, t_k \, \omega( u_k a u_k^* ) 
= \sum_k \, t_k \, \nu(\Phi( u_k a u_k^* )) =  \sum_k \, t_k \,  \nu(\Phi(a) ) = \nu(\Phi(a) ).$$
Thus $\rho(a) = \nu(\Phi(a) )$, so that $\rho$ is a normal state extending $\nu \circ \Phi$.  Now apply (1) to obtain 
a  normal $\rho$-preserving conditional expectation $\M \to \mathcal{D}$ extending $\Phi$.  \end{proof}

\begin{remark} 
Indeed we may take $\rho$  in (2) to be $\omega \circ E_{\D' \cap \M}$ 
where $E_{\D' \cap \M}$ is the unique $\omega$-preserving 
conditional expectation onto 
$\D' \cap \M$.
\end{remark}

\begin{corollary}  \label{HRf3}  
Consider the inclusions $\D \subset \A \subset \M,$ where $\M$ is a von Neumann algebra
with faithful normal tracial state $\tau$, $\A$ is a
 subalgebra of $\M$, and $\D$ is a von Neumann subalgebra containing the unit of $\M$.
Let $\Phi : \A \to \mathcal{D}$ be a  unital 
map, which is a $\mathcal{D}$-bimodule map. 
\begin{itemize}
\item [(1)]  Assume that  $\tau \circ \Phi$ extends to a normal state 
$\rho$ on $\M$ whose density with respect to $\tau$ commutes with $\D$.   Then there is a unique  $\rho$-preserving normal conditional expectation $\M \to \mathcal{D}$ extending $\Phi$. 
\item [(2)] Assume that  $\tau \circ \Phi$ extends to a normal state  on $\M$.    
 Then there is a  normal conditional expectation $\M \to \mathcal{D}$ extending $\Phi$. 
\end{itemize}
\end{corollary}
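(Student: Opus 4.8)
The plan is to read off this corollary as the specialization of Proposition \ref{HRf3g} to the case where the reference state $\nu$ is the faithful normal tracial state $\tau$. The first thing I would observe is that since $\tau$ is a faithful normal trace, its centralizer $\M_\tau$ is all of $\M$; in particular $\D \subset \M_\tau$ and $\tau(xd) = \tau(dx)$ for all $x \in \M, d \in \D$. Thus with $\nu = \tau$ the standing hypotheses of Proposition \ref{HRf3g} are automatically satisfied: $\tau$ is a normal state, faithful on $\D$, with $\D$ in its centralizer, and the subalgebra $\A$ is in particular a $\D$-submodule of $\M$ containing $\D$.

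For part (1), the only point needing checking is that the hypothesis ``the density $h = \frac{d\rho}{d\tau}$ of $\rho$ commutes with $\D$'' is exactly the hypothesis ``$\D$ lies in the centralizer of $\rho$'' required by Proposition \ref{HRf3g}(1). This is immediate from traciality: writing $\rho(x) = \tau(hx)$, for $d \in \D$ and $x \in \M$ we have $\rho(dx) = \tau(hdx) = \tau(dhx) = \tau(hxd) = \rho(xd)$, using $hd = dh$ and then the trace property of $\tau$. (This equivalence is in any case recorded in Corollary \ref{Hcon}.) With $\D$ now known to be in the centralizer of $\rho$, Proposition \ref{HRf3g}(1) supplies the unique $\rho$-preserving normal conditional expectation $\M \to \D$ and asserts that it extends $\Phi$.

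For part (2), since $\tau$ is faithful I would apply Proposition \ref{HRf3g}(2) verbatim. That result, via the averaging argument of Lemma \ref{aves2} over the unitary group of $\D$, first upgrades the given normal-state extension of $\tau \circ \Phi$ to a normal state $\rho$ extending $\tau \circ \Phi$ with $\D$ in its centralizer, and then produces a normal $\rho$-preserving conditional expectation $\M \to \D$ extending $\Phi$; this is the desired expectation (one simply forgets the extra $\rho$-preservation to match the weaker conclusion stated here).

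The substance of the argument is therefore entirely contained in Proposition \ref{HRf3g} and the machinery feeding it---the averaging Lemma \ref{aves2} and the existence/uniqueness results (Corollary \ref{unfco}, Theorem \ref{wcent}) for conditional expectations associated to states with $\D$ in the centralizer. I do not expect any genuine obstacle here; the one mildly nontrivial step is the translation in part (1) between the density-commutation condition and the centralizer condition, which is immediate once traciality is used.
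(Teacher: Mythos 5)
Your proposal is correct and is exactly the paper's intended argument: the paper states Corollary \ref{HRf3} with no separate proof, as an immediate specialization of Proposition \ref{HRf3g} to $\nu = \tau$, where traciality makes the standing centralizer hypothesis automatic. Your translation in part (1) of the density-commutation hypothesis into the condition that $\D$ lies in the centralizer of $\rho$ (via $\rho(dx)=\tau(hdx)=\tau(dhx)=\tau(hxd)=\rho(xd)$, as also recorded in Corollary \ref{Hcon}) is precisely the one routine step the paper leaves to the reader.
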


To illustrate the efficacy of the technology developed above, we show how it may be 
used to recover a commutative 
generalization of the classical Hoffman-Rossi theorem conditioned to the present context. The proof strategy of this corollary will in the next section serve as the template for the proof of the general noncommutative Hoffman-Rossi theorem.

\begin{corollary} \label{HRc}  Consider the inclusions $\D \subset \A \subset \M,$ where $\M$ is a commutative von Neumann algebra, $\A$ is a
weak* closed subalgebra of $\M$, and $\D$ is a von Neumann subalgebra containing the unit of $\M$. 
Let $\sigma$ be a faithful  normal semifinite weight on $\D$.
Let $\Phi : \A \to \mathcal{D}$ be a weak* continuous  $\D$-character.   
   Then   $\sigma \circ \Phi$  extends to a  normal semifinite weight $\rho$ on $\M$, such that
there is a unique  $\rho$-preserving normal conditional expectation $\M \to \mathcal{D}$ extending $\Phi$.
Indeed for any  normal semifinite trace $\rho$ extending $\sigma \circ \Phi$,
there is a unique  $\rho$-preserving normal conditional expectation $\M \to \mathcal{D}$ extending $\Phi$.
 \end{corollary}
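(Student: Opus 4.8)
The plan is to split the corollary into its two genuinely distinct assertions: the \emph{existence} of a normal semifinite weight $\rho$ extending $\sigma\circ\Phi$, and the construction, from \emph{any} such $\rho$, of the unique $\rho$-preserving normal conditional expectation extending $\Phi$. The point I would lean on throughout is that $\M$ is commutative, so that every normal semifinite weight on $\M$ is automatically a trace with trivial modular group $(\sigma^{\rho}_t)=\mathrm{id}$, and $\D$ automatically lies in the centralizer $\M_\rho$ of every such weight. Consequently the ``averaging for centrality'' stage of the general three-stage strategy is vacuous, and the hypotheses needed to invoke Theorem \ref{excodsfex} are met for free once a reference weight is available.

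For the last (and for our purposes main) statement, I would take $\rho$ to be an arbitrary normal semifinite trace on $\M$ with $\rho|_{\A}=\sigma\circ\Phi$. Since $\Phi$ is the identity on $\D$, we get $\rho(d)=\sigma(\Phi(d))=\sigma(d)$ for $d\in\D$, so $\rho$ restricts to the faithful semifinite trace $\sigma$ on $\D$; combined with $\D\subset\M_\rho$ this is exactly condition (2) of Theorem \ref{excodsfex}, which produces a unique $\rho$-preserving normal conditional expectation $E_\rho:\M\to\D$. The only thing left is $E_\rho|_{\A}=\Phi$. Here I would set $c=\Phi(a)-E_\rho(a)\in\D$ for $a\in\A$; for any projection $e\in\D$ with $\sigma(e)<\infty$ the element $c^*e\in\D\subset\A$ gives $c^*ea\in\A$ with $\Phi(c^*ea)=c^*e\,\Phi(a)$, and $\rho$-preservation together with $\rho|_\D=\sigma$ yields
$$\sigma(c^*e\,\Phi(a))=\rho(c^*ea)=\rho(c^*e\,E_\rho(a))=\sigma(c^*e\,E_\rho(a)),$$
so that $\sigma(e|c|^2)=0$ (commutativity gives $c^*ec=e|c|^2$). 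Letting $e\nearrow\I$ along the $\sigma$-finite projections of $\D$ and using normality and faithfulness of $\sigma$ on $\D$ forces $c=0$. Uniqueness of a $\rho$-preserving normal conditional expectation is then the standard faithfulness argument used in the proof of Theorem \ref{wcent}.

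It therefore remains to exhibit one normal semifinite weight $\rho$ extending $\sigma\circ\Phi$, and this is the genuine Hoffman--Rossi content. The plan is to reduce to the classical scalar theorem by disintegrating $\M$ over $\D$. Writing $\D\cong L^\infty(Y)$ with $\sigma$ given by a (strictly localizable) measure $m$ on $Y$, commutativity of $\M$ lets one realise $\M$ as a direct integral $\int_Y^\oplus \M_y\,dm(y)$ of commutative algebras with $\D$ acting diagonally; then $\A$ decomposes as a measurable field of weak* closed subalgebras $\A_y\subset\M_y$ and the unital $\D$-bimodule homomorphism $\Phi$ decomposes as a measurable field of unital multiplicative maps $\Phi_y:\A_y\to\Cdb$. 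Each $\Phi_y$ is a character of $\A_y$ which is weak* continuous for $m$-a.e.\ $y$, so the classical Hoffman--Rossi theorem in its general positive-measure form \cite{BFZ} provides a normal state $\rho_y$ on $\M_y$ extending $\Phi_y$. A measurable selection of these extensions gives a measurable field $y\mapsto\rho_y$, and $\rho=\int_Y^\oplus\rho_y\,dm(y)$ is a normal semifinite weight on $\M$ with $\rho(a)=\int_Y\Phi_y(a_y)\,dm(y)=\sigma(\Phi(a))$ for $a\in\A$.

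The hard part will be precisely this existence step. First I would have to make the direct-integral decomposition of the triple $\D\subset\A\subset\M$ rigorous in full (possibly non-separable, localizable) von Neumann algebra generality; then verify that the weak* continuous $\Phi$ disintegrates into a.e.\ weak* continuous characters $\Phi_y$; and finally perform the measurable selection of the fiberwise extensions carefully enough that the integrated functional is a bona fide normal semifinite weight rather than merely a positive functional on a dense domain. Once such a $\rho$ is produced, the second paragraph applies verbatim, and since the conditional expectation there is determined by $\rho$ alone, the result is insensitive to the choices made in the selection.
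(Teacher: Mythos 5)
Your handling of the second assertion is sound and essentially matches the paper: in a commutative $\M$ every normal semifinite weight is a trace with $\D$ automatically central, so Theorem \ref{excodsfex} (2) applies to any normal semifinite trace $\rho$ extending $\sigma\circ\Phi$, and your verification that the resulting expectation $E_\rho$ agrees with $\Phi$ (testing against $c^*e$ for $\sigma$-finite projections $e\in\D$, then using normality and faithfulness of $\sigma$) is a legitimate variant of the paper's argument, which tests instead on the decomposition $\A=\ker\Phi\oplus\D$ and passes to the limit over the same net of projections. The genuine gap is the existence of $\rho$, which you yourself call ``the hard part'' and leave as a plan rather than a proof; since this is the entire Hoffman--Rossi content of the corollary, the proposal as written does not establish the statement.

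Concretely, the disintegration plan faces obstacles that are not merely technical bookkeeping. First, the corollary carries no separability hypothesis: $\M$ is an arbitrary commutative von Neumann algebra and $\D$ an arbitrary von Neumann subalgebra, whereas direct-integral decompositions of $\M$ over $\D$ --- measurable fields of fiber algebras $\M_y$, of weak* closed subalgebras $\A_y$, and of fiber maps $\Phi_y$ --- belong to the separable-predual (standard Borel) reduction theory, and the decomposition $\M\cong\int_Y^\oplus\M_y\,dm(y)$ you start from need not exist in this generality. Second, even assuming separability, the deferred steps are substantive: a weak* continuous $\D$-bimodule homomorphism does not obviously disintegrate into a.e.\ weak* continuous multiplicative fiber functionals, and the fiberwise representing measures $\rho_y$ supplied by the classical theorem are highly non-unique, so producing a measurable field $y\mapsto\rho_y$ requires a measurable-selection theorem applied to the set-valued map $y\mapsto\{\text{normal extensions of }\Phi_y\}$, whose measurability is nowhere addressed. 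The paper avoids all of this with a short $L^2$ argument valid without any separability: Banach-space duality gives $a,b\in L^2(\M)$ with $\tau(x\,ab^*)=\sigma(\Phi(x))$ on $\A$ (for a faithful normal semifinite trace $\tau$ on $\M$); replacing $b$ by its projection $c$ onto $[\A a]_2$ (the Sarason trick), the functional $x\mapsto\tau(x\,cc^*)$ still annihilates $J=\ker\Phi$ and satisfies $\sigma(d^*d)^2\le\|a\|_2^2\,\tau\bigl((d^*d)^2cc^*\bigr)$ for $d\in\D$, hence after normalization is a normal state killing $J$ and faithful on $\D$; Theorem \ref{wcent} then yields the expectation, and the weight case follows by patching these expectations over a family of $\sigma$-finite projections in $\D$ summing to $\I$. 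To complete your proposal you would either have to carry out the disintegration program in full (which would in any case restrict the corollary to separable $\M$) or replace the existence step by an argument of this duality type.
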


\begin{proof}   First assume that $\D$  possesses a
faithful normal state $\sigma$.  Let $\tau$ be a
faithful normal semifinite trace on $\M$ (integration with respect to $\mu$ if $\M = L^\infty(\mu)$). 
  By Banach space duality $\sigma \circ \Phi$  extends to a weak* continuous 
functional on $\M$, so there exists $r = ab^* \in L^1(\M),$ with $a, b \in L^2(\M)$, such that $\tau(r x) = \sigma(\Phi(x))$ for all $x \in \A$. 
   In particular $\tau(r d) = \tau(da b^*) =  \sigma(d)$ for 
$d \in \D$, 
and $\tau(r j) = \tau(ja b^*) = 0$ for $j \in J = {\rm Ker}(\Phi)$, so that $b \perp F$
 where $F = [Ja]_2$.    We apply an idea that appears to go back to Sarason and others, as discussed
in  \cite{BFZ}.  Let $c$ be the projection $P_E(b)$ of $b$ onto $E = [Aa]_2$.
Then $c \perp F$ since $F \subset E$.   Note that
$$\tau(j c c^*) = \tau(j c b^*)  = 0, \qquad j \in J,$$ since $jc \in J [Aa]_2 \subset F \subset E$.   For $d \in \D$ 
 we have 
 $$ \sigma(d^*d) =  |\langle d^*d a , b \rangle |  = |\langle d^* da , c \rangle |   = |\langle a , d^*dc \rangle |\leq 
\| a \|_2 \| d^* d c \|_2 . $$
 Thus 
$$\sigma(d^*d)^2 \leq  \| a \|_2^2  \, \tau( (d^* d)^2 c c^*) =  \| a \|_2^2  \, \tau( (d^* d)^2 E_{\D}(c c^*)), \qquad  d \in \D .$$

  Thus $\rho(x) = \frac{1}{\| c \|_2} \, \tau(x c c^*)$ is a
normal state on $\M$ restricting to a faithful normal tracial state $\omega$ on $\D$, and $\rho$ annihilates $J$.  

By e.g.\ Theorem \ref{wcent}  (or by Corollary
3.2 in \cite{BLv}), 
there is a unique  $\rho$-preserving normal conditional expectation $\Psi : \M \to \mathcal{D}$.  Fix $d \in \D$. 
Now $\omega(\Psi(a) d) = \rho(ad)$.   This equals $\omega(\Phi(a) d)$ for all $a$ in $J$ and in $\D$, since both are zero on $J$
and equal $\omega(ad)$ for $a \in \D$.   Since $\A = J \oplus \D$ we deduce that $\Psi_{|\A} = \Phi$.

Now that we know that there exists a normal conditional expectation $\Psi : \M \to \mathcal{D}$ extending $\Phi$,
redefining $\rho = \sigma \circ \Psi$ gives a normal state  on $\M$ extending $\sigma \circ \Phi$.
Also $\Psi$ is   $\rho$-preserving.     The last assertion follows from 
Theorem \ref{wcent}: for any  normal state  $\rho$   on $\M$ extending $\sigma \circ \Phi$,
there exists a  $\rho$-preserving  normal conditional expectation $\Psi : \M \to \mathcal{D}$.   By the argument at the 
end of the last paragraph $\Psi_{|\A} = \Phi$. 

In the general case suppose that $(p_i)$ are projections in $\D$ adding to $1$  with $\sigma(p_i) < \infty$, as in the proof of
Theorem \ref{wcentw}.   Let 
$\D_i = p_i \D$, which has a faithful normal state $\sigma_i = \frac{1}{\sigma(p_i)} \, \sigma(p_i \, \cdot)$.
We have $\Phi(p_i) = p_i$ and $\Phi(p_i \A) = \D_i$. 
So by the first part there exists a unique $\sigma_i$-preserving normal conditional expectation $E_i : \M_i = \M p_i  \to \mathcal{D}_i$ 
extending $\Phi(p_i \, \cdot)$.   Define $\Psi : \M \to \D$ by $\Psi((p_i x)) = (E_i(p_ix))$.   
As in the proof of 
Theorem \ref{wcentw} this is a  normal conditional expectation $\M \to \mathcal{D}$.
 Let $\rho = \sigma \circ \Psi$ on $\M_+$, a normal weight extending $\sigma$.  Note that $\rho$ is a semifinite weight since $x e_t \to x$ for all $x \in \M$, 
 and $e_t$ and hence $x e_t$ are in ${\mathfrak n}_\rho$.
  We have $$\sigma(\Psi((p_i x))) = \sigma(\sum_i \, E_i(p_ix)) =\sum_i \, \sigma(E_i(p_ix)) =  \sum_i \, 
\rho(p_ix) = \rho(x) , \qquad x \in \M_+.$$

If $\rho$ is any normal weight on $\M$ extending $\sigma \circ \Psi$, then e.g.\ 
Theorem \ref{excodsfex} gives a unique normal conditional expectation $\Psi$ from $\M$ onto $\mathcal{D}$ which is 
 $\rho$-preserving. Fix $d \in \D$, and $p$ a projection in $\D$ of finite trace. 
Now $\omega(\Psi(a) dp) = \rho(adp)$.   This equals $\omega(\Phi(a) dp)$ for all $a$ in $J$ and in $\D$, since both are zero on $J$
and equal $\omega(adp)$ for $a \in \D$.   Since $\A = J \oplus \D$ we deduce that $\Psi(a)p= \Phi(a)p$.  Taking a weak*  limit along net of such projections
$p$ increasing to $1$, we obtain $\Psi(a) = \Phi(a)$.

The uniqueness is  as in the proof of
Theorem \ref{wcentw}: suppose that $x \in \M_+$ and $\sigma(d^* \Psi(x) d) = \sigma(d^* E(x) d)$ for all $d \in \D$.
Choosing $d$ a positive element in $\D_i$ shows that $\sigma((\Psi(x) - E(x))p_i d^2) = 0$.  
Since such squares span $\D_i$, we see that $\Psi(x) p_i = E(x) p_i$ for all $i$ so that $\Psi(x)  = E(x)$.
\end{proof}

The last proof shows that for any normal state 
$\omega$ on $\D$, $\omega \circ \Phi$  extends to a  normal state $\rho$ on $\M$, and 
there is a  $\rho$-preserving normal conditional expectation $\M \to \mathcal{D}$ extending $\Phi$.   The latter is unique
if $\omega$ is faithful on $\D$.

One problem with the proof of the last result in the  noncommutative case is that not every  faithful normal 
tracial state on $\D$ extends to a faithful normal tracial state on $\M$ (consider the  diagonal algebra  
in $M_2$). Indeed it need not even extend to a faithful normal state on $\M$ 
with $\D$ in the centralizer (see the example above Lemma \ref{aves2}).  We will overcome this difficulty in 
the form of Theorem \ref{HRc2} and Theorem \ref{HRsfcent}. In the case that $\M$ is semifinite and noncommutative 
but $\D$ is finite the result cannot be true without extra conditions (consider $\M = B(L^2([0,1]) )$, and  
$\A = \D$, the latter being the copy of $L^\infty([0,1])$ in $\M$ as in in Example \ref{abave}).     

\begin{corollary} \label{HRf2} Consider the inclusions $\D \subset \A \subset \M,$ where $\M$ is a von Neumann algebra, $\A$ is a
 subalgebra of $\M$, and $\D$ is a von Neumann subalgebra containing the unit of $\M$.
 Let $\omega$ be a faithful normal tracial state on $\mathcal{D}$. Let $\Phi : \A \to \mathcal{D}$ be a unital
 $\mathcal{D}$-bimodule map (or equivalently is the identity map on $\mathcal{D}$).   Assume also that  $\A + \A^*$ is weak* dense in $\M$. 
  Then there is a normal conditional expectation $\Psi : \M \to \mathcal{D}$ extending $\Phi$
if and only if
 $\omega \circ \Phi$ extends to a normal state $\psi$ on $\M$.   In this case $\Psi$ is $\psi$-preserving.  \end{corollary}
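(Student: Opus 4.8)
The forward implication is straightforward and I would dispatch it first: given a normal conditional expectation $\Psi : \M \to \D$ extending $\Phi$, set $\psi = \omega \circ \Psi$. This is a normal state on $\M$ (since $\Psi$ is normal, positive and unital and $\omega$ is a normal state), and for $a \in \A$ we have $\psi(a) = \omega(\Psi(a)) = \omega(\Phi(a))$, so $\psi$ extends $\omega \circ \Phi$. As $\Psi \circ \Psi = \Psi$ we also get $\psi \circ \Psi = \psi$, so $\Psi$ is $\psi$-preserving; the expectation produced in the converse direction will be $\psi$-preserving by construction, so the final assertion of the corollary comes for free.

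For the converse I would start from a normal state $\psi$ extending $\omega \circ \Phi$ and note that $\psi|_\D = \omega$ (because $\Phi$ is the identity on $\D$), so $\psi$ is faithful on $\D$. The plan is then to show that $\D$ lies in the centralizer $\M_\psi$ and to feed this into Theorem \ref{wcent}. The mechanism for producing the centralizer condition is the observation that the weak* density of $\A + \A^*$ makes the normal state extension of $\omega \circ \Phi$ unique: two such extensions agree on $\A$, hence on $\A^*$ by conjugation, hence on the weak* dense subspace $\A + \A^*$, and hence everywhere by normality. Concretely, I would fix a unitary $u \in \D$ and consider the normal state $\psi^u(x) = \psi(u^* x u)$; for $a \in \A$ one has $u^* a u \in \A$ (as $u, u^* \in \D \subset \A$), and using that $\Phi$ is a $\D$-bimodule map together with the traciality of $\omega$ on $\D$,
\[
\psi^u(a) = \omega(\Phi(u^* a u)) = \omega(u^* \Phi(a) u) = \omega(\Phi(a)) = \psi(a).
\]
Thus $\psi^u$ is again a normal state extending $\omega \circ \Phi$, whence $\psi^u = \psi$ by uniqueness; that is, $\psi(u^* x u) = \psi(x)$ for all unitaries $u \in \D$ and $x \in \M$. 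Replacing $x$ by $ux$ yields $\psi(xu) = \psi(ux)$, and since $\D$ is spanned by its unitaries this gives $\psi(xd) = \psi(dx)$ for all $d \in \D$, $x \in \M$, i.e. $\D \subset \M_\psi$.

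At this point $\psi$ is a normal state on $\M$ that is faithful and tracial on $\D$ with $\psi(dx) = \psi(xd)$, so Theorem \ref{wcent} hands me a unique $\psi$-preserving normal conditional expectation $\Psi : \M \to \D$, and it only remains to check that $\Psi$ extends $\Phi$. Here I would use faithfulness of $\omega$ on $\D$: for $a \in \A$ and $d \in \D$, the module property of $\Psi$ and the fact that it preserves $\psi$ give $\psi(d\Psi(a)) = \psi(\Psi(da)) = \psi(da) = \omega(\Phi(da)) = \psi(d\Phi(a))$, using that $\psi$ extends $\omega \circ \Phi$ on $\A$ and equals $\omega$ on $\D$. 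Setting $b = \Psi(a) - \Phi(a) \in \D$ we get $\psi(db) = 0$ for every $d \in \D$; taking $d = b^*$ gives $\psi(b^*b) = 0$, so $b = 0$ by faithfulness of $\psi|_\D = \omega$, and hence $\Psi|_\A = \Phi$.

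The step I expect to be the genuine obstacle is establishing $\D \subset \M_\psi$ for an arbitrary von Neumann algebra $\M$. One cannot here invoke the weak-compactness and averaging argument of Lemma \ref{aves}, which needs a faithful normal tracial state on all of $\M$; the whole point of assuming $\A + \A^*$ weak* dense is that it forces uniqueness of normal state extensions, so that the $u^*(\cdot)u$-invariance of $\psi$ arrives automatically instead of having to be manufactured by averaging. Once the centralizer condition is in hand, the remainder is a routine appeal to the conditional-expectation theory of Section \ref{ncec}.
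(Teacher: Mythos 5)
Your proposal is correct and follows essentially the same route as the paper: verify that $\D$ lies in the centralizer of $\psi$ (using the $\D$-bimodule property of $\Phi$, the traciality of $\omega$ on $\D$, and weak* density of $\A+\A^*$ together with normality), then invoke Theorem \ref{wcent} to obtain the $\psi$-preserving normal conditional expectation, and finally use faithfulness of $\omega$ on $\D$ to see that this expectation extends $\Phi$. The only difference is cosmetic: where the paper computes $\psi(ad)=\omega(\Phi(a)d)=\omega(d\Phi(a))=\psi(da)$ directly for $a\in\A$ and passes to all of $\M$ by density, you reach the same centralizer condition via uniqueness of the normal state extension plus invariance under conjugation by unitaries of $\D$ --- the same ingredients repackaged, with the pleasant byproduct that $\psi$ is unique.
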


\begin{proof}  One direction is obvious.
For the other, if  $\omega \circ \Phi$ extends to a normal state $\psi$ on $\M$, then
we have  $\omega(\Phi(ad)) =  \omega(\Phi(a)d) =  \omega(d \Phi(a))  = \omega(\Phi(da))$ for $d \in \mathcal{D}, a\in \A$.   Thus
 $\psi(xd) = \psi(dx)$ for $x \in \A$, hence for $x \in \M$.
  Now appeal to Corollary \ref{HRf}: there is a normal $\psi$-preserving
conditional expectation $\Psi : \M \to \mathcal{D}$.
For any normal conditional expectation $\Psi : \M \to \mathcal{D}$ extending $\Phi$
we have $$\psi(\Psi(a)) = \psi(\Phi(a)) = \omega(\Phi(a)) = \psi(a), \qquad a \in \A 
, $$
so that $\Psi$ is $\psi$-preserving.
 \end{proof}

\begin{remark}
The condition that $\A +\A^*$ is weak* dense may not simply be dropped in the last result.
The example mentioned before the corollary shows this (with
$\D = \A = L^\infty(\Tdb)$).

 In several results in the present paper, a condition on $\M$ (and perhaps also on $\D$) may be replaced 
by a  condition on an intermediate subalgebra $\A$ between $\M$ and $\D$, if  $\A + \A^*$ is weak* dense in $\M$ (a condition 
which is crucial for example  for Arveson's subdiagonal algebras \cite{Arv}, or for much of the abstract analytic function theory 
from the 1960's and 70's \cite{Gam,BLsurv}).     This follows e.g.\ by tricks seen in the last proof.  
\end{remark}

\section{The noncommutative Hoffman-Rossi theorem for sigma-finite algebras} \label{HR2} 

\begin{lemma}  \label{mth}  Let $(K,  \mu)$ be a 
measure 
 space and $g \in L^1(K,\mu)_+$.
Then $$(\int_K \, f \, d \mu)^2 \leq \int_K \, f^2 g \, d \mu, \qquad
f \in L^\infty(K,\mu)_+,$$ if and only if $g$ is
$\mu$-a.e.\ nonzero and $\int_K \, g^{-1} \, d \mu
\leq 1$. 
\end{lemma}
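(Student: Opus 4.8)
The plan is to treat the two implications separately, with the reverse (``if'') direction being an immediate application of the Cauchy--Schwarz inequality, and the forward (``only if'') direction requiring a truncation argument to justify testing the hypothesis against the (generally unbounded) function $g^{-1}$.

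For the ``if'' direction, suppose $g$ is $\mu$-a.e.\ nonzero with $\int_K g^{-1} \, d\mu \leq 1$. Given $f \in L^\infty(K,\mu)_+$, I would factor $f = (f g^{1/2}) \cdot g^{-1/2}$ pointwise (using that $g > 0$ a.e.) and apply Cauchy--Schwarz in $[0,\infty]$:
$$\int_K f \, d\mu \leq \Big(\int_K f^2 g \, d\mu\Big)^{1/2} \Big(\int_K g^{-1} \, d\mu\Big)^{1/2} \leq \Big(\int_K f^2 g \, d\mu\Big)^{1/2}.$$
Squaring gives the desired inequality; this remains valid even when $\int_K f \, d\mu = \infty$, since then the right-hand side is forced to be infinite as well.

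For the ``only if'' direction, assume the inequality holds for all $f \in L^\infty(K,\mu)_+$. First I would show $g$ is a.e.\ nonzero: if $\{g = 0\}$ had positive measure, I would choose a subset $A_0$ of finite positive measure (possible since the measures relevant here are semifinite, indeed strictly localizable) and test with $f = \chi_{A_0}$, obtaining $\mu(A_0)^2 \leq \int_{A_0} g \, d\mu = 0$, a contradiction. To obtain the bound, I would test against truncations of $g^{-1}$. Since $g \in L^1$, each set $B_n = \{x : 1/n \le g(x) \le n\}$ has finite measure and $f_n = g^{-1}\chi_{B_n} \in L^\infty(K,\mu)_+$. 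Applying the hypothesis to $f_n$ and evaluating both sides (on $B_n$ one has $g^{-2}g = g^{-1}$) yields
$$\Big(\int_{B_n} g^{-1}\, d\mu\Big)^2 \le \int_{B_n} g^{-2} g \, d\mu = \int_{B_n} g^{-1}\, d\mu,$$
whence $\int_{B_n} g^{-1}\, d\mu \le 1$ for every $n$. Because $g$ is a.e.\ nonzero and, being in $L^1$, a.e.\ finite, we have $B_n \nearrow \{0 < g < \infty\}$, a set of full measure, so the monotone convergence theorem gives $\int_K g^{-1} \, d\mu = \lim_n \int_{B_n} g^{-1}\, d\mu \le 1$.

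The only genuinely delicate point I anticipate is reaching $g^{-1}$ as an increasing limit of \emph{legitimate} test functions $f_n \in L^\infty(K,\mu)_+$ while keeping both integrals finite at each stage; this is exactly what the cutoffs $B_n$ accomplish, being of finite measure since $\mu(\{g \ge 1/n\}) \le n \|g\|_1 < \infty$ and giving $g^{-1} \le n$ there. A secondary technical caveat is the extraction of a finite-measure subset in the a.e.-nonzero step, which relies on semifiniteness of $\mu$; this is automatic for the $\sigma$-finite (equivalently, strictly localizable) measure spaces arising in this paper.
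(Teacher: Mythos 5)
Your proposal is correct and follows essentially the same route as the paper: Cauchy--Schwarz for the ``if'' direction, and testing the hypothesis against the bounded truncations $g^{-1}\chi_{B_n}$ (with finiteness of $\int_{B_n} g^{-1}\,d\mu$ guaranteed by Chebyshev) for the ``only if'' direction. The one superfluous step is your appeal to semifiniteness in the a.e.-nonzero argument: the paper simply tests with $f = \chi_{\{g=0\}}$, which yields $\mu(\{g=0\})^2 \leq \int_{\{g=0\}} g \, d\mu = 0$ directly, so no finite-measure subset is needed and the lemma holds for an arbitrary measure space.
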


\begin{proof}  Suppose that $\int_K \, g^{-1} \, d \mu
\leq 1$.   Then by Cauchy-Schwarz, 
$$(\int_K \, f \, g^{\frac{1}{2}} \,
g^{-\frac{1}{2}} \, d \mu)^2   
\leq 
(\int_K \, f^2 g \, d \mu) \, (\int_K \, g^{-1} \, d \mu )
\leq \int_K \, f^2 g \, d \mu .$$

Conversely, suppose that the inequality holds. 
Letting $E = \{ x \in K : g(x) = 0 \}$, and $f = \chi_E$ we see that $\mu(E)^2 = 0$.   So we may assume that $g$ is never
$0$, so that $g^{-1}$ is well defined and finite.  
 Letting $E_n = \{ x \in K : g(x)^{-1} \leq n \}$, and  $f_n = g^{-1}
\, \chi_{E_n}$, we have 
$$(\int_{E_n} \, g^{-1} \, d \mu)^2 \leq 
\int_{E_n} \, (g^{-1})^2 \, g \, d \mu = 
\int_{E_n} \, g^{-1} \, d \mu .$$  
Thus $\int_{E_n} \, g^{-1} \, d \mu
\leq 1$.   Letting $n \to \infty$ we have $\int_{K} \, g^{-1} \, d \mu \leq 1$. \end{proof}

\begin{remark} 
We thank Vaughn Climenhaga for discussions around the last result.  
\end{remark}

The following is the generalized  Hoffman-Rossi theorem for subalgebras of `finite' von Neumann algebras.  See \cite{BFZ} for the 
case when $\D$ is atomic.

\begin{theorem} \label{HRc2}   Consider the inclusions $\D \subset \A \subset \M,$ where $\M$ is a von Neumann algebra
with faithful normal tracial state $\tau$, $\A$ is a
weak* closed subalgebra of $\M$, and $\D$ is a von Neumann subalgebra containing the unit of $\M$.
Let $\Phi : \A \to \mathcal{D}$ be a weak* continuous  $\D$-character.   
Then there exists a normal conditional expectation $\Psi : \M \to \mathcal{D}$ extending $\Phi$. 
Indeed $\tau \circ \Phi$  extends to a  normal state $\rho$ on $\M$ such that 
there exists a unique  $\rho$-preserving normal conditional expectation $\Psi : \M \to \mathcal{D}$ extending $\Phi$. 
\end{theorem}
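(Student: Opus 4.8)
The plan is to establish the stronger ``indeed'' clause, from which the first assertion is immediate, by following the template of Corollary \ref{HRc} in three stages. First I would manufacture, by a Sarason-type argument, \emph{some} normal state $\rho_0$ on $\M$ that annihilates $J = \ker \Phi$ and is faithful on $\D$. Then I would average $\rho_0$ over the unitary group $\mathcal{U}$ of $\D$ to obtain a normal state $\psi$ which still kills $J$, is still faithful on $\D$, but now has $\D$ in its centralizer. Finally I would feed $\psi$ into the conditional-expectation machinery of Proposition \ref{HRf3g}(1) to produce a normal conditional expectation $\Psi : \M \to \D$ extending $\Phi$; the representing state is then $\rho = \tau \circ \Psi$.

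Stage one is the analytic heart of the matter. Since $\Phi$ is weak* continuous and $\tau$ is normal, $\tau \circ \Phi$ is a weak* continuous functional on $\A$, hence is represented by some $r \in L^1(\M)$ with $\tau(rx) = \tau(\Phi(x))$ for $x \in \A$; in particular $\tau(rd) = \tau(d)$ for $d \in \D$ and $\tau(rj) = 0$ for $j \in J$. Writing $r = ab^*$ with $a,b \in L^2(\M)$ via the polar decomposition, the relation $\tau(rj) = 0$ says $\langle ja, b \rangle = 0$, so $b \perp F$ where $F = [Ja]_2$. As in Corollary \ref{HRc}, let $c = P_E(b)$ with $E = [\A a]_2 \supseteq F$; then $c \perp F$, and $c \neq 0$ since otherwise $b \perp a \in E$ would force $\langle a, b \rangle = \tau(r) = \tau(\Phi(1)) = 1$ to vanish. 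I would set $\rho_0(x) = \|c\|_2^{-2}\, \tau(c^* x c)$. Traciality of $\tau$ makes this a normal state; it annihilates $J$ because $jc \in j\,[\A a]_2 \subseteq [Ja]_2 = F$, so $\tau(c^* j c) = \langle jc, c \rangle = 0$. The essential point is faithfulness on $\D$: if $d \in \D_+$ and $\rho_0(d) = 0$, then $\| d^{1/2} c \|_2^2 = \tau(c^* d c) = 0$, so $c^* d = (d^{1/2}c)^* d^{1/2} = 0$; since $da \in E$ and $b - c \perp E$, this gives $\tau(d) = \tau(rd) = \langle da, b \rangle = \langle da, c \rangle = \tau(c^* d a) = 0$, whence $d = 0$ by faithfulness of $\tau$.

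In stage two I would average $\rho_0$: by (the proof of) Lemma \ref{aves} and the remark after it, the norm closed convex hull of $\{ u^* \rho_0 u : u \in \mathcal{U} \}$ contains a normal state $\psi$ with $\psi(dx) = \psi(xd)$ for all $d \in \D$, $x \in \M$, so $\D \subseteq \M_\psi$ and $\psi|_\D$ is tracial. Conjugation by $\D$-unitaries carries $J$ into $J$, so $\psi$ still annihilates $J$. Faithfulness on $\D$ persists: letting $z$ be the (central) support projection of $\psi|_\D$ in $\D$, the element $1 - z$ is fixed by conjugation by each $u \in \mathcal{U}$, so every member of the convex hull agrees with $\rho_0$ on $1 - z$; hence $\psi(1-z) = \rho_0(1-z)$, and since $\psi(1-z) = 0$ while $\rho_0$ is faithful on $\D$ we conclude $z = 1$.

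Thus $\psi$ is a normal state that is faithful on $\D$, has $\D$ in its centralizer, and satisfies $\psi|_\A = \psi \circ \Phi$ (since $\psi$ kills $J$, so $\psi(a) = \psi(\Phi(a))$ for $a \in \A$). Applying Proposition \ref{HRf3g}(1) with $\nu = \rho = \psi$ then yields a unique $\psi$-preserving normal conditional expectation $\Psi : \M \to \D$, and this $\Psi$ extends $\Phi$. To finish, set $\rho = \tau \circ \Psi$: this is a normal state with $\rho|_\A = \tau \circ \Phi$; traciality of $\tau$ gives $\rho(dx) = \tau(d\Psi(x)) = \tau(\Psi(x)d) = \rho(xd)$, so $\D \subseteq \M_\rho$, while $\rho|_\D = \tau|_\D$ is faithful and tracial on $\D$. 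Theorem \ref{wcent} then provides both the $\rho$-preserving normal conditional expectation (namely $\Psi$) and its uniqueness. The main obstacle is stage one, the production of a positive normal density at all, together with the check in stage two that faithfulness on $\D$ survives the averaging; everything after that is bookkeeping against the results already established.
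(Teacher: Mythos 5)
Your proposal is correct, and it proves the full statement including the ``indeed'' clause; but at the decisive step it takes a genuinely different route from the paper. Both proofs share the same outer template (Sarason-type $L^2$ argument to get a density, averaging over the unitary group of $\D$, then the conditional expectation machinery), and your Stage 1 setup ($r=ab^*$, $c=P_E(b)$, $c\perp F$, $\tau(\cdot\,cc^*)$ kills $J$) is exactly the paper's. The divergence is what one demands of the state before averaging. The paper insists on a normal state that \emph{exactly extends} $\tau_{|\D}$: to get this it proves $g^{-1}\in L^1(\D)_+$ for $g=E_{\D}(cc^*)$ (via Lemma \ref{mth} applied in $W^*(g)$), renormalizes to $h=g^{-1/2}cc^*g^{-1/2}$ using Goldstein's results on the extended positive part and strong products, and runs a careful spectral-truncation argument to show $\tau(\cdot\,h)$ still annihilates $J$; it then quotes Corollary \ref{HRf3}(2), whose internal averaging (Lemma \ref{aves2}) preserves the extension property because $\tau_{|\D}$ is tracial. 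You instead settle for the weaker property that $\rho_0=\|c\|_2^{-2}\tau(c^*\cdot c)$ is \emph{faithful} on $\D$ (your direct argument for this, via $d^{1/2}c=0$ and $\langle da,b\rangle=\langle da,c\rangle$, is sound), average using Lemma \ref{aves} plus the remark after it (legitimate here since $\M$ is finite -- this is exactly where finiteness enters, cf.\ Example \ref{abave}), and then supply the extra check the paper never needs: that faithfulness on $\D$ survives averaging, via the neat observation that the central support $z$ of $\psi_{|\D}$ has $1-z$ fixed under ${\rm Ad}(u)$, so $\rho_0(1-z)=\psi(1-z)=0$. Feeding $\nu=\rho=\psi$ into Proposition \ref{HRf3g}(1) and finishing with $\rho=\tau\circ\Psi$ and Theorem \ref{wcent} is then correct bookkeeping. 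What each approach buys: yours is shorter and avoids the hardest analysis in the paper's proof (Lemma \ref{mth}, the $L^1$-inverse, the Haagerup/Goldstein machinery); the paper's renormalization technique, on the other hand, is precisely the part that scales up to the $\sigma$-finite Theorem \ref{HRsfcent}, where $\M$ is no longer finite, the averaging over $\mathcal{U}(\D)$ is unavailable, and the analogous correction must be performed in Haagerup $L^p$-spaces.
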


\begin{proof}  Proceeding as in the proof of Corollary \ref{HRc} we obtain $r = ab^* \in L^1(\M)$ and $c \in [\A a]_2$, 
such that   $x \mapsto \tau(x c c^*)$  annihilates $J$, and 
 $$\tau(d^*d)^2 \leq  \| a \|_2^2  \, \tau( (d^* d)^2 c c^*) =  \| a \|_2^2  \, \tau( (d^* d)^2 E_{\D}(c c^*)), \qquad  d \in \D .$$
Letting $g = E_{\D}(c c^*) \in L^1(\D)_+$ and $f = d^* d$ we have $$\tau(f)^2 \leq    \| a \|_2^2  \, \tau( f^2g), \qquad  f \in \D_+ .$$
Let $\M_0$ be the von Neumann algebra generated by $g$ (see e.g.\ \cite[p.\ 349]{KR1}), a commutative subalgebra of $\D$.
For $f \in (\M_0)_+$ we have $\tau(f)^2 \leq    \| a \|_2^2  \, \tau( f^2g)$.   This implies  that 
$g^{-1} \in L^1(\M_0)_+$  by Lemma \ref{mth}.   Hence as a closed densely defined positive operator affiliated with $\D$, 
we have $g^{-1} \in L^1(\D)_+$.  

Let $c_1 = g^{-\frac{1}{2}} c \in L^1(\M)$, and let $h = g^{-\frac{1}{2}} c c^* g^{-\frac{1}{2}}$ (the so called `strong product' \cite[p.\ 174]{tak2}).   Let $\hat{E}$ be Haagerup's extension of $E_{\D}$ to the 
extended positive part $\hat{\M}_+$.
 Since $\hat{E}(c c^*) = E(c c^*) = g \in L^1(\M)$, by \cite[Proposition 3.3]{Gol} we have 
$\hat{E}(h) = g^{-\frac{1}{2}} \cdot g \cdot g^{-\frac{1}{2}}$, where the latter is the
strong product.  But this is $1$.  
Thus $$\hat{\tau}(h) = \tau(g \bullet g^{-\frac{1}{2}}) = \hat{\tau}(g^{-\frac{1}{2}} \cdot g \cdot g^{-\frac{1}{2}} ) 
= 1 ,$$
by  Propositions  2.6 (used twice) and  3.2
in \cite{Gol}.  Thus $h \in L^1(\M)$ with $\| h \|_1 = 1$
(see Remark 1.2 in \cite{LabCo}), and $c_1 \in L^2(\M)$ 
with $\| c_1 \|_2 = 1$.  
Then $\omega = \tau( \cdot \, h)$ is a normal state on $\M$
which extends $\tau_{|\D}$.  Indeed $\tau(dh) = \tau(E_{\D}(dh))= \tau(dE_{\D}(h))
= \tau(d)$ for $d \in \D$.

 We now check that $\omega$ 
  annihilates $J$.  Let $d_n$ be $g^{-\frac{1}{2}}$ times the spectral projection of $[0,n]$ for $g^{-\frac{1}{2}}$.
So $d_n \nearrow g^{-\frac{1}{2}}$, and this convergence is also in $L^2$-norm (as one can see by spectral theory).
Then, similarly to the above, 
$$\tau((g^{-\frac{1}{2}} -d_n) cc^* (g^{-\frac{1}{2}} - d_n)) = 
\tau ( cc^* \bullet (g^{-\frac{1}{2}} -d_n)) = \tau(g  \bullet (g^{-\frac{1}{2}} -d_n)) =
\tau((1 - g^{\frac{1}{2}} d_n)^2).$$
This equals $\tau(p_n),$
where $p_n$ is the  spectral projection of $[n,\infty)$ for $g^{-\frac{1}{2}}$.
Clearly  $1-p_n \nearrow 1$ weak*. 
We deduce that $c^*d_n \to c^* g^{-\frac{1}{2}}$
and $d_n c \to g^{-\frac{1}{2}} c$  in $2$-norm.

It follows that $g^{-\frac{1}{2}} c \in  E = [Aa]_2$.  We have $j  g^{-\frac{1}{2}} c \in F = [J a]_2 \subset E$.
Thus $$\tau(j  g^{-\frac{1}{2}} c c^*) = \langle  j g^{-\frac{1}{2}} c , c \rangle = 0, $$
since $b \perp F$ and $c \perp F$ as before.  Hence 
$$0 = \lim_n \, \tau(j g^{-\frac{1}{2}} c c^* d_n 
)  =  \tau(j  g^{-\frac{1}{2}} c c^*  g^{-\frac{1}{2}} )
=  \tau(j h ) .$$ 
So $\omega$  annihilates $J$.

Since $\omega$ extends $\tau_{|D}$, we see that $\omega$ extends $\tau \circ \Phi$.   
Next appeal to  Corollary \ref{HRf3} (2) to obtain  a  normal conditional expectation $\Psi
: \M \to \mathcal{D}$ extending $\Phi$.   If $\rho = \tau \circ \Psi$ then $\rho$ 
extends $\tau \circ \Phi$, and $\Psi$ is $\rho$-preserving.   The uniqueness is as before.  
\end{proof}

\begin{remark} 
If  $\Phi : \A \to \D$ is a 
homomorphism and projection 
onto a $C^*$-subalgebra $\D$ of $\A$,
and if $\Phi$ is faithful on $\A \cap \A^*$,
 then  it follows by an argument of Arveson \cite{Arv} that $\D = \A \cap
\A^*$:  Certainly $\D \subset \A \cap \A^*$.
If $x \in (\A \cap \A^*)_{\rm sa}$ then
$\Phi((x - \Phi(x))^2) = (\Phi(x - \Phi(x)))^2 = 0$.  So    $(x - \Phi(x))^2
= x - \Phi(x) =0$, hence  $x = \Phi(x) \in \D$.    Thus $\D = \A \cap \A^*$.
This holds in particular  if $\Phi$ is preserved by a faithful state.  
\end{remark}

The following is the generalized  Hoffman-Rossi theorem for certain subalgebras of $\sigma$-finite von Neumann algebras--in this case $\D$ is still 
finite (that is, has a faithful normal tracial state $\tau_{\D}$), but $\M$ need not be finite.     

\begin{theorem} \label{HRsfcent}  Consider inclusions $\D \subset \A \subset \M,$ where $\M$ is a von Neumann algebra 
with faithful normal state $\omega$  with $\D$ in its centralizer, $\A$ is a 
weak* closed subalgebra of $\M$, and $\D$ is a von Neumann subalgebra  containing the unit of $\M$. 
Let $\Phi : \A \to \mathcal{D}$ be a weak* continuous  $\D$-character.    
Then there exists a normal conditional expectation $\Psi : \M \to \mathcal{D}$ extending $\Phi$. 
Indeed $\omega \circ \Phi$  extends to a  normal state $\rho$ on $\M$ such that 
there exists a unique  $\rho$-preserving normal conditional expectation $\Psi : \M \to \mathcal{D}$ extending $\Phi$. 
\end{theorem}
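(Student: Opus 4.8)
The plan is to reduce the whole statement, via Proposition \ref{HRf3g}(2), to the single assertion that the normal functional $\omega \circ \Phi$ on $\A$ extends to a normal \emph{state} on $\M$. First I would record the ambient structure. Since $\omega$ is faithful with $\D \subset \M_\omega$, Lemma \ref{istrD} shows that $\tau_\D := \omega_{|\D}$ is a faithful normal tracial state on $\D$, so $\D$ is finite; moreover by \cite[Theorem IX.4.2]{tak2} there is an $\omega$-preserving normal conditional expectation $E_\D : \M \to \D$, which by Remark \ref{crossprod2} extends to $E^{(1)}_\D : L^1(\M) \to L^1(\D)$ and to Haagerup's map $\widehat{E_\D}$ on the extended positive part, with $tr \circ E^{(1)}_\D = tr$. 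Granting that $\omega\circ\Phi$ extends to a normal state on $\M$, Proposition \ref{HRf3g}(2) applies directly ($\omega$ is faithful, $\D\subset\M_\omega$, $\A$ is a $\D$-submodule and $\Phi$ is a unital $\D$-bimodule map), and yields at once a normal state $\rho$ extending $\omega\circ\Phi$ with $\D$ in its centralizer together with a unique $\rho$-preserving normal conditional expectation $\Psi:\M\to\D$ extending $\Phi$, which is the full conclusion.

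To produce the normal-state extension I would transcribe the proof of Theorem \ref{HRc2} into the Haagerup setting, using the spaces $L^p(\M)$ attached to $\omega$ and the tracial functional $tr$ on $L^1(\M)$ in place of the semifinite trace used there. As $\omega\circ\Phi$ is weak* continuous on the weak* closed subspace $\A$, Banach space duality provides $r \in L^1(\M)$ with $tr(rx) = \omega(\Phi(x))$ for $x\in\A$; traciality of $tr$ lets me write $r = ab^*$ with $a,b \in L^2(\M)$ and recast this as $\langle xa, b\rangle = \omega(\Phi(x))$, where $\langle\xi,\eta\rangle = tr(\eta^*\xi)$. In particular $\langle da,b\rangle = \omega(d)$ for $d\in\D$ and $\langle ja,b\rangle = 0$ for $j\in J := \ker\Phi$. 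Setting $E = [\A a]_2$, $F = [Ja]_2 \subseteq E$ and $c = P_E(b)$, both $b$ and $c$ are orthogonal to $F$; and here the homomorphism property of $\Phi$ is essential, since it makes $J$ an ideal of $\A$, whence $jc \in F$ for $j\in J$ and therefore $tr(jcc^*) = \langle jc, c\rangle = \langle jc, b\rangle = 0$.

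The analytic heart is then exactly as in Theorem \ref{HRc2}: for $d\in\D$,
\[
\omega(d^*d) = \langle d^*d\, a,\, c\rangle = \langle a,\, d^*d\, c\rangle \le \|a\|_2\,\|d^*d\,c\|_2 ,
\]
so that with $g := E^{(1)}_\D(cc^*) \in L^1(\D)_+$ and the module identity $tr(yf) = tr_\D(y\,E^{(1)}_\D(f))$ for $y\in\D$, one obtains $\tau_\D(f)^2 \le \|a\|_2^2\, tr_\D(f^2 g)$ for all $f\in\D_+$. Passing to the commutative $\M_0 = W^*(g) \subseteq \D$ and using that on the finite algebra $\D$ the Haagerup trace $tr_\D$ agrees with tracial integration against $\tau_\D$, Lemma \ref{mth} gives $g^{-1} \in L^1(\D)_+$. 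I would then set $h := g^{-1/2}\, cc^*\, g^{-1/2}$ (a strong product) and use $\widehat{E_\D}(cc^*) = g$ together with \cite[Proposition 3.3]{Gol} to get $\widehat{E_\D}(h) = g^{-1/2}\cdot g\cdot g^{-1/2} = \I$ and hence $tr(h) = tr_\D(\widehat{E_\D}(h)) = tr_\D(\I) = 1$, so that $\rho_0 := tr(h\,\cdot\,)$ is a normal state on $\M$. Finally $\rho_0$ extends $\tau_\D$, since $tr(dh) = tr_\D(d\,\widehat{E_\D}(h)) = tr_\D(d) = \omega(d)$, and it annihilates $J$ by the computation above (after the spectral approximation $g^{-1/2}\chi_{[0,n]} \nearrow g^{-1/2}$, which places $g^{-1/2}c$ in $E$ and $j\,g^{-1/2}c$ in $F$, exactly as in Theorem \ref{HRc2}); as $\A = J\oplus\D$, this shows $\rho_0$ extends $\omega\circ\Phi$.

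I expect the main obstacle to lie entirely in the $L^p$-space bookkeeping of the third paragraph rather than in any new idea: making rigorous the unbounded strong products $g^{-1/2}cc^*g^{-1/2}$ and their images under $\widehat{E_\D}$ in the extended positive part, and—most delicately—the identification of the Haagerup trace functional $tr_\D$ on the finite algebra $\D$ with tracial integration against $\tau_\D = \omega_{|\D}$, which is what legitimises the appeal to Lemma \ref{mth}. Everything else (traciality of $tr$, the identity $tr\circ E^{(1)}_\D = tr$, the $L^1$- and extended-positive-part extensions of $E_\D$, and the concluding invocation of Proposition \ref{HRf3g}(2)) is either already recorded in Remark \ref{crossprod2} or is a routine transcription of the finite case.
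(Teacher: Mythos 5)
Your overall architecture coincides with the paper's: reduce everything, via Proposition \ref{HRf3g}(2), to extending $\omega\circ\Phi$ to a normal state on $\M$; run the Sarason-type projection argument in Haagerup $L^2(\M)$; and use Lemma \ref{mth} to invert the density $E_{\D}(cc^*)$. But the execution of the analytic heart contains a genuine gap, and it is not the routine bookkeeping you anticipate. The problem is the conflation of elements of the Haagerup space $L^1(\D)$ with operators affiliated to $\D$. By definition, $L^1(\D)$ consists of operators affiliated to the crossed product $\D\rtimes_\omega\mathbb{R}$ satisfying $\theta_s(f)=e^{-s}f$; no nonzero such operator is affiliated to $\pi(\D)$, since operators affiliated to $\pi(\D)$ are $\theta$-invariant. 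Hence your $\M_0=W^*(g)$ is \emph{not} a subalgebra of $\D$, and the conclusion ``$g^{-1}\in L^1(\D)_+$'' cannot hold: from $\theta_s(g)=e^{-s}g$ one gets $\theta_s(g^{-1})=e^{s}g^{-1}$, the wrong scaling for membership in any $L^p(\D)$. Worse, your $h=g^{-1/2}cc^*g^{-1/2}$ satisfies $\theta_s(h)=e^{s/2}\,e^{-s}\,e^{s/2}\,h=h$, so $h$ is $\theta$-invariant and in particular \emph{not} in $L^1(\M)$; consequently $tr(h)$ and the would-be state $tr(h\,\cdot)$ are undefined. Finally, ``$\widehat{E_\D}(h)=\I$'' and ``$tr_\D(\I)=1$'' are not meaningful targets in this setting, because $\I\notin L^1(\D)$; the correct normalization in the Haagerup picture is $E_{\D}(h)=k_\omega$ together with $tr(k_\omega)=\omega(\I)=1$.

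What is missing is precisely the maneuver around which the paper's proof is built. Since $\omega$ is tracial on $\D$, Haagerup's $L^p(\D)$ is identified with $\{\pi(a)k_\omega^{1/p}:a\in L^p(\D,\omega)\}$, where $L^p(\D,\omega)$ is the \emph{tracial} $L^p$-space; thus $g$ factors as $g=\pi(g_0)k_\omega$ with $g_0\in L^1(\D,\omega)_+$ genuinely affiliated to $\D$. Lemma \ref{mth} must be applied to $g_0$ inside $W^*(g_0)\subseteq\D$, after rewriting $tr(\pi(f)^2g)=\omega(f^2g_0)$, yielding $g_0^{-1}\in L^1(\D,\omega)_+$; note that $\pi(g_0^{-1})k_\omega\neq g^{-1}$, i.e.\ inversion does not commute with the identification, which is exactly why working with $g^{-1}$ fails. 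The correct density is $h=\pi(g_0^{-1/2})\,cc^*\,\pi(g_0^{-1/2})$, built from the tracial density and the plain embedding $\pi$, with no $k_\omega$ factors; and because $\pi(g_0^{-1/2})$ is unbounded, the paper constructs $h$ as $h_1^*h_1$ where $h_1=\lim_n\pi(g_0^{-1/2}e_n)c$ in $L^2(\M)$, the Cauchy estimate resting on $tr\circ\overline{E_{\D}}=tr$ and the strong commutation of $k_\omega$ with $\pi(\D)$ from Remark \ref{crossprod2}, and then proves $E_{\D}(h)=k_\omega$ by an $L^p$-duality argument rather than by an analogue of \cite[Proposition 3.3]{Gol}. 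These steps --- the factorization through the tracial picture, the bounded spectral truncations $d_n=g_0^{-1/2}e_n\in\D$, and the identity $E_{\D}(h)=k_\omega$ --- are the actual content of the $\sigma$-finite case; as written, your construction does not produce a well-defined normal state.
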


\begin{proof}  Note that the fact that $\D\subset \M_\omega$, ensures that $\D$ is the image of an $\omega$-preserving faithful normal conditional expectation $E_{\D} : \M \to \D$. We then know from Remark \ref{crossprod2} that $\D\rtimes_\omega\mathbb{R}\subseteq\M\rtimes_\omega\mathbb{R}$, whence $\widetilde{\D\rtimes_\omega\mathbb{R}}\subseteq\widetilde{\M\rtimes_\omega\mathbb{R}}$. Thus for each $p>0$, 
\begin{align*} L^p(\D) & = \{a\in \widetilde{\D\rtimes_\omega\mathbb{R}}\colon \theta_s(a)=e^{-s/p}a\mbox{ for all} \; s\} \\ 
 & \subseteq\{a\in \widetilde{\M\rtimes_\omega\mathbb{R}}\colon \theta_s(a)=e^{-s/p}a\mbox{ for all} \; s\}=L^p(\M). \end{align*}
It is clear from Remark \ref{crossprod} (and perhaps the fact in Remark \ref{crossprod2} that  $E_\D \circ\sigma_t^\omega=\sigma_t^\omega \circ E_\D$) 
 that the dual action  $\theta_s$ corresponding to $\M$  agrees with the one corresponding to $\D$.
 However the fact that $\D\subset \M_\omega$, also ensures that $\omega_{|\D}$ is a trace on $\D$. So by for example pages 62-63 of \cite{terp} or the introductory discussion in Section 2 of \cite{LabCo}  (or 
Theorem 6.74 in \cite{GLnotes} and Section 1 in \cite{HJX}), $\D\rtimes_\omega\mathbb{R}$, the density $k_\omega=\frac{d\widetilde{\omega_{|\D}}}{d\tau}$ 
(which we observed in Remark \ref{crossprod2} equals the density of the dual weight $\tilde{\omega}$ on $\M \rtimes_\omega\mathbb{R}$), and $L^p(\D)$ will respectively up to Fourier transform canonically correspond to $\D\overline{\otimes}L^\infty(\mathbb{R})$, $\I\otimes\exp$ and $\{a\otimes(\exp)^{1/p}:a\in L^p(\D,\omega)\}$, where $L^p(\D,\omega)$ denotes the tracial $L^p$-space constructed using the trace $\omega_{|\D}$. The density $k_\omega\in L^1(\D)$ then clearly commutes with $\D$ 
(see also Proposition \ref{commhk} 
for another way to see this).

We momentarily follow the argument for Corollary \ref{HRc}:
By Banach space duality $\omega \circ \Phi$  extends to a weak* continuous functional on $\M$, so that there exists $r = ab^* \in L^1(\M),$ with $a, b \in L^2(\M)$, such that $tr(r x) = \omega(\Phi(x))$ for all $x \in \A$. In particular $$tr(rd) = tr(da b^*) =  \omega(d) \,  , \; \; \; 
\; tr(r j) = tr(ja b^*) = 0, \qquad d \in \D , j \in J = {\rm Ker}(\Phi),$$ 
so that $b \perp F$ where $F = [Ja]_2$. We apply 
 an idea that appears to go back to Sarason and others, as discussed in  \cite{BFZ}.  Let $c\in L^2(\M)$ be the projection $P_E(b)$ of $b$ onto $E = [\A a]_2$.
Then $c \perp F$ since $F \subset E$.  Note that $$tr(j c c^*) = tr(j c b^*)  = 0, \qquad j \in J,$$ since $jc \in J [\A a]_2 \subset F \subset E$.  
 For $d \in \D$ we then have $$\omega(d^*d) = tr(r d^*d) = tr(b^* d^*da) 
=  |\langle d^*d a , b \rangle |  = |\langle d^* da , c \rangle |   = |\langle a , d^*dc \rangle |,$$
which is dominated by 
$\| a \|_2 \| d^* d c \|_2.$
 Thus for $d \in \D$ we have 
$$\omega(d^*d)^2 \leq  \| a \|_2^2 \| d^* d c \|_2^2=\| a \|_2^2  \, tr( (d^* d)^2 c c^*) =  \| a \|_2^2  \, tr( (d^* d)^2 E_{\D}(c c^*)) .$$
Letting $g = E_{\D}(c c^*) \in L^1(\D)_+$ and $f = d^* d$ we have 
$$\omega(f)^2 \leq    \| a \|_2^2  \, tr( f^2g), \qquad  f \in \D_+ .$$ For the sake of clarity we will in the rest of this proof 
dispense with convention, and distinguish between $\D$ and the copy thereof, namely $\pi(\D)$, inside the crossed product. 
Similarly $\tilde{\D}$ may be identified with a subset of $\widetilde{\D\rtimes_\omega\mathbb{R}}$, which we 
will also write as $\pi(\tilde{\D})$.  
 From the 
above discussion it is clear that up to Fourier transform, the action of $\pi$ is to map each $d\in \D$ onto $d\otimes 1$. The above 
inequality should then properly be written as $\omega(f)^2 \leq    \| a \|_2^2  \, tr( \pi(f)^2g)$ for all $f \in \D_+$. We now 
reformulate this inequality in terms of the $L^p(\D,\omega)$ spaces. By the discussion at the start of this proof, $g$ is up to Fourier 
transform of the form $g_0\otimes \exp=(g_0\otimes 1).(\I\otimes\exp)$ for a unique $g_0\in L^1_+(\D,\omega)$. So on taking the inverse 
transform, it then follows that $g=\pi(g_0)k_\omega\equiv (g_0\otimes 1)(\I\otimes \exp)$ for a unique $g_0\in L^1_+(\D,\omega)$. Thus 
the preceding inequality may be written in the form $$\omega(f)^2 \leq    \| a \|_2^2  \, tr( \pi(f)^2\pi(g_0)k_\omega) = \| a \|_2^2  \, \omega( f^2g_0), \qquad  f \in \D_+ .$$ 
The last $\omega$ here is the natural extension of this trace on $\D$ to $\tilde{D}_+$, hence to $L^1(\D,\omega)_+$.

Let $\D_0$ be the von Neumann subalgebra of $\D$ generated by $g_0$ (see e.g.\ \cite[p.\ 349]{KR1}), a commutative subalgebra of $\D$.
For $f \in (\D_0)_+$ we have $\omega(f)^2 \leq    \| a \|_2^2  \, \omega( f^2g_0)$. By Lemma \ref{mth} this implies  that 
$g_0^{-1} \in L^1(\D_0,\omega)_+\subset L^1(\D,\omega)_+$. Thus as a closed densely defined positive operator affiliated with $\D$, 
we have $g_0^{-1} \in L^1(\D,\omega)_+$.  
Writing $\widetilde{g}$ for $k_\omega \pi(g_0^{-1})$, this then ensures that $\widetilde{g}^{1/2}=k^{1/2}_\omega \pi(g_0^{-\frac{1}{2}})\in L^2(\D)$. (Here we used the definition of Haagerup's $L^2(\D)$, and the fact that $k_\omega$ commutes strongly with $\pi(\D)$.)

Let $d_n$ be $g_0^{-\frac{1}{2}}e_n$ where $e_n$ is the spectral projection of $g_0^{-\frac{1}{2}}$ for $[0,n]$. So $e_n  \nearrow 1$ and 
$d_n \nearrow g_0^{-\frac{1}{2}}$ as $n\to \infty$. Notice that for $m\geq n$ we may use the fact that  $e_ne_m=e_n$ to see that  
\begin{align*} & tr(|c^*\pi(d_n)-c^*\pi(d_m)|^2) \\
& =tr(\pi(d_n)cc^*\pi(d_n)-\pi(d_m)cc^*\pi(d_n)-\pi(d_n)cc^*\pi(d_m)+\pi(d_m)cc^*\pi(d_m)) \\ 
& =tr(\pi(d_m)cc^*\pi(d_m)-\pi(d_n)cc^*\pi(d_n)) =tr(E_{\D}(\pi(d_m)cc^*\pi(d_m)-\pi(d_n)cc^*\pi(d_n))) \\
& =tr(\pi(d_m)g\pi(d_m)-\pi(d_n)g\pi(d_n))=tr(\pi(e_m)k_\omega - \pi(e_n)k_\omega)=\omega(e_m-e_n). \end{align*} 
(In the second equality we used the fact that since $e_ne_m=e_n$, we for example have that $$tr(\pi(d_m)cc^*\pi(d_n))=tr(\pi(d_n)\pi(d_m)cc^*)=tr(\pi(d_n)^2cc^*)=tr(\pi(d_n)cc^*\pi(d_n)).$$ In the second last equality we 
used the fact that $k_\omega$ commutes with $\pi(\D)$, and that $g=\pi(g_0)k_\omega$.) The projections $e_n$ increase to $\I$ and so converge $\sigma$-strongly to $\I$. It now clearly follows from the above sequence of 
equalities that $$\lim_{n,m\to\infty}tr(|c^*\pi(d_n)-c^*\pi(d_m)|^2)=\lim_{n,m\to\infty}|\omega(e_m-e_n)|=\omega(\I-\I)=0.$$ So $(c^*\pi(d_n))$ must converge in $L^2$-norm to some $h_1^*\in 
L^2(\M)$, with $(\pi(d_n)c)$ obviously converging to $h_1$. Now let $h=h_1^*h_1$. This element of $L^1(\M)$ in some sense corresponds to 
$\pi(g_0^{-1/2}) c c^* \pi(g_0^{-1/2})\in L^1(\M)$, and must by construction be the limit in $L^1$-norm of $(\pi(d_n)cc^*\pi(d_n))$. To see this observe that
$$\|h_1^*h_1-d_ncc^*d_n\|_1= \|h_1^*h_1-h_1^*c^*d_n+h_1^*c^*d_n -d_ncc^*d_n\|_1$$ $$\leq \|h_1\|_2\|h_1-c^*d_n\|_2 +\|c^*d_n\|_2\|h_1^*-d_ncd_n\|_2.$$For each $x\in \D$, 
$(e_nxe_n)$ is $\sigma$-weakly convergent to $x$. So for such an $x$, we may use $L^p$-duality to see that 
$$tr(E_{\D}(h)\pi(x))=\lim_{n\to\infty}tr(E_{\D}(\pi(d_n)cc^*\pi(d_n))\pi(x))=\lim_{n\to\infty}tr(\pi(d_n)g\pi(d_n)\pi(x))$$ $$=\lim_{n\to\infty}tr(\pi(e_n)k_\omega \pi(e_nx)) =\lim_{n\to\infty}tr(k_\omega \pi(e_nxe_n))=tr(k_\omega \pi(x)).$$But then clearly $E_{\D}(h)=\lim_{n\to\infty}E_{\D}(\pi(d_n)cc^*\pi(d_n))=k_\omega$. 
Thus $$tr(h) = tr(E_{\D}(h)) = tr(k_\omega) =\omega(\I) = 1,$$ thereby ensuring that $h \in L^1_+(\M)$ with $\| h \|_1 = 1$, and 
$h_1 \in L^2(\M)$ with $\| h_1 \|_2 = 1$. Then $\vartheta = tr( \cdot \, h)$ is a normal state on $\M$. Since  $$\vartheta(d)=tr(\pi(d)h)=tr(E_{\D}(\pi(d)h))=tr(\pi(d)k_\omega)=\omega(d), \qquad d\in \D,$$ we have 
that $\vartheta$  agrees with $\omega$ on $\D$. 

We now check that $\vartheta$ annihilates $J$. With $(d_n)$ as before we clearly have that $(\pi(d_n) c)\subset E = [\pi(A)a]_2$, and 
hence that $h_1 \in  E = [\pi(A)a]_2$. But then $\pi(j)  h_1 \in F = [\pi(J) a]_2 \subset E$. Thus 
$tr(\pi(j)  h_1 c^*) = \langle  \pi(j) h_1 , c \rangle = 0$, since as before $b \perp F$ and $c \perp F$.  Hence 
$$0 = \lim_n \, tr(\pi(j) h_1(c^*\pi(d_n))) = tr(\pi(j)h_1h_1^*) = tr(\pi(j)h).$$
So $\vartheta$ annihilates $J$ as required.

Since $\vartheta$ extends $\omega_{|\D}$ and  annihilates $J$, it is a normal state on $\M$ agreeing with $\omega \circ \Phi$ on $\A = D + J$. 
Next appeal to Proposition \ref{HRf3g} (2) to obtain  a  normal conditional expectation $\Psi: \M \to \mathcal{D}$ extending $\Phi$. 
If $\rho = \omega \circ \Psi$ then $\rho$ extends $\omega \circ \Phi$, and $\Psi$ is $\rho$-preserving. The uniqueness is as before.  
\end{proof}

Again it is easy to see the converse.   Namely, if $\Phi$ is the restriction to $\A$ of  
a normal conditional expectation of $\M$ onto $\D$, then  
for any normal state 
$\omega$ on $\D$, $\omega \circ \Phi$  extends to a  normal state $\rho$ on 
$\M$, and 
there is a  $\rho$-preserving normal conditional expectation $\M \to \mathcal{D}$ extending $\Phi$.   The latter is unique 
if $\omega$ is faithful on $\D$.

\section{Representing measures and the Hoffman-Rossi theorem for general von Neumann algebras} \label{repm} 

Our first result gives a sufficient condition under which the main result of the previous section holds for a general von Neumann algebra $\M$.
This result also generalizes Corollary \ref{HRc} to a large class of noncommutative situations.

\begin{theorem} \label{HRc2-sfcomm} Consider the inclusions $\D \subset \A \subset \M,$ where $\M$ is a von Neumann algebra
with faithful normal semifinite weight 
$\omega$, $\A$ is a weak* closed subalgebra of $\M$, and $\D$ is a von Neumann subalgebra  of $\M_\omega$.
Let $\Phi : \A \to \mathcal{D}$ be a weak* continuous  $\D$-character.  
Suppose that $\I$ is the sum of a collection $\{e_t\}$ of mutually orthogonal $\omega$-finite projections in $\D$, which are central in 
$\D$. Then there exists a normal conditional expectation $\Psi : \M \to \mathcal{D}$ extending $\Phi$. Indeed $\tau \circ \Phi$ extends to a normal semifinite weight $\rho$ on $\M$ for which there exists a unique 
$\rho$-preserving normal conditional expectation $\Psi : \M \to \mathcal{D}$ extending $\Phi$. 
\end{theorem}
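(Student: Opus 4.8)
The plan is to localise to the corners cut out by the central projections $\{e_t\}$, apply the $\sigma$-finite Hoffman--Rossi theorem (Theorem \ref{HRsfcent}) on each corner, and then reassemble. First set $\tau = \omega_{|\D}$; since $\D \subset \M_\omega$, Lemma \ref{istrD} shows $\tau$ is a trace, and the hypothesis $\I = \sum_t e_t$ with $\omega(e_t) < \infty$ shows it is a faithful normal \emph{semifinite} trace on $\D$. For each index $t$ consider the corner $\M_t = e_t \M e_t$, a von Neumann algebra carrying the faithful normal state $\omega_t = \omega(e_t)^{-1}\, \omega_{|\M_t}$; the subalgebra $\D_t = \D e_t = e_t \D e_t$ is a von Neumann subalgebra with unit $e_t$, and since $e_t \in \M_\omega$ one checks that $\D_t \subset (\M_t)_{\omega_t}$. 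Because $e_t \in \D \subset \A$, the set $\A_t = e_t \A e_t = \A \cap \M_t$ is a weak* closed subalgebra of $\M_t$, and the compression $\Phi_t(e_t a e_t) = e_t \Phi(a) e_t = \Phi(e_t a e_t)$ is a weak* continuous $\D_t$-character on $\A_t$ (using that $e_t$ is central in $\D$ and that $\Phi$ is a homomorphism and $\D$-bimodule map).

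Having verified its hypotheses, Theorem \ref{HRsfcent} applied to $\D_t \subset \A_t \subset \M_t$ yields a normal conditional expectation $\Psi_t : \M_t \to \D_t$ extending $\Phi_t$. I would then assemble these into a single map by setting $\Psi(x) = \sum_t \Psi_t(e_t x e_t)$ for $x \in \M$. The point that makes this naive ``block-diagonal'' assembly legitimate --- with no compatibility conditions between the pieces, unlike the situation in Theorem \ref{wcentw} --- is that the $e_t$ are central in $\D$: any $\D$-valued conditional expectation $\Psi$ automatically satisfies $\Psi(x) = \sum_t e_t \Psi(x) e_t = \sum_t \Psi(e_t x e_t)$, so it annihilates all off-diagonal blocks, and the corners $\M_t$ are pairwise orthogonal. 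For $x \in \M_+$ the terms $\Psi_t(e_t x e_t) \in (\D e_t)_+$ have mutually orthogonal supports and partial sums bounded by $\|x\|\I$, so the sum converges $\sigma$-strongly; extending by linearity gives a well-defined contractive unital positive map with $\Psi_{|\D} = \mathrm{id}_\D$ (as each $\Psi_t$ fixes $\D e_t$) and hence, by Tomiyama's theorem, a conditional expectation onto $\D$. Normality follows because along an increasing net $x_\alpha \nearrow x$ each $\Psi_t(e_t x_\alpha e_t) \nearrow \Psi_t(e_t x e_t)$ and the double supremum, over $\alpha$ and over $t$, may be interchanged. Finally $\Psi$ extends $\Phi$: for $a \in \A$ we get $\Psi_t(e_t a e_t) = \Phi_t(e_t a e_t) = \Phi(a) e_t$, whence $\Psi(a) = \sum_t \Phi(a) e_t = \Phi(a)$.

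For the last assertion I would simply put $\rho = \tau \circ \Psi = \omega_{|\D} \circ \Psi$. By the implication (1) $\Rightarrow$ (2) of Theorem \ref{excodsfex}, $\rho$ is a normal semifinite weight on $\M$ with $\D$ in its centralizer which is faithful and semifinite (indeed tracial) on $\D$; it extends $\tau \circ \Phi$ since $\rho(a) = \tau(\Psi(a)) = \tau(\Phi(a))$ for $a \in \A$, and $\Psi$ is $\rho$-preserving because $\rho \circ \Psi = \tau \circ \Psi \circ \Psi = \tau \circ \Psi = \rho$. The uniqueness of a $\rho$-preserving normal conditional expectation onto $\D$ (Theorem \ref{excodsfex}, or the faithfulness argument in Theorem \ref{wcent}) then shows $\Psi$ is the unique $\rho$-preserving normal conditional expectation extending $\Phi$.

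I expect the main obstacle to be bookkeeping rather than conceptual: carefully confirming that each corner $(\M_t, \D_t, \A_t, \Phi_t)$ meets every hypothesis of Theorem \ref{HRsfcent} (in particular that $\D_t$ lies in the centralizer of $\omega_t$ and that $\Phi_t$ remains a weak* continuous $\D_t$-character), and then verifying the normality and the conditional-expectation properties of the infinite assembly $\Psi = \sum_t \Psi_t(e_t \cdot e_t)$. The genuinely useful structural observation, which removes what would otherwise be a delicate compatibility argument across overlapping corners, is that centrality of the $e_t$ in $\D$ forces every $\D$-valued conditional expectation to respect the block decomposition determined by $\{e_t\}$, so the orthogonal singleton corners suffice and no matching of the $\Psi_t$ on overlaps is required.
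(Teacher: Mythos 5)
Your proof is correct and takes essentially the same route as the paper's: compress to the corners $e_t \M e_t$, apply Theorem \ref{HRsfcent} to each (after checking $\D e_t \subset (e_t\M e_t)_{\omega}$ and that $\Phi$ compresses to a $\D e_t$-character), reassemble block-diagonally using centrality of the $e_t$ in $\D$, and set $\rho = \omega_{|\D} \circ \Psi$ for the final claim. Indeed you supply some details the paper leaves implicit, such as the $\sigma$-strong convergence of the sum $\sum_t \Psi_t(e_t x e_t)$ and the interchange-of-suprema argument for normality.
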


Note that the existence of the collection of projections $\{e_t\}$ above, ensures that the restriction of $\omega$ to $\D$ is still semifinite and hence
by \cite[Theorem IX.4.2]{tak2}  there exists a normal conditional expectation $E_{\D}$ onto $\D$ preserving $\tau$, since $\D \subset \M_\omega$.

\begin{proof}  By Lemma \ref{istrD} $\omega$ is tracial on $\D$.
We may apply Theorem \ref{HRsfcent} to each of the compressions $e_t \M e_t$.
Since $\omega (e_t d e_t x e_t) =  \omega (e_t d e_t x e_t)$ for all $x \in \M, d \in \D$, we  obtain a normal conditional expectation 
$\Psi_e : e \M e \to e\mathcal{D}e$ extending $\Phi_t=\Phi_{\vert e_t\A e_t}$. (Note that since $\Phi(e_tae_t)=e_t\Phi(a)e_t$ for all 
$a\in \A$, $\Phi_{\vert e\M e}$ maps $e_t\A e_t$ onto $e_t \D e_t$.)   We define the map $\Psi:\M\to \D$ by 
$\Psi(x)= \oplus_t \Psi_t(e_txe_t)$. Each $\Psi_t$ is normal, and so the sum $\Psi$ will also be. We first show that this 
map extends $\Psi$. To see this note that $\Psi$ clearly extends $\oplus_t \Phi_t$. It is an easy exercise to see that  for any 
$a\in A$ we have that $\oplus_t \Phi_t(a)=\oplus_t e_t\Phi(a)e_t= \Phi(\oplus_t e_tae_t)$. The $\D$-centrality of the $e_t$'s ensure that if 
$t\neq s$, then $\Phi(e_tae_s)=e_t\Phi(a)e_s=e_se_t\Phi(a)=0$. So $\Phi(a)=\Phi((\oplus_t e_t)a(\oplus_se_s)) = \oplus_t \Phi_t(a)$ for any $a\in A$. Thus $\Psi$ extends $\Phi$.

It remains to show that $\Psi$ is a conditional expectation. To see this, note that for any $x\in \M$ we will have that 
$$\Psi(\Psi(x)) = \oplus_t \Psi_t(e_t(\oplus_r \Psi_r(e_rxe_r))e_t)=\oplus_t \Psi_t(\Psi_t(e_txe_t))=\oplus_t \Psi_t(e_txe_t)=\Psi(x).$$

The proof of the final claim now proceeds as  in Theorem \ref{HRc2}.
\end{proof}

We pass to proving a noncommutative Hoffman-Rossi theorem for general von Neumann algebras. We shall use the Haagerup reduction theorem to extract this result from the one for the finite algebras. In so doing we shall first prove a result for $\sigma$-finite algebras (because of the relative simplicity of the reduction theorem in this setting) before indicating how the proof technique may be adaped to yield a theorem for general von Neumann algebras. Though we eventually do get a theorem for a more 
general class of von Neumann algebras, it is at the cost of normality of the conditional expectation extending the given $\D$-character.

 \begin{theorem} Consider the inclusions $\D \subset \A \subset \M,$ where $\M$ is a $\sigma$-finite von Neumann algebra
with faithful normal state $\varphi$, $\A$ is a weak* closed subalgebra of $\M$, and $\D$ is a von Neumann subalgebra 
containing the unit of $\M$. Suppose that $\sigma^\varphi_t(\A)=\A$ and $\sigma^\varphi_t(\D)=\D$ for each $t\in \mathbb{R}$. Let 
$\Phi : \A \to \mathcal{D}$ be a weak* continuous completely bounded $\D$-character which commutes with $(\sigma^\varphi_t)$. Then there 
exists a possibly non-normal conditional expectation $\Psi$ from $\M$ onto $\D$ extending $\Phi$. 
\end{theorem}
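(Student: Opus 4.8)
The plan is to use the Haagerup reduction theorem to replace $\M$ by a crossed product that is exhausted by \emph{finite} von Neumann subalgebras, apply the finite Hoffman--Rossi theorem (Theorem \ref{HRc2}) on each of these, pass to a point-weak* limit, and finally compress the resulting expectation back down to $\M$. First I would invoke the Haagerup reduction theorem \cite{HJX} for $(\M,\varphi)$. With $G = \bigcup_{n\geq 1} 2^{-n}\Zdb \subset \Rdb$, form $\R = \M \rtimes_{\sigma^\varphi} G$, generated by $\pi(\M)$ and unitaries $\{\lambda(g):g\in G\}$ with $\lambda(g)\pi(x)\lambda(g)^* = \pi(\sigma^\varphi_g(x))$. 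The theorem provides a faithful normal state $\hat\varphi = \varphi\circ\mathcal{E}_\M$ on $\R$, where $\mathcal{E}_\M:\R\to\pi(\M)$ is the canonical conditional expectation of the discrete crossed product; an increasing sequence of finite von Neumann subalgebras $\R_n$ with $\bigcup_n\R_n$ weak* dense in $\R$; and $\hat\varphi$-preserving faithful normal conditional expectations $\Psi_n:\R\to\R_n$ with $\Psi_n(x)\to x$ $\sigma$-strongly. Here $\R_n$ is the centralizer $\R_{\varphi_n}$ of a state $\varphi_n$ that is an inner perturbation of $\hat\varphi$ by a positive element in the abelian algebra generated by the $\lambda(g)$, so that $\sigma^{\varphi_n}$ is a cocycle perturbation of $\sigma^{\hat\varphi}$ by group unitaries and $\Psi_n$ is the ergodic-average expectation onto its fixed-point algebra.

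Next I would lift $\A$, $\D$, $\Phi$ to $\R$. Since $\sigma^\varphi_g(\A)=\A$ and $\sigma^\varphi_g(\D)=\D$, the weak* closed algebras $\hat\A = \A\rtimes G$ and $\hat\D = \D\rtimes G$ generated inside $\R$ by $\pi(\A)$ (resp. $\pi(\D)$) and the $\lambda(g)$ satisfy $\D\subset\pi(\D)\subset\hat\D\subset\hat\A\subset\R$. Because $\Phi$ is completely bounded and $G$-equivariant ($\Phi\circ\sigma^\varphi_g = \sigma^\varphi_g\circ\Phi$), the assignment $\pi(a)\lambda(g)\mapsto\pi(\Phi(a))\lambda(g)$ extends to a weak* continuous completely bounded $\hat\D$-character $\hat\Phi:\hat\A\to\hat\D$ extending $\Phi$, exactly as a $(\sigma^\nu_t)$-commuting conditional expectation extends to a crossed product in Remark \ref{crossprod2} via \cite[Theorem 4.1]{HJX}. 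A short computation shows $\hat\Phi$ commutes with $\sigma^{\hat\varphi}$, and since the perturbing elements defining $\varphi_n$ lie in $\hat\D$ and $\hat\Phi$ is $\hat\D$-bimodular, $\hat\Phi$ also commutes with each $\sigma^{\varphi_n}$; likewise $\hat\A$ and $\hat\D$ are $\sigma^{\varphi_n}$-invariant, so $\Psi_n(\hat\A)\subset\hat\A$ and $\Psi_n(\hat\D)\subset\hat\D$.

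For the finite-level step set $\A_n = \hat\A\cap\R_n$ and $\D_n = \hat\D\cap\R_n$. Commutation of $\hat\Phi$ with $\sigma^{\varphi_n}$ gives $\hat\Phi(\A_n)\subset\D_n$, so $\hat\Phi$ restricts to a weak* continuous $\D_n$-character $\Phi_n:\A_n\to\D_n$ on the finite algebra $\R_n$. Theorem \ref{HRc2} then produces a normal conditional expectation $\Theta_n:\R_n\to\D_n$ extending $\Phi_n$, and $\tilde\Theta_n = \Theta_n\circ\Psi_n:\R\to\D_n\subset\hat\D$ is a unital positive contractive projection onto $\D_n$. For $x\in\hat\A$ we have $\Psi_n(x)\in\A_n$, whence $\tilde\Theta_n(x)=\hat\Phi(\Psi_n(x))\to\hat\Phi(x)$ weak*, while for $d\in\hat\D$ we have $\tilde\Theta_n(d)=\Psi_n(d)\to d$. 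A point-weak* cluster point of the net of unital contractions $(\tilde\Theta_n)$ (Banach--Alaoglu in the topology of pointwise weak* convergence) yields a unital positive contractive projection $\hat\Psi:\R\to\hat\D$, hence a conditional expectation, which extends $\hat\Phi$ but is only point-weak* approximated by normal maps and so need not itself be normal. Finally, with $\mathcal{P}_\D:\hat\D=\D\rtimes G\to\pi(\D)$ the canonical conditional expectation of the discrete crossed product onto its base, the map $\Psi = \pi^{-1}\circ\mathcal{P}_\D\circ\hat\Psi\circ\pi:\M\to\D$ is a unital positive contraction fixing $\D$, hence a conditional expectation onto $\D$, and it extends $\Phi$ since $\hat\Psi(\pi(a))=\pi(\Phi(a))$ with $\Phi(a)\in\D$ for $a\in\A$.

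The main obstacle, I expect, is the bookkeeping of the middle stage: verifying that $\hat\Phi$ is genuinely well-defined, weak* continuous and completely bounded on all of $\hat\A$, that it commutes with every $\sigma^{\varphi_n}$ so as to carry $\A_n$ into $\D_n$, and that the reduction expectations $\Psi_n$ preserve $\hat\A$ and $\hat\D$. These are precisely the points at which the complete boundedness and modular-commutation hypotheses are consumed, and they require the explicit internal structure of the Haagerup reduction (the inner nature of the perturbations $\varphi_n$ and the averaging form of $\Psi_n$). The limit and compression steps are then comparatively routine, with normality lost only at the point-weak* limit, which is what forces the \emph{possibly non-normal} conclusion.
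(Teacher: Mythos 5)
Your proposal is correct and follows essentially the same route as the paper's own proof: Haagerup reduction over the dyadic rationals, lifting $\Phi$ to a normal $\widehat{\D}$-character $\widehat{\Phi}$ on $\widehat{\A}$ via complete boundedness and modular equivariance, applying the finite Hoffman--Rossi theorem (Theorem \ref{HRc2}) on the finite layers $\R_n$, composing with the reduction expectations, taking a point-weak* cluster point, and compressing back to $\M$ by the canonical expectation onto the base algebra. The only cosmetic differences are that you define $\A_n, \D_n$ as intersections $\widehat{\A}\cap\R_n$, $\widehat{\D}\cap\R_n$ rather than as images $\Edb_n(\widehat{\A})$, $\Edb_n(\widehat{\D})$ (these coincide once one knows $\Edb_n$ preserves $\widehat{\A}$ and $\widehat{\D}$), and your observation that $\widetilde{\Theta}_n(d)=\Psi_n(d)\to d$ for $d\in\widehat{\D}$ gives the surjectivity of $\widehat{\Psi}$ onto $\widehat{\D}$ slightly more cleanly than the paper states it.
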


\begin{proof} We will closely follow the notation of \cite{HJX}. (The proof was originally written down in May 1978 
by Haagerup and circulated as a hand-written copy. An outline of the proof appeared in print in 2005 (see \cite{Xu}). 
However it was not until the appearance of \cite{HJX} that a full proof appeared in print.) Write $\R$ for 
$\M\rtimes\mathbb{Q}_D$ where $\mathbb{Q}_D$ are the diadic rationals. 
We shall identify $\M$ with the *-isomorphic copy thereof inside $\M\rtimes\mathbb{Q}_D$. It is known that $\R$ is the weak* closure of 
$\mathrm{span}\{\lambda_t x: x\in \M, t\in \mathbb{Q}_D\}$. We shall write $\widehat{\A}$ and $\widehat{\D}$ for the weak* closures of 
$\mathrm{span}\{\lambda_t a: a\in \A, t\in \mathbb{Q}_D\}$ and $\mathrm{span}\{\lambda_t d: d\in \D, t\in \mathbb{Q}_D\}$ respectively. 
Note that $\mathrm{span}\{\lambda_t a: a\in \M, t\in \mathbb{Q}_D\}$ is in fact a dense *-algebra, since $$\lambda_t a_0\lambda_s a_1 = \lambda_{t+s}\lambda_{-s}a_0\lambda_s a_1 = \lambda_{t-s}\sigma^\varphi_{-s}(a_0)a_1.$$  The prescription 
$\widehat{\Phi}(\lambda_ta)=\lambda_t\Phi(a)$ defines a map on a dense subalgebra of $\widehat{\A}$, which by the technique of 
\cite[Theorem 4.1]{HJX} extends to a bounded normal map from $\widehat{\A}$ to $\widehat{\D}$. Another way to see these facts is to note 
that the complete boundedness of $\Phi$ allows us to extend it to a map $\Phi\otimes\mathrm{Id}$ on $\A\otimes B(L^2(\mathbb{Q}_D))$ 
with preservation of norm. We may now without loss of generality assume that $\M$ is in standard form, and hence that the modular group 
is implemented. Consulting  \cite[Part I, Proposition 2.8]{vD} now gives a very precise description of how $\M\rtimes\mathbb{Q}_D$ may be realised 
as a subspace of $\M\otimes B(L^2(\mathbb{Q}_D))$. What remains now is to check that $\widehat{\A}$ similarly lives inside 
$\A\otimes B(L^2(\mathbb{Q}_D))\subset\M\otimes B(L^2(\mathbb{Q}_D))$, that $\Phi\otimes\mathrm{Id}$ is still normal and that the demand 
that $\Phi$ commutes with $\sigma^\varphi_t$, ensures that $\widehat{\A}$ is an invariant subspace of the action of 
$\Phi\otimes\mathrm{Id}$ on $\A\otimes B(L^2(\mathbb{Q}_D))$. The restriction of $\Phi\otimes\mathrm{Id}$ to this invariant subspace is 
the map $\widehat{\Phi}$ we seek. Note that by definition (and normality) $\widehat{\Phi}$ will map $\widehat{\A}$ onto $\widehat{\D}$. 
The map $\widehat{\Phi}$ is moreover a homomorphism. The easiest way to see this is to note the earlier computations may be modified to 
show that for any $a_0,a_1\in \A$ and $t,s\in \mathbb{Q}_D$ we have that $$\widehat{\Phi}(\lambda_t a_0\lambda_s a_1) = \widehat{\Phi}(\lambda_{t-s}\sigma^\varphi_{-s}(a_0)a_1)=\lambda_{t-s}\Phi(\sigma^\varphi_{-s}(a_0)a_1)$$ $$= \lambda_{t-s}\sigma^\varphi_{-s}(\Phi(a_0))\Phi(a_1)=\lambda_t \Phi(a_0)\lambda_s \Phi(a_1)= \widehat{\Phi}(\lambda_ta_0)\widehat{\Phi}(\lambda_s a_1).$$  Thus it is a $\widehat{\D}$-character.

There exists an increasing sequence $(\R_n)$ of finite von Neumann subalgebras of $\R$ for which $\cup\R_n$ is weak*-dense in $\R$. 
Moreover with $\widehat{\varphi}$ denoting the dual weight (it is a state) on $\R$, there exist normal conditional expectations 
$$\Edb_\M:\R\to \M \mbox{ and }\Edb_n:\R\to\R_n$$commuting with the automorphism group $\sigma^{\widehat{\varphi}}_t$ 
\cite[Lemma 2.4]{HJX}. We will 
write $\A_n$ and $\D_n$ for $\Edb_n(\widehat{\A})$ and $\Edb_n(\widehat{\D})$ respectively. We claim that $\A_n\subset \widehat{\A}$ and 
$\D_n\subset\widehat{\D}$ for each $n$. To see this note that $a_n$ as defined prior to \cite[Lemma 2.4]{HJX} belongs to the von Neumann 
subalgebra generated by the $\lambda_t$'s and hence to $\widehat{\A}$. The fact that $\sigma^{\widehat{\varphi}}_t$ is implemented by 
the unitary group $t\to\lambda_t$, ensures that $\sigma^{\widehat{\varphi}}_t(\widehat{\A})=\widehat{\A}$ for every $t$. Hence by 
\cite[Equation(2.4)]{HJX} $\sigma^{\varphi_n}_t(a)\in\widehat{\A}$ for every $t$. So the definition of $\Edb_n$ on 
page 2133 of \cite{HJX} ensures that $\A_n\subset \widehat{\A}$. Similarly $\D_n\subset\widehat{\D}$. In fact on modifying the ideas in 
\cite[Lemmata 2.6 \& 2.7]{HJX}, one can show that the $\A_n$'s and $\D_n$'s are increasing with $\cup \A_n$ and $\cup \D_n$ are 
respectively dense in $\widehat{\A}$ and $\widehat{\D}$. To see the claim about being increasing, note that the last 5 lines of page 
2133 of \cite{HJX} shows that if $x\in \R_n$, then $\sigma^{\varphi_n}_t(x)=x$ for any $t$. This suffices to show that if say $a\in \A_n$ 
it follows from the definition of $\Edb_{n+1}$ (again on p 2133 of \cite{HJX}) that $\Edb_{n+1}(a)=a$. So clearly $\A_n\subset \A_{n+1}$ 
for every $n$. Similarly $\D_n\subset \D_{n+1}$. 

The fact that $\Phi$ commutes with $\sigma^\varphi_t$, ensures that $\widehat{\Phi}$ commutes with $\sigma^{\widehat{\varphi}}_t$ (this 
can also be verified with the technique of \cite[Theorem 4.1]{HJX}). A version of  \cite[Theorem 4.1(i)]{HJX} similarly holds for 
$\widehat{\Phi}$. When these facts are considered alongside \cite[Equation(2.4)]{HJX}, it follows that 
$\sigma^{\varphi_n}_t\circ\widehat{\Phi}=\widehat{\Phi}\circ\sigma^{\varphi_n}_t$ for all $t$. Thus again by the definition of $\Edb_n$, 
it follows that $\widehat{\Phi}\circ\Edb_n= \Edb_n\circ\widehat{\Phi}$, and hence that $\widehat{\Phi}$ maps $\A_n$ into $\D_n$. In fact 
for the same reason it also follows that $\Edb_n\circ\widehat{\Phi}=\widehat{\Phi}\circ \Edb_n$. We will 
write $\Phi_n$ for the induced map from $\A_n$ to $\D_n$. The inclusions $\A_n\subset \widehat{\A}$ and $\D_n\subset\widehat{\D}$ ensure 
that each $\Phi_n$ is a $\D_n$-character. So by the result for finite von Neumann algebras, each $\Phi_n$ extends to a normal conditional 
expectation $\Psi_n:\A_n\to \D_n$.

Now consider the maps $\Theta_n=\Psi_n\circ\Edb_n$. Each $\Theta_n$ is a completely bounded normal conditional expectation which agrees 
with $\widehat{\Phi}$ on $\A_n$. Thus they all belong to $CB(\R,\widehat{\D})$. Take a weak* limit point $\widehat{\Psi}$ of the 
$\Theta_n$'s. The definition of $\Phi_n$ combined with the fact that $\Edb_n\circ\widehat{\Phi}=\widehat{\Phi}\circ \Edb_n$, ensures 
that in fact each $\Theta_n$ is a normal conditional expectation from $\R$ to $\D_n$ which agrees with $\Edb_n\circ\widehat{\Phi}$ on 
all of $\widehat{\A}$, and hence with $\Edb_n\circ\Phi$ on $\A$. But for each $a\in \A$, $\Edb_n\circ\Phi(a)\to \Phi(a)$ 
$\sigma$-strongly as $n\to \infty$. (This follows from for example \cite[Lemma 2.7]{HJX}.) It follows that $\widehat{\Psi}$ is a 
possibly non-normal conditional expectation from $\R$ to a subspace of $\widehat{\D}$, which agrees with $\Phi$ on $\A$. So 
$\widehat{\Psi}(\M)$ must include $\D=\Phi(\A)$. By construction $\widehat{\Psi}$ also maps $\cup_n\R_n$ onto $\cup_nD_n$. Thus if 
indeed $\widehat{\Psi}$ was normal, we would be able to conclude that $\widehat{\Psi}$ maps $\R$ onto $\widehat{\D}$. However for now 
the most we can say is that $\D \subset \widehat{\Psi}(\M)\subset\widehat{\D}$. The conditional expectation $\Edb_\M$ maps 
$\widehat{\D}$ (and hence also $\widehat{\Psi}(\M)$) onto $\D$. So setting $\Psi=\Edb_\M\circ\widehat{\Psi}_{|\M}$ will yield a possibly 
non-normal conditional conditional expectation from $\M$ onto $\D$ which agrees with $\Phi$ on $\A$.
\end{proof} 

We now present a version of the above theorem for general von Neumann algebras. Here we require the canonical weight on $\M$ to be strictly semifinite on $\D$. We point out that this restriction is a consequence of the criteria required for the application of the reduction theorem and not of the proof technique (see 
\cite[Remark 2.8]{HJX}). If this restriction can be lifted from the reduction theorem, it can therefore also be lifted here. We also note that for tracial weights the concepts of semifinite and strictly semifinite agree. Hence in the tracial setting this theorem is probably as general as it can be.

\begin{theorem} \label{mostgen} Consider the inclusions $\D \subset \A \subset \M,$ where $\M$ is an arbitrary von Neumann algebra, $\A$ is a weak* 
closed subalgebra of $\M$, and $\D$ is a von Neumann subalgebra containing the unit of $\M$. Suppose that $\M$ is equipped with a fns 
weight $\varphi$ for which the restriction to $\D$ is strictly semifinite, and that $\sigma^\varphi_t(\A)=\A$ and 
$\sigma^\varphi_t(\D)=\D$ for each $t\in \mathbb{R}$. Let $\Phi : \A \to \mathcal{D}$ be a weak* continuous completely bounded 
$\D$-character which commutes with $(\sigma^\varphi_t)$. Then there exists a possibly non-normal conditional expectation $\Psi$ from $\M$ onto $\D$ 
extending $\Phi$.
\end{theorem}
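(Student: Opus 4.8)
The plan is to repeat the proof of the preceding theorem essentially word for word, the single structural change being that the $\sigma$-finite form of Haagerup's reduction theorem is replaced by its general counterpart. As recorded in \cite[Remark 2.8]{HJX}, the reduction construction goes through for an arbitrary von Neumann algebra provided the reference weight restricts to a \emph{strictly} semifinite weight on the subalgebra in question; this is precisely why we demand that $\varphi_{|\D}$ be strictly semifinite, and the discussion preceding the theorem already signals that this is the sole role of that hypothesis. So first I would form the crossed product $\R = \M \rtimes \mathbb{Q}_D$ with the dyadic rationals (the action being $\sigma^\varphi$), identify $\M$ with its canonical copy in $\R$, and take $\widehat{\varphi}$ to be the dual weight --- now a faithful normal weight rather than a state. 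Exactly as before, set $\widehat{\A}$ and $\widehat{\D}$ to be the weak* closures of $\mathrm{span}\{\lambda_t a : a \in \A, \, t \in \mathbb{Q}_D\}$ and $\mathrm{span}\{\lambda_t d : d \in \D, \, t \in \mathbb{Q}_D\}$, and extend $\Phi$ by $\widehat{\Phi}(\lambda_t a) = \lambda_t \Phi(a)$. The complete boundedness of $\Phi$ together with its commutation with $(\sigma^\varphi_t)$ allows the identical $\Phi \otimes \mathrm{Id}$ argument to show that $\widehat{\Phi}$ is a weak* continuous $\widehat{\D}$-character on $\widehat{\A}$ commuting with $(\sigma^{\widehat{\varphi}}_t)$.

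Next I would invoke the general reduction theorem to obtain an increasing family $(\R_n)$ of von Neumann subalgebras of $\R$ with $\bigcup_n \R_n$ weak* dense, together with normal conditional expectations $\Edb_\M : \R \to \M$ and $\Edb_n : \R \to \R_n$ commuting with $(\sigma^{\widehat{\varphi}}_t)$ and preserving $\widehat{\varphi}$. On each $\R_n$ the weight $\widehat{\varphi}$ restricts to a faithful normal semifinite trace. Writing $\A_n = \Edb_n(\widehat{\A})$ and $\D_n = \Edb_n(\widehat{\D})$, the same verifications as in the $\sigma$-finite proof give $\A_n \subset \widehat{\A}$ and $\D_n \subset \widehat{\D}$, that these families increase with dense unions, that $\Edb_n \circ \widehat{\Phi} = \widehat{\Phi} \circ \Edb_n$, and hence that $\widehat{\Phi}$ restricts to a $\D_n$-character $\Phi_n : \A_n \to \D_n$. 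The essential step is to extend each $\Phi_n$ to a normal conditional expectation $\Psi_n : \R_n \to \D_n$ by appealing to the Hoffman-Rossi theorem already established for the appropriate class of reduced algebra: here one uses that $\widehat{\varphi}$ is tracial on $\R_n$, so that $\D_n$ automatically lies in the centralizer, and that strict semifiniteness of $\varphi_{|\D}$ descends to $\widehat{\varphi}_{|\D_n}$, so that the hypotheses of Theorem \ref{HRc2-sfcomm} (or, after compressing to $\widehat{\varphi}$-finite projections, of Theorem \ref{HRc2}) are met.

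The remainder is identical to the preceding proof. Each $\Theta_n = \Psi_n \circ \Edb_n$ is a completely bounded normal conditional expectation of $\R$ onto $\D_n$ agreeing with $\Edb_n \circ \widehat{\Phi}$ on $\widehat{\A}$, and hence with $\Edb_n \circ \Phi$ on $\A$; these lie in a bounded subset of $CB(\R, \widehat{\D})$ and therefore have a weak* cluster point $\widehat{\Psi}$. Since $\Edb_n \circ \Phi(a) \to \Phi(a)$ $\sigma$-strongly for every $a \in \A$, by the analogue of \cite[Lemma 2.7]{HJX}, the map $\widehat{\Psi}$ is a (possibly non-normal) conditional expectation onto a subspace of $\widehat{\D}$ agreeing with $\Phi$ on $\A$. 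Composing with $\Edb_\M$, which carries $\widehat{\D}$ onto $\D$, yields the desired possibly non-normal conditional expectation $\Psi = \Edb_\M \circ \widehat{\Psi}_{|\M}$ from $\M$ onto $\D$ extending $\Phi$.

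The main obstacle I expect is entirely in the transition from the $\sigma$-finite to the general reduction theorem: one must verify that every feature exploited in the $\sigma$-finite argument --- the explicit realization of $\R_n$ inside $\R$, the commutation relations between $\widehat{\Phi}$, $(\sigma^{\widehat{\varphi}}_t)$ and the $\Edb_n$, and the density of $\bigcup_n \A_n$ and $\bigcup_n \D_n$ --- persists when $\widehat{\varphi}$ is only a strictly semifinite weight. The genuinely delicate point within this is the extension of each $\Phi_n$ on the now merely semifinite algebra $\R_n$: one must confirm that strict semifiniteness of $\varphi_{|\D}$ transfers to exactly the hypothesis required by the earlier finite or semifinite Hoffman-Rossi theorem, and it is here that strict (rather than mere) semifiniteness becomes indispensable, in agreement with the limitation inherited from \cite[Remark 2.8]{HJX}. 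The loss of normality in the final $\Psi$ is intrinsic to the cluster-point construction and cannot be avoided by this method.
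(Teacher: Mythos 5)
Your proposal takes a genuinely different route from the paper, and as written it contains real gaps. The paper's proof never invokes the general (weight) form of the Haagerup reduction theorem at all. Instead, strict semifiniteness of $\varphi_{|\D}$ is used to produce a net $(e_\alpha)$ of $\varphi$-finite projections in the centralizer $\D_\varphi$ increasing to $\I$; each $e_\alpha$ is a fixed point of $(\sigma^\varphi_t)$, so the corner $e_\alpha\M e_\alpha$ is a $\sigma$-finite algebra (with a faithful normal positive functional) whose modular group is the restriction of $\sigma^\varphi_t$ and which contains $e_\alpha\A e_\alpha$ and $e_\alpha\D e_\alpha$ invariantly. The already-established $\sigma$-finite theorem is then applied to each corner, and $\Psi$ is obtained as a weak* cluster point of the compressed maps $\Psi_\alpha(e_\alpha\,\cdot\,e_\alpha)$ in $CB(\M,\D)$. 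So your reading that the ``sole role'' of the strict semifiniteness hypothesis is to license the general reduction theorem of \cite[Remark 2.8]{HJX} is not what the paper does: its role is to supply modular-invariant finite projections \emph{inside} $\D$ for a compression argument, and the reduction theorem is only ever used in its state form, via the preceding theorem.

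The substantive gap in your route is the step extending $\Phi_n$ over the reduced algebras. In the weight setting the $\R_n$ carry only a faithful normal \emph{semifinite} trace (and note it is the perturbed weight $\varphi_n$, not $\widehat{\varphi}$ itself, that is tracial on $\R_n$), so you need a semifinite Hoffman-Rossi theorem there. Theorem \ref{HRc2-sfcomm} does not apply: its hypothesis requires $\I$ to be a sum of mutually orthogonal finite projections in $\D_n$ that are \emph{central in} $\D_n$, and this centrality is used essentially in its proof (to get $\Phi(e_tae_s)=0$ for $t\neq s$). Semifiniteness of a trace never yields central finite projections; if $\D_n$ has an infinite factor (type ${\rm I}_\infty$ or ${\rm II}_\infty$) as a direct summand, there are no nonzero finite central projections at all. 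Your fallback---compressing $\R_n$ by finite-trace projections of $\D_n$ and citing Theorem \ref{HRc2}---is exactly the paper's compression trick transplanted inside $\R_n$, but then (i) your asserted ``descent'' of semifiniteness to $\D_n$ must actually be proved (it requires the internals of the HJX construction: a finite projection $e\in\D_\varphi$ satisfies $\lambda_t\pi(e)\lambda_t^*=\pi(\sigma_t^\varphi(e))=\pi(e)$, hence commutes with $a_n$, hence lies in $\R_n$ and so in $\D_n$), and (ii) the $\Psi_n$ so produced is itself only a weak* cluster point, so your claim that each $\Psi_n$ is \emph{normal} is unjustified (harmless for the final statement, but stated as fact in your write-up). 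Finally, every structural ingredient you import---the increasing $\R_n$ with weak* dense union, weight-preserving expectations commuting with $(\sigma_t^{\widehat{\varphi}})$, and the Lemma 2.7-type convergence $\Edb_n\to{\rm id}$---exists in print only as a sketch in \cite[Remark 2.8]{HJX} for weights and would need re-verification. The paper's order of operations (compress to $\sigma$-finite corners first, then reduce) avoids all of these issues, which is precisely why the hypothesis is placed on $\varphi_{|\D}$ rather than on $\varphi$ alone.
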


\begin{proof} By \cite[Exercise VIII.2(1)]{tak2}, the restriction of $\varphi$ to $\D$ is semifinite on the centralizer $\D_\varphi$. 
The restriction of the weight $\varphi$ to $\D_\varphi$ is again a trace, and hence there exists a net of projections 
$(e_\alpha)\subset\D_\varphi$ all with finite weight, increasing to $\I$. Amongst other facts, the inclusion 
$(e_\alpha)\subset\D_\varphi$ ensures that each $e_\alpha$ is a fixed point of $\sigma^\varphi_t$. When in standard form, the fact of 
$e_\alpha$ being a fixed point of the modular group, translates to the claim that $e_\alpha$ commutes with the modular operator 
$\Delta$. Each compression $e_\alpha\M e_\alpha$ is a $\sigma$-finite von Neumann algebra with respect to the restriction of $\varphi$. 
The commutation of $e_\alpha$ with $\Delta$ further ensures that the modular operator for the pair $(e_\alpha\M e_\alpha, \varphi_{|e_\alpha\M e_\alpha})$ may be identified with $\Delta_{|e_\alpha H}$, and that the modular group of this pair may be identified with the restriction of $\sigma_t^{\varphi_\alpha}=e_\alpha\sigma_t^\varphi(\cdot)e_\alpha$ to $e_\alpha\M e_\alpha$. (The reader may find these facts in the  proof of \cite[Theorem VIII.2.11]{tak2}.) All of this ensures that 
$\sigma_t^{\varphi_\alpha}(e_\alpha\A e_\alpha)=e_\alpha\A e_\alpha$ and that $\sigma_t^{\varphi_\alpha}(e_\alpha\D e_\alpha)= e_\alpha\D e_\alpha$. Moreover $\Phi_{|e_\alpha\A e_\alpha}$ is easily seen to be a $e_\alpha\D e_\alpha$ character on 
$e_\alpha\A e_\alpha$. We may therefore apply the preceding theorem to the compression to see that $\Phi{|e_\alpha\A e_\alpha}$ extends 
to a possibly non-normal conditional expectation $\Psi_\alpha$ from $e_\alpha\M e_\alpha$ to $e_\alpha\D e_\alpha$ which agrees with 
$\Phi_{|e_\alpha\A e_\alpha}$ on $e_\alpha\A e_\alpha$. Now let $\Theta_\alpha$ be given by $\Phi_\alpha\circ C_\alpha$ where $C_\alpha$ 
is the compression $\M\to e_\alpha\M e_\alpha$. Each $\Theta_\alpha$ belongs to $CB(\M, \D)$. Let $\Psi$ be a weak* limit. First notice 
that for any $a\in \A$ we will have that $\Theta_\alpha(e_\alpha a e_\alpha) =\Phi(e_\alpha a e_\alpha) = e_\alpha \Phi(a) e_\alpha\to \Phi(a)$ (since $\{e_\alpha\}$ converges $\sigma$-strong* to $\I$). So $\Psi$ agrees with $\Phi$ on $\A$. Since each $\Theta_\alpha$ is 
a compressed expectation, and since (as we have just seen)  $\Theta_\alpha(e_\alpha d e_\alpha)\to d$ for every $d\in \D$, it 
follows that $\Psi$ is a possibly non-normal conditional expectation from $\M$ to $\D$.   
\end{proof}

\section{Jensen measures} \label{Jens}  

Consider the inclusions $\D \subset \A \subset \mathcal{C},$ where $\mathcal{C}$ is a unital $C^*$-algebra with subalgebra $\A$ and 
$C^*$-subalgebra $\D$, containing $1_{\mathcal{C}}$.
Let $\Psi$ be a ``noncommutative representing measure'' of a $\D$-character $\Phi$,
that is  $\Psi$ is a $\D$-valued normal conditional expectation extending $\Phi$.
Let $\omega$ be a state of $\mathcal{C}$ preserved by $\Psi$ (that is   $\omega\circ\Psi=\omega$).  We say that  $\omega$, or the 
pair $(\Psi, \omega)$,  is a \emph{noncommutative Jensen measure} for $\Phi$, if it satisfies the Jensen-like inequality 
$$\omega(\log(|\Phi(a)|) ) \leq \omega( \log |a|), \quad a \in \A.$$ 
We say that  $\omega$ is an 
\emph{noncommutative Arens-Singer measure} for $\Phi$, if it satisfies this inequality for all invertible $a\in \A$.   These 
inequalities may be rewritten in terms of  the $\omega$-{\em geometric mean} $\Delta_\omega(a) = \exp(\omega( \log |a|))$
(see  the introduction to \cite{BLv}).  For example  noncommutive Arens-Singer measures
satisfy  $$\Delta_\omega(\Phi(a)) \leq  \Delta_\omega(a) , \quad a \in \A^{-1}.$$ 
As we will see in the proof below this inequality is actually an equality.  

In the classical setting we have that $\D=\mathbb{C}\I$ and $\Psi(b)=\omega(b)\I$ for all $b\in \M$, where $\omega(b)=\int b\,d\mu_\omega$ 
and $\mu_\omega$ is the probability 
measure associated with $\Psi$  by the Riesz representation theorem.  It is a straightforward exercise to see that under these identifications the above condition yields exactly the classical definition of Jensen measures
and Arens-Singer measures 
(see e.g.\ \cite{Gam,Stout}, e.g.\ the argument on p.\ 108 in the latter text).

In this section we will prove a theorem concerning the inequalities above which is very closely related to the main 
result of \cite{LL1}.  We will need  two facts noted as 
lemmata in \cite{LL1}: Let $a, b$ be positive invertible
elements of a unital $C^*$-algebra $\mathcal{C}$, with $[a, b] = 0$. Then $b + b^{-1}a \geq
2a^{1/2}$. If for such a positive invertible element $a \in \mathcal{C}$, we inductively define $$x_1 = a, \quad x_{n+1} =
\frac{1}{2}(x_n + ax_n^{-1}),$$then $(x_{n+1})$ decreases
monotonically to $a^{1/2}$ with convergence taking place in the
norm topology.

\begin{lemma}\label{lemma 3} Let $\omega$ be a state on a $C^*$-algebra $\mathcal{C}$. For any $a\in \mathcal{C}$, the sequence $( \omega(|a|^{2^{-n}})^{2^n} )$ is decreasing. If $\omega$ is in fact tracial, then 
\begin{enumerate}
\item $\omega(|a|^p)=\omega(|a^*|^p)$ for any $0<p<\infty$ and any $a\in \mathcal{C}$;
\item given $0<p,q,r\leq \infty$ with $\frac{1}{p}=\frac{1}{q}+\frac{1}{r}$, we will for any $a,b\in \mathcal{C}$ have that 
$\omega(|ab|^p)^{1/p}\leq\omega(|a|^q)^{1/q} \, \omega(|b|^r)^{1/r}$.
\end{enumerate}
\end{lemma}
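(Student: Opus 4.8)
The plan is to treat the three assertions separately, in increasing order of difficulty, since only the last is genuinely nonelementary.

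For the monotonicity of $\bigl(\omega(|a|^{2^{-n}})^{2^n}\bigr)$, I would reduce everything to a single Cauchy--Schwarz step. Setting $x = |a|^{2^{-(n+1)}} \in \mathcal{C}_+$, one has $x^2 = |a|^{2^{-n}}$, so the claimed inequality $\omega(|a|^{2^{-(n+1)}})^{2^{n+1}} \le \omega(|a|^{2^{-n}})^{2^n}$ is precisely $\bigl(\omega(x)^2\bigr)^{2^n} \le \omega(x^2)^{2^n}$. Since $t \mapsto t^{2^n}$ is increasing on $[0,\infty)$, it suffices to note $\omega(x)^2 \le \omega(x^2)$, which is the Cauchy--Schwarz inequality $|\omega(1\cdot x)|^2 \le \omega(1)\,\omega(x^*x)$ for the positive sesquilinear form $(u,v)\mapsto \omega(v^*u)$, applied to the self-adjoint $x$. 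This step uses only that $\omega$ is a state, consistent with the fact that tracality is not assumed here.

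For part (1), I would first establish $\omega((a^*a)^m) = \omega((aa^*)^m)$ for every integer $m \ge 1$ from the trace property: writing $(a^*a)^m = a^*(aa^*)^{m-1}a$ and cycling gives $\omega(a^*(aa^*)^{m-1}a) = \omega((aa^*)^{m-1}aa^*) = \omega((aa^*)^m)$. Hence $\omega(p(a^*a)) = \omega(p(aa^*))$ for every polynomial $p$ with $p(0)=0$. Since $t \mapsto t^{p/2}$ is continuous on the spectra of $a^*a$ and $aa^*$ (contained in $[0,\|a\|^2]$) and vanishes at $0$, it is a uniform limit of such polynomials by Stone--Weierstrass; norm-continuity of $\omega$ together with continuity of the functional calculus then yields $\omega(|a|^p) = \omega((a^*a)^{p/2}) = \omega((aa^*)^{p/2}) = \omega(|a^*|^p)$.

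For part (2), the noncommutative H\"older inequality, I would pass to the GNS construction of the tracial state $\omega$. The weak closure $M = \pi_\omega(\mathcal{C})''$ is a finite von Neumann algebra carrying a faithful normal tracial state $\bar\omega$ that implements $\omega$, and the continuous functional calculus is preserved, so $\pi_\omega(|c|^s) = |\pi_\omega(c)|^s$ and $\omega(|c|^s) = \bar\omega(|\pi_\omega(c)|^s)$ for all $c \in \mathcal{C}$ and $s>0$. This reduces the claim to $\|\pi_\omega(a)\pi_\omega(b)\|_p \le \|\pi_\omega(a)\|_q\,\|\pi_\omega(b)\|_r$ for the tracial $L^p(M,\bar\omega)$-quasinorms, which is the standard H\"older inequality listed among the basic structural properties of these spaces in Section \ref{Sec1}; the endpoint conventions $q=\infty$ or $r=\infty$ are to be read as the $L^\infty$-norm, i.e.\ the operator norm in $M$. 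I expect this part to be the main obstacle, precisely because it is the one assertion that must be imported from $L^p$-theory rather than proved by hand. The point requiring care is the legitimacy of the GNS reduction---that $\bar\omega$ is a faithful normal trace on $M$ and that each scalar $\omega(|c|^s)$ is genuinely recovered by $\bar\omega$---after which the inequality is exactly tracial H\"older, the only remaining bookkeeping being the meaning of the endpoints when one of $p,q,r$ equals $\infty$.
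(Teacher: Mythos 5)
Your proposal is correct, but it reaches the two tracial claims by a genuinely different reduction than the paper. The monotonicity step is identical (Cauchy--Schwarz applied to $x=|a|^{2^{-(n+1)}}$, then raise to the power $2^n$), and your argument for (1) --- cycling the trace to get $\omega((a^*a)^m)=\omega((aa^*)^m)$ and then uniformly approximating $t\mapsto t^{p/2}$ by polynomials vanishing at $0$ --- is essentially the ``alternative proof'' the paper itself records for (1). Where you genuinely part ways is in how one reduces to a faithful normal trace on a von Neumann algebra. The paper's primary device, used for both (1) and (2), is to pass to the bidual: the normal extension $\tilde\omega$ of $\omega$ to $\mathcal{C}^{**}$ is still a trace, and there is a central projection $z$ such that $\tilde\omega(z\,\cdot)$ is a faithful normal tracial state on $z\mathcal{C}^{**}$ with $\tilde\omega(zx)=\tilde\omega(x)$; the claims then follow from the faithful case using the identity $|x|^pz=|xz|^p$ (quoted from \cite{BLv}) and, for (2), the Fack--Kosaki H\"older inequality \cite[Theorem 4.9]{FK}. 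You instead pass to the GNS representation, observing that the vector state $\bar\omega$ is a normal tracial state on $M=\pi_\omega(\mathcal{C})''$ which is faithful (a cyclic trace vector is separating --- the one point needing care, which you correctly flag), and that $\pi_\omega$ intertwines the continuous functional calculus, so every scalar $\omega(|c|^s)$ is recovered by $\bar\omega$. Both reductions then invoke the same external input, namely tracial H\"older for finite von Neumann algebras in the full quasinorm range $p<1$; your appeal to the ``basic structural properties'' mentioned in Section \ref{Sec1} is vaguer than the paper's explicit pointer, and you should cite \cite[Theorem 4.9]{FK} (or \cite{GPVT}, which the paper notes is thereby improved to $p<1$). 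As for what each approach buys: your GNS route is cleaner in that a $*$-homomorphism automatically commutes with functional calculus, so no auxiliary lemma about central compressions is needed; the paper's bidual route keeps $\mathcal{C}$ sitting inside the larger algebra (rather than possibly losing information in a non-faithful representation) and reuses machinery from \cite{BLv} that the paper deploys again immediately afterwards in the proof of the Jensen/Arens--Singer theorem.
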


\begin{proof}
Regarding the first claim note that for any $a\in\mathcal{C}$, we may use the Cauchy-Schwarz inequality to see that $\omega(|a|^{2^{-n}})\leq \omega(|a|^{2^{-n+1}})^{2}\omega(\I)=\omega(|a|^{2^{-n+1}})^{2}$. 

Now suppose that $\omega$ is tracial.  Then the normal extension $\tilde{\omega}$ to $\M = \mathcal{C}^{**}$ is still a trace.  
As in the lines above Lemma 1.1 in \cite{BLv}, there is a central projection $z \in \M$ such that 
$\tilde{\omega}(z \cdot)$ is a faithful normal tracial state on $zM$, and $\tilde{\omega}(z x) = \tilde{\omega}(x)$ for all
$x \in \M$.   So $z \M$ is a  finite von Neumann algebra.   Since the result which we are interested in is true for 
 faithful normal tracial states, we have
$$\omega(|a|^p)=  \tilde{\omega}(z |a|^p) = \tilde{\omega}( |za|^p) =  \tilde{\omega}( |a^* z|^p),$$
and similarly $\omega(|a^*|^p)=  \tilde{\omega}(|a^* z|^p)$.
  We have also used the fact from Lemma 1.2 in \cite{BLv} that $|x|^p z = (|x| z)^p
= |xz|^p$.   In that lemma $x$ was invertible, however this is not used in the proof.  
 An alternative proof: By the Stone-Weierstrass theorem we may select a sequence of polynomials $\{p_n\}$ in one real 
variable which converge uniformly to the function $t\to t^{p/2}$ on $[0,\|a\|]$. It is an easy exercise to use the tracial property to 
see that $\omega((a^*a)^m)=\omega((aa^*)^m)$ for all $m\in\{0,1,2,\dots\}$. (In the case $m=0$ this boils down to the well known fact 
that $\omega(s_r(a))=\omega(s_l(a))$, where $s_r(a)$ and $s_l(a)$ denote the left and tight supports of $a$. But then 
$\omega(p_n(a^*a))=\omega(p_n(aa^*))$ for all $n$, whence 
$\omega(|a|^p)=\lim_{n\to\infty}\omega(p_n(a^*a))=\lim_{n\to\infty}\omega(p_n(aa^*))=\omega(|a^*|^p)$.

The final claim follows from \cite[Theorem 4.9]{FK} and a similar argument to the argument starting the second paragraph of the proof
(reduction to the finite von Neumann algebra case). 
Thus for example,
$$\omega(|ab|^p)^{1/p} = \tilde{\omega}(z |ab|^p)^{1/p} = 
\tilde{\omega}( |zazb|^p)^{1/p} = \| (za) (zb) \|_p \leq \| za \|_q \| zb \|_r,$$
while $\| za \|_q = \tilde{\omega}( |za|^q)^{1/q} = 
\omega(|a|^p)^{1/p}$, and so on.  
\end{proof}

\begin{remark} 
Item (2) improves the H\"older inequality from \cite{GPVT} to include  the case of $p < 1$.
\end{remark}

We recall 
from \cite{BLI} that a unital norm closed subalgebra $\A$ of a unital $C^*$-algebra $\mathcal{C}$ is {\em  logmodular}
 if every element $b \in \mathcal{C}$ such that $b \geq \epsilon 1$  for some $\epsilon > 0,$ is  a uniform limit of terms of the form $a^* a$ where 
 $a$ is an invertible element in $\A$.   See e.g.\ \cite{BK} for a very recent survey and paper 
 on logmodular algebras.  If $\A$ is  logmodular then $\mathcal{C}$ is the $C^*$-envelope of $\A$ by \cite[Proposition 4.3]{BLI}.
 In addition, by \cite[Theorem 4.4]{BLI} for any unital $C^*$-subalgebra  $\mathcal{D}$  and   
  $\mathcal{D}$-character $\Phi$ on $\A$, if there exists a positive map    
 $\Psi$ from $C$ onto $\D$  
extending $\Phi$ (which is necessarily a conditional  expectation), then it is unique.     
Of course the 
earlier Hoffman-Rossi  theorems can assist the existence of such an extension in some important cases.

The latter  is a generalization of the important {\em unique normal
state extension property} in the theory of noncommutative $H^\infty$ (a la Arveson's subdiagonal algebras)--see e.g.\ \cite{BLsurv,Arv}.
Indeed if $\mathcal{D} \subset B(H)$ then  there is a unique UCP map $\Psi : C \to B(H)$ extending $\Phi$,
and of course $\Psi(\A  + \A^*) \subset \mathcal{D}$.   So if further $\mathcal{C}$ is a von Neumann algebra with $\A + \A^*$ weak* dense in $\M$
(which is the case in Arveson's subdiagonal setting), and if 
$\Psi$ is weak* continuous, then $\Psi$ maps into $\D$, and by the above it is the unique positive extension of  $\Phi$ to $\M$. 
 Without the  logmodular assumption certainly there may exist no conditional  expectation 
 from $C$ onto $\mathcal{D}$, i.e.\ no `noncommutative representing measure' in the sense of this paper.  

\begin{theorem} Let $\A$ be a unital norm closed logmodular subalgebra of a unital $C^*$-algebra $\mathcal{C}$, and $\omega$ a state on $\mathcal{C}$ which is tracial on a $C^*$-subalgebra $\mathcal{D}$ of $\A$.  Let $\Phi$ be a $\omega$-preserving
$\mathcal{D}$-character on $\A$, which extends to an $\omega$-preserving conditional expectation
from  $\mathcal{C}$ onto $\mathcal{D}$.  For any $a\in \A^{-1}$, we  have 
$$\lim_{n\to\infty} \omega(|a|^{2^{-n}})^{2^n} \geq \lim_{n\to\infty} \omega(|\Phi(a)|^{2^{-n}})^{2^n}.$$
These limits are decreasing, and equal $\exp ( \omega( \log |a|))$ and $\exp ( \omega( \log |\Phi(a)|))$ respectively.  Thus we have
Jensen's equality 
$$\Delta_\omega(a) = \Delta_\omega(\Phi(a)), \; \qquad a\in \A^{-1},$$
and $\omega$ is a noncommutative Arens-Singer measure for $\Phi$. 
\end{theorem}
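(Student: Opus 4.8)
The plan is to first reduce the statement about the dyadic power means to one about the geometric means, and then to prove the resulting Jensen inequality and upgrade it to an equality. Since $a$ and $\Phi(a)$ are invertible, $|a|$ and $|\Phi(a)|$ are bounded below, so $\log|a|$ and $\log|\Phi(a)|$ are bounded self-adjoint elements of $\mathcal{C}$. Writing $|a|^{2^{-n}}=\exp(2^{-n}\log|a|)=\I+2^{-n}\log|a|+o(2^{-n})$ and applying $\omega$ gives $\omega(|a|^{2^{-n}})^{2^n}\to\exp(\omega(\log|a|))=\Delta_\omega(a)$, and likewise for $\Phi(a)$; the monotone decrease of both sequences is precisely the first assertion of Lemma \ref{lemma 3}. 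Hence it suffices to prove $\omega(\log|\Phi(a)|)\le\omega(\log|a|)$, i.e.\ $\Delta_\omega(\Phi(a))\le\Delta_\omega(a)$, and then that it is an equality.

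Before attacking the inequality I would record two structural facts. First, $\omega$ is a trace on $\A$: for $x,y\in\A$, using $\omega\circ\Psi=\omega$, the multiplicativity of $\Phi=\Psi|_\A$, and the trace property of $\omega$ on $\D$, one has $\omega(xy)=\omega(\Phi(x)\Phi(y))=\omega(\Phi(y)\Phi(x))=\omega(yx)$. Second, since $\omega$ is a genuine trace on $\D$, the restriction of $\Delta_\omega$ to $\D$ is a Fuglede--Kadison determinant and is therefore multiplicative; in particular $\Delta_\omega(\Phi(ab))=\Delta_\omega(\Phi(a))\Delta_\omega(\Phi(b))$ for $a,b\in\A$ and $\Delta_\omega(\Phi(a)^{-1})=\Delta_\omega(\Phi(a))^{-1}$. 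I would also use the arithmetic--geometric mean inequality $\Delta_\omega(x)\le\omega(|x|^p)^{1/p}$ valid for every $p>0$, which follows from Jensen's inequality for the convex function $\exp$ applied to the single self-adjoint operator $\log|x|^p$ under the state $\omega$.

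The heart of the proof, and the step I expect to be the main obstacle, is the Jensen inequality $\Delta_\omega(\Phi(a))\le\Delta_\omega(a)$: this is exactly where logmodularity must enter, since without it no comparison between $\omega(|a|^p)$ and $\omega(|\Phi(a)|^p)$ is available (the Kadison--Schwarz and operator-concavity estimates for the completely positive $\Psi$ all point the same, unhelpful, way). Mirroring the classical Lumer--Hoffman argument and the method of \cite{LL1}, I would exploit logmodularity to approximate the positive invertible $|a|^{-2}$ in norm by elements $b^*b$ with $b\in\A^{-1}$, so that $ab$ becomes ``nearly unimodular'' in the relevant sense; feeding this into $\Delta_\omega(\Phi(\cdot))$ and using the Kadison--Schwarz inequality for $\Psi$, the $\omega$-preservation $\omega\circ\Psi=\omega$, and the multiplicativity of $\Delta_\omega$ on $\D$, one passes to the limit to obtain the inequality. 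The two facts about square roots quoted just before Lemma \ref{lemma 3} (the bound $b+b^{-1}c\ge 2c^{1/2}$ and the monotone Newton iteration for $c^{1/2}$) are the tools that let one carry out these estimates entirely inside $\mathcal{C}$, controlling the geometric mean through its dyadic approximants rather than through the unbounded functional calculus.

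Finally, to upgrade the inequality to Jensen's equality I would apply the inequality just proved to $a^{-1}\in\A^{-1}$, obtaining $\Delta_\omega(\Phi(a^{-1}))\le\Delta_\omega(a^{-1})$. Rewriting the left-hand side as $\Delta_\omega(\Phi(a))^{-1}$ via the multiplicativity of $\Delta_\omega$ on $\D$, together with the inversion law $\Delta_\omega(a^{-1})=\Delta_\omega(a)^{-1}$ for the geometric mean on $\A$ (where logmodularity and the trace property of $\omega$ re-enter to control the term $\omega(\log|a^{*}|)$), yields the reverse inequality $\Delta_\omega(\Phi(a))\ge\Delta_\omega(a)$. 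Combining the two gives $\Delta_\omega(a)=\Delta_\omega(\Phi(a))$ for every $a\in\A^{-1}$, whence the Arens--Singer inequality holds, indeed with equality, as asserted.
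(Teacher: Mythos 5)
Your reduction of the limit statement is fine (the Taylor-expansion argument for $\omega(|a|^{2^{-n}})^{2^n}\to\exp(\omega(\log|a|))$ is even a little cleaner than the paper's reduction to the commutative case), and your observation that $\omega$ is tracial on $\A$ is correct. But the heart of the proof is genuinely missing, and the sketch you give of it would fail. Approximating $|a|^{-2}$ in norm by $b^*b$ with $b\in\A^{-1}$ does \emph{not} make $ab$ ``nearly unimodular'' in a noncommutative $C^*$-algebra: $|ab|^2=b^*|a|^2b$, which is nowhere near $\I$ unless $b^*b$ commutes with $|a|^2$, so the classical Lumer--Hoffman maneuver collapses exactly at the point where noncommutativity enters. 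What the paper (following \cite{LL1}) actually does is an induction on the dyadic exponent: one proves $\omega(|a|^{2^{-k}})\geq\omega(|\Phi(a)|^{2^{-k}})$ for \emph{all} $a\in\A^{-1}\cup(\A^*)^{-1}$ simultaneously, with Kadison--Schwarz furnishing only the base case $p=2$. The passage from level $k$ to $k+1$ runs the Newton iteration $x_{m+1}=\tfrac12(x_m+|a|^{2^{1-k}}x_m^{-1})$, uses logmodularity to approximate $|x_m^{-2^{k-1}}u_a^*|$ (note the polar partial isometry $u_a$, not $|a|^{-1}$ itself) by $|(z_l^{(m)})^{-1}|$ with $z_l^{(m)}\in\A^{-1}$, applies the inductive hypothesis to the \emph{auxiliary} elements $(z_l^{(m)})^*\in(\A^*)^{-1}$ and $(z_l^{(m)})^{-1}a\in\A^{-1}$, and then combines the tracial H\"older inequality on $\D$ (Lemma \ref{lemma 3}(2), with $p=2^{-k}$, $q=r=2^{1-k}$) with the scalar AGM inequality. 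This two-sided dyadic induction is the key idea; none of the ingredients you list (Kadison--Schwarz, $\omega\circ\Psi=\omega$, Fuglede--Kadison multiplicativity on $\D$) can replace it, and deferring to ``the method of \cite{LL1}'' is deferring the entire proof.

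Your upgrade from inequality to equality is also circular as proposed. You invoke the inversion law $\Delta_\omega(a^{-1})=\Delta_\omega(a)^{-1}$ for $a\in\A^{-1}$; since $|a^{-1}|=|a^*|^{-1}$, this identity is equivalent to $\omega(\log|a^*|)=\omega(\log|a|)$. That would follow if $\omega$ were tracial on $C^*(a)$, but $\omega$ is only assumed tracial on $\D$, and the traciality you derived on $\A$ (namely $\omega(xy)=\omega(yx)$ for $x,y\in\A$) says nothing about $\omega(|a|^p)$ versus $\omega(|a^*|^p)$, because $a^*\notin\A$. In fact $\Delta_\omega(a^*)=\Delta_\omega(a)$ for $a\in\A^{-1}$ is essentially a \emph{consequence} of the theorem, not an available input. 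The paper sidesteps this precisely because its induction covers $(\A^*)^{-1}$ as well: applying the adjoint version of the inequality to $a^{-1}$, i.e.\ $\omega(\log|(a^{-1})^*|)\geq\omega(\log|\Phi(a^{-1})^*|)$, and using $|x^{-1}|=|x^*|^{-1}$ on both sides gives $-\omega(\log|a|)\geq-\omega(\log|\Phi(a)|)$, which is the reverse inequality. So the two-sided formulation of the inductive statement is not a technical nicety; it is what legitimizes the inversion step you want to make.
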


\begin{proof}
The proof uses induction. For any $a\in \A^{-1}\cup(\A^*)^{-1}$, we clearly have by the Kadison-Schwarz inequality that
$$\omega(|a|^2)=\omega(\Psi(|a|^2))\geq \omega(|\Phi(a)|^2)).$$ Now suppose that for any $a\in \A^{-1}\cup(\A^*)^{-1}$ we have that 
\begin{equation}\label{eq:a}
\omega(|a|^{2^{1-n}})^{2^{n-1}} \geq \omega(|\Phi(a)|^{2^{1-n}})^{2^{n-1}} \mbox{ for
all } 0\leq n\leq k.
\end{equation}
Here we interpret $\Phi(a^*) = \Phi(a)^*$ if $a \in A$.
We  show that this inequality also holds for $n=k+1$. So let $a\in \A^{-1}$ be given, and inductively define the
sequence $( x_m )  \subset \mathcal{C}_+$ by $$x_1 = |a|^{2^{1-k}}, \quad
x_{m+1} = \frac{1}{2}(x_m + |a|^{2^{1-k}}x_m^{-1}).$$Then $(x_{m+1})$ decreases monotonically in norm to 
$|a|^{2^{-k}}$ by the facts noted prior to Lemma \ref{lemma 3}. Now let $u_a$ be the partial isometry in the polar decomposition $a=u_a|a|$. Since $a\in \A^{-1}$, we have that $u_a$ is a unitary in $\mathcal{C}^{**}$, and $u_a^* a = |a|$. 
Several times we will use silently the identity $|x^{-1}| = |x^*|^{-1}$, which is true since 
$$|x^{-1}|^2 =(x x^*)^{-1} = |x^*|^{-2}.$$  We will also use the operator theoretic fact 
(perhaps originally due to Macaev and Palant) that $\| x_n^{\frac{1}{2}} - y^{\frac{1}{2}} \| \leq 
2 \| x_n - y \|^{\frac{1}{2}}$ for positive operators $x_n, y$.   Hence  if $x_n \to x$ then $x_n^{\frac{1}{2}} \to y^{\frac{1}{2}}$.
If $z_n \to z$ in $B(H)$ then $z_n^* z_n  \to z^* z$, and hence $w^* z_n^* z_n  w \to w^*  z^* z w$ for any operator $w$ on $H$.
Thus by the last fact,  $|z_n w | \to |zw|$.

Because $\A$ is logmodular, we may for each fixed $m$, select a
sequence $( z_l^{(m)} ) \subset \A^{-1}$ with $$\lim_{l\to\infty} |(z_l^{(m)})^{-1}| = |x_m^{-2^{k-1}}u_a^*|.$$
Note that $x_m^{-2^{k-1}}u_a^* = (u_a x_m^{-2^{k-1}})^* \in C$, since $x_m^{-2^{k-1}} \in C^*(|a|)$.    
For this sequence we have that 
$$\lim_{l\to\infty}|(z_l^{(m)})^{-1}a|=\lim_{l\to\infty}|\,|(z_l^{(m)})^{-1}|a|=|\,|x_m^{-2^{k-1}}u_a^*|a|=|x_m^{-2^{k-1}}u_a^*a|=x_m^{-2^{k-1}}|a|.$$
Observe that we then also have (since all of our operators are bounded below) that
$$\lim_{l\to\infty}|(z_l^{(m)})^*|^2=\lim_{l\to\infty}|(z_l^{(m)})^{-1}|^{-2}=|x_m^{-2^{k-1}}u_a^*|^{-2}=|u_ax_m^{2^{k-1}}|^2=(x_m^{2^{k-1}})^2,$$ and hence that $\lim_{l\to\infty}|z_l^{(m)}|=x_m^{2^{k-1}}$. It is clear from the induction hypothesis and Lemma \ref{lemma 3}, that
\begin{eqnarray}\label{eqn-aa}
\frac{1}{2}\omega(|(z_l^{(m)})^*|^{2^{1-k}} + |(z_l^{(m)})^{-1}a|^{2^{1-k}})&\geq& \frac{1}{2}\omega(|\Phi(z_l^{(m)})^*|^{2^{1-k}} + |\Phi((z_l^{(m)})^{-1}a)|^{2^{1-k}})\nonumber\\
&=& \frac{1}{2}\omega(|\Phi(z_l^{(m)})|^{2^{1-k}} + |\Phi(z_l^{(m)})^{-1}\Phi(a)|^{2^{1-k}}).
\end{eqnarray}
Recall that $\omega$ is a trace on $\mathcal{D}$. Thus the $p$-seminorms on $\mathcal{D}$ defined in terms of this trace, satisfy a  
H\"{o}lder inequality. Now since $2^{k-1} + 2^{k-1} = 2^k$ we may apply this H\"{o}lder
inequality with $p =(\frac{1}{2})^{k}$ and $q = r = (\frac{1}{2})^{k-1}$, to see that
$$\omega(|\Phi(a)|^{2^{-k}})^{2^{k}} \leq \omega(|\Phi(z_l^{(m)})|^{2^{1-k}})^{2^{k-1}}
\omega(|\Phi(z_l^{(m)})^{-1}\Phi(a)|^{2^{1-k}})^{2^{k-1}}.$$This
in turn translates to 
$$\omega(|\Phi(z_l^{(m)})^{-1}\Phi(a)|^{2^{1-k}}) \geq \frac{\omega(|\Phi(a)|^{2^{-k}})^2}{\omega(|\Phi(z_l^{(m)})|^{2^{1-k}})}.$$ 
If we combine the above with inequality (\ref{eqn-aa}), it will then follow from the facts noted prior to Lemma \ref{lemma 3} that 
\begin{eqnarray*} 
\frac{1}{2}\omega(|(z_l^{(m)})^*|^{2^{1-k}} + |(z_l^{(m)})^{-1}a|^{2^{1-k}})&\geq& \frac{1}{2}
 \left[\omega(|\Phi(z_l^{(m)})|)^{2^{1-k}} + \frac{\omega(|\Phi(a)|^{2^{-k}})^2}{\omega(|\Phi(z_l^{(m)})|^{2^{1-k}})}\right]\\ 
&\geq& \omega(|\Phi(a)|^{1/2^{k}}), 
\end{eqnarray*} 
the last inequality following from the AGM inequality. 
This fact alongside the above limit formulas, now ensures that 
\begin{eqnarray*} 
\omega(x_{m+1}) &=& \frac{1}{2}\omega(x_m + x_m^{-1}|a|^{2^{1-k}})\\ 
&=& \frac{1}{2}\lim_{l\to\infty}\omega(|(z_l^{(m)})^*|^{2^{1-k}} + |(z_l^{(m)})^{-1}a|^{2^{1-k}})\\ 
&\geq& \omega(|\Phi(a)|^{1/2^{k}}). 
\end{eqnarray*} 
Since $(x_{m+1})$ decreases uniformly and monotonically to $|a|^{1/2^{k}}$, it follows that 
$$\omega(|a|^{1/2^{k}}) \geq \omega(|\Phi(a)|^{1/2^{k}})\mbox{ for all }a\in \A^{-1}.$$ 
It is an exercise to see that if $\A$ is logmodular, then so is $\A^*$ (see the remark before  \cite[Proposition 4.3]{BLI}).  
Hence the same proof with the roles of $\A$ and 
 $\A^*$ reversed, shows that we also have that 
$$\omega(|a|^{1/2^{k}}) \geq \omega(|\Phi(a)|^{1/2^{k}})\mbox{ for all }a\in (\A^*)^{-1},$$    
as desired.  

Lemma \ref{lemma 3} shows that the limits are decreasing.  In fact $\omega(|a|^{p})$ is decreasing as $p \searrow 0$, as
one may see by noting  that $\omega$ restricts to a (tracial) state on  $C^*(1,|a|)$.    We may thus reduce 
to the case of a positive invertible function $f$  in $C(K)$, with $\omega$ becoming a probability measure $\mu$ on $K$.  In this case it is
well known that $\| f \|_p \searrow \exp (\int_K \, \ln |f| \, d \mu)$ (see e.g.\ \cite{Gam}).  
By the proof of \cite[Lemma 4.3.6]{Arv} we have that $\omega(|a|^{1/2^{k}})^{2^k} \to \exp ( \omega( \log |a|))$.
The matching assertions for $\Phi(a)$ are better known, or are similar.  Thus we have shown that 
$$\Delta_\omega(a) \geq \Delta_\omega(\Phi(a)), \; \qquad a\in \A^{-1},$$
as desired.

Replacing $a$ by $a^*$ we have $\omega(|a^*|^{1/2^{k}}) \geq \omega(|\Phi(a)^*|^{1/2^{k}})$, and letting 
$k \to \infty$ yields $\exp ( \omega( \log |a^*|)) \geq \exp ( \omega( \log |\Phi(a)^*|))$.
Then replacing $a$ by $a^{-1}$  gives $\exp ( \omega( \log |a|^{-1}))  \geq \exp ( \omega( \log |\Phi(a)|^{-1})))$.
By the functional calculus $\log |a|^{-1} = - \log |a|$ since $|a|$ is bounded below. 
We deduce that $\Delta_\omega(a) \leq \Delta_\omega(\Phi(a))$, giving Jensen's equality.
\end{proof}

\begin{remark} 
If $\tau$ is a trace on $\mathcal{D}$, and $\Psi :  \mathcal{C} \to \mathcal{D}$ is a  positive extension (`noncommutative representing measure') 
of a $\mathcal{D}$-character $\Phi$   on $\A$ 
then $\omega = \tau \circ \Psi$ is a  state on $\mathcal{C}$ which is tracial on $\mathcal{D}$, and of course $\omega \circ \Psi = \omega$).    
Also, if $\rho$ is a  state on $\A$ which is tracial on $\mathcal{D}$,
and which is preserved by $\Phi$, set $\tau = \rho_{\mathcal{D}}$ and  $\omega = \tau \circ \Psi$.  Then $\Psi$ is $\omega$-preserving, and $\omega(a) = \rho(\Psi(a) ) = \rho(\Phi(a) ) = \rho(a)$ for $a \in A$, so $\omega$ extends
$\rho$.   In these cases $\omega$ (and $\Psi$) satisfy the conditions of the  theorem.  
\end{remark}

{\em Acknowledgements.}  We thank Roger Smith and Mehrdad Kalantar for  assistance with the von Neumann algebraic 
facts described in the Remark above Lemma \ref{aves2}.     We thank the referee for their careful reading, and thank David Sherman for some conversations.


\begin{thebibliography}{8888}
\bibitem{Arv} W. B.  Arveson, {\it Analyticity in operator algebras,} Amer.\ J.\ Math.\ {\bf 89} (1967), 578--642.

\bibitem{BK} B. V. R. Bhat and M. Kumar, {\em Lattices of logmodular algebras}, Preprint (2021). 

\bibitem{Bla} B. Blackadar, {\em Operator algebras. Theory of $C^*$-algebras and von Neumann algebras,} Encyclopaedia of Mathematical Sciences, 122. Operator Algebras and Non-commutative Geometry, 
Springer-Verlag, Berlin, 2006.


\bibitem{Bposx}  D. P. Blecher,  {\em  Real positive maps and conditional expectations on operator algebras,}   p.\
63--102 in {\em Positivity and its applications}, (Eds. E. Kikianty, M. Mabula, M. Messerschmidt, J. H. van der Walt, and M. Wortel),  Trends in Mathematics, Birkh\"auser (2021).

\bibitem{BFZ}  D. P. Blecher, L. C. Flores, and B. G. Zimmer, {\em The Hoffman-Rossi theorem for operator algebras,}  
Integral Eq.\ Op.\ Th.. {\bf 91} (2019), no. 2, Paper No. 17. 


\bibitem{BGL}  D. P. Blecher, S. Goldstein,
and L. E. Labuschagne, {\em Abelian von Neumann algebras,
measure algebras and $L^\infty$-spaces}, Preprint (2021).
 
\bibitem{BLI}  D. P. Blecher and L. E. Labuschagne, {\em  
 Logmodularity and isometries of operator algebras,} Trans. Amer. Math. Soc.\ {\bf  355} (2003), 1621--1646. 


\bibitem{BL-FMR}  D. P. Blecher and L. E. Labuschagne, {\em  Noncommutative function theory and unique extensions}, 
 Studia Math. {\bf 178} (2007), 177-195.

\bibitem{BLsurv}  D. P. Blecher and L. E. Labuschagne, {\em  
Von Neumann algebraic $H^p$ theory,} in: Function spaces:\ 5th Conference on function spaces, Contemp. Math. Vol 435, Amer. Math. Soc (2007).

 \bibitem{BLueda}  D. P. Blecher and L. E. Labuschagne, {\em
 Ueda's peak set theorem for general von Neumann algebras,}  Trans. Amer. Math. Soc. {\bf 370} (2018), 8215--8236.

  \bibitem{BLv}   D. P. Blecher and L. E. Labuschagne, {\em On vector-valued characters for noncommutative function algebras},
 Complex Anal.\ Oper.\ Theory {\bf 14} (2020),  Paper No.\ 31. 

\bibitem{BLM}  D. P. Blecher
and C.  Le Merdy, {\em Operator algebras and their modules---an operator space
approach,} Oxford Univ. Press, Oxford, 2004.  


\bibitem{BNjp}   D. P. Blecher
and M. Neal, {\em Contractive projections and real positive maps on  operator algebras,}   Studia Math.\  {\bf 256} (2021),  21--60. 

\bibitem{BR} O. Bratteli and D.\ W.\ Robinson, \textit{Operator Algebras and Quantum Statistical Mechanics I: Second Edition}, Springer-Verlag, New York, 1987.


\bibitem{Comb} F. Combes, {\em Poids sur une $C^*$-algebre,} J. Math. Pures Appl. {\bf 47} (1968), 57--100.

\bibitem{DKen} K. Davidson and M. Kennedy, {\em Noncommutative Choquet theory,} Preprint (2019). (arXiv:1905.08436) 


\bibitem{ES}
E. G. Effros and E. St{\o}rmer, \emph{Positive projections and Jordan structure in operator algebras,} Math.\ Scand.\ {\bf 45} (1979),  127--138. 



\bibitem{FK} T. Fack and H. Kosaki, {\it Generalized s-numbers of 
$\tau$-measurable operators,} Pacific J.\  Math.\ \textbf{123} (1986), 
269--300.

\bibitem{Gam}   T. W. Gamelin,
 {\em Uniform Algebras,} Second edition, Chelsea, New
York, 1984.

\bibitem{Gol}  S. Goldstein, {\em  Conditional expectation and stochastic integrals in noncommutative $L^p$ spaces,} Math.\ Proc.\ Cambridge Philos.\ Soc.\
\textbf{110} (1991),  365--383.


\bibitem{GLnotes} S. Goldstein and L. E.  Labuschagne, {\it Notes on noncommutative  $L^p$ and Orlicz spaces,}  {\L}odz University Press, {\L}odz, 2020. ISBN 978-83-8220-385-1, e-ISBN 978-83-8220-386-8 

\bibitem{GL2} S. Goldstein and J. M. Lindsay, {\it Markov semigroups
KMS-symmetric for a weight,} Math.\ Ann.\ \textbf{313} (1999), 39--67.
 
\bibitem{GPVT} S. Goldstein and P.-V. Thu, {\em  $L^p$-spaces for $C^*$-algebras with a state,}  Internat.\ J.\ Theoret.\ Phys.\ \textbf{39} (2000), 687--693. 

\bibitem{haag-OV1} U. Haagerup, {\it Operator valued weights in von Neumann algebras I,} J.\ Funct.\ Anal.\ \textbf{32} (1979), 175--206.

\bibitem{haag-OV2} U. Haagerup, {\it Operator valued weights in von Neumann algebras II,} J.\ Funct.\ Anal.\  \textbf{33} (1979), 339--361.

\bibitem{HJX} U. Haagerup, M. Junge, and Q. Xu, {\it A reduction method for noncommutative $L^p$-spaces and applications,}  
Trans.\ Amer.\ Math.\ Soc.\ \textbf{362} (2010), 2125--2165.

\bibitem{HR}  K. Hoffman and H. Rossi, {\em Extensions of positive 
weak*-continuous functionals,}
Duke Math.\ J.\ {\bf 34} (1967), 453--466.

\bibitem{Junge} M. Junge, {\it Doob's inequality for noncommutative martingales,} J Reine Ang.\ Math.\  \textbf{549} (2002), 149--190.

\bibitem{KR1}  R. V. Kadison and J. R. Ringrose,  {\em Fundamentals of the theory of operator algebras. Vol. I. Elementary theory,}
Corrected reprint of the 1983 original. Graduate Studies in Mathematics, 15. American Mathematical Society, Providence, RI, 1997.


\bibitem{LL1} L. E. Labuschagne, {\em A noncommutative Szeg\"o theorem
for subdiagonal subalgebras of von Neumann algebras,}  Proc.
Amer.\ Math.\ Soc.\ {\bf 133} (2005), 3643--3646.

\bibitem{LabCo}  L. E. Labuschagne, {\em A  crossed product approach to Orlicz spaces,}  Proc. London Math. Soc. {\bf  107} (2013), 965--1003. 

\bibitem{LH2} L. E.  Labuschagne, {\it Invariant subspaces for $H^2$-spaces of $\sigma$-finite algebras,} Bulletin London Math.\ Soc.\ \textbf{49} 
(2017), 33--44. 

\bibitem{LM} L. E. Labuschagne and W. A. Majewski, {\it Dynamics on noncommutative Orlicz spaces,}
Acta Math.\ Sci.\ Ser.\ B, \textbf{40} (2020), 1249-1270.

\bibitem{PT}    G. K. Pedersen and M. Takesaki, {\em  The Radon-Nikodym theorem for von Neumann algebras,} Acta Math. {\bf 130} (1973), 53--87.  

\bibitem{Sakai} S. Sakai, \textit{$C^*$-Algebras and $W^*$-Algebras}, Classics in Mathematics, Springer-Verlag, Berlin Heidelberg, 1998.

\bibitem{SS}  A. M. Sinclair and R. R. Smith, {\em Finite von Neumann algebras and masas,} London Mathematical Society Lecture Note Series, 351. Cambridge University Press, Cambridge, 2008.

\bibitem{Stout} R. Stout, {\em The theory of uniform algebras,} Bogden and Quigley, 1971.


\bibitem{Stratila}  S. Stratila, {\em Modular theory in operator algebras,} Abacus Press, Editura Academiei, Bucharest; Tunbridge Wells, 1981.

\bibitem{Strat-Zsido}  S. Stratila and L. Zsid\'o, {\em Lectures on von Neumann algebras,} Editura Academiei, Bucharest; Abacus Press, Tunbridge Wells, 1979.

\bibitem{Tak}   M. Takesaki, {\em Conditional expectations in von Neumann algebras,} J. Functional Analysis {\bf 9} (1972), 306--321.

\bibitem{tak1}   M. Takesaki, {\em Theory of 
operator algebras I}, Springer, New York, 1979.

\bibitem{tak2} M. Takesaki, \textit{Theory of operator algebras II}, Springer, New York, 2003.

\bibitem{terp} M. Terp, {\it $L^p$ spaces associated with von Neumann algebras}. K{\o}benhavns Universitet, Mathematisk Institut, Rapport No 3a, 1981.

\bibitem{Terp2} M. Terp, {\it  Interpolation spaces between a von Neumann algebra and its predual,}  J. Operator Theory \textbf{8} (1982), 327--360.

\bibitem{vD} A. Van Daele, \textit{Continuous crossed products and type III von Neumann algebras},
Cambridge University Press, 1978.

\bibitem{Xu}  Q. Xu,  {\it On the maximality of subdiagonal algebras,} J. Operator Theory   {\bf 54}  (2005), 137--146.

\end{thebibliography}
\end{document}